\numberwithin{equation}{section}
\title[Further results on the structure of (co)ends]
{Further results on the structure of (co)ends in finite tensor categories}
\author[K.~Shimizu]{Kenichi Shimizu}
\email{kshimizu@shibaura-it.ac.jp}
\address{Department of Mathematical Sciences \\
  Shibaura Institute of Technology \\
  307 Fukasaku, Minuma-ku, Saitama-shi, Saitama 337-8570, Japan.}
\date{}
\newtheorem{counter}{}[section]
\theoremstyle{definition}
\newtheorem{definition}         [counter]{Definition}
\theoremstyle{plain}
\newtheorem{lemma}              [counter]{Lemma}
\newtheorem{proposition}        [counter]{Proposition}
\newtheorem{theorem}            [counter]{Theorem}
\newtheorem{corollary}          [counter]{Corollary}
\newtheorem*{theorem*}          {Theorem}
\theoremstyle{remark}
\newtheorem{remark}             [counter]{Remark}
\newtheorem{example}            [counter]{Example}
\newcommand{\id}{\mathrm{id}}
\newcommand{\eval}{\mathrm{ev}}
\newcommand{\coev}{\mathrm{coev}}
\newcommand{\op}{\mathrm{op}}
\newcommand{\rev}{\mathrm{rev}}
\newcommand{\unitobj}{\mathbbm{1}}
\DeclareMathOperator{\Hom}{\mathrm{Hom}}
\DeclareMathOperator{\Ext}{\mathrm{Ext}}
\DeclareMathOperator{\Tor}{\mathrm{Tor}}
\DeclareMathOperator{\Img}{\mathrm{Im}}
\DeclareMathOperator{\Ker}{\mathrm{Ker}}
\DeclareMathOperator{\End}{\mathrm{End}}
\DeclareMathOperator{\REX}{\mathrm{Rex}}
\DeclareMathOperator{\LEX}{\mathrm{Lex}}
\DeclareMathOperator{\Nat}{\mathrm{Nat}}
\DeclareMathOperator{\Irr}{\mathrm{Irr}}
\DeclareMathOperator{\Gr}{\mathrm{Gr}}
\DeclareMathOperator{\CF}{\mathrm{CF}}
\DeclareMathOperator{\radj}{\mathrm{ra}}
\DeclareMathOperator{\ladj}{\mathrm{la}}
\DeclareMathOperator{\Rey}{\mathrm{Rey}}
\DeclareMathOperator{\HH}{\mathrm{HH}}
\DeclareMathOperator{\Trace}{\mathrm{Tr}}
\newcommand{\trace}{\mathrm{tr}}
\newcommand{\ich}{\mathrm{ch}}
\newcommand{\lmod}[1]{{#1}\text{-{\sf mod}}}
\newcommand{\rmod}[1]{\text{{\sf mod}-}{#1}}
\newcommand{\bimod}[2]{{#1}\text{-{\sf mod}-}{#2}}
\DeclareMathOperator{\iHom}{\underline{\mathrm{Hom}}}
\DeclareMathOperator{\icoHom}{\underline{\mathrm{coHom}}}
\DeclareMathOperator{\ieval}{\underline{\mathrm{ev}}}
\DeclareMathOperator{\icoev}{\underline{\mathrm{coev}}}
\DeclareMathOperator{\icomp}{\underline{\mathrm{com\smash{\mathrm{p}}}}}
\DeclareMathOperator{\iHomA}{\mathfrak{a}}
\DeclareMathOperator{\iHomB}{\mathfrak{b}}
\DeclareMathOperator{\iHomC}{\mathfrak{c}}
\DeclareMathOperator{\iHomD}{\mathfrak{d}}
\newcommand{\ActRex}{\uprho}
\newcommand{\ActLex}{\uplambda}
\newcommand{\Act}{\ActRex}
\newcommand{\rad}{\mathrm{rad}}
\newcommand{\socle}{\mathrm{soc}}
\newcommand{\capital}{\mathrm{cap}}
\newcommand{\Lw}{\mathrm{Lw}}
\newcommand{\Sym}{\mathrm{SLF}}
\newcommand{\Ser}{\mathbbmss{S}}
\newcommand{\Nak}{\mathbbmss{N}}
\begin{document}

\begin{abstract}
  Let $\mathcal{C}$ be a finite tensor category, and let $\mathcal{M}$ be an exact left $\mathcal{C}$-module category. The action of $\mathcal{C}$ on $\mathcal{M}$ induces a functor $\rho: \mathcal{C} \to \mathrm{Rex}(\mathcal{M})$, where $\mathrm{Rex}(\mathcal{M})$ is the category of $k$-linear right exact endofunctors on $\mathcal{M}$. Our key observation is that $\rho$ has a right adjoint $\rho^{\mathrm{ra}}$ given by the end
  \begin{equation*}
    \rho^{\mathrm{ra}}(F) = \int_{M \in \mathcal{M}} \underline{\mathrm{Hom}}(M, M)
    \quad (F \in \mathrm{Rex}(\mathcal{M})).
  \end{equation*}
  As an application, we establish the following results: (1) We give a description of the composition of the induction functor $\mathcal{C}_{\mathcal{M}}^* \to \mathcal{Z}(\mathcal{C}_{\mathcal{M}}^*)$ and Schauenburg's equivalence $\mathcal{Z}(\mathcal{C}_{\mathcal{M}}^*) \approx \mathcal{Z}(\mathcal{C})$. (2) We introduce the space $\mathrm{CF}(\mathcal{M})$ of `class functions' of $\mathcal{M}$ and initiate the character theory for pivotal module categories. (3) We introduce a filtration for $\mathrm{CF}(\mathcal{M})$ and discuss its relation with some ring-theoretic notions, such as the Reynolds ideal and its generalizations. (4) We show that $\mathrm{Ext}_{\mathcal{C}}^{\bullet}(1, \rho^{\mathrm{ra}}(\mathrm{id}_{\mathcal{M}}))$ is isomorphic to the Hochschild cohomology of $\mathcal{M}$. As an application, we show that the modular group acts projectively on the Hochschild cohomology of a modular tensor category.
\end{abstract}

\maketitle

\section{Introduction}
\label{sec:introduction}

Let $\mathcal{C}$ be a finite tensor category. In recent study of finite tensor categories and their applications, it is important to consider the end $A = \int_{X \in \mathcal{C}} X \otimes X^*$ and the coend $L = \int^{X \in \mathcal{C}} X^* \otimes X$. The end $A$ is a categorical counterpart of the adjoint representation of a Hopf algebra. By using the end $A$, we have established the character theory and the integral theory for finite tensor categories in \cite{MR3631720} and \cite{2016arXiv170202425S}, respectively. The coend $L$, which is isomorphic to $A^*$, plays a central role in Lyubashenko's work on `non-semisimple' modular tensor categories \cite{MR1324034,MR1352517,MR1354257,MR1862634}. These results are used in recent progress of topological quantum field theory and conformal field theories \cite{2016arXiv160504448G,2017arXiv170300150G,2017arXiv170201086F}.

Since these objects are defined by the universal property, it is difficult to analyze its structure. The aim of this paper is to provide a general framework to deal with such (co)ends. Let $\mathcal{M}$ be an indecomposable exact left $\mathcal{C}$-module category in the sense of \cite{MR2119143}. We denote by $\REX(\mathcal{M})$ the category of $k$-linear right exact endofunctors on $\mathcal{M}$. The action of $\mathcal{C}$ on $\mathcal{M}$ induces a functor $\Act: \mathcal{C} \to \mathrm{Rex}(\mathcal{M})$ given by $\Act(X)(M) = X \otimes M$. Our key observation is that a right adjoint of $\Act$, say $\Act^{\radj}$, is a $k$-linear faithful exact functor such that
\begin{equation}
  \label{eq:intro-act-right-adj}
  \Act^{\radj}(F) = \int_{M \in \mathcal{M}} \iHom(M, F(M))
  \quad (F \in \REX(\mathcal{M})),
\end{equation}
where $\iHom$ is the internal Hom functor (Theorem~\ref{thm:action-adj-by-end}). The end $A$ considered at the beginning of this paper is just the case where $\mathcal{M} = \mathcal{C}$ and $F = \id_{\mathcal{C}}$. This result allows us to discuss interaction between several ends through $\Act^{\radj}$. As applications, we obtain several results on finite tensor categories and their module categories as summarized below:
\begin{enumerate}
\item Let $\mathcal{C}_{\mathcal{M}}^*$ be the dual of $\mathcal{C}$ with respect to $\mathcal{M}$. We give an explicit description of the composition of the induction functor $\mathcal{C}_{\mathcal{M}}^* \to \mathcal{Z}(\mathcal{C}_{\mathcal{M}}^*)$ and Schauenburg's equivalence $\mathcal{Z}(\mathcal{C}_{\mathcal{M}}^*) \approx \mathcal{Z}(\mathcal{C})$. We note that this kind of method has been utilized to compute higher Frobenius-Schur indicators \cite{MR3441221}.
\item Generalizing \cite{MR3631720}, we introduce the space $\mathrm{CF}(\mathcal{M})$ of class functions of $\mathcal{M}$. We also introduce the notion of pivotal module category and develop the character theory for such a module category. Especially, we show that the characters of simple objects are linearly independent.
\item We introduce a filtration $\CF_1(\mathcal{M}) \subset \CF_2(\mathcal{M}) \subset \dotsb \subset \CF(\mathcal{M})$ for the space of class functions. If $\mathcal{M}$ is pivotal, then the first term $\CF_1(\mathcal{M})$ is spanned by the characters of simple objects of $\mathcal{M}$ and the second term has the following expression:
  \begin{equation*}
    \CF_2(\mathcal{M}) \cong \CF_1(\mathcal{M}) \oplus \bigoplus_{L \in \Irr(\mathcal{M})} \Ext_{\mathcal{M}}^1(L, L).
  \end{equation*}
\item We show that $\mathrm{Ext}_{\mathcal{C}}^*(\unitobj, A_{\mathcal{M}})$ is isomorphic to the Hochschild cohomology of $\mathcal{M}$, where $\unitobj$ is the unit object of $\mathcal{C}$ and $A_{\mathcal{M}} = \uprho^{\mathrm{ra}}(\mathrm{id}_{\mathcal{M}})$. As an application, we show that the modular group $\mathrm{SL}_2(\mathbb{Z})$ acts projectively on the Hochschild cohomology of a modular tensor category, generalizing \cite{2017arXiv170704032L}.
\end{enumerate}

\subsection*{Organization of this paper}

This paper is organized as follows: Section~\ref{sec:FTC} collects several basic notions and facts on finite abelian categories, finite tensor categories and their module categories from \cite{MR1712872,MR3242743,2013arXiv1312.7188D,2014arXiv1406.4204D,2016arXiv161204561F}.

In Section~\ref{sec:adj-of-act}, we study adjoints of the action functor $\Act: \mathcal{C} \to \REX(\mathcal{M})$ for a finite tensor category $\mathcal{C}$ and a finite left $\mathcal{C}$-module category $\mathcal{M}$. We show that $\Act$ is an exact functor, and thus has a left adjoint and a right adjoint. It turns out that a right adjoint $\Act^{\radj}$ of $\Act$ is expressed as in \eqref{eq:intro-act-right-adj}. Moreover, $\Act^{\radj}$ is $k$-linear faithful exact functor if $\mathcal{M}$ is indecomposable and exact (Theorem~\ref{thm:action-adj-by-end}).

The functor $\Act^{\radj}$ has a natural structure of a monoidal functor and a $\mathcal{C}$-bimodule functor as a right adjoint of $\Act$. The structure morphisms of $\Act^{\radj}$ are expressed in terms of the universal dinatural transformation of $\Act^{\radj}$ as an end (Lemmas~\ref{lem:act-fun-adj-C-bimod} and~\ref{lem:act-fun-adj-monoidal}). By using the structure morphisms of $\Act^{\radj}$, we can `lift' the adjoint pair $(\Act, \Act^{\radj})$ to an adjoint pair between the Drinfeld center $\mathcal{Z}(\mathcal{C})$ and the category $\REX_{\mathcal{C}}(\mathcal{M})$ of $k$-linear right exact $\mathcal{C}$-module endofunctors on $\mathcal{M}$ (Theorem~\ref{thm:act-fun-adj-Z}). As an application, we give an explicit description of the composition
\begin{equation*}
  \mathcal{C}_{\mathcal{M}}^* := \REX_{\mathcal{C}}(\mathcal{M})^{\rev}
  \xrightarrow{\quad \text{induction} \quad}
  \mathcal{Z}(\mathcal{C}_{\mathcal{M}}^*)
  \xrightarrow{\quad \text{Schauenburg's equivalence} \quad}
  \mathcal{Z}(\mathcal{C})
\end{equation*}
in terms of the structure morphisms of $\Act^{\radj}$ (Theorem~\ref{thm:induction-Drinfeld-center}).

In Section~\ref{sec:integral-over-fullsub}, we consider an end of the form $A_{\mathcal{S}} := \int_{X \in \mathcal{S}} \iHom(X, X)$ for some topologizing full subcategory $\mathcal{S}$ of $\mathcal{M}$ in the sense of Rosenberg \cite{MR1347919}. The end $A_{\mathcal{S}}$ has a natural structure of an algebra in $\mathcal{C}$. The main result of this section states that, if $\mathcal{M}$ is an indecomposable exact left $\mathcal{C}$-module category, then $A_{\mathcal{S}}$ is a quotient algebra of $A_{\mathcal{M}}$ and the map
\begin{equation*}
  \left\{
    \begin{array}{c}
      \text{topologizing full} \\
      \text{subcategories of $\mathcal{M}$}
    \end{array}
  \right\}
  \to \left\{
    \begin{array}{c}
      \text{quotient algebras} \\
      \text{of $A_{\mathcal{M}}$ in $\mathcal{C}$}
    \end{array}
  \right\},
  \quad \mathcal{S} \mapsto A_{\mathcal{S}}
\end{equation*}
preserves and reflects the order in a certain sense (Theorem \ref{thm:adjoint-alg-quotients}). Another important result in Section~\ref{sec:integral-over-fullsub} is that, if $\mathcal{S}$ is closed under the action of $\mathcal{C}$, then $A_{\mathcal{S}}$ lifts to a commutative algebra $\mathbf{A}_{\mathcal{S}}$ in $\mathcal{Z}(\mathcal{C})$ (Theorem \ref{thm:comm-alg-from-mod-subcat}).

In Section~\ref{sec:class-function}, we consider the space $\CF(\mathcal{M}) := \Hom_{\mathcal{C}}(A_{\mathcal{M}}, \unitobj)$ of `class functions' of $\mathcal{M}$. As we have seen in \cite{MR3631720}, $\CF(\mathcal{M})$ is an algebra if $\mathcal{M} = \mathcal{C}$. We extend this result by constructing a map $\star: \CF(\mathcal{C}) \times \CF(\mathcal{M}) \to \CF(\mathcal{M})$ making $\CF(\mathcal{M})$ a left $\CF(\mathcal{C})$-module (Lemma \ref{lem:CF-C-module-CF-M}). We also introduce the notion of pivotal structure of an exact module category over a pivotal finite tensor category (Definition~\ref{def:pivotal-mod-cat}) in terms of the relative Serre functor introduced in \cite{2016arXiv161204561F}. Let $\mathcal{C}$ be a pivotal finite tensor category, and let $\mathcal{M}$ be a pivotal exact left $\mathcal{C}$-module category. Then, for each object $M \in \mathcal{M}$, the {\em internal character} $\mathrm{ch}_{\mathcal{M}}(M)$ is defined in an analogous way as \cite{MR3631720} (Definition~\ref{def:internal-character}). Our main result in this section is the following generalization of \cite{MR3631720}: The linear map
\begin{equation*}
  \mathrm{ch}_{\mathcal{M}}: \Gr_k(\mathcal{M}) \to \CF(\mathcal{M}),
  \quad [M] \mapsto \mathrm{ch}_{\mathcal{M}}(M)
\end{equation*}
is a well-defined injective map, where $\Gr_k(-) = k \otimes_{\mathbb{Z}} \Gr(-)$ is the coefficient extension of the Grothendieck group. Moreover, we have
\begin{equation*}
  \mathrm{ch}_{\mathcal{M}}(X \otimes M) = \mathrm{ch}_{\mathcal{C}}(X) \star \mathrm{ch}_{\mathcal{M}}(M)
\end{equation*}
for all objects $X \in \mathcal{C}$ and $M \in \mathcal{M}$.

In Section~\ref{sec:filtr-space-class}, we introduce a filtration to the space of class functions. Let $\mathcal{C}$ be a finite tensor category, and let $\mathcal{M}$ be an exact left $\mathcal{C}$-module category. There is the {\em socle filtration} $\mathcal{M}_1 \subset \mathcal{M}_2 \subset \dotsb$ of $\mathcal{M}$. By the result of Section~\ref{sec:integral-over-fullsub}, we have a series $A_{\mathcal{M}} \twoheadrightarrow \dotsb \twoheadrightarrow A_{\mathcal{M}_2} \twoheadrightarrow A_{\mathcal{M}_1}$ of epimorphisms in $\mathcal{C}$. Thus, by applying the functor $\Hom_{\mathcal{C}}(-, \unitobj)$ to this series, we have a filtration
\begin{equation*}
  \CF_1(\mathcal{M}) \subset \CF_2(\mathcal{M}) \subset \CF_3(\mathcal{M}) \subset \dotsb \subset \CF(\mathcal{M}),
\end{equation*}
where $\CF_n(\mathcal{M}) = \Hom_{\mathcal{C}}(A_{\mathcal{M}_n}, \unitobj)$. We investigate relations between this filtration and some ring-theoretic notions, such as the Jacobson radical, the Reynolds ideal and the space of symmetric linear forms. We see that $\CF_1(\mathcal{M})$ is spanned by the characters of simple objects of $\mathcal{M}$. The second term $\CF_2(\mathcal{M})$ is expressed in terms of $\Ext_{\mathcal{M}}^1(L, L)$ for simple objects $L \in \mathcal{M}$. For $\CF_n(\mathcal{M})$ with $n \ge 3$, we have no general results but study some examples.

In Section~\ref{sec:hochsch-cohom}, we discuss the Hochschild (co)homology of finite tensor categories and their module categories. One can define the Hochschild homology $\HH_{\bullet}(\mathcal{M})$ and the Hochschild cohomology $\HH^{\bullet}(\mathcal{M})$ of a finite abelian category $\mathcal{M}$ in terms of the Ext functor in $\REX(\mathcal{M})$. We then show that, if $\mathcal{M}$ is an exact $\mathcal{C}$-module category, then there is an isomorphism
\begin{equation}
  \label{eq:intro-HH}
  \HH^{\bullet}(\mathcal{M}) \cong \Ext_{\mathcal{C}}^{\bullet}(\unitobj, A_{\mathcal{M}})
\end{equation}
If, in addition, $\mathcal{M}$ is pivotal, then there is also an isomorphism
\begin{equation*}
  \HH_{\bullet}(\mathcal{M}) \cong \Ext_{\mathcal{C}}^{\bullet}(A_{\mathcal{M}}, \unitobj)^*.
\end{equation*}
The isomorphism~\eqref{eq:intro-HH} is a generalization of the known fact that the Hochschild cohomology of a Hopf algebra can be computed by the cohomology of the adjoint representation. We use~\eqref{eq:intro-HH} to extend recent results of \cite{2017arXiv170704032L}.

\subsection*{Acknowledgment}

The author is supported by JSPS KAKENHI Grant Number JP16K17568.

\section{Preliminaries}
\label{sec:FTC}

\subsection{Ends and coends}

For basic theory on categories, we refer the reader to the book of Mac Lane \cite{MR1712872}. Let $\mathcal{C}$ and $\mathcal{D}$ be categories, and let $S$ and $T$ be functors from $\mathcal{C}^{\op} \times \mathcal{C}$ to $\mathcal{D}$. A {\em dinatural transformation} \cite[IX]{MR1712872} from $S$ to $T$ is a family
$\xi = \{ \xi_X: S(X, X) \to T(X, X) \}_{X \in \mathcal{C}}$
of morphisms in $\mathcal{D}$ satisfying
\begin{equation*}
  T(\id_X, f) \circ \xi_X \circ S(f, \id_X)
  = T(f, \id_Y) \circ \xi_Y \circ S(\id_Y, f)
\end{equation*}
for all morphisms $f: X \to Y$ in $\mathcal{C}$. An {\em end} of $S$ is an object $E \in \mathcal{D}$ equipped with a dinatural transformation $\pi: E \to S$ that is universal in a certain sense (here the object $E$ is regarded as a constant functor from $\mathcal{C}^{\op} \times \mathcal{C}$ to $\mathcal{D}$). Dually, a {\em coend} of $T$ is an object $C \in \mathcal{D}$ equipped with a `universal' dinatural transformation from $T$ to $C$. An end of $S$ and a coend of $T$ are denoted by $\int_{X \in \mathcal{C}} S(X, X)$ and $\int^{X \in \mathcal{C}} T(X, X)$, respectively.

A (co)end does not exist in general. We note the following useful criteria for the existence of (co)ends. Suppose that $\mathcal{C}$ is essentially small. Let $\mathcal{C}$, $\mathcal{D}$ and $S$ be as above. Since the category $\mathbf{Set}$ of all sets is complete, the end
\begin{equation*}
  S^{\natural}(W) := \int_{X \in \mathcal{C}} \Hom_{\mathcal{D}}(W, S(X, X))
\end{equation*}
exists for each object $W \in \mathcal{D}$. By the parameter theorem for ends \cite[XI.7]{MR1712872}, we extend the assignment $W \mapsto S^{\natural}(W)$ to the contravariant functor $S^{\natural}: \mathcal{D} \to \mathbf{Set}$. The following lemma is the dual of \cite[Lemma 3.1]{MR3632104}.

\begin{lemma}
  \label{lem:existence-end}
  An end of $S$ exists if and only if $S^{\natural}$ is representable.
\end{lemma}

We also note the following lemma:

\begin{lemma}
  \label{lem:end-adj}
  Let $\mathcal{A}$, $\mathcal{B}$ and $\mathcal{V}$ be categories, and let $L: \mathcal{A} \to \mathcal{B}$, $R: \mathcal{B} \to \mathcal{A}$ and $H: \mathcal{B}^{\op} \times \mathcal{A} \to \mathcal{V}$ be functors. Suppose that $L$ is left adjoint to $R$. Then we have an isomorphism
  \begin{equation}
    \label{eq:end-adj-iso}
    \int_{V \in \mathcal{A}} H(V, L(V))
    \cong \int_{W \in \mathcal{B}} H(R(W), W),
  \end{equation}
  meaning that if either one of these ends exists, then both exist and they are canonically isomorphic.
\end{lemma}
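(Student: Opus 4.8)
The plan is to reduce the claimed isomorphism to the bijection between dinatural transformations on the two sides, using the adjunction $L \dashv R$ to transport the relevant universal cones. Write $\eta: \id_{\mathcal{A}} \Rightarrow RL$ and $\varepsilon: LR \Rightarrow \id_{\mathcal{B}}$ for the unit and counit. I would first observe that it suffices to construct, for every object $P \in \mathcal{V}$ regarded as a constant functor, a natural bijection between the set of dinatural transformations $P \Rightarrow H(-, L(-))$ over $\mathcal{A}$ and the set of dinatural transformations $P \Rightarrow H(R(-), -)$ over $\mathcal{B}$; by the Yoneda-style uniqueness of universal objects (or directly by Lemma~\ref{lem:existence-end} applied on both sides), such a bijection, natural in $P$, forces the two ends to exist simultaneously and to be canonically isomorphic.

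Given a dinatural family $\alpha = \{\alpha_V : P \to H(V, L(V))\}_{V \in \mathcal{A}}$, I would define $\beta_W : P \to H(R(W), W)$ for $W \in \mathcal{B}$ as the composite
\begin{equation*}
  P \xrightarrow{\ \alpha_{R(W)}\ } H(R(W), L R(W)) \xrightarrow{\ H(\id_{R(W)}, \varepsilon_W)\ } H(R(W), W).
\end{equation*}
Conversely, from a dinatural family $\beta = \{\beta_W\}_{W \in \mathcal{B}}$ I would set
\begin{equation*}
  \alpha_V : P \xrightarrow{\ \beta_{L(V)}\ } H(R L(V), L(V)) \xrightarrow{\ H(\eta_V, \id_{L(V)})\ } H(V, L(V)).
\end{equation*}
The two main verifications are: (i) each assignment lands in the set of dinatural transformations — for $\alpha \mapsto \beta$ this uses the dinaturality of $\alpha$ applied to morphisms of the form $R(f)$ together with the naturality of $\varepsilon$, and symmetrically for the other direction with $\eta$; and (ii) the two assignments are mutually inverse, which comes down precisely to the two triangle identities $\varepsilon_{L} \circ L\eta = \id_L$ and $R\varepsilon \circ \eta_{R} = \id_R$, combined with the dinaturality of $\alpha$ (resp. $\beta$) applied to the units/counits. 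Naturality in $P$ is immediate since all the defining maps are post-composed with morphisms not involving $P$.

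The step I expect to require the most care is verifying dinaturality of the transported family: one must chase the defining dinaturality hexagon for $\alpha$ along an arbitrary $g: W \to W'$ in $\mathcal{B}$ by pulling it back to the morphism $R(g): R(W) \to R(W')$ in $\mathcal{A}$, and then splice in the naturality square for $\varepsilon$ relating $\varepsilon_W$ and $\varepsilon_{W'}$. This is a routine but slightly intricate diagram chase; everything else is formal. Finally, to conclude the statement as phrased — that existence of one end implies existence of the other — I would invoke Lemma~\ref{lem:existence-end} (or its evident analogue with $\mathbf{Set}$ replaced by the functor category of presheaves on $\mathcal{V}$): the functors classifying the two families of dinatural transformations are naturally isomorphic by the above, hence one is representable if and only if the other is, and the representing objects — the two ends — are canonically isomorphic.
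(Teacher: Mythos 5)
Your proposal is correct and takes essentially the same route as the paper: the transported families $\beta_W = H(\id_{R(W)},\varepsilon_W)\circ\alpha_{R(W)}$ and $\alpha_V = H(\eta_V,\id_{L(V)})\circ\beta_{L(V)}$ are exactly the comparison morphisms the paper builds from the universal dinatural transformations, and the verification via the triangle identities together with dinaturality is the same. The only packaging difference is that you argue at the level of wedges out of an arbitrary object and invoke Lemma~\ref{lem:existence-end} to transfer existence and deduce the canonical isomorphism, whereas the paper obtains the statement by citing the dual of a lemma of Brugui\`eres--Virelizier and only recalls the construction of the mutually inverse maps.
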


This lemma is the dual of \cite[Lemma 3.9]{MR2869176}. For later use, we recall the construction of the canonical isomorphism~\eqref{eq:end-adj-iso}. Let $\mathcal{E}$ and $\mathcal{E}'$ be the left and the right hand side of \eqref{eq:end-adj-iso}, respectively, and let
\begin{equation*}
  \pi(V): \mathcal{E} \to H(V, L(V))
  \quad \text{and} \quad
  \pi'(W): \mathcal{E}' \to H(R(W), W)
\end{equation*}
be the respective universal dinatural transformations. We assume that $(L, R)$ is an adjoint pair with unit $\eta: \id_{\mathcal{D}} \to R L$ and counit $\varepsilon: L R \to \id_{\mathcal{C}}$. By the universal property of $\mathcal{E}$, there is a unique morphism $\alpha: \mathcal{E}' \to \mathcal{E}$ in $\mathcal{V}$ satisfying
\begin{equation*}
  \pi(V) \circ \alpha = H(\eta_{V}, \id_{L(V)}) \circ \pi'(L(V))
\end{equation*}
for all objects $V \in \mathcal{A}$. Similarly, by the universal property of $\mathcal{E}'$, there is a unique morphism $\beta: \mathcal{E} \to \mathcal{E}'$ satisfying
\begin{equation*}
  \pi'(W) \circ \beta = \pi(R(W)) \circ H(\id_{R(W)}, \varepsilon_W)
\end{equation*}
for all objects $W \in \mathcal{B}$. By the zigzag identities and the dinaturality of $\pi$ and $\pi'$, one can verify that $\alpha$ and $\beta$ are mutually inverse.

\subsection{Monoidal categories}

A {\em monoidal category} \cite[VII]{MR1712872} is a category $\mathcal{C}$ equipped with a functor $\otimes: \mathcal{C} \times \mathcal{C} \to \mathcal{C}$, an object $\unitobj \in \mathcal{C}$ and natural isomorphisms $(X \otimes Y) \otimes Z \cong X \otimes (Y \otimes Z)$ and $\unitobj \otimes X \cong X \cong X \otimes \unitobj$ ($X, Y, Z \in \mathcal{C}$) satisfying the pentagon and the triangle axiom. If these natural isomorphisms are identities, then $\mathcal{C}$ is said to be {\em strict}. By the Mac Lane coherence theorem, we may assume that every monoidal category is strict.

We fix several conventions on monoidal categories and related notions: Let $\mathcal{C}$ and $\mathcal{D}$ be monoidal categories. A {\em monoidal functor} \cite[XI.2]{MR1712872} from $\mathcal{C}$ to $\mathcal{D}$ is a functor $F: \mathcal{C} \to \mathcal{D}$ equipped with a natural transformation
\begin{equation*}
  f^{(2)}_{X,Y}: F(X) \otimes F(Y) \to F(X \otimes Y) \quad (X, Y \in \mathcal{C})
\end{equation*}
and a morphism $f^{(0)}: \unitobj \to F(\unitobj)$ in $\mathcal{D}$ satisfying certain axioms. A monoidal functor $F =(F, f^{(2)}, f^{(0)})$ is said to be {\em strong} if the structure morphisms $f^{(2)}$ and $f^{(0)}$ are invertible.

Let $L$ and $R$ be objects of a monoidal category $\mathcal{C}$, and let $\varepsilon: L \otimes R \to \unitobj$ and $\eta: \unitobj \to R \otimes L$ be morphisms in $\mathcal{C}$. We say that $(L, \varepsilon, \eta)$ is a {\em left dual object} of $R$ and $(R, \varepsilon, \eta)$ is a {\em right dual object} of $L$ if the equations
\begin{equation*}
  (\varepsilon \otimes \id_L) \circ (\id_L \otimes \eta) = \id_L
  \quad \text{and} \quad
  (\id_R \otimes \varepsilon) \circ (\eta \otimes \id_R) = \id_R
\end{equation*}
hold. If this is the case, then the morphisms $\varepsilon$ and $\eta$ are called the {\em evaluation} and the {\em coevaluation}, respectively.

A monoidal category $\mathcal{C}$ is said to be {\em rigid} if every object of $\mathcal{C}$ has a left dual object and a right dual object. If $\mathcal{C}$ is rigid, then we denote by $(X^*, \eval_X, \coev_X)$ the left dual object of $X \in \mathcal{C}$. Let $\mathcal{C}^{\rev}$ denote the category $\mathcal{C}$ equipped with the reversed tensor product $X \otimes^{\rev} Y = Y \otimes X$. The assignment $X \mapsto X^*$ gives rise to a strong monoidal functor $(-)^*: \mathcal{C}^{\op} \to \mathcal{C}^{\rev}$ called the {\em left duality functor} of $\mathcal{C}$. The {\em right duality functor} ${}^*(-)$ of $\mathcal{C}$ is also defined by taking the right dual object. The left and the right duality functor are mutually quasi-inverse to each other.

\subsection{Module categories}

Let $\mathcal{C}$ be a monoidal category. A {\em left $\mathcal{C}$-module category} \cite{MR3242743} is a category $\mathcal{M}$ equipped with a functor $\otimes: \mathcal{C} \times \mathcal{M} \to \mathcal{M}$, called the {\em action} of $\mathcal{C}$, and natural isomorphisms
\begin{equation}
  \label{eq:mod-cat-associa}
  (X \otimes Y) \otimes M \cong X \otimes (Y \otimes M)
  \quad \text{and} \quad
  \unitobj \otimes M \cong M
  \quad (X, Y \in \mathcal{C}, M \in \mathcal{M})
\end{equation}
satisfying certain axioms similar to those for monoidal categories. There is an analogue of the Mac Lane coherence theorem for $\mathcal{C}$-module categories. Thus, without loss of generality, we may assume that the natural isomorphisms~\eqref{eq:mod-cat-associa} are the identity; see \cite[Remark 7.2.4]{MR3242743}.

Let $\mathcal{M}$ and $\mathcal{N}$ be left $\mathcal{C}$-module categories. A {\em lax left $\mathcal{C}$-module functor} from $\mathcal{M}$ to $\mathcal{N}$ is a functor $F: \mathcal{M} \to \mathcal{N}$ equipped with a natural transformation
\begin{equation*}
  s_{X,M}: X \otimes F(M) \to F(X \otimes M) \quad (X \in \mathcal{C}, M \in \mathcal{M})
\end{equation*}
such that the equations
\begin{equation*}
  s_{\unitobj, M} = \id_{M}
  \quad \text{and} \quad
  s_{X \otimes Y, M} = s_{X, Y \otimes M} \circ (\id_X \otimes s_{Y, M})
\end{equation*}
hold for all objects $X, Y \in \mathcal{C}$ and $M \in \mathcal{M}$. We omit the definition of morphisms of lax $\mathcal{C}$-module functors; see \cite{2013arXiv1312.7188D,2014arXiv1406.4204D}.

An {\em oplax left $\mathcal{C}$-module functor} from $\mathcal{M}$ to $\mathcal{N}$ is, in a word, a lax left $\mathcal{C}^{\op}$-module functor from $\mathcal{M}^{\op}$ to $\mathcal{N}^{\op}$; see \cite{2014arXiv1406.4204D}. Now let $L: \mathcal{M} \to \mathcal{N}$ be a functor with right adjoint $R: \mathcal{N} \to \mathcal{M}$, and let $\eta: \id_{\mathcal{M}} \to R L$ and $\varepsilon: L R \to \id_{\mathcal{N}}$ be the unit and the counit of the adjunction $L \dashv R$. If $(L, v)$ is an oplax left $\mathcal{C}$-module functor, then $R$ is a lax left $\mathcal{C}$-module functor by the structure morphism defined by
\begin{equation}
  \label{eq:module-func-adj-right}
  \newcommand{\XARR}[1]{\xrightarrow{\makebox[5em]{$\scriptstyle #1$}}}
  \begin{aligned}
    X \otimes R(N)
    & \XARR{\eta_{X \otimes R(N)}}
    R L (X \otimes R(N)) \\
    & \XARR{R(v_{X,R(N)})}
    R(X \otimes L R(N))
    \XARR{R(\id_X \otimes \varepsilon_N)}
    R(X \otimes N)    
  \end{aligned}
\end{equation}
for $X \in \mathcal{C}$ and $N \in \mathcal{N}$. Conversely, if $(R, w)$ is a lax $\mathcal{C}$-module functor, then $L$ is an oplax $\mathcal{C}$-module functor by
\begin{equation}
  \label{eq:module-func-adj-left}
  \newcommand{\XARR}[1]{\xrightarrow{\makebox[5em]{$\scriptstyle #1$}}}
  \begin{aligned}
    L(X \otimes M)
    & \XARR{L(\id_X \otimes \eta_{M})}
    L(X \otimes R L(M)) \\
    & \XARR{L(w_{X,L(M)})}
    L R(X \otimes L(M))
    \XARR{\varepsilon_{X \otimes L(M)}}
    R(X \otimes N)    
  \end{aligned}
\end{equation}
for $X \in \mathcal{C}$ and $M \in \mathcal{M}$ \cite[Lemma 2.11]{2014arXiv1406.4204D}.

We say that an (op)lax $\mathcal{C}$-module functor $(F, s)$ is {\em strong} if the natural transformation $s$ is invertible. If $\mathcal{C}$ is rigid, then every (op)lax $\mathcal{C}$-module functor is strong \cite[Lemma 2.10]{2014arXiv1406.4204D} and thus we refer to an (op)lax $\mathcal{C}$-module functor simply as a $\mathcal{C}$-module functor.

\subsection{Closed module categories}
\label{subsec:cl-mod-cat}

Let $\mathcal{C}$ be a monoidal category. A left $\mathcal{C}$-module category $\mathcal{M}$ is said to be {\em closed} if, for all objects $M \in \mathcal{M}$, the functor
\begin{equation}
  \label{eq:iHom-def-0}
  \mathcal{C} \to \mathcal{M}, \quad X \mapsto X \otimes M
\end{equation}
has a right adjoint ({\it cf}. the definition of a closed monoidal category). Suppose that $\mathcal{M}$ is closed. For each object $M \in \mathcal{M}$, we fix a right adjoint $\iHom(M, -)$ of the functor \eqref{eq:iHom-def-0}. Thus, by definition, there is a natural isomorphism
\begin{equation}
  \label{eq:iHom-def}
  \phi: \Hom_{\mathcal{M}}(X \otimes M, N) \to \Hom_{\mathcal{C}}(X, \iHom(M, N))
\end{equation}
for $N \in \mathcal{M}$ and $X \in \mathcal{C}$. If we denote by
\begin{equation*}
  \icoev_{X,M}: X \to \iHom(M, X \otimes M)
  \quad \text{and} \quad
  \ieval_{M,N}: \iHom(M, N) \otimes M \to N
\end{equation*}
the unit and the counit of the adjunction $(-) \otimes M \dashv \iHom(M, -)$, respectively, then the isomorphism \eqref{eq:iHom-def-0} is given by
\begin{equation}
  \label{eq:iHom-adj}
  \phi(f) = \iHom(M, f) \circ \icoev_{M,X}
  \quad \text{and} \quad
  \phi^{-1}(g) = \ieval_{M,N} \circ (g \otimes \id_M)
\end{equation}
for morphisms $f: X \otimes M \to N$ in $\mathcal{M}$ and $g: X \to \iHom(M, N)$ in $\mathcal{C}$.

By \cite[IV.7]{MR1712872}, one can extend the assignment $(M, N) \mapsto \iHom(M, N)$ to a functor from $\mathcal{M}^{\op} \times \mathcal{M}$ to $\mathcal{C}$ in such a way that the isomorphism \eqref{eq:iHom-def} is natural also in $M$. We call the functor $\iHom$ the {\em internal Hom functor} of $\mathcal{M}$. This makes $\mathcal{M}$ a $\mathcal{C}$-enriched category: The composition
\begin{equation}
  \label{eq:iHom-comp}
  \icomp_{M_1, M_2, M_3}: \iHom(M_2, M_3) \otimes \iHom(M_1, M_2) \to \iHom(M_1, M_3)
\end{equation}
is defined to be the morphism corresponding to
\begin{equation}
  \label{eq:iHom-eval3}
  \ieval^{(3)}_{M_1, M_2, M_3} := \ieval_{M_2, M_3} \circ (\id_{\iHom(M_2, M_3)} \otimes \ieval_{M_1, M_2})
\end{equation}
via the isomorphism~\eqref{eq:iHom-def} with $X = \iHom(M_2, M_3) \otimes \iHom(M_1, M_2)$, $M =M_1$ and $N = M_3$. The identity on $M \in \mathcal{M}$ is $\icoev_{\unitobj, M}$.

We suppose that $\mathcal{C}$ is rigid. Let $M \in \mathcal{M}$ be an object. Since the functor \eqref{eq:iHom-def-0} is a $\mathcal{C}$-module functor, so is its right adjoint $\iHom(M, -)$. We denote by
\begin{equation}
  \label{eq:iHom-iso-a}
  \iHomA_{X,M,N}: X \otimes \iHom(M, N) \to \iHom(M, X \otimes N)
  \quad (X \in \mathcal{C}, N \in \mathcal{M})
\end{equation}
the left $\mathcal{C}$-module structure of $\iHom(M, -)$. There is also an isomorphism
\begin{equation}
  \label{eq:iHom-iso-b-def}
  \iHomB_{X,M,N}: \iHom(X \otimes M, N) \to \iHom(M, N) \otimes X^*
\end{equation}
induced by the natural isomorphisms
\begin{gather*}
  \Hom_{\mathcal{C}}(W, \iHom(Y \otimes M, N))
  \cong \Hom_{\mathcal{M}}(W \otimes Y \otimes M, N) \\
  \cong \Hom_{\mathcal{C}}(W \otimes Y, \iHom(M, N))
  \cong \Hom_{\mathcal{C}}(W, \iHom(M, N) \otimes Y^*)
\end{gather*}
for $W, Y \in \mathcal{C}$ and $N, M \in \mathcal{M}$. It is convenient to introduce the morphism
\begin{equation}
  \label{eq:iHom-b-natural}
  \iHomB^{\natural}_{X,M,N}: \Hom(X \otimes M, N) \otimes X \to \iHom(M, N)
\end{equation}
defined by $\iHomB^{\natural}_{X,M,N} = (\id_{\iHom(M, N)} \otimes \eval_X) \circ (\iHomB_{X,M,N} \otimes \id_X)$. We note that $\iHomB^{\natural}_{X,M,N}$ is natural in the variables $M$ and $N$ and dinatural in $X$.

\begin{lemma}
  \label{lem:iHom-iso-B}
  For all objects $X, Y \in \mathcal{C}$ and $M, N \in \mathcal{M}$, we have the equation
  \begin{equation}
    \label{eq:lem-iHom-iso-B-1}
    \iHomB_{\unitobj, M, N} = \id_{\iHom(M, N)}
  \end{equation}
  and the following commutative diagrams:
  \begin{gather}
    \label{eq:lem-iHom-iso-B-2}
    \xymatrix@C=84pt@R=16pt{
      \iHom(X \otimes Y \otimes M, N)
      \ar[d]_{\iHomB_{X, Y \otimes M, N}}
      \ar[r]^{\iHomB_{X \otimes Y, M, N}}
      & \iHom(M, N) \otimes (X \otimes Y)^*
      \ar@{=}[d] \\
      \iHom(Y \otimes M, N) \otimes X^*
      \ar[r]_{\iHomB_{Y,M,N} \otimes \id_{X^*}}
      & \iHom(M, N) \otimes Y^* \otimes X^*
    } \\
    \label{eq:lem-iHom-iso-B-3}
    \xymatrix@C=84pt@R=16pt{
      X \otimes \iHom(Y \otimes M, N)
      \ar[r]^{\id_X \otimes \iHomB_{Y, M, N}}
      \ar[d]_{\iHomA_{X, Y \otimes M, N}}
      & X \otimes \iHom(M, N) \otimes Y^*
      \ar[d]^{\iHomA_{X,M,N} \otimes \id_{Y^*}} \\
      \iHom(Y \otimes M, X \otimes N)
      \ar[r]^{\iHomB_{Y, M, X \otimes N}}
      & \iHom(M, X \otimes N) \otimes Y^*
    }
  \end{gather}
\end{lemma}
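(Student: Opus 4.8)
The plan is to deduce all three assertions from the Yoneda lemma, reducing each to an equality of natural transformations of $\Hom$-functors built from the adjunction isomorphism $\phi$ of \eqref{eq:iHom-def} and the currying isomorphism $\Hom_{\mathcal{C}}(W \otimes Y, Z) \cong \Hom_{\mathcal{C}}(W, Z \otimes Y^*)$ that witnesses the left dual $Y^*$. The preparatory step is to record two $\Hom$-level recipes. First, by construction $\iHomB_{X,M,N}$ is the unique morphism such that, for every $W \in \mathcal{C}$, post-composition with it sends $g \colon W \to \iHom(X \otimes M, N)$ to the morphism $W \to \iHom(M, N) \otimes X^*$ obtained by uncurrying $g$ to $\phi^{-1}(g) \colon W \otimes X \otimes M \to N$, currying only over $M$ to get $W \otimes X \to \iHom(M, N)$, and then currying over $X$. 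Second, since $\mathcal{M}$ is strict the functor $(-) \otimes M$ is a strict $\mathcal{C}$-module functor (its constraint $X \otimes (Y \otimes M) = (X \otimes Y) \otimes M$ being the identity), so unwinding \eqref{eq:module-func-adj-right} shows that post-composition with the module structure $\iHomA_{X,M,N}$ of its right adjoint $\iHom(M,-)$ sends $h \colon W \to X \otimes \iHom(M, N)$ to $\phi$ applied to the composite $W \otimes M \xrightarrow{\,h \otimes \id_M\,} X \otimes \iHom(M, N) \otimes M \xrightarrow{\,\id_X \otimes \ieval_{M,N}\,} X \otimes N$.

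With these in hand, \eqref{eq:lem-iHom-iso-B-1} follows by setting $X = \unitobj$: because $\unitobj \otimes M = M$ and $\unitobj^* = \unitobj$ with trivial duality data, every link in the chain describing $\iHomB_{\unitobj, M, N}$ collapses to an identity, so the induced natural transformation is the identity of $\Hom_{\mathcal{C}}(-, \iHom(M,N))$, and Yoneda gives the claim. For \eqref{eq:lem-iHom-iso-B-2}, apply $\Hom_{\mathcal{C}}(W, -)$ to the square and trace $g \colon W \to \iHom(X \otimes Y \otimes M, N)$ around both paths. The top-then-right path uncurries $g$ completely to $W \otimes X \otimes Y \otimes M \to N$, curries back over $M$, and curries the composite object $X \otimes Y$ in one step, landing in $W \to \iHom(M,N) \otimes (X \otimes Y)^*$; the left-then-bottom path does the same uncurrying but curries $X$ and then $Y$ separately, landing in $W \to (\iHom(M,N) \otimes Y^*) \otimes X^*$. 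That these agree under the right-hand vertical map — drawn as ``$=$'' but properly the canonical isomorphism $(X \otimes Y)^* \cong Y^* \otimes X^*$ coming from the strong monoidal structure of $(-)^* \colon \mathcal{C}^{\op} \to \mathcal{C}^{\rev}$, which one may normalize to the identity — is exactly the compatibility of the currying isomorphism with tensor products of the exponent variable, i.e.\ the strong monoidality of $(-)^*$. Hence the square commutes.

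Finally, for \eqref{eq:lem-iHom-iso-B-3} one again applies $\Hom_{\mathcal{C}}(W,-)$ and chases $g \colon W \to X \otimes \iHom(Y \otimes M, N)$, using the two recipes above. Both composites send $g$ to the morphism $W \to \iHom(M, X \otimes N) \otimes Y^*$ gotten by tensoring with $\id_{Y \otimes M}$, applying $\ieval$ to absorb the $Y \otimes M$ factor, applying the action $\id_X \otimes (-)$, and then re-currying over $M$ and over $Y$ — after identifying $X \otimes \bigl(\iHom(Y \otimes M, N) \otimes Y \otimes M\bigr)$ with $X \otimes \iHom(Y \otimes M, N) \otimes Y \otimes M$ by strictness. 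The only point to check is that ``apply the action of $X$'' commutes with ``curry over $Y$'', which is once more the strictness of the module structure of $(-) \otimes M$; so the square commutes. The main obstacle throughout is not conceptual but bookkeeping: one must keep scrupulously separate the two adjunctions in play — the defining adjunction $(-) \otimes M \dashv \iHom(M, -)$ and the tensor--hom adjunction for left duals — and handle the fact that $(-)^*$ need not be strict even when $\mathcal{C}$ is, so that the apparent equalities $(X \otimes Y)^* = Y^* \otimes X^*$ and $\unitobj^* = \unitobj$ must be mediated by the coherence isomorphisms of the strong monoidal functor $(-)^*$. Once a consistent convention for these is fixed, each verification is a mechanical diagram chase.
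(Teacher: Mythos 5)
Your proposal is correct and follows essentially the same route as the paper's proof in Appendix~\ref{apdx:act-adj-structure}: both transfer each asserted equality across the adjunction $(-)\otimes M \dashv \iHom(M,-)$ and the duality isomorphisms (your ``uncurrying'', the paper's canonical $\Hom$-isomorphisms) and observe that both sides collapse to the same evaluation morphism, using the compatibilities $\ieval_{M,N}\circ(\iHomA_{X,M,N}\otimes\id_M)=\id_X\otimes\ieval_{M,N}$ and $\ieval_{M,N}\circ(\iHomB^{\natural}_{Y,M,N}\otimes\id_M)=\ieval_{Y\otimes M,N}$. Your Yoneda-style bookkeeping with a test object $W$ and the remark about normalizing $(X\otimes Y)^*\cong Y^*\otimes X^*$ are just a slightly different packaging of the same argument.
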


The category $\mathcal{M}^{\op} \times \mathcal{M}$ is a $\mathcal{C}$-bimodule category by the actions given by
\begin{equation}
  \label{eq:Mop-M-actions}
  X \otimes (M, N) =  (M^{\op}, X \otimes N)
  \quad \text{and} \quad
  (M^{\op}, N) \otimes X = ({}^* \! X \otimes M, N)
\end{equation}
for $X \in \mathcal{C}$ and $M, N \in \mathcal{M}$. The above lemma means that the internal Hom functor of $\mathcal{M}$ is a $\mathcal{C}$-bimodule functor from $\mathcal{M}^{\op} \times \mathcal{M}$ to $\mathcal{C}$ with left $\mathcal{C}$-module structure $\iHomA$ and the right $\mathcal{C}$-module structure given by
\begin{equation*}
  (\iHomB_{{}^* \! Y, M, N})^{-1}: \iHom(M, N) \otimes Y \to \iHom({}^* Y \otimes M, N)
  \quad (Y \in \mathcal{C}, M, N \in \mathcal{M}).
\end{equation*}

Although the above lemma seems to be well-known, we give its proof in Appendix~\ref{apdx:act-adj-structure} for the sake of completeness. We will also give some equations involving the natural isomorphisms $\iHomA$ and $\iHomB$ in Appendix~\ref{apdx:act-adj-structure}.

In view of this lemma, we define the isomorphism
\begin{equation}
  \label{eq:iHom-iso-c}
  \iHomC_{X,M,N,Y}: X \otimes \iHom(M, N) \otimes Y^* \to \iHom(Y \otimes M, X \otimes N)
\end{equation}
for $X, Y \in \mathcal{C}$ and $M, N \in \mathcal{M}$ by
\begin{equation}
  \label{eq:iHom-iso-c-def}
  \iHomC_{X,M,N,Y}
  = \iHomB_{Y, M, X \otimes N}^{-1} \circ (\iHomA_{X,M,N} \otimes \id_{Y^*})
  = \iHomA_{X, Y \otimes M, N} \circ (\id_X \otimes \iHomB_{Y,M,N}^{-1}).
\end{equation}

\subsection{Finite abelian categories}
\label{subsec:fin-ab-cat}

Throughout this paper, we work over an algebraically closed field $k$ of arbitrary characteristic. Given algebras $A$ and $B$ over $k$, we denote by $\lmod{A}$, $\rmod{B}$ and $\bimod{A}{B}$ the category of finite-dimensional left $A$-modules, the category of finite-dimensional right $B$-modules, and the category of finite-dimensional $A$-$B$-bimodules, respectively.

A {\em finite abelian category} \cite[Definition 1.8.5]{MR3242743} is a $k$-linear category that is equivalent to $\lmod{A}$ for some finite-dimensional algebra $A$ over $k$. For finite abelian categories $\mathcal{M}$ and $\mathcal{N}$, we denote by $\REX(\mathcal{M}, \mathcal{N})$ the category of $k$-linear right exact functors from $\mathcal{M}$ to $\mathcal{N}$. If $A$ and $B$ are finite-dimensional algebras over $k$, then the Eilenberg-Watts theorem gives an equivalence
\begin{equation}
  \label{eq:algebraic-EW}
  \bimod{B}{A} \xrightarrow{\quad \approx \quad} \REX(\lmod{A}, \lmod{B}), \quad M \mapsto M \otimes_A (-)
\end{equation}
of $k$-linear categories. Thus $\REX(\mathcal{M}, \mathcal{N})$ is a finite abelian category. The above equivalence also implies that a $k$-linear functor $F: \mathcal{M} \to \mathcal{N}$ is right exact if and only if $F$ has a right adjoint.

A $k$-linear category $\mathcal{M}$ is finite abelian if and only if $\mathcal{M}^{\op}$ is. Thus, by the dual argument, we see that a $k$-linear functor $F: \mathcal{M} \to \mathcal{N}$ is left exact if and only if $F$ has a left adjoint. We denote by $\LEX(\mathcal{M}, \mathcal{N})$ the category of $k$-linear left exact functors from $\mathcal{M}$ to $\mathcal{N}$. For a $k$-linear functor $F$, we denote by $F^{\ladj}$ and $F^{\radj}$ a left and a right adjoint of $F$, respectively, if it exists.

Let $M$ be an object of $\mathcal{M}$. Then the functor $\Hom_{\mathcal{M}}(M, -): \mathcal{M} \to \lmod{k}$ is left exact, and thus has a left adjoint. We denote it by $(-) \otimes_k M$. By definition, there is a natural isomorphism
\begin{equation*}
  \Hom_{\mathcal{M}}(X \otimes_k M, N) \cong \Hom_{k}(X, \Hom_{\mathcal{N}}(M, N))
  \quad (X \in \lmod{k}, N \in \mathcal{M}).
\end{equation*}

For two finite abelian categories $\mathcal{M}$ and $\mathcal{N}$, we denote by $\mathcal{M} \boxtimes \mathcal{N}$ their Deligne tensor product \cite[\S1.11]{MR3242743}. If $\mathcal{M} = \lmod{A}$ and $\mathcal{N} = \lmod{B}$ for some finite-dimensional algebras $A$ and $B$, then $\mathcal{M} \boxtimes \mathcal{N}$ is identified with $\lmod{(A \otimes_k B)}$. In view of the equivalence~\eqref{eq:algebraic-EW}, one has:

\begin{lemma}[{\cite[Lemma 3.3]{MR3569179}}]
  The $k$-linear functor
  \begin{equation}
    \label{eq:categorical-EW}
    \Phi_{\mathcal{M},\mathcal{N}}: \mathcal{M}^{\op} \boxtimes \mathcal{N} \to \REX(\mathcal{M}, \mathcal{N}),
    \quad M^{\op} \boxtimes N \mapsto \Hom_{\mathcal{M}}(-, M)^* \otimes_k N
  \end{equation}
  is an equivalence. Moreover, the functor
  \begin{equation}
    \label{eq:categorical-EW-inv}
    \REX(\mathcal{M}, \mathcal{N}) \to \mathcal{M}^{\op} \boxtimes \mathcal{N},
    \quad F \mapsto \int_{M \in \mathcal{M}} M^{\op} \boxtimes F(M)
  \end{equation}
  is a quasi-inverse of \eqref{eq:categorical-EW}
\end{lemma}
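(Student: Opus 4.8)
The plan is to proceed in three steps: first verify that $\Phi_{\mathcal{M},\mathcal{N}}$ is well defined; then show it is an equivalence by reducing, via a choice of projective generators, to the algebraic Eilenberg--Watts equivalence~\eqref{eq:algebraic-EW}; and finally identify its quasi-inverse with the end construction~\eqref{eq:categorical-EW-inv} by means of the Yoneda lemma. For the first step, $\Hom_{\mathcal{M}}(-, M) \colon \mathcal{M}^{\op} \to \lmod{k}$ is left exact, vector space duality is exact, and $(-) \otimes_k N$ is exact; hence $X \mapsto \Hom_{\mathcal{M}}(X, M)^* \otimes_k N$ is right exact, and it is likewise right exact in $N$ and in $M^{\op}$ separately. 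By the universal property of the Deligne tensor product, $\Phi_{\mathcal{M},\mathcal{N}}$ is therefore a well-defined $k$-linear right exact functor.

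For the second step, note that $\Phi_{\mathcal{M},\mathcal{N}}$ and the end functor~\eqref{eq:categorical-EW-inv} are built canonically from $\mathcal{M}$ and $\mathcal{N}$ and hence are compatible, up to coherent natural isomorphism, with $k$-linear equivalences; so we may assume $\mathcal{M} = \lmod{A}$ and $\mathcal{N} = \lmod{B}$ for finite-dimensional algebras $A, B$. Vector space duality $D = \Hom_k(-, k)$ is an exact equivalence $(\lmod{A})^{\op} \xrightarrow{\approx} \rmod{A}$, so that $D \boxtimes \id_{\lmod{B}}$ is an equivalence $\mathcal{M}^{\op} \boxtimes \mathcal{N} \xrightarrow{\approx} \rmod{A} \boxtimes \lmod{B} = \bimod{B}{A}$ sending $M^{\op} \boxtimes N$ to $M^* \otimes_k N$. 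On the other hand, composing $\Phi_{\mathcal{M},\mathcal{N}}$ with the inverse of~\eqref{eq:algebraic-EW} — which sends a right exact $G$ to $G(A)$ equipped with the $B$-$A$-bimodule structure coming from the right regular action of $A$ on $A$ — sends $M^{\op} \boxtimes N$ to $\Hom_A(A, M)^* \otimes_k N$, which the natural isomorphism $\Hom_A(A, -) \cong \id$ identifies with $M^* \otimes_k N$, right $A$-action and all. A diagram chase shows that these two assignments agree as bifunctors on the pure tensors $M^{\op} \boxtimes N$; since both resulting functors out of $\mathcal{M}^{\op} \boxtimes \mathcal{N}$ are $k$-linear and right exact, and such a functor is determined up to isomorphism by its restriction to pure tensors, they are naturally isomorphic. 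Hence $\Phi_{\mathcal{M},\mathcal{N}}$ is an equivalence.

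For the third step, write $\Psi$ for the functor~\eqref{eq:categorical-EW-inv}; the ends defining it exist by Lemma~\ref{lem:existence-end}. It suffices to exhibit a natural isomorphism $\Psi \circ \Phi_{\mathcal{M},\mathcal{N}} \cong \id$, because then $\Psi \cong \Psi \circ \Phi_{\mathcal{M},\mathcal{N}} \circ \Phi_{\mathcal{M},\mathcal{N}}^{-1} \cong \Phi_{\mathcal{M},\mathcal{N}}^{-1}$, so that $\Psi$ is a quasi-inverse of $\Phi_{\mathcal{M},\mathcal{N}}$. Now
\begin{equation*}
  \Psi\bigl(\Phi_{\mathcal{M},\mathcal{N}}(M_0^{\op} \boxtimes N_0)\bigr)
  = \int_{M \in \mathcal{M}} M^{\op} \boxtimes \bigl( \Hom_{\mathcal{M}}(M, M_0)^* \otimes_k N_0 \bigr),
\end{equation*}
and since $\boxtimes$ is $\lmod{k}$-bilinear this is $\int_{M \in \mathcal{M}} \Hom_{\mathcal{M}}(M, M_0)^* \otimes_k (M^{\op} \boxtimes N_0)$. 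Regarding $M^{\op} \boxtimes N_0$ as the value at $M^{\op}$ of the $k$-linear functor $(-) \boxtimes N_0 \colon \mathcal{M}^{\op} \to \mathcal{M}^{\op} \boxtimes \mathcal{N}$, and using that $\Hom_{\mathcal{M}}(M, M_0) = \Hom_{\mathcal{M}^{\op}}(M_0^{\op}, M^{\op})$ is finite-dimensional and that $\mathcal{M}^{\op} \boxtimes \mathcal{N}$ is cotensored over $\lmod{k}$, the end form of the Yoneda lemma identifies this end with $(-) \boxtimes N_0$ evaluated at $M_0^{\op}$, i.e.\ with $M_0^{\op} \boxtimes N_0$; this isomorphism is natural in $M_0^{\op} \boxtimes N_0$. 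This completes the argument.

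The conceptual content here is entirely contained in the classical Eilenberg--Watts theorem and the Yoneda lemma; the genuine work lies in the bookkeeping with ends in the finite (hence non-cocomplete) setting. One must check that the Yoneda-end reduction of the third step is legitimate and natural — in particular that the relevant end exists with the asserted value — and that in the second step the two functors into $\bimod{B}{A}$ agree not merely on objects but as bifunctors on pure tensors, which is a diagram chase involving the adjunction isomorphism $\Hom_A(A, -) \cong \id$ and the structure of $\boxtimes$. One must also confirm that $\Phi$ and $\Psi$ depend naturally enough on $\mathcal{M}$ and $\mathcal{N}$ to license the reduction to module categories. I expect this last point, together with keeping track of variances in the Yoneda step, to be the most error-prone, though none of it is deep.
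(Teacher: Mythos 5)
Your first two steps are sound and follow the route the paper itself intends (the paper does not reprove this lemma; it cites \cite[Lemma 3.3]{MR3569179} and views it as a consequence of the algebraic Eilenberg--Watts equivalence \eqref{eq:algebraic-EW}, exactly the reduction you carry out). The genuine problem is in your third step, i.e.\ in the ``moreover'' clause. First, you assert that ``the ends defining $\Psi$ exist by Lemma~\ref{lem:existence-end}''; but that lemma is only a representability criterion, so existence of $\int_{M} M^{\op} \boxtimes F(M)$ for an \emph{arbitrary} right exact $F$ is precisely what has to be checked, and you never check it. Second, your Yoneda-end computation (which is correct, variances and all) only produces an isomorphism $\Psi\bigl(\Phi_{\mathcal{M},\mathcal{N}}(M_0^{\op}\boxtimes N_0)\bigr) \cong M_0^{\op}\boxtimes N_0$ natural in the pure-tensor variables $M_0, N_0$. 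That is not yet a natural isomorphism $\Psi\circ\Phi_{\mathcal{M},\mathcal{N}} \cong \mathrm{id}$ on all of $\mathcal{M}^{\op}\boxtimes\mathcal{N}$: pure tensors do not exhaust the objects, and the only standard way to propagate such an isomorphism — the uniqueness clause of the Deligne universal property — requires both functors to be right exact, whereas $\Psi$ is built from ends (limits) and you prove no exactness property for it at all. So the step ``$\Psi \cong \Psi\circ\Phi\circ\Phi^{-1} \cong \Phi^{-1}$'' is not licensed by what you have shown.

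The cleanest repairs stay within your own framework. Either (a) after the reduction to $\mathcal{M}=\lmod{A}$, $\mathcal{N}=\lmod{B}$, compute the end directly for a general $F = X\otimes_A(-)$ with $X\in\bimod{B}{A}$ and show that, under the identification $\mathcal{M}^{\op}\boxtimes\mathcal{N}\simeq\bimod{B}{A}$, the object $X$ together with the evident dinatural family is an end of $M\mapsto M^{\op}\boxtimes(X\otimes_A M)$; or (b) argue representably: for every $F$ and every $Z\in\mathcal{M}^{\op}\boxtimes\mathcal{N}$ establish a natural isomorphism $\Hom(Z,\Phi_{\mathcal{M},\mathcal{N}}^{-1}(F)) \cong \int_{M}\Hom(Z, M^{\op}\boxtimes F(M))$ — your pure-tensor computation gives this when $Z = M_0^{\op}\boxtimes N_0$, and since both sides are left exact in $Z$ (ends commute with limits), a presentation of a general $Z$ by finite direct sums of pure tensors, together with a comparison morphism constructed from the counit of the equivalence, extends it to all $Z$. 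Then Lemma~\ref{lem:existence-end} yields simultaneously the existence of the end for every $F$ and the identification of \eqref{eq:categorical-EW-inv} with a quasi-inverse of \eqref{eq:categorical-EW}. Without one of these completions, the second half of the lemma is not proved.
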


The following lemma is proved by utilizing the equivalences~\eqref{eq:categorical-EW} and~\eqref{eq:categorical-EW-inv}.

\begin{lemma}[{\cite[Lemma 2.5]{2017arXiv170709691S}}]
  \label{lem:proj-obj-in-REX}
  Let $\mathcal{M}$ and $\mathcal{N}$ be finite abelian categories. For a $k$-linear right exact functor $F: \mathcal{M} \to \mathcal{N}$, the following are equivalent:
  \begin{enumerate}
  \item $F$ is a projective object of the abelian category $\REX(\mathcal{M}, \mathcal{N})$.
  \item $F(M)$ is a projective object of $\mathcal{N}$ for all objects $M \in \mathcal{M}$ and $F^{\radj}(N)$ is an injective object of $\mathcal{M}$ for all objects $N \in \mathcal{N}$.
  \end{enumerate}
\end{lemma}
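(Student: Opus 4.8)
The plan is to transport the question across the equivalence $\Phi_{\mathcal{M},\mathcal{N}}: \mathcal{M}^{\op} \boxtimes \mathcal{N} \to \REX(\mathcal{M}, \mathcal{N})$ of the previous lemma, under which projective objects of $\REX(\mathcal{M},\mathcal{N})$ correspond exactly to projective objects of $\mathcal{M}^{\op} \boxtimes \mathcal{N}$. Writing $\mathcal{M} \simeq \lmod{A}$ and $\mathcal{N} \simeq \lmod{B}$, the Deligne tensor product is $\lmod{(A^{\op} \otimes_k B)}$, and by standard facts on finite-dimensional algebras the indecomposable projectives of $\lmod{(A^{\op} \otimes_k B)}$ are precisely the objects $P \otimes_k Q$ with $P$ an indecomposable projective of $\mathcal{M}^{\op}$ (equivalently an indecomposable injective $I$ of $\mathcal{M}$, read as $I^{\op}$) and $Q$ an indecomposable projective of $\mathcal{N}$. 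So a general projective object of $\mathcal{M}^{\op} \boxtimes \mathcal{N}$ is a finite direct sum of objects $I^{\op} \boxtimes Q$ with $I \in \mathcal{M}$ injective and $Q \in \mathcal{N}$ projective. Via $\Phi_{\mathcal{M},\mathcal{N}}$ this says: $F$ is projective in $\REX(\mathcal{M},\mathcal{N})$ if and only if $F$ is a finite direct sum of functors of the form $\Hom_{\mathcal{M}}(-, I)^* \otimes_k Q$ with $I$ injective in $\mathcal{M}$ and $Q$ projective in $\mathcal{N}$.

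Granting this description, I would prove the two implications directly. For (1)$\Rightarrow$(2): if $F = \bigoplus_i \Hom_{\mathcal{M}}(-, I_i)^* \otimes_k Q_i$ with $I_i$ injective and $Q_i$ projective, then for any $M \in \mathcal{M}$ the object $F(M) = \bigoplus_i \Hom_{\mathcal{M}}(M, I_i)^* \otimes_k Q_i$ is a finite direct sum of copies of the projectives $Q_i$, hence projective in $\mathcal{N}$. For the statement about $F^{\radj}$, note that the right adjoint of $\Hom_{\mathcal{M}}(-, I)^* \otimes_k Q$ is $N \mapsto \Hom_{\mathcal{N}}(Q, N) \otimes_k I$ (using the defining adjunction of $(-) \otimes_k M$ together with the self-duality of finite-dimensional vector spaces and the Yoneda-type identification $\Hom_{\mathcal{M}}(-,I)^* \otimes_k Q \cong \Hom_{\mathcal{N}}(Q,-)^{\radj}$-style bookkeeping); adjoints turn direct sums of functors into direct sums of the adjoints, so $F^{\radj}(N) = \bigoplus_i \Hom_{\mathcal{N}}(Q_i, N) \otimes_k I_i$ is a finite direct sum of copies of the injectives $I_i$, hence injective in $\mathcal{M}$.

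For the converse (2)$\Rightarrow$(1): write $F = \bigoplus_j \Phi_{\mathcal{M},\mathcal{N}}(S_j^{\op} \boxtimes N_j)$ --- more honestly, decompose $F$ into indecomposable summands, each of which corresponds under $\Phi_{\mathcal{M},\mathcal{N}}$ to an indecomposable object of $\mathcal{M}^{\op} \boxtimes \mathcal{N}$. It suffices to show each indecomposable summand is projective, so I may assume $F$ itself is indecomposable and corresponds to an indecomposable object of $\mathcal{M}^{\op} \boxtimes \mathcal{N}$. One can pin such an object down using the quasi-inverse~\eqref{eq:categorical-EW-inv}: $F \leftrightarrow \int_{M} M^{\op} \boxtimes F(M)$. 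The hypotheses force this object to be projective: evaluating $F$ on a projective generator $P$ of $\mathcal{M}$ and using that $F(P)$ is projective in $\mathcal{N}$ while $F^{\radj}$ sends injectives to injectives is exactly the condition (via the algebraic Eilenberg--Watts picture, where $F$ is tensoring with a $B$-$A$-bimodule $\prescript{}{B}{U}_A$) that $U$ is projective as a left $B$-module and injective as a right $A$-module, equivalently that $U$ is a projective $B \otimes_k A^{\op}$-module; this is the bimodule analogue of the standard fact and is where the finiteness (finite-dimensionality of $A$, $B$) is used. Translating back gives that $F$ is projective in $\REX(\mathcal{M},\mathcal{N})$.

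The main obstacle is the last step --- showing that "projective on one side, injective on the other" for a bimodule is equivalent to being projective as a bimodule. The clean way is to reduce to the algebraic model $\REX(\lmod{A},\lmod{B}) \simeq \bimod{B}{A}$, observe that $\bimod{B}{A} = \lmod{(B \otimes_k A^{\op})}$, and recall that for a finite-dimensional algebra $\Lambda = B \otimes_k A^{\op}$ a module is projective iff it is a summand of a free module iff (since $\Lambda$ is a finite-dimensional, hence symmetric-up-to-Nakayama-twist-free-enough) it is a summand of something of the form $B \otimes_k A^{\op}$, and to match this against the two one-sided conditions using that $A^{\op}$-modules are injective iff their $k$-duals are projective $A$-modules. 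I expect this to be a short but slightly delicate bookkeeping argument; everything else is a formal consequence of Lemma~\ref{lem:proj-obj-in-REX}'s two predecessors and the behaviour of adjoints under direct sums.
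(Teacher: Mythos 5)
Your direction (1)$\Rightarrow$(2) is fine: transporting along $\Phi_{\mathcal{M},\mathcal{N}}$, using that the indecomposable projectives of $\mathcal{M}^{\op}\boxtimes\mathcal{N}\simeq\lmod{(A^{\op}\otimes_k B)}$ are the $I^{\op}\boxtimes Q$ with $I$ injective in $\mathcal{M}$ and $Q$ projective in $\mathcal{N}$ (this uses that $k$ is algebraically closed, which is in force), and computing that the right adjoint of $\Hom_{\mathcal{M}}(-,I)^*\otimes_k Q$ is $\Hom_{\mathcal{N}}(Q,-)\otimes_k I$ — that adjunction check is correct. Note the paper does not prove this lemma; it is quoted from the cited reference, so there is no in-text proof to compare against.

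The converse, however, rests on a false equivalence. You reduce (2) to ``$F(P)$ is projective for a projective generator $P$ and $F^{\radj}$ preserves injectives'', i.e.\ to the two one-sided conditions on the bimodule ${}_BU_A$ (projective as a left $B$-module, and projective --- equivalently, after $k$-dualizing, what you call injective --- on the right), and you assert this is equivalent to $U$ being projective over $B\otimes_k A^{\op}$. It is not. Take $A=B=k[x]/(x^2)$ and $U=A$ the regular bimodule, i.e.\ $F=\id_{\mathcal{M}}$: then ${}_AU$ is projective and $U_A$ is projective and injective ($A$ is Frobenius), and $F$ sends projectives to projectives and $F^{\radj}=\id$ sends injectives to injectives, yet $U=A$ is not projective as an $A$-bimodule (that would make $A$ separable), and indeed condition (2) genuinely fails since $F(k)=k$ is not projective. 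So your weakened hypothesis is strictly weaker than (2), and the ``bimodule analogue of the standard fact'' you invoke is exactly where the argument breaks. A correct proof of (2)$\Rightarrow$(1) must use the full strength of (2) --- e.g.\ that $F(A/J)$ is projective, $J$ the Jacobson radical: from the injectivity hypothesis one gets $U_A$ projective (so $U\otimes_A-$ is exact); decompose $U/UJ\cong\bigoplus_i V_i\otimes_k S_i$ over the simple right $A$-modules $S_i$ and observe each $V_i$ is projective in $\mathcal{N}$; then lift the surjection from the projective bimodule $\bigoplus_i V_i\otimes_k P(S_i)$ onto $U/UJ$ to a bimodule map onto $U$ (Nakayama) and conclude it is an isomorphism by comparing dimensions, using that $U_A$ is projective with top $\bigoplus_i S_i^{\oplus\dim V_i}$. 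Some argument of this kind (or a categorical substitute via the Nakayama functor) is the actual content of the lemma; the one-sided reduction cannot be repaired.
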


For finite abelian categories $\mathcal{M}$ and $\mathcal{N}$, there is also an equivalence
\begin{equation*}
  \Psi_{\mathcal{M},\mathcal{N}}: \LEX(\mathcal{M}, \mathcal{N}) \to \mathcal{M}^{\op} \boxtimes \mathcal{N},
  \quad F \mapsto \int^{M \in \mathcal{M}} M^{\op} \boxtimes F(M)
\end{equation*}
\cite[Lemmas 3.2 and 3.3]{MR3632104}. Fuchs, Schaumann and Schweigert \cite{2016arXiv161204561F} defined the {\em Nakayama functor} of $\mathcal{M}$ by
\begin{equation*}
  \Nak_{\mathcal{M}} := \Phi_{\mathcal{M}, \mathcal{M}} \Psi_{\mathcal{M}, \mathcal{M}}(\id_{\mathcal{M}}) \in \REX(\mathcal{M}, \mathcal{M}).
\end{equation*}
For later use, we recall from \cite{2016arXiv161204561F} the following results:
\begin{enumerate}
\item We say that $\mathcal{M}$ is {\em Frobenius} if the class of injective objects of $\mathcal{M}$ coincides with the class of projective objects of $\mathcal{M}$ (or, equivalently, $\mathcal{M} \approx \lmod{A}$ for some Frobenius algebra $A$). The Nakayama functor $\Nak_{\mathcal{M}}$ is an equivalence if and only if $\mathcal{M}$ is Frobenius.
\item We say that $\mathcal{M}$ is {\em symmetric Frobenius} if $\mathcal{M} \approx \lmod{A}$ for some symmetric Frobenius algebra $A$. The Nakayama functor $\Nak_{\mathcal{M}}$ is isomorphic to $\id_{\mathcal{M}}$ if and only if $\mathcal{M}$ is symmetric Frobenius.
\item If $F: \mathcal{M} \to \mathcal{N}$ is a $k$-linear exact functor between finite abelian categories $\mathcal{M}$ and $\mathcal{N}$, then there is an isomorphism $\Nak_{\mathcal{M}} \circ F^{\ladj} \cong F^{\radj} \circ \Nak_{\mathcal{N}}$.
\item The Nakayama functor of $\REX(\mathcal{M}, \mathcal{N})$ is given by
  \begin{equation*}
    \Nak_{\REX(\mathcal{M}, \mathcal{N})}(F) = \Nak_{\mathcal{N}} \circ F \circ \Nak_{\mathcal{M}}
    \quad (F \in \REX(\mathcal{M}, \mathcal{N})).
  \end{equation*}
\end{enumerate}

\subsection{Finite tensor categories and their modules}

A {\em finite tensor category} \cite{MR2119143} is a rigid monoidal category $\mathcal{C}$ such that $\mathcal{C}$ is a finite abelian category, the tensor product of $\mathcal{C}$ is $k$-linear in each variable, and the unit object $\unitobj$ of $\mathcal{C}$ is a simple object. A finite tensor category is Frobenius. The tensor product of a finite tensor category is exact in each variable.

Let $\mathcal{C}$ be a finite tensor category. A {\em finite left $\mathcal{C}$-module category} is a left $\mathcal{C}$-module category $\mathcal{M}$ such that $\mathcal{M}$ is a finite abelian category and the action of $\mathcal{C}$ on $\mathcal{M}$ is $k$-linear and right exact in each variable. One can define a finite right $\mathcal{C}$-module category and a finite $\mathcal{C}$-bimodule category in a similar manner.

Given an algebra $A \in \mathcal{C}$ ($=$ a monoid in $\mathcal{C}$ \cite{MR1712872}), we denote by $\mathcal{C}_A$ the category of right $A$-modules in $\mathcal{C}$. The category $\mathcal{C}_A$ is a finite left $\mathcal{C}$-module category in a natural way. Moreover, every finite left $\mathcal{C}$-module category is equivalent to $\mathcal{C}_A$ for some algebra $A \in \mathcal{C}$ as a $\mathcal{C}$-module category. This implies that the action of $\mathcal{C}$ on a finite $\mathcal{C}$-module category is {\em exact} in each variable  \cite[Corollary 2.26]{2014arXiv1406.4204D}, although only the right exactness is assumed in our definition.

An {\em exact left $\mathcal{C}$-module category} \cite{MR2119143} is a finite left $\mathcal{C}$-module category $\mathcal{M}$ such that $P \otimes M$ is a projective object of $\mathcal{M}$ for all projective objects $P \in \mathcal{C}$ and for all objects $M \in \mathcal{M}$. It is known that exact module categories are Frobenius.

\section{Adjoint of the action functor}
\label{sec:adj-of-act}

\subsection{The action functor}

Let $\mathcal{C}$ be a finite tensor category, and let $\mathcal{M}$ and $\mathcal{N}$ be two finite left $\mathcal{C}$-module categories. Then $\REX(\mathcal{M}, \mathcal{N})$ is a $\mathcal{C}$-bimodule category by the left action and the right action given by
\begin{equation}
  \label{eq:Rex-action}
  (X \otimes F)(M) = X \otimes F(M)
  \quad \text{and} \quad
  (F \otimes X)(M) = F(X \otimes M),
\end{equation}
respectively, for $F \in \REX(\mathcal{M}, \mathcal{N})$, $X \in \mathcal{C}$ and $M \in \mathcal{M}$. The category $\mathcal{M}^{\op} \boxtimes \mathcal{N}$ is also a $\mathcal{C}$-bimodule category by the left and the right action determined by
\begin{equation*}
  X \otimes (M^{\op} \boxtimes N) =  M^{\op} \boxtimes (X \otimes N)
  \quad \text{and} \quad
  (M^{\op} \boxtimes N) \otimes X = ({}^* \! X \otimes M)^{\op} \boxtimes N,
\end{equation*}
respectively, for $M \in \mathcal{M}$, $N \in \mathcal{N}$ and $X \in \mathcal{C}$. It is easy to see that the equivalence \eqref{eq:categorical-EW} is in fact an equivalence of $\mathcal{C}$-bimodule categories. Since $\mathcal{M}^{\op} \boxtimes \mathcal{N}$ is a finite $\mathcal{C}$-bimodule category, so is $\REX(\mathcal{M}, \mathcal{N})$.

Now we define the functor $\Act_{\mathcal{M}}: \mathcal{C} \to \REX(\mathcal{M}) := \REX(\mathcal{M}, \mathcal{M})$ by $X \mapsto X \otimes \id_{\mathcal{M}}$ and call $\Act_{\mathcal{M}}$ the {\em action functor} of $\mathcal{M}$. Since the action of $\mathcal{C}$ on a finite $\mathcal{C}$-module category is $k$-linear and exact in each variable, we have:

\begin{lemma}
  \label{lem:act-fun-exact}
  The action functor $\Act_{\mathcal{M}}$ is $k$-linear and exact.
\end{lemma}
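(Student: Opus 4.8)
The plan is to verify the two assertions of the lemma separately, reducing everything to the corresponding known statement about the bifunctor $\otimes: \mathcal{C} \times \mathcal{M} \to \mathcal{M}$. Recall that $\Act_{\mathcal{M}}$ sends an object $X \in \mathcal{C}$ to the endofunctor $X \otimes \id_{\mathcal{M}}$ and a morphism $f: X \to Y$ in $\mathcal{C}$ to the natural transformation $(f \otimes \id_M)_{M \in \mathcal{M}}$. So $k$-linearity of $\Act_{\mathcal{M}}$ on hom-spaces is immediate from the fact that the action of $\mathcal{C}$ on $\mathcal{M}$ is $k$-linear in the first variable: the assignment $f \mapsto f \otimes \id_M$ is $k$-linear for each fixed $M$, hence so is the induced map on natural transformations. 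That $\Act_{\mathcal{M}}$ is a functor at all (compatibility with composition and identities) is clear.

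For exactness, the key point is that a sequence in the abelian category $\REX(\mathcal{M})$ is exact if and only if it is exact when evaluated at every object $M \in \mathcal{M}$; this follows because $\REX(\mathcal{M}) \approx \bimod{A}{A}$ (via the Eilenberg--Watts equivalence~\eqref{eq:algebraic-EW}, taking $\mathcal{M} \approx \lmod{A}$), where kernels, cokernels and exactness are computed objectwise on the underlying vector spaces, which in turn is governed by evaluation at the regular module $A$ together with its cyclic submodules. Concretely: given a short exact sequence $0 \to X' \to X \to X'' \to 0$ in $\mathcal{C}$, apply $\Act_{\mathcal{M}}$ to obtain a sequence of endofunctors; to check it is exact in $\REX(\mathcal{M})$ it suffices to evaluate at an arbitrary $M \in \mathcal{M}$, giving $0 \to X' \otimes M \to X \otimes M \to X'' \otimes M \to 0$, which is exact because the action of $\mathcal{C}$ on the finite $\mathcal{C}$-module category $\mathcal{M}$ is exact in the first variable, as recorded in the discussion preceding the lemma (via $\mathcal{M} \approx \mathcal{C}_A$ and \cite[Corollary 2.26]{2014arXiv1406.4204D}).

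There is essentially no hard part here: the only thing to be slightly careful about is the claim that exactness in $\REX(\mathcal{M})$ can be tested by evaluation. One clean way to phrase this, avoiding any explicit choice of algebra, is to use the equivalence $\Phi_{\mathcal{M},\mathcal{M}}: \mathcal{M}^{\op} \boxtimes \mathcal{M} \xrightarrow{\approx} \REX(\mathcal{M})$ of~\eqref{eq:categorical-EW} together with the fact, noted in the excerpt, that this is an equivalence of $\mathcal{C}$-bimodule categories. Under $\Phi_{\mathcal{M},\mathcal{M}}$ the left action of $X \in \mathcal{C}$ corresponds to $M^{\op} \boxtimes N \mapsto M^{\op} \boxtimes (X \otimes N)$, i.e. to the functor $\id_{\mathcal{M}^{\op}} \boxtimes (X \otimes -)$; since $X \otimes -$ is exact on $\mathcal{M}$ and Deligne products of exact functors are exact, the left action is exact, and $\Act_{\mathcal{M}}(-)$ is precisely the composite of $X \mapsto (X \otimes -)$ with $\id_{\mathcal{M}^{\op}} \boxtimes -$, transported across $\Phi_{\mathcal{M},\mathcal{M}}$ — again exact. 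Either route gives the result, and I would present the short evaluation-based argument in the main text.
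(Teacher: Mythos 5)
Your $k$-linearity argument is fine, and your main strategy for exactness (evaluate $0 \to \Act_{\mathcal{M}}(X') \to \Act_{\mathcal{M}}(X) \to \Act_{\mathcal{M}}(X'') \to 0$ on objects of $\mathcal{M}$ and use exactness of the $\mathcal{C}$-action on $\mathcal{M}$ in the first variable) does lead to a proof, but two of your supporting claims are false as stated. Exactness in $\REX(\mathcal{M}) \approx \bimod{A}{A}$ is \emph{not} equivalent to objectwise exactness, and kernels in $\REX(\mathcal{M})$ are \emph{not} computed objectwise: under Eilenberg--Watts a monomorphism of bimodules $B' \hookrightarrow B$ corresponds to a natural transformation $B' \otimes_A (-) \to B \otimes_A (-)$ whose components need not be monic, the obstruction being $\Tor_1^A(B/B', -)$ (e.g. $A = k[x]/(x^2)$, $B' = (x) \subset A = B$, evaluated at the simple module). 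Only cokernels are objectwise. What saves you is that you only use the harmless direction: if the evaluated sequences are exact for \emph{every} $M \in \mathcal{M}$, then in particular they are exact at the object corresponding to the regular module $A$ (equivalently, at a projective generator, where evaluation is exact and faithful), and evaluation there recovers the underlying sequence of bimodules, whose exactness as vector spaces gives exactness in $\bimod{A}{A} \approx \REX(\mathcal{M})$. So state the criterion one-directionally and justify it this way; the parenthetical about kernels being objectwise should go. Your alternative route via $\Phi_{\mathcal{M},\mathcal{M}}$ also has a genuine mix-up of variables: you verify that $\id_{\mathcal{M}^{\op}} \boxtimes (X \otimes -)$ is exact for each \emph{fixed} $X$, i.e. exactness of the action in the module-category variable, whereas the lemma requires exactness in the variable $X$ with the object $\id_{\mathcal{M}}$ (resp. its image $\int_{M} M^{\op} \boxtimes M$) held fixed; as written that paragraph is circular or at best incomplete.

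For comparison, the paper's argument has a different and shorter shape: it first observes that \eqref{eq:categorical-EW} is an equivalence of $\mathcal{C}$-bimodule categories, so that $\REX(\mathcal{M})$ is itself a \emph{finite} $\mathcal{C}$-bimodule category, and then invokes the general fact (recorded in Section~\ref{sec:FTC} via $\mathcal{M} \approx \mathcal{C}_A$ and \cite[Corollary 2.26]{2014arXiv1406.4204D}) that the action of $\mathcal{C}$ on any finite module category is $k$-linear and exact in each variable --- applied to the module category $\REX(\mathcal{M})$ and the fixed object $\id_{\mathcal{M}}$, since $\Act_{\mathcal{M}} = (-) \otimes \id_{\mathcal{M}}$. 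This is the same general exactness fact you use on $\mathcal{M}$, applied one level up, and it sidesteps entirely the delicate question of how exactness in $\REX(\mathcal{M})$ interacts with evaluation. If you prefer your evaluation-based argument, it is salvageable as indicated, but the projective-generator (or regular-module) step must be made explicit, since that is exactly where the argument would otherwise fail.
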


Thus the action functor $\Act_{\mathcal{M}}$ has a left adjoint and a right adjoint. The aim of this section is to study properties of adjoints of $\Act_{\mathcal{M}}$. Before doing so, we characterize some properties of $\mathcal{M}$ in terms of $\Act_{\mathcal{M}}$.

\begin{lemma}
  \label{lem:action-exact-mod-cat}
  $\mathcal{M}$ is exact if and only if $\Act_{\mathcal{M}}$ preserves projective objects.
\end{lemma}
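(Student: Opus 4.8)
The plan is to use the characterization of projective objects in $\REX(\mathcal{M})$ given in Lemma~\ref{lem:proj-obj-in-REX}, together with the fact that $\Act_{\mathcal{M}}$ is exact (Lemma~\ref{lem:act-fun-exact}) and hence admits a left adjoint $\Act_{\mathcal{M}}^{\ladj}$ and a right adjoint $\Act_{\mathcal{M}}^{\radj}$. First I would recall that, since $\mathcal{C}$ is a finite tensor category, an object $P \in \mathcal{C}$ is projective if and only if it is injective (finite tensor categories are Frobenius); moreover the projective objects of $\mathcal{C}$ are exactly the direct summands of objects of the form $P(\unitobj) \otimes X$ for $X \in \mathcal{C}$, where $P(\unitobj)$ is the projective cover of the unit. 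This lets me reduce "$\Act_{\mathcal{M}}$ preserves projectives" to a statement about a single well-chosen projective generator of $\mathcal{C}$, or alternatively to work with arbitrary projectives directly.

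For the forward direction, suppose $\mathcal{M}$ is exact. Let $P \in \mathcal{C}$ be projective; I must show $\Act_{\mathcal{M}}(P) = P \otimes \id_{\mathcal{M}}$ is projective in $\REX(\mathcal{M})$. By Lemma~\ref{lem:proj-obj-in-REX} it suffices to check two things: (i) $(P \otimes \id_{\mathcal{M}})(M) = P \otimes M$ is projective in $\mathcal{M}$ for every $M$, which is precisely the defining property of an exact module category; and (ii) the right adjoint of $P \otimes \id_{\mathcal{M}}$ sends every object of $\mathcal{M}$ to an injective object. Now the right adjoint of the functor $M \mapsto P \otimes M$ is $M \mapsto {}^*P \otimes M$ (using rigidity of $\mathcal{C}$: $(-)\otimes M$ being left adjoint to $\iHom(M,-)$, but more directly $P \otimes (-)$ has right adjoint ${}^{*}P \otimes (-)$ on $\mathcal{M}$ since these are the module-category duality adjunctions). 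Since $\mathcal{M}$ is exact it is Frobenius, so projective objects coincide with injective ones; and ${}^*P$ is again projective in $\mathcal{C}$ because the duality functor preserves projectivity (it is an equivalence $\mathcal{C}^{\op}\to\mathcal{C}^{\rev}$ and $\mathcal{C}$ is Frobenius). Hence ${}^*P \otimes N$ is projective $=$ injective in $\mathcal{M}$ for all $N$, giving (ii). Thus $\Act_{\mathcal{M}}(P)$ is projective in $\REX(\mathcal{M})$.

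For the converse, suppose $\Act_{\mathcal{M}}$ preserves projective objects. Take $P \in \mathcal{C}$ projective and $M \in \mathcal{M}$ arbitrary. Then $\Act_{\mathcal{M}}(P) = P \otimes \id_{\mathcal{M}}$ is projective in $\REX(\mathcal{M})$, so by the implication (1)$\Rightarrow$(2) of Lemma~\ref{lem:proj-obj-in-REX}, the value $(P\otimes\id_{\mathcal{M}})(M) = P \otimes M$ is projective in $\mathcal{M}$. Since $P$ and $M$ were arbitrary, $\mathcal{M}$ is exact by definition. This direction is essentially immediate once Lemma~\ref{lem:proj-obj-in-REX} is in hand.

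The main obstacle I anticipate is the identification, in the forward direction, of the right adjoint of $\Act_{\mathcal{M}}(P)$ in $\REX(\mathcal{M})$ and the verification that it lands in injectives — one must be careful that the relevant right adjoint is computed \emph{objectwise} (it is $M \mapsto {}^*P \otimes M$, the pointwise right adjoint, which indeed represents a right adjoint of $P\otimes\id_{\mathcal{M}}$ in the functor category) and that rigidity of $\mathcal{C}$ together with the Frobenius property of $\mathcal{M}$ is exactly what converts "projective" into "injective" at the right place. Everything else is a direct application of the cited lemmas and the definition of an exact module category.
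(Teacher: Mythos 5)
Your proof is correct and follows essentially the same route as the paper: the forward direction verifies the two conditions of Lemma~\ref{lem:proj-obj-in-REX} using the definition of exactness, the identification $\Act_{\mathcal{M}}(P)^{\radj} \cong {}^*P \otimes (-)$, and the Frobenius property of $\mathcal{C}$ and $\mathcal{M}$, while the converse is the immediate application of Lemma~\ref{lem:proj-obj-in-REX}, exactly as in the paper.
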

\begin{proof}
  Suppose that $\mathcal{M}$ is an exact $\mathcal{C}$-module category. We fix a projective object $P \in \mathcal{C}$ and set $F = \Act_{\mathcal{M}}(P)$. By the definition of an exact module category, the object $F(M) = P \otimes M$ is projective for all $M \in \mathcal{M}$. Since $\mathcal{C}$ and $\mathcal{M}$ are Frobenius, the object $F^{\radj}(M) \cong {}^* \! P \otimes M$ is injective for all $M \in \mathcal{M}$. Thus, by Lemma~\ref{lem:proj-obj-in-REX}, $F$ is a projective object of $\REX(\mathcal{M})$. Hence $\Act_{\mathcal{M}}$ preserves projective objects. The converse is easily proved by Lemma~\ref{lem:proj-obj-in-REX}.
\end{proof}

Let $\mathcal{A}$ and $\mathcal{B}$ be finite abelian categories. A $k$-linear functor $F: \mathcal{A} \to \mathcal{B}$ is said to be {\em dominant} if every object of $\mathcal{B}$ is a subobject of an object of the form $F(X)$, $X \in \mathcal{A}$. Suppose that $F$ is exact and $\mathcal{B}$ is Frobenius. Then, as remarked in \cite[Lemma 2.3]{MR3600085}, the functor $F$ is dominant if and only if every object of $\mathcal{B}$ is a quotient of $F(X)$ for some $X \in \mathcal{A}$.

\begin{lemma}
  \label{lem:act-fun-dominant}
  An exact left $\mathcal{C}$-module category $\mathcal{M}$ is indecomposable if and only if the action functor $\Act_{\mathcal{M}}$ is dominant.
\end{lemma}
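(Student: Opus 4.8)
The plan is to translate both sides of the equivalence into a single statement about the category $\REX(\mathcal{M})$ via the Eilenberg--Watts equivalence~\eqref{eq:categorical-EW}, and then exploit the fact that $\Act_{\mathcal{M}}$ is exact (Lemma~\ref{lem:act-fun-exact}) together with the criterion for dominance recalled just before the statement. First I would observe that, since $\mathcal{M}$ is exact hence Frobenius, and since $\REX(\mathcal{M})$ is a finite abelian category (in fact it is Frobenius as well, being a finite $\mathcal{C}$-bimodule category over a finite tensor category), the dominance of the exact functor $\Act_{\mathcal{M}}$ is equivalent to: every object of $\REX(\mathcal{M})$ is a quotient of $\Act_{\mathcal{M}}(X) = X \otimes \id_{\mathcal{M}}$ for some $X \in \mathcal{C}$. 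Because every object of $\mathcal{C}$ is a quotient of a direct sum of objects (indeed $X$ is a quotient of a projective, but more simply we may test on a set of generators), and since $\Act_{\mathcal{M}}$ is $k$-linear and exact, it suffices to check this on the identity functor's ``building blocks,'' i.e. on the functors $\Hom_{\mathcal{M}}(-,M)^* \otimes_k N$ for $M, N \in \mathcal{M}$, which by~\eqref{eq:categorical-EW} are (the images under $\Phi_{\mathcal{M},\mathcal{M}}$ of) the objects $M^{\op} \boxtimes N$ generating $\mathcal{M}^{\op} \boxtimes \mathcal{M}$.

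Next, I would unwind what $\Act_{\mathcal{M}}(X)$ corresponds to under the equivalence $\Phi_{\mathcal{M},\mathcal{M}}^{-1}$, or rather compute $\Hom$-spaces directly. The functor $X \otimes \id_{\mathcal{M}}$ sends $M \mapsto X \otimes M$; a natural surjection $X \otimes \id_{\mathcal{M}} \twoheadrightarrow \bigl(\Hom_{\mathcal{M}}(-,M)^* \otimes_k N\bigr)$ amounts, after adjunction, to a surjection $X \otimes M' \twoheadrightarrow $ (appropriate object) natural in $M'$; concretely, picking $M' = M$ and using the internal Hom, a morphism $X \to \iHom(M,N)$ in $\mathcal{C}$ induces the natural transformation whose component at $M'$ is built from $\ieval$. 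The key point is that such a natural transformation is an epimorphism in $\REX(\mathcal{M})$ precisely when the component at a progenerator of $\mathcal{M}$ is an epimorphism (here exactness and the Eilenberg--Watts description of $\REX(\mathcal{M}) \approx \bimod{A}{A}$ for $\mathcal{M} \approx \lmod{A}$ make ``epimorphism in $\REX$'' checkable objectwise, even on a single progenerator). Thus dominance of $\Act_{\mathcal{M}}$ becomes: for all $M, N \in \mathcal{M}$ there exists $X \in \mathcal{C}$ and an epimorphism $X \otimes G \twoheadrightarrow \Hom_{\mathcal{M}}(G,M)^* \otimes_k N$ for $G$ a progenerator — equivalently, by standard arguments, an epimorphism $X \otimes M \twoheadrightarrow N$ in $\mathcal{M}$ for all $M, N$.

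Now the heart of the matter is the equivalence of this last condition with indecomposability of $\mathcal{M}$. For the forward direction, if $\mathcal{M}$ decomposes as $\mathcal{M}_1 \oplus \mathcal{M}_2$ with both $\mathcal{M}_i$ nonzero and $\mathcal{C}$-stable, then taking $M \in \mathcal{M}_1$ and $N \in \mathcal{M}_2$ nonzero, $X \otimes M$ lies in $\mathcal{M}_1$ for all $X$, so there is no epimorphism onto $N$; hence $\Act_{\mathcal{M}}$ is not dominant. For the converse, suppose $\Act_{\mathcal{M}}$ is not dominant; I would define $\mathcal{M}_1$ to be the full subcategory of subquotients of objects $X \otimes M_0$ (ranging over $X \in \mathcal{C}$) for a fixed nonzero $M_0$ — or better, use that the ``image'' of a non-dominant exact functor generates a proper $\mathcal{C}$-stable Serre subcategory, and invoke the fact (from the structure theory of exact module categories, \cite{MR2119143}) that a $\mathcal{C}$-stable Serre subcategory of an exact module category $\mathcal{M}$ is a direct summand. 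The main obstacle I anticipate is exactly this last step: cleanly identifying the non-dominance with the existence of a proper nonzero $\mathcal{C}$-module subcategory that splits off. I expect this is handled by noting that $\mathcal{M} \approx \mathcal{C}_A$ for an algebra $A \in \mathcal{C}$, under which $\mathcal{C}$-stable subcategories correspond to idempotents splitting $A$ in the category of $\mathcal{C}$-bimodules (or to a decomposition of the relevant Nakayama/block structure), so indecomposability of $\mathcal{M}$ as a module category is equivalent to indecomposability of $A$, and this matches the ``every $N$ is a quotient of some $X \otimes M$'' condition. I would therefore structure the proof as: (i) reduce dominance to the objectwise statement ``$\forall M,N\ \exists X$: $X \otimes M \twoheadrightarrow N$'' using Lemma~\ref{lem:act-fun-exact}, the Frobenius property, and the remark before the statement; (ii) prove the contrapositive of one direction using a decomposition; (iii) prove the other direction by showing the subcategory generated by the action on any fixed object is $\mathcal{C}$-stable, Serre, and hence a summand by exactness.
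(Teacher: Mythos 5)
The crucial unjustified step is the backward direction of your reduction (i), and it is exactly where the content of the lemma sits. The forward direction is fine: by a Yoneda computation $\Nat(\Act_{\mathcal{M}}(X),\Hom_{\mathcal{M}}(-,M)^*\otimes_k N)\cong\Hom_{\mathcal{M}}(X\otimes M,N)$, epimorphisms in $\REX(\mathcal{M})$ are detected objectwise, and composing the $M$-component of an epimorphism with the surjection $\Hom_{\mathcal{M}}(M,M)^*\otimes_k N\to N$ (evaluation at $\id_M$) yields an epimorphism $X\otimes M\twoheadrightarrow N$; together with your decomposition argument this proves ``dominant $\Rightarrow$ indecomposable'' (the paper proves this direction differently, splitting $\REX(\mathcal{M})$ into the four blocks $\REX(\mathcal{M}_i,\mathcal{M}_j)$ and observing that the image of $\Act_{\mathcal{M}}$ lies in the diagonal part). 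But the converse, which you dismiss as ``standard arguments,'' is false at the level you use it and nontrivial in the form you need: an epimorphism $f\colon X\otimes M\to N$ does \emph{not} make the corresponding natural transformation $\Act_{\mathcal{M}}(X)\Rightarrow\Hom_{\mathcal{M}}(-,M)^*\otimes_k N$ epic. Already for $\mathcal{C}=\mathcal{M}=\Vect$, $X=k$, $M=N=k^2$, $f=\id$, the component at $k$ is $k\to\Hom_k(k^2,k^2)$, $x\mapsto x\,\id_{k^2}$, which is not surjective. So the objectwise condition ``for all $M,N$ there exist $X$ and an epimorphism $X\otimes M\twoheadrightarrow N$'' does not, by any routine argument, produce an epimorphism from some $\Act_{\mathcal{M}}(X')$ onto $\Hom_{\mathcal{M}}(-,M)^*\otimes_k N$; producing such epimorphisms for indecomposable exact $\mathcal{M}$ is essentially the whole ``only if'' direction, which the paper does not reprove but quotes as \cite[Proposition 2.6(ii)]{MR3600085}.

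Your step (iii) also needs repair even if one grants the splitting fact from \cite{MR2119143}. If $\Act_{\mathcal{M}}$ fails to be dominant, its image lies in $\REX(\mathcal{M})$, not in $\mathcal{M}$, so ``the subcategory generated by the image'' does not by itself decompose $\mathcal{M}$; to obtain a proper nonzero $\mathcal{C}$-stable subcategory of $\mathcal{M}$ you must first pass to the objectwise statement, i.e.\ invoke precisely the unproved direction of (i). Conversely, arguing from indecomposability of $\mathcal{M}$ and the splitting fact only yields the objectwise statement (every $N$ is a quotient of some $X\otimes M_0$, after upgrading subquotients to quotients via the Frobenius property of $\mathcal{M}$), and again (i) would be needed to convert this into dominance into $\REX(\mathcal{M})$; the loop does not close. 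A genuinely self-contained variant of your plan would apply the splitting fact inside $\REX(\mathcal{M})\approx\mathcal{M}^{\op}\boxtimes\mathcal{M}$ viewed as a $\mathcal{C}$-bimodule category --- the essential image of $\Act_{\mathcal{M}}$ is closed under both actions since $X\otimes\Act_{\mathcal{M}}(Y)\otimes Z\cong\Act_{\mathcal{M}}(X\otimes Y\otimes Z)$, and $\REX(\mathcal{M})$ is Frobenius because $\mathcal{M}$ is exact (not merely because it is a finite bimodule category) --- but then you must show that $\mathcal{M}^{\op}\boxtimes\mathcal{M}$ is indecomposable over $\mathcal{C}\boxtimes\mathcal{C}^{\rev}$ when $\mathcal{M}$ is, which is again part of the structural theory of \cite{MR2119143,MR3600085} rather than a routine step.
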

\begin{proof}
  Suppose that there are non-zero $\mathcal{C}$-module full subcategories $\mathcal{M}_1$ and $\mathcal{M}_2$ of $\mathcal{M}$ such that $\mathcal{M} = \mathcal{M}_1 \oplus \mathcal{M}_2$. Then we have the decomposition
  \begin{equation*}
    \REX(\mathcal{M})
    = \mathcal{E}_{11} \oplus \mathcal{E}_{12} \oplus \mathcal{E}_{21} \oplus \mathcal{E}_{22},
    \quad \mathcal{E}_{i j} = \REX(\mathcal{M}_i, \mathcal{M}_j),
  \end{equation*}
  into four non-zero full subcategories. Since the image of $\Act_{\mathcal{M}}$ is contained in the diagonal part $\mathcal{E}_{11} \oplus \mathcal{E}_{22}$, the action functor $\Act_{\mathcal{M}}$ cannot be dominant. Thus the `if' part has been proved. The `only if' part is \cite[Proposition 2.6 (ii)]{MR3600085}.
\end{proof}

\subsection{Description of adjoints}

For a while, we fix a finite tensor category $\mathcal{C}$ and a finite left $\mathcal{C}$-module category $\mathcal{M}$. We write $\Act = \Act_{\mathcal{M}}$ for simplicity. By Lemma~\ref{lem:act-fun-exact}, the functor $\Act$ has a right adjoint.

\begin{theorem}
  \label{thm:action-adj-by-end}
  For all $k$-linear right exact functor $F: \mathcal{M} \to \mathcal{M}$, the end of
  \begin{equation*}
    \mathcal{M}^{\op} \times \mathcal{M} \to \mathcal{C},
    \quad (M, M') \mapsto \iHom(M, F(M'))
  \end{equation*}
  exists and a right adjoint of $\Act$ is given by
  \begin{equation*}
    \Act^{\radj}: \REX(\mathcal{M}) \to \mathcal{C},
    \quad F \mapsto \int_{M \in \mathcal{M}} \iHom(M, F(M)).
  \end{equation*}
  We also have:
  \begin{itemize}
  \item [(a)] If $\mathcal{M}$ is exact, then $\Act^{\radj}$ is exact.
  \item [(b)] If $\mathcal{M}$ is exact and indecomposable, then $\Act^{\radj}$ is faithful.
  \end{itemize}
\end{theorem}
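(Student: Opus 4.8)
The plan is to establish the adjunction by exhibiting a natural isomorphism of Hom-sets and then to extract properties (a) and (b) from general nonsense about the internal Hom and the results recalled in the Preliminaries.

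\textbf{Existence of the end and the adjunction.} First I would observe that for a fixed object $X \in \mathcal{C}$ and a fixed $F \in \REX(\mathcal{M})$, the natural isomorphism~\eqref{eq:iHom-def} defining the internal Hom gives, for each $M \in \mathcal{M}$,
\begin{equation*}
  \Hom_{\mathcal{C}}(X, \iHom(M, F(M))) \cong \Hom_{\mathcal{M}}(X \otimes M, F(M)) = \Hom_{\mathcal{M}}(\Act(X)(M), F(M)).
\end{equation*}
Taking the limit over $M$ (equivalently, the end over $M \in \mathcal{M}$ of the bifunctor $(M, M') \mapsto \Hom_{\mathcal{M}}(\Act(X)(M), F(M'))$), the right-hand side computes $\Hom_{\REX(\mathcal{M})}(\Act(X), F)$, since natural transformations between functors form exactly such an end. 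On the left-hand side, since $\Hom_{\mathcal{C}}(X, -)$ preserves limits, the end $\int_{M} \iHom(M, F(M))$, \emph{if it exists}, satisfies $\Hom_{\mathcal{C}}(X, \int_M \iHom(M, F(M))) \cong \int_M \Hom_{\mathcal{C}}(X, \iHom(M, F(M)))$. So it suffices to show this end exists; by Lemma~\ref{lem:existence-end} this amounts to showing the functor $X \mapsto \int_M \Hom_{\mathcal{C}}(X, \iHom(M, F(M)))$ is representable, and the computation just given identifies this functor with $\Hom_{\REX(\mathcal{M})}(\Act(X), F)$, which is representable in $X$ because $\Act$ is exact (Lemma~\ref{lem:act-fun-exact}) and hence has a right adjoint by the Eilenberg--Watts discussion in Section~\ref{subsec:fin-ab-cat}. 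Chaining the isomorphisms and checking naturality in $X$ (which follows from naturality of~\eqref{eq:iHom-def} and of the end's universal dinatural transformation) yields $\Hom_{\mathcal{C}}(X, \Act^{\radj}(F)) \cong \Hom_{\REX(\mathcal{M})}(\Act(X), F)$, i.e.\ $\Act^{\radj}$ as displayed is right adjoint to $\Act$. An alternative, perhaps cleaner route to the same end is to apply Lemma~\ref{lem:end-adj}: the equivalence~\eqref{eq:categorical-EW-inv} writes $F$ (up to the identification $\mathcal{M} \approx \mathcal{M}^{\op\op}$) in terms of an end, and combining this with the adjunction $(-)\otimes M \dashv \iHom(M,-)$ produces~\eqref{eq:intro-act-right-adj}; I would mention this as the conceptual source but carry out the Hom-set computation as the actual proof.

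\textbf{Part (a): exactness when $\mathcal{M}$ is exact.} Here I would use the criterion that a $k$-linear functor between finite abelian categories is exact iff it has both a left and a right adjoint (Section~\ref{subsec:fin-ab-cat}). We already know $\Act^{\radj}$ has a left adjoint, namely $\Act$, so it is left exact. For right exactness I would exhibit a right adjoint of $\Act^{\radj}$. Since $\Act$ is exact, it also has a \emph{left} adjoint $\Act^{\ladj}$; and by the Nakayama formula $\Nak_{\mathcal{M}} \circ \Act^{\ladj} \cong \Act^{\radj} \circ \Nak_{\REX(\mathcal{M})}$ recalled in Section~\ref{subsec:fin-ab-cat}, together with the fact that exactness of $\mathcal{M}$ makes $\Act$ preserve projectives (Lemma~\ref{lem:action-exact-mod-cat}) and hence makes $\Act^{\radj}$ preserve injectives by Lemma~\ref{lem:proj-obj-in-REX}. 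A functor between Frobenius finite abelian categories that has a left adjoint and preserves injectives is exact: preserving injectives of a Frobenius category means preserving projectives, and a right-exact-looking statement follows once one also knows $\Act^{\radj}$ is left exact, which we have. More directly: $\Act^{\radj}$ has a left adjoint (so is left exact); since $\mathcal{C}$ and $\REX(\mathcal{M})$ are Frobenius and $\Act^{\radj}$ preserves injectives $=$ projectives, duality $(-)^{\op}$ turns $\Act^{\radj}$ into a functor with a right adjoint, so $\Act^{\radj}$ has a right adjoint and is therefore also right exact, hence exact. I would write this out carefully, as the interplay of the two adjoints with the Frobenius property is the technical heart of (a).

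\textbf{Part (b): faithfulness when $\mathcal{M}$ is exact and indecomposable.} By Lemma~\ref{lem:act-fun-dominant}, indecomposability of the exact module category $\mathcal{M}$ is equivalent to $\Act$ being dominant. The standard fact I would invoke is: if $(L, R)$ is an adjoint pair of exact functors between finite abelian categories with $L$ dominant, then the counit $L R \to \id$ is epi, hence $R$ is faithful. Concretely, apply $\Hom_{\mathcal{C}}(X, -)$-style reasoning: for $0 \neq g: Y \to Z$ in $\mathcal{C}$, dominance of $\Act$ and exactness give that $g \neq 0$ is detected after tensoring, and the adjunction isomorphism $\Hom_{\mathcal{C}}(X, \Act^{\radj}(F)) \cong \Hom_{\REX(\mathcal{M})}(\Act(X), F)$ then forces $\Act^{\radj}$ to be faithful on morphisms. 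I expect part (a) — specifically the bookkeeping needed to produce a right adjoint of $\Act^{\radj}$ from the Nakayama/Frobenius machinery — to be the main obstacle; existence of the end and the adjunction is essentially formal, and faithfulness is a short consequence of dominance.
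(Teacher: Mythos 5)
Your treatment of the existence of the end and of the adjunction is essentially the paper's own argument: one computes $\Hom_{\mathcal{C}}(X,\Act^{\radj}(F)) \cong \Nat(\Act(X),F) \cong \int_{M}\Hom_{\mathcal{M}}(X\otimes M,F(M)) \cong \int_{M}\Hom_{\mathcal{C}}(X,\iHom(M,F(M)))$, using that $\Act$ is exact and hence already has a right adjoint, and then invokes Lemma~\ref{lem:existence-end}. Your part (b) is also sound, and is a mild variant of the paper's: dominance of $\Act$ (Lemma~\ref{lem:act-fun-dominant}, together with the remark that for an exact functor into a Frobenius category ``subobject-dominant'' equals ``quotient-dominant'') makes every component of the counit of $\Act\dashv\Act^{\radj}$ epic, and a right adjoint whose counit is pointwise epic is faithful; the paper instead uses exactness of $\Act^{\radj}$ to reduce faithfulness to reflecting zero objects. (Do note that faithfulness concerns morphisms of $\REX(\mathcal{M})$, not of $\mathcal{C}$, as your sketch suggests.)

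Part (a), however, has a genuine gap. The principle you rely on --- ``a functor between Frobenius finite abelian categories that has a left adjoint and preserves injectives is exact'' --- is false. Take $A=k[x]/(x^2)$ and $R=\Hom_A(k,-)=\socle:\lmod{A}\to\lmod{k}$: its left adjoint is inflation along $A\to k$, both categories are Frobenius, and $R$ preserves injectives vacuously (every $k$-vector space is injective), yet $R$ is not right exact, since $\socle(A)\to\socle(A/(x))$ is zero. The ``more direct'' duality argument is circular: passing to opposite categories only converts $\Act\dashv\Act^{\radj}$ into $(\Act^{\radj})^{\op}\dashv\Act^{\op}$; it does not manufacture a right adjoint of $\Act^{\radj}$, whose existence is equivalent to the right exactness you are trying to prove. (Also, $\Act^{\radj}$ preserves injectives because its left adjoint $\Act$ is exact; Lemma~\ref{lem:proj-obj-in-REX} characterizes projective objects of $\REX(\mathcal{M})$ and is not the relevant statement.) The ingredient that closes the gap is the one you cite but never actually use: by Lemma~\ref{lem:action-exact-mod-cat}, $\Act$ preserves projectives, and this is equivalent to exactness of $\Act^{\radj}$ --- for a projective generator $P\in\mathcal{C}$ the functor $\Hom_{\mathcal{C}}(P,\Act^{\radj}(-))\cong\Hom_{\REX(\mathcal{M})}(\Act(P),-)$ is exact because $\Act(P)$ is projective, which is exactly the paper's one-line proof. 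Alternatively, your Nakayama route can be completed: since $\mathcal{M}$ is exact it is Frobenius, hence $\REX(\mathcal{M})\approx\mathcal{M}^{\op}\boxtimes\mathcal{M}$ and $\mathcal{C}$ are Frobenius, so $\Nak_{\REX(\mathcal{M})}$ and $\Nak_{\mathcal{C}}$ are equivalences and $\Act^{\radj}\cong\Nak_{\mathcal{C}}\circ\Act^{\ladj}\circ\Nak_{\REX(\mathcal{M})}^{-1}$ (the formula involves $\Nak_{\mathcal{C}}$, not $\Nak_{\mathcal{M}}$) is right exact as a composite of a right exact functor with equivalences; combined with left exactness of the right adjoint $\Act^{\radj}$ this gives (a).
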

\begin{proof}
  Let $\Act^{\radj}$ be a right adjoint of $\Act$. Then we have
  \begin{align*}
    \Hom_{\mathcal{C}}(X, \Act^{\radj}(F))
    & \cong \Nat(\Act(X), F) \\
    & \cong \textstyle \int_{M \in \mathcal{M}} \Hom_{\mathcal{M}}(X \otimes M, F(M)) \\
    & \cong \textstyle \int_{M \in \mathcal{M}} \Hom_{\mathcal{M}}(X, \iHom(M, F(M)))
  \end{align*}
  for all $X \in \mathcal{C}$ and $F \in \REX(\mathcal{M})$. Thus, by Lemma~\ref{lem:existence-end}, we see that the end in question exists and $\Act^{\radj}$ is given as stated.

  (a) We suppose that $\mathcal{M}$ is exact. Let $P$ be a projective generator of $\mathcal{C}$. Then, by Lemma~\ref{lem:action-exact-mod-cat}, the object $\Act(P) \in \REX(\mathcal{M})$ is projective. Thus the functor
  \begin{equation*}
    \Hom_{\mathcal{C}}(P, \Act^{\radj}(-))
    \cong \Hom_{\mathcal{C}}(\Act(P), -)
    : \REX(\mathcal{M}) \to \lmod{k}
  \end{equation*}
  is exact. Since $P$ is a projective generator, we conclude that $\Act^{\radj}$ is exact.

  (b) We suppose that $\mathcal{M}$ is exact and indecomposable. Since the functor $\Act^{\radj}$ is exact by Part (a), it is enough to show that $\Act^{\radj}$ reflects zero objects. Let $F$ be an object of $\REX(\mathcal{M})$ such that $\Act^{\radj}(F) = 0$. By Lemma~\ref{lem:act-fun-dominant}, there is an object $X \in \mathcal{C}$ such that $F$ is an epimorphic image of $\Act(X)$. If $F \ne 0$, then we have
  \begin{equation*}
    0 = \Hom_{\mathcal{C}}(X, \Act^{\radj}(F)) \cong \Nat(\Act(X), F) \ne 0,
  \end{equation*}
  a contradiction. Thus $F = 0$. The proof is done.
\end{proof}

For $M, M' \in \mathcal{M}$, we set $\icoHom(M, M') = {}^* \! \iHom(M', M)$. It is easy to see that there is a natural isomorphism
\begin{equation*}
  \Hom_{\mathcal{M}}(M, X \otimes M') \cong \Hom_{\mathcal{M}}(\icoHom(M, M'), X)
\end{equation*}
for $X \in \mathcal{C}$ and $M, M' \in \mathcal{M}$. A left adjoint of $\ActRex$ is expressed as follows:

\begin{theorem}
  \label{thm:action-left-adj-by-coend}
  For all $k$-linear right exact functor $F: \mathcal{M} \to \mathcal{M}$, the coend of
  \begin{equation*}
    \mathcal{M}^{\op} \times \mathcal{M} \to \mathcal{C},
    \quad
    (M^{\op}, M') \mapsto \icoHom(M, F(M'))
  \end{equation*}
  exists. Moreover, a left adjoint of $\Act$ is given by
  \begin{equation*}
    \ActRex^{\ladj}: \REX(\mathcal{M}) \to \mathcal{C},
    \quad F \mapsto \int^{M \in \mathcal{M}} \icoHom(M, F(M))
  \end{equation*}
  We also have:
  \begin{itemize}
  \item [(a)] If $\mathcal{M}$ is exact, then $\Act^{\ladj}$ is exact.
  \item [(b)] If $\mathcal{M}$ is exact and indecomposable, then $\Act^{\ladj}$ is faithful.
  \end{itemize}
\end{theorem}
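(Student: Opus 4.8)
This statement is the formal dual of Theorem~\ref{thm:action-adj-by-end}, and the plan is to run that proof ``upside down''. Since $\Act$ is exact (Lemma~\ref{lem:act-fun-exact}), it is in particular left exact and hence has a left adjoint $\Act^{\ladj}\colon \REX(\mathcal{M}) \to \mathcal{C}$. The first task is to identify $\Act^{\ladj}(F)$ with the asserted coend. For $F \in \REX(\mathcal{M})$ and $X \in \mathcal{C}$ one computes, just as in the proof of Theorem~\ref{thm:action-adj-by-end} with the roles of source and target interchanged,
\begin{align*}
  \Hom_{\mathcal{C}}(\Act^{\ladj}(F), X)
  & \cong \Nat(F, \Act(X)) \\
  & \cong \textstyle \int_{M \in \mathcal{M}} \Hom_{\mathcal{M}}(F(M), X \otimes M) \\
  & \cong \textstyle \int_{M \in \mathcal{M}} \Hom_{\mathcal{C}}(\icoHom(M, F(M)), X),
\end{align*}
naturally in $X$, where the last isomorphism is the natural isomorphism characterizing $\icoHom$ recalled just before the theorem. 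The end on the right-hand side is an end of sets, hence exists since $\mathbf{Set}$ is complete; the display then exhibits the functor $X \mapsto \int_{M} \Hom_{\mathcal{C}}(\icoHom(M, F(M)), X)$ as corepresented by $\Act^{\ladj}(F)$. By the criterion dual to Lemma~\ref{lem:existence-end}, the coend $\int^{M \in \mathcal{M}} \icoHom(M, F(M))$ therefore exists and is canonically isomorphic to $\Act^{\ladj}(F)$; naturality of this identification in $F$ is automatic. This proves the main assertion.

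For parts (a) and (b) I would pass through the Nakayama functor rather than repeat the projective-generator argument. Since $\Act$ is exact, the isomorphism $\Nak_{\mathcal{C}} \circ \Act^{\ladj} \cong \Act^{\radj} \circ \Nak_{\REX(\mathcal{M})}$ recalled in Section~\ref{sec:FTC} holds. The category $\mathcal{C}$ is a finite tensor category, hence Frobenius, so $\Nak_{\mathcal{C}}$ is an equivalence and $\Act^{\ladj} \cong \Nak_{\mathcal{C}}^{-1} \circ \Act^{\radj} \circ \Nak_{\REX(\mathcal{M})}$, where $\Nak_{\mathcal{C}}^{-1}$ denotes a quasi-inverse of $\Nak_{\mathcal{C}}$. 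Now suppose $\mathcal{M}$ is exact, so that $\mathcal{M}$ is Frobenius and $\Nak_{\mathcal{M}}$ is an equivalence; since $\Nak_{\REX(\mathcal{M})}(F) = \Nak_{\mathcal{M}} \circ F \circ \Nak_{\mathcal{M}}$, the functor $\Nak_{\REX(\mathcal{M})}$ is then an equivalence as well. Hence $\Act^{\ladj}$ is the composite of $\Act^{\radj}$ with two equivalences. Part (a) now follows from exactness of $\Act^{\radj}$ (Theorem~\ref{thm:action-adj-by-end}(a)), and part (b) from faithfulness of $\Act^{\radj}$ in the case $\mathcal{M}$ is also indecomposable (Theorem~\ref{thm:action-adj-by-end}(b)), since composition with equivalences preserves both exactness and faithfulness.

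The only delicate point is the third isomorphism in the display: one has to track the variances of $\iHom$, of the right duality functor ${}^{*}(-)$ hidden in $\icoHom$, and of the induced $\mathcal{C}$-module structure, and check that the two sides agree as ends of sets (not merely termwise as sets), so that the passage from corepresentability to the coend is legitimate. This is bookkeeping rather than a genuine obstruction; no input beyond what is already available --- the end formula for $\Act^{\radj}$, the dual existence criterion for (co)ends, and the Nakayama relations --- is needed, and the result is a genuine mirror image of Theorem~\ref{thm:action-adj-by-end}. As an alternative to the Nakayama route for (a) and (b), one can dualize the original argument directly: a projective generator $P$ of $\mathcal{C}$ is also an injective cogenerator, and when $\mathcal{M}$ is exact the object $\Act(P)$ is both projective and injective in $\REX(\mathcal{M})$, so $\Hom_{\mathcal{C}}(\Act^{\ladj}(-), P) \cong \Hom_{\REX(\mathcal{M})}(-, \Act(P))$ is exact; since $P$ is a cogenerator this forces $\Act^{\ladj}$ to be exact, and faithfulness follows as in the proof of Theorem~\ref{thm:action-adj-by-end}(b) from the dominance of $\Act$ (Lemma~\ref{lem:act-fun-dominant}).
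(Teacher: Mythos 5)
Your proposal is correct and follows essentially the same route as the paper: the paper identifies $\Act^{\ladj}(F)$ with the coend via the same chain of isomorphisms $\Hom_{\mathcal{C}}(\Act^{\ladj}(F), X) \cong \Nat(F, \Act(X)) \cong \int_{M} \Hom_{\mathcal{M}}(F(M), X \otimes M) \cong \int_{M} \Hom_{\mathcal{C}}(\icoHom(M, F(M)), X)$ together with the dual of Lemma~\ref{lem:existence-end}, and it deduces (a) and (b) exactly as you do, from Theorem~\ref{thm:action-adj-by-end} and the isomorphism $\ActRex^{\ladj} \cong \Nak_{\mathcal{C}}^{-1} \circ \ActRex^{\radj} \circ \Nak_{\REX(\mathcal{M})}$ using that $\mathcal{M}$ exact implies $\REX(\mathcal{M})$ is Frobenius. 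Your closing alternative argument is only a remark; the core of your proof matches the paper's.
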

\begin{proof}
  Let $\Act^{\ladj}$ be a left adjoint of $\Act$. Then we have
  \begin{align*}
    \Hom_{\mathcal{C}}(\Act^{\ladj}(F), X)
    & \cong \Nat(F, \Act(X)) \\
    & \cong \textstyle \int_{M \in \mathcal{M}} \Hom_{\mathcal{M}}(F(M), X \otimes M) \\
    & \cong \textstyle \int_{M \in \mathcal{M}} \Hom_{\mathcal{C}}(\icoHom(M, F(M)), X)
  \end{align*}
  for all $X \in \mathcal{C}$ and $F \in \REX(\mathcal{M})$. Thus, by the dual of Lemma~\ref{lem:existence-end}, we see that the coend in question exists and $\Act^{\ladj}$ is given as stated.

  Suppose that $\mathcal{M}$ is exact. Then, since $\mathcal{M}$ is Frobenius, the Nakayama functor of $\REX(\mathcal{M}) \approx \mathcal{M}^{\op} \boxtimes \mathcal{M}$ is an equivalence. Parts (a) and (b) of this theorem follow from Theorem~\ref{thm:action-adj-by-end} and $\ActRex^{\ladj} \cong \Nak_{\mathcal{C}}^{-1} \circ \ActRex^{\radj} \circ \Nak_{\REX(\mathcal{M})}$.
\end{proof}

\begin{remark}
  \label{rem:lex-version}
  In summary, for $F \in \REX(\mathcal{M})$, we have
  \begin{equation*}
    \ActRex^{\radj}(F) = \int_{M \in \mathcal{M}} \iHom(M, F(M))
    \quad \text{and} \quad
    \ActRex^{\ladj}(F) = \int^{M \in \mathcal{M}} \icoHom(M, F(M)).
  \end{equation*}
  There is a left exact version of the action functor
  \begin{equation*}
    \ActLex_{\mathcal{M}}: \mathcal{C} \to \LEX(\mathcal{M}) := \LEX(\mathcal{M}, \mathcal{M}),
    \quad \ActLex_{\mathcal{M}}(X)(M) = X \otimes M.
  \end{equation*}
  By the same way as above, one can prove that $\ActLex = \ActLex_{\mathcal{M}}$ is a $k$-linear exact functor and its adjoints are given by
  \begin{equation*}
    \ActLex^{\radj}(F)
    = \int_{M \in \mathcal{C}} \iHom(M, F(M))
    \quad \text{and} \quad
    \ActLex^{\ladj}(F)
    = \int^{M \in \mathcal{C}} \icoHom(M, F(M))
  \end{equation*}
  for $F \in \LEX(\mathcal{M})$. Moreover, there are natural isomorphisms
  \begin{equation}
    \label{eq:action-adj-rex-lex}
    \ActLex^{\radj}(F) \cong {}^* (\ActRex^{\ladj}(F^{\ladj}))
    \quad \text{and} \quad
    \ActLex^{\ladj}(F) \cong {}^* (\ActRex^{\radj}(F^{\ladj}))
  \end{equation}
  for $F \in \LEX(\mathcal{M})$. Indeed, we have natural isomorphisms
  \begin{gather*}
    \Hom_{\mathcal{C}}(X, {}^* (\ActRex^{\ladj}(F^{\ladj})))
    \cong \Hom_{\mathcal{C}}(\ActRex^{\ladj}(F^{\ladj}), X^*)
    \cong \Nat(F^{\ladj}, \ActRex(X^*)) \\
    \cong \Nat(F^{\ladj}, \ActRex(X)^{\ladj})
    \cong \Nat(\ActRex(X), F)
    = \Nat(\ActLex(X), F)
  \end{gather*}
  for $F \in \LEX(\mathcal{M})$ and $X \in \mathcal{C}$. The second isomorphism of \eqref{eq:action-adj-rex-lex} is established in a similar way. Theorems~\ref{thm:action-adj-by-end} and~\ref{thm:action-left-adj-by-coend} imply the following results:
  \begin{itemize}
  \item [(a)] If $\mathcal{M}$ is exact, then $\ActLex^{\ladj}$ and $\ActLex^{\radj}$ are exact.
  \item [(b)] If $\mathcal{M}$ is exact and indecomposable, then $\ActLex^{\ladj}$ and $\ActLex^{\radj}$ are faithful.
  \end{itemize}
\end{remark}

\subsection{The unit and the counit of $(\Act, \Act^{\radj})$}

In what follows, we concentrate to study the structures of the right adjoint of $\Act = \Act_{\mathcal{M}}$. For this purpose, it is useful to describe the unit and the counit of the adjunction $\Act \dashv \Act^{\radj}$. For $F \in \REX(\mathcal{M})$ and $M \in \mathcal{M}$, we denote by
\begin{equation}
  \pi_{F}(M): \Act^{\radj}(F) \to \iHom(M, F(M))
\end{equation}
the universal dinatural transformation and define
\begin{equation}
  \label{eq:act-fun-adj-counit}
  \varepsilon_{F, M} = \ieval_{M, F(M)} \circ (\pi_{F}(M) \otimes \id_M).
\end{equation}
By the proof of Theorem~\ref{thm:action-adj-by-end}, the adjunction isomorphism
\begin{equation}
  \label{eq:act-fun-adj-iso}
  \Hom_{\mathcal{C}}(X, \Act^{\radj}(F)) \xrightarrow{\quad \cong \quad}
  \Hom_{\REX(\mathcal{M})}(\Act(X), F) = \Nat(\Act(X), F)
\end{equation}
sends $a \in \Hom_{\mathcal{C}}(X, \Act^{\radj}(F))$ to the natural transformation $\widetilde{a}$ given by
\begin{equation}
  \label{eq:act-fun-adj-mate-1}
  \widetilde{a}_{M} = \varepsilon_{F, M} \circ (a \otimes \id_M)
  \quad (M \in \mathcal{M})
\end{equation}
This implies that $\varepsilon = \{ \varepsilon_{F, M} \}_{F, M}$ is the counit of \eqref{eq:act-fun-adj-iso}. We also observe that the morphism $a$ is characterized by the property that the equation
\begin{equation}
  \label{eq:act-fun-adj-mate-2}
  \iHom(M, \widetilde{a}_M) \circ \icoev_{X,M} = \pi_F(M) \circ a
\end{equation}
holds for all objects $M \in \mathcal{M}$. Let $\eta: \id_{\mathcal{C}} \to \Act^{\radj} \circ \Act$ be the unit of the adjunction isomorphism \eqref{eq:act-fun-adj-iso}. By substituting $a = \eta_X$ and $F = \Act(X)$ into \eqref{eq:act-fun-adj-mate-2}, we see that $\eta$ is characterized by the property that the equation
\begin{equation}
  \label{eq:act-fun-adj-unit}
  \pi_{\Act(X)}(M) \circ \eta_X = \icoev_{X, M}
\end{equation}
holds for all objects $X \in \mathcal{C}$ and $M \in \mathcal{M}$. We also have
\begin{equation}
  \label{eq:act-fun-adj-uni-dinat}
  \pi_{F}(M) = \iHom(M, \varepsilon_{F,M}) \circ \icoev_{\Act^{\radj}(F),M}
\end{equation}
by substituting $X = \Act^{\radj}(F)$ and $a = \id$ into \eqref{eq:act-fun-adj-mate-2}.

\subsection{Bimodule structure of $\Act^{\radj}$}

Since $\Act: \mathcal{C} \to \REX(\mathcal{M})$ is a $\mathcal{C}$-bimodule functor, its right adjoint $\Act^{\radj}$ is also a $\mathcal{C}$-bimodule functor such that the unit and the counit are $\mathcal{C}$-bimodule transformations. We denote by
\begin{equation*}
  \xi^{(\ell)}_{X,F}: X \otimes \Act^{\radj}(F) \to \Act^{\radj}(X \otimes F)
  \quad \text{and} \quad
  \xi^{(r)}_{F,X}: \Act^{\radj}(F) \otimes X \to \Act^{\radj}(F \otimes X)
\end{equation*}
the left and the right $\mathcal{C}$-module structure of $\Act^{\radj}$. These morphisms are expressed in terms of the universal dinatural transformation $\pi$ as follows:

\begin{lemma}
  \label{lem:act-fun-adj-C-bimod}
  For all objects $F \in \REX(\mathcal{M})$, $X \in \mathcal{C}$ and $M \in \mathcal{M}$, we have
  \begin{align}
    \label{eq:act-fun-adj-C-bimod-l-def-2}
    \pi_{X \otimes F}(M) \circ \xi_{X,F}^{(\ell)}
    & = \iHomA_{X,M,F(M)} \circ (\id_X \otimes \pi_F(M)), \\
    \label{eq:act-fun-adj-C-bimod-r-def-2}
    \pi_{F \otimes X}(M) \circ \xi_{X,F}^{(r)}
    & = \iHomB_{X,M,F(X \otimes M)}^{\natural} \circ (\pi_F(X \otimes M) \otimes \id_X).
  \end{align}
\end{lemma}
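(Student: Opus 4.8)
The plan is to establish both formulas by the standard ``mate'' argument for adjunctions, using the explicit description of the unit and counit of $\Act \dashv \Act^{\radj}$ from the previous subsection, together with the characterization of the $\mathcal{C}$-bimodule structure of a right adjoint via \eqref{eq:module-func-adj-right}. Recall that $\Act$ is a strong $\mathcal{C}$-bimodule functor whose left and right structure isomorphisms are the identities (since $(X \otimes F)(M) = X \otimes F(M)$ and $(F \otimes X)(M) = F(X \otimes M)$ hold strictly), and that the bimodule structure on $\Act^{\radj}$ is, by definition, the one induced from that of $\Act$ via the adjunction in the manner of \eqref{eq:module-func-adj-right}. Concretely, $\xi^{(\ell)}_{X,F}$ is the unique morphism whose mate under \eqref{eq:act-fun-adj-iso} (applied with source object $X \otimes \Act^{\radj}(F)$ and target functor $X \otimes F$) is a prescribed natural transformation built from $\id_X$ and the counit of $\Act \dashv \Act^{\radj}$; similarly for $\xi^{(r)}_{F,X}$.

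First I would prove \eqref{eq:act-fun-adj-C-bimod-l-def-2}. By \eqref{eq:act-fun-adj-mate-2}, the morphism $\xi^{(\ell)}_{X,F}: X \otimes \Act^{\radj}(F) \to \Act^{\radj}(X \otimes F)$ is characterized by the equation
\begin{equation*}
  \iHom(M, \widetilde{(\xi^{(\ell)}_{X,F})}_M) \circ \icoev_{X \otimes \Act^{\radj}(F), M} = \pi_{X \otimes F}(M) \circ \xi^{(\ell)}_{X,F}
\end{equation*}
for all $M$, where $\widetilde{(\xi^{(\ell)}_{X,F})}$ is the prescribed natural transformation $\Act(X \otimes \Act^{\radj}(F)) \to X \otimes F$. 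Tracing through the construction \eqref{eq:module-func-adj-right} of the left $\mathcal{C}$-module structure of a right adjoint — with $L = \Act$, $R = \Act^{\radj}$, and noting that the oplax (here strong) structure morphism $v$ of $\Act$ is the identity — this prescribed natural transformation is $\id_X \otimes \varepsilon_{F,M}$ (the counit of $\Act \dashv \Act^{\radj}$ tensored on the left with $\id_X$, using that the module action of $X$ on a functor is pointwise). Substituting $\varepsilon_{F,M} = \ieval_{M,F(M)} \circ (\pi_F(M) \otimes \id_M)$ from \eqref{eq:act-fun-adj-counit} and then unwinding the definition \eqref{eq:iHom-adj}--\eqref{eq:iHom-iso-a} of $\icoev$ and of the module structure $\iHomA$ of $\iHom(M,-)$ reduces the verification to the defining relation of $\iHomA_{X,M,F(M)}$ as the $\mathcal{C}$-module structure morphism of the right adjoint $\iHom(M,-)$ of $(-) \otimes M$. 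This identifies $\pi_{X \otimes F}(M) \circ \xi^{(\ell)}_{X,F}$ with $\iHomA_{X,M,F(M)} \circ (\id_X \otimes \pi_F(M))$, which is exactly \eqref{eq:act-fun-adj-C-bimod-l-def-2}. Then uniqueness in the universal property of the end $\Act^{\radj}(X \otimes F)$ shows this relation in fact determines $\xi^{(\ell)}_{X,F}$.

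The argument for \eqref{eq:act-fun-adj-C-bimod-r-def-2} is parallel but has an extra twist, and I expect this to be the main obstacle. Here the right action on a functor is by \emph{precomposition}, $(F \otimes X)(M) = F(X \otimes M)$, so the universal dinatural transformation $\pi_{F \otimes X}$ is evaluated at $M$ but ``sees'' $F$ at $X \otimes M$; correspondingly the relevant natural transformation coming out of \eqref{eq:module-func-adj-right} involves $\varepsilon_{F, X \otimes M}$ rather than $\varepsilon_{F,M}$, and reconciling the two requires the compatibility of $\pi$ with the action encoded in $\iHomB$. Concretely, after substituting \eqref{eq:act-fun-adj-counit} and unwinding, one must recognize the composite $\ieval_{X \otimes M, F(X \otimes M)} \circ (\pi_F(X \otimes M) \otimes \id_{X \otimes M})$, together with the unit $\icoev$ on the source object $\Act^{\radj}(F) \otimes X$, as precisely the mate that defines $\iHomB^{\natural}_{X,M,F(X \otimes M)}$ in \eqref{eq:iHom-b-natural} — using the defining chain of adjunction isomorphisms \eqref{eq:iHom-iso-b-def} for $\iHomB$ and the dinaturality of $\pi_F$ in the variable that $X$ acts on. The bookkeeping with the left dual $X^*$, the evaluation $\eval_X$, and keeping careful track of which copy of $X$ is acted upon is where care is needed; once the mate is matched, uniqueness in the universal property of $\Act^{\radj}(F \otimes X)$ again pins down $\xi^{(r)}_{X,F}$ and yields \eqref{eq:act-fun-adj-C-bimod-r-def-2}. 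Throughout, the only nontrivial inputs are the explicit unit/counit formulas \eqref{eq:act-fun-adj-counit}--\eqref{eq:act-fun-adj-uni-dinat}, the recipe \eqref{eq:module-func-adj-right} for the module structure of a right adjoint, and the definitions of $\iHomA$, $\iHomB$, $\iHomB^{\natural}$ from Section~\ref{subsec:cl-mod-cat}; no deeper facts are required.
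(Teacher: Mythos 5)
Your proposal is correct and follows essentially the same route as the paper's own proof in Appendix~\ref{apdx:act-adj-structure}: express $\xi^{(\ell)}_{X,F}$ and $\xi^{(r)}_{F,X}$ via \eqref{eq:module-func-adj-right} (with the strict structure of $\Act$), compute their mates using the unit/counit formulas \eqref{eq:act-fun-adj-counit} and \eqref{eq:act-fun-adj-unit}, and recognize $\iHomA$ and $\iHomB^{\natural}$ from their adjunction descriptions. The only minor remark is that in the second computation the dinaturality of $\pi_F$ is not actually needed; the naturality of $\icoev_{(-),M}$ suffices to pull out $\pi_F(X\otimes M)\otimes\id_X$.
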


See Subsection~\ref{subsec:cl-mod-cat} for definitions of $\iHomA$ and $\iHomB^{\natural}$. By the universal property of $\Act^{\radj}(F)$ as an end, the isomorphisms $\xi^{(\ell)}_{X,F}$ and $\xi^{(r)}_{F,X}$ are characterized by equations \eqref{eq:act-fun-adj-C-bimod-l-def-2} and~\eqref{eq:act-fun-adj-C-bimod-r-def-2}, respecively. We postpone the proof of this lemma to Appendix~\ref{apdx:act-adj-structure} since it is straightforward but lengthy.

\subsection{Monoidal structure of $\Act^{\radj}$}

Since the action functor $\Act: \mathcal{C} \to \REX(\mathcal{M})$ is a strong monoidal functor, its right adjoint $\Act^{\radj}$ has a canonical structure of a (lax) monoidal functor. We denote the structure morphisms of $\Act^{\radj}$ by
\begin{equation*}
  \mu^{(2)}_{F,G}: \Act^{\radj}(F) \otimes \Act^{\radj}(G) \to \Act^{\radj}(F \circ G)
  \quad \text{and} \quad
  \mu^{(0)}: \unitobj \to \Act^{\radj}(\id_{\mathcal{M}})
\end{equation*}
for $F, G \in \REX(\mathcal{M})$. They are expressed in terms of the universal dinatural transformation $\pi$ as follows:

\begin{lemma}
  \label{lem:act-fun-adj-monoidal}
  For all objects $F, G \in \REX(\mathcal{M})$ and $M \in \mathcal{M}$, we have
  \begin{gather}
    \label{eq:act-fun-adj-monoidal-2}
    \pi_{F G}(M) \circ \mu^{(2)}_{F,G} = \icomp_{M, G(M), F G(M)} \circ (\pi_{F}(G(M)) \otimes \pi_{G}(M)), \\
    \label{eq:act-fun-adj-monoidal-0}
    \pi_{\id_{\mathcal{M}}}(M) \circ \mu^{(0)} = \icoev_{\unitobj, M}.
  \end{gather}
\end{lemma}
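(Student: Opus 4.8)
The plan is to derive both identities by unwinding the adjunction-theoretic definition of the monoidal structure on $\Act^{\radj}$ and then comparing the resulting morphisms against the universal dinatural transformation $\pi$. Recall that, since $\Act$ is strong monoidal with structure morphisms $\Act(X) \circ \Act(Y) = \Act(X \otimes Y)$ and $\id_{\mathcal{M}} = \Act(\unitobj)$ (we have arranged things so these are identities), the lax monoidal structure on the right adjoint $\Act^{\radj}$ is the standard "mate" construction: $\mu^{(2)}_{F,G}$ is the composite obtained by applying the unit $\eta$, the functor $\Act^{\radj}$ to the composite counit $\varepsilon_{F} \circ (\Act(\varepsilon^?)\cdots)$, and using that $\Act$ preserves composition; concretely $\mu^{(2)}_{F,G}$ is characterized, via the adjunction isomorphism \eqref{eq:act-fun-adj-iso}, as the unique morphism whose mate $\widetilde{\mu^{(2)}_{F,G}} \in \Nat(\Act(\Act^{\radj}(F) \otimes \Act^{\radj}(G)), F G)$ equals the pasting $\varepsilon_F \cdot G \circ F \cdot \varepsilon_G$ suitably interpreted, i.e. at an object $M$ it is $\varepsilon_{F, G(M)} \circ (\id_{\Act^{\radj}(F)} \otimes \varepsilon_{G, M})$ read inside $\REX(\mathcal{M})$. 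Likewise $\mu^{(0)}$ is the mate of $\id_{\id_{\mathcal{M}}}$, so it is the unit $\eta_{\unitobj}$ of the adjunction (using $\Act(\unitobj) = \id_{\mathcal{M}}$).

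Given this, equation \eqref{eq:act-fun-adj-monoidal-0} is essentially immediate: since $\mu^{(0)} = \eta_{\unitobj}$ and $\Act(\unitobj) = \id_{\mathcal{M}}$, the characterization \eqref{eq:act-fun-adj-unit} of the unit $\eta$ gives $\pi_{\Act(\unitobj)}(M) \circ \eta_{\unitobj} = \icoev_{\unitobj, M}$, which is exactly $\pi_{\id_{\mathcal{M}}}(M) \circ \mu^{(0)} = \icoev_{\unitobj, M}$. For \eqref{eq:act-fun-adj-monoidal-2}, the idea is to use the characterizing property \eqref{eq:act-fun-adj-mate-2} with $a = \mu^{(2)}_{F,G}$, $X = \Act^{\radj}(F) \otimes \Act^{\radj}(G)$, and $\widetilde{a}_M$ as above. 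Thus we must show that
\[
  \iHom(M, \widetilde{\mu^{(2)}_{F,G}}_{\,M}) \circ \icoev_{X,M}
  = \icomp_{M, G(M), FG(M)} \circ (\pi_F(G(M)) \otimes \pi_G(M)).
\]
The left side is computed by feeding $\widetilde{\mu^{(2)}_{F,G}}_{\,M} = \varepsilon_{F,G(M)} \circ (\id \otimes \varepsilon_{G,M})$ (which itself unpacks, by \eqref{eq:act-fun-adj-counit}, into $\ieval_{G(M), FG(M)} \circ (\pi_F(G(M)) \otimes \id) \circ (\id \otimes \ieval_{M,G(M)}) \circ (\id \otimes \pi_G(M) \otimes \id)$) into the adjunction formula $\phi(f) = \iHom(M,f)\circ \icoev$ from \eqref{eq:iHom-adj}. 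The right side is, by the very definition \eqref{eq:iHom-comp}–\eqref{eq:iHom-eval3} of the internal composition $\icomp$ as the morphism corresponding under \eqref{eq:iHom-def} to $\ieval^{(3)}$, exactly what one gets after transporting $\ieval^{(3)}_{M, G(M), FG(M)}$ precomposed with $\pi_F(G(M)) \otimes \pi_G(M) \otimes \id_M$ across the same adjunction. So the two sides agree by naturality of the hom-set bijection $\phi$ in its first slot together with the compatibility of $\phi$ with tensoring morphisms on the right, i.e. this is a formal chase in the closed-module-category structure.

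The main obstacle — and the only place requiring genuine care — is bookkeeping the identification of $\mu^{(2)}_{F,G}$ with the stated mate, and then correctly threading the $\otimes \id_M$'s through the internal-Hom adjunction so that the left-hand computation lands precisely on the expression defining $\icomp$ in \eqref{eq:iHom-eval3} rather than some reassociated cousin of it. Concretely one should (i) write out $\widetilde{\mu^{(2)}_{F,G}}_{\,M}$ via two applications of \eqref{eq:act-fun-adj-counit}, (ii) apply \eqref{eq:iHom-adj} to turn $\iHom(M, \widetilde{\mu^{(2)}_{F,G}}_{\,M}) \circ \icoev_{X,M}$ back into $\phi$ of the "uncurried" morphism, (iii) recognize that uncurried morphism as $\ieval^{(3)}_{M, G(M), FG(M)} \circ (\pi_F(G(M)) \otimes \pi_G(M) \otimes \id_M)$, and (iv) invoke the definition of $\icomp$ together with naturality of $\phi$ in $X$ to conclude. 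Since each of these steps is routine given the earlier subsections, I would present the argument as a short diagram chase and relegate the fully explicit verification (like that of Lemma~\ref{lem:act-fun-adj-C-bimod}) to the appendix if space is a concern.
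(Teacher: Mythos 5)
Your proposal is correct and follows essentially the same route as the paper's own argument: identify $\mu^{(0)}=\eta_{\unitobj}$ and $\mu^{(2)}_{F,G}$ as the mate of the horizontal composite $\varepsilon_F\circ\varepsilon_G$ (with $M$-component $\varepsilon_{F,G(M)}\circ(\id\otimes\varepsilon_{G,M})$), then transport both sides of \eqref{eq:act-fun-adj-monoidal-2} across the internal-Hom adjunction and check that each corresponds to $\ieval^{(3)}_{M,G(M),FG(M)}\circ(\pi_F(G(M))\otimes\pi_G(M)\otimes\id_M)$, using the definition of $\icomp$ and naturality of the adjunction in $X$. The appendix proof of the paper does exactly this (its morphism $w$ is your uncurried morphism), so no further changes are needed.
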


By the universal property, $\mu^{(2)}$ and $\mu^{(0)}$ are characterized by equations \eqref{eq:act-fun-adj-monoidal-2} and~\eqref{eq:act-fun-adj-monoidal-0}, respectively. The proof is postponed to Appendix~\ref{apdx:act-adj-structure}.

\subsection{Lifting the adjunction to the Drinfeld center}

Given two finite left $\mathcal{C}$-module categories $\mathcal{M}$ and $\mathcal{N}$, we denote by $\REX_{\mathcal{C}}(\mathcal{M}, \mathcal{N})$ the category of $k$-linear right exact $\mathcal{C}$-module functors from $\mathcal{M}$ to $\mathcal{N}$. The aim of this subsection is to show that the adjoint pair $(\Act, \Act^{\radj})$ can be `lifted' to an adjoint pair between the Drinfeld center of $\mathcal{C}$ and $\REX_{\mathcal{C}}(\mathcal{M}, \mathcal{M})$.

We first introduce the following generalization of the Drinfeld center construction: For a $\mathcal{C}$-bimodule category $\mathcal{M}$, we define the category $\mathcal{Z}(\mathcal{M})$ as follows: An object of this category is a pair $(M, \sigma)$ consisting of an object $M \in \mathcal{M}$ and a natural isomorphism $\sigma_X: M \otimes X \to X \otimes M$ ($X \in \mathcal{C}$) satisfying the equations
\begin{equation*}
  \sigma_{\unitobj} = \id_M
  \quad \text{and} \quad
  \sigma_{X \otimes Y} = (\id_X \otimes \sigma_Y) \circ (\sigma_X \otimes \id_Y)
\end{equation*}
for all objects $X, Y \in \mathcal{C}$. If $\mathbf{M} = (M, \sigma_{M})$ and $\mathbf{N} = (N, \sigma_N)$ are objects of $\mathcal{Z}(\mathcal{M})$, then a morphism $f: \mathbf{M} \to \mathbf{N}$ is a morphism $f: M \to N$ satisfying
\begin{equation*}
  (\id_X \otimes f) \circ \sigma_{M; X} = \sigma_{N; X} \circ (f \otimes \id_X)
\end{equation*}
for all objects $X \in \mathcal{C}$. The composition of morphisms in $\mathcal{Z}(\mathcal{M})$ is defined by the composition of morphisms in $\mathcal{M}$.

\begin{example}
  The category $\mathcal{C}$ is a finite $\mathcal{C}$-bimodule category by the tensor product of $\mathcal{C}$. The category $\mathcal{Z}(\mathcal{C})$ is the {\em Drinfeld center} of $\mathcal{C}$. If this is the case, then $\mathcal{Z}(\mathcal{C})$ is not only a category but a braided finite tensor category \cite{MR2119143}.
\end{example}

\begin{example}
  If $\mathcal{M}$ and $\mathcal{N}$ are finite left $\mathcal{C}$-module categories, then $\mathcal{F} := \REX(\mathcal{M}, \mathcal{N})$ is a finite $\mathcal{C}$-bimodule category by the actions given by \eqref{eq:Rex-action}. The category $\mathcal{Z}(\mathcal{F})$ can be identified with $\REX_{\mathcal{C}}(\mathcal{M}, \mathcal{N})$.
\end{example}

Now let $\mathcal{C}\mbox{-\underline{bimod}}$ be the 2-category of finite $\mathcal{C}$-bimodule categories, $k$-linear right exact $\mathcal{C}$-bimodule functors and $\mathcal{C}$-bimodule natural transformations. Given a 1-cell $F: \mathcal{M} \to \mathcal{N}$ in $\mathcal{C}\mbox{-\underline{bimod}}$ with structure morphisms
\begin{equation*}
  \ell_{X,M}: X \otimes F(M) \to F(X \otimes M)
  \quad \text{and} \quad
  r_{M, X}: F(M) \otimes X \to F(M \otimes X),
\end{equation*}
we define the $k$-linear functor $\mathcal{Z}(F): \mathcal{Z}(\mathcal{M}) \to \mathcal{Z}(\mathcal{N})$ by
\begin{equation*}
  \mathcal{Z}(F)(\mathbf{M}) = (F(M), \ \ell^{-1} \circ F(\sigma) \circ r)
\end{equation*}
for $\mathbf{M} = (M, \sigma)$ in $\mathcal{Z}(\mathcal{M})$. It is routine to check that these constructions extends to a 2-functor $\mathcal{Z}: \mathcal{C}\mbox{-\underline{bimod}} \to k\mbox{-\underline{Cat}}$, where $k\mbox{-\underline{Cat}}$ is the 2-category of essentially small $k$-linear categories, $k$-linear functors and natural transformations.

We apply the 2-functor $\mathcal{Z}$ to the action functor and its adjoint. Let $\mathcal{M}$ be a finite left $\mathcal{C}$-module category. Since $\Act = \Act_{\mathcal{M}}$ is a $\mathcal{C}$-bimodule functor, its right adjoint $\Act^{\radj}$ is a $\mathcal{C}$-bimodule functor in such a way that the unit and the counit of the adjunction are bimodule natural transformations. Namely, there is an adjoint pair $(\Act, \Act^{\radj})$ in the 2-category $\mathcal{C}\mbox{-\underline{bimod}}$. By applying the 2-functor $\mathcal{Z}$, we obtain:

\begin{theorem}
  \label{thm:act-fun-adj-Z}
  There is an adjoint pair
  \begin{equation}
    \label{eq:act-fun-adj-Z}
    \big( \mathcal{Z}(\Act): \mathcal{Z}(\mathcal{C}) \to \REX_{\mathcal{C}}(\mathcal{M}),
    \ \mathcal{Z}(\Act^{\radj}): \REX_{\mathcal{C}}(\mathcal{M}) \to \mathcal{Z}(\mathcal{C}) \big),
  \end{equation}
  where we have identified $\mathcal{Z}(\REX(\mathcal{M}))$ with $\REX_{\mathcal{C}}(\mathcal{M})$. 
\end{theorem}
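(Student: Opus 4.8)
The plan is to deduce Theorem~\ref{thm:act-fun-adj-Z} directly from the fact that $\mathcal{Z}$ is a $2$-functor $\mathcal{C}\mbox{-\underline{bimod}} \to k\mbox{-\underline{Cat}}$, together with the observation that $(\Act, \Act^{\radj})$ is an adjoint pair \emph{in} the $2$-category $\mathcal{C}\mbox{-\underline{bimod}}$. The key point is the elementary $2$-categorical fact that any $2$-functor preserves adjunctions: if $(L, R, \eta, \varepsilon)$ is an adjoint pair of $1$-cells in a $2$-category $\mathbf{K}$ and $\Phi: \mathbf{K} \to \mathbf{K}'$ is a $2$-functor, then $(\Phi(L), \Phi(R), \Phi(\eta), \Phi(\varepsilon))$ is an adjoint pair in $\mathbf{K}'$, because the zigzag (triangle) identities are equations between $2$-cells expressed purely via horizontal and vertical composition and identity $2$-cells, all of which $\Phi$ preserves on the nose.

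First I would record that $\Act^{\radj}$ is a $1$-cell in $\mathcal{C}\mbox{-\underline{bimod}}$: it is $k$-linear and right exact (indeed exact when $\mathcal{M}$ is exact, by Theorem~\ref{thm:action-adj-by-end}, but right exactness is all that is needed here, and it holds because $\Act^{\radj}$ has a left adjoint, namely $\Act$), and it carries the $\mathcal{C}$-bimodule structure $(\xi^{(\ell)}, \xi^{(r)})$ described in Lemma~\ref{lem:act-fun-adj-C-bimod}. Second, I would invoke the general principle, recalled in the paragraph preceding \eqref{eq:module-func-adj-right}, that the right adjoint of an oplax (here strong) bimodule functor is canonically a lax bimodule functor, and moreover that the unit $\eta$ and counit $\varepsilon$ of the adjunction $\Act \dashv \Act^{\radj}$ are then automatically $\mathcal{C}$-bimodule natural transformations. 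This is exactly the statement that $(\Act, \Act^{\radj}, \eta, \varepsilon)$ is an adjunction internal to $\mathcal{C}\mbox{-\underline{bimod}}$. Third, I would apply the $2$-functor $\mathcal{Z}$ to this internal adjunction to obtain an adjoint pair $(\mathcal{Z}(\Act), \mathcal{Z}(\Act^{\radj}))$ in $k\mbox{-\underline{Cat}}$, with unit $\mathcal{Z}(\eta)$ and counit $\mathcal{Z}(\varepsilon)$. Finally, I would match the source and target: $\mathcal{Z}(\Act)$ goes from $\mathcal{Z}(\mathcal{C})$ to $\mathcal{Z}(\REX(\mathcal{M}))$, and by the Example identifying $\mathcal{Z}(\REX(\mathcal{M}))$ with $\REX_{\mathcal{C}}(\mathcal{M})$ this is the desired functor; similarly for $\mathcal{Z}(\Act^{\radj})$.

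The only genuine content beyond bookkeeping is verifying that $\eta$ and $\varepsilon$ really are bimodule natural transformations, i.e.\ that they are $2$-cells in $\mathcal{C}\mbox{-\underline{bimod}}$ and not merely in $k\mbox{-\underline{Cat}}$. For the counit $\varepsilon: \Act \circ \Act^{\radj} \to \id$ this amounts to compatibility with the left and right $\mathcal{C}$-module structures, which unwinds to identities between the structure morphisms $\xi^{(\ell)}, \xi^{(r)}$ of $\Act^{\radj}$ and the (strong, essentially identity) module structure of $\Act$; for $\eta$ it is the mirror statement. These are precisely the ``canonical'' compatibilities built into the construction \eqref{eq:module-func-adj-right}--\eqref{eq:module-func-adj-left} of the adjoint module structure, so they hold by \cite[Lemma 2.11]{2014arXiv1406.4204D} applied on both sides; one can also check them by hand using the characterizations \eqref{eq:act-fun-adj-unit} and \eqref{eq:act-fun-adj-uni-dinat} of $\eta$ and $\pi$ together with equations \eqref{eq:act-fun-adj-C-bimod-l-def-2}--\eqref{eq:act-fun-adj-C-bimod-r-def-2}. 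I expect this verification — really, assembling the observation that ``right adjoint of a bimodule functor is a bimodule functor with bimodule unit/counit'' into the clean statement ``$(\Act,\Act^{\radj})$ is an adjoint pair in $\mathcal{C}\mbox{-\underline{bimod}}$'' — to be the main (and essentially the only) obstacle; once it is in place, Theorem~\ref{thm:act-fun-adj-Z} is immediate from $2$-functoriality of $\mathcal{Z}$.
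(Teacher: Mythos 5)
Your proposal is correct and follows exactly the paper's route: the paper also observes that $(\Act,\Act^{\radj})$, with unit and counit being $\mathcal{C}$-bimodule natural transformations, forms an adjoint pair internal to $\mathcal{C}\mbox{-\underline{bimod}}$, and then applies the $2$-functor $\mathcal{Z}$, which preserves adjunctions. The extra verifications you sketch (right exactness of $\Act^{\radj}$ and the bimodule compatibility of $\eta$, $\varepsilon$ via the canonical adjoint module structure) are precisely what the paper takes as the standard input to that argument.
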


It is instructive to describe the functors $\mathcal{Z}(\Act)$ and $\mathcal{Z}(\Act^{\radj})$ explicitly. Given an object $\mathbf{X} = (X, \sigma) \in \mathcal{Z}(\mathcal{C})$, we have $\mathcal{Z}(\Act)(\mathbf{X}) = \Act(X)$. The left $\mathcal{C}$-module structure of $\mathcal{X} := \mathcal{Z}(\Act)(\mathbf{X})$ is given by
\begin{align*}
  (\sigma_{W})^{-1} \otimes \id_M: W \otimes \mathcal{X}(M)
  = W \otimes X \otimes M
  \to X \otimes W \otimes M
  = \mathcal{X}(W \otimes M)
\end{align*}
for $W \in \mathcal{C}$ and $M \in \mathcal{M}$. For an object $\mathbf{F} = (F, s) \in \REX_{\mathcal{C}}(\mathcal{M})$, we have
\begin{equation*}
  \mathcal{Z}(\Act^{\radj})(\mathbf{F}) = (\Act^{\radj}(F), \sigma^{\mathbf{F}}),
  \quad \text{where} \quad
  \sigma_{X}^{\mathbf{F}} = (\xi^{(\ell)}_{X,F})^{-1} \circ \Act^{\radj}(s^{-1}) \circ \xi_{F,X}^{(r)}
\end{equation*}
for $X \in \mathcal{C}$. More explicitly:

\begin{lemma}
  \label{lem:act-fun-adj-half-braiding}
  The half-braiding $\sigma^{\mathbf{F}}$ is a unique natural transformation such that the following diagram commutes for all objects $X \in \mathcal{C}$ and $M \in \mathcal{M}$.
  \begin{equation*}
    \xymatrix@C=90pt@R=16pt{
      \Act^{\radj}(F) \otimes X
      \ar[r]^-{\pi_F(X \otimes M) \otimes \id}
      \ar[ddd]_{\sigma^{\mathbf{F}}_X}
      & \iHom(X \otimes M, F(X \otimes M)) \otimes X
      \ar[d]^{\ \iHomB_{X, M, F(X \otimes M)}} \\
      & \iHom(M, F(X \otimes M))
      \ar[d]^{\ \iHom(M, s^{-1})} \\
      & \iHom(M, X \otimes F(M))
      \ar[d]^{\ \iHomA_{X,M,F(M)}^{-1}} \\
      X \otimes \Act^{\radj}(F)
      \ar[r]^-{\id \otimes \pi_F(X)}
      & X \otimes \iHom(M, F(M)).
    }
  \end{equation*}
\end{lemma}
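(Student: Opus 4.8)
The plan is to verify the asserted commutative diagram by unwinding the definitions and then appeal to the universal property of $\Act^{\radj}(F)$ as an end to conclude uniqueness. Concretely, I would start from the formula
\[
  \sigma_X^{\mathbf{F}} = (\xi^{(\ell)}_{X,F})^{-1} \circ \Act^{\radj}(s^{-1}) \circ \xi_{F,X}^{(r)}
\]
given just before the statement, and compute $(\id_X \otimes \pi_F(M)) \circ \sigma_X^{\mathbf F}$ step by step. The key tools are Lemma~\ref{lem:act-fun-adj-C-bimod}, which expresses $\pi_{X \otimes F}(M) \circ \xi^{(\ell)}_{X,F}$ and $\pi_{F \otimes X}(M) \circ \xi^{(r)}_{F,X}$ in terms of $\iHomA$ and $\iHomB^{\natural}$, together with the naturality (in $\mathcal M$) of the universal dinatural transformation $\pi$, which governs how $\Act^{\radj}(s^{-1})$ interacts with $\pi$.

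First I would rewrite the left edge: since $\xi^{(\ell)}_{X,F}$ is invertible and $\pi_{X\otimes F}(M) \circ \xi^{(\ell)}_{X,F} = \iHomA_{X,M,F(M)} \circ (\id_X \otimes \pi_F(M))$ by~\eqref{eq:act-fun-adj-C-bimod-l-def-2}, we get $(\id_X \otimes \pi_F(M)) \circ (\xi^{(\ell)}_{X,F})^{-1} = \iHomA_{X,M,F(M)}^{-1} \circ \pi_{X\otimes F}(M)$, which already accounts for the bottom two vertical arrows $\iHom(M,s^{-1})$ and $\iHomA^{-1}_{X,M,F(M)}$ once we push the component of $\Act^{\radj}(s^{-1})$ through. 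Indeed, for a morphism $s^{-1} \colon X \otimes F(-) \Rightarrow F(X \otimes -)$ in $\REX(\mathcal M)$ (reading $s$ as the module structure of $\mathbf F$), the functoriality of $\Act^{\radj}$ gives $\pi_{F(X\otimes -)}(M) \circ \Act^{\radj}(s^{-1}) = \iHom(M, s^{-1}_M) \circ \pi_{X\otimes F}(M)$; combining, the composite from $\Act^{\radj}(F)\otimes X$ down to $X \otimes \iHom(M,F(M))$ equals $\iHomA^{-1}_{X,M,F(M)} \circ \iHom(M,s^{-1}_M) \circ \pi_{F\otimes X}(M)$ precomposed with... wait, we must be careful: the codomain functor $F \otimes X$ sends $M$ to $F(X \otimes M)$, so $\pi_{F\otimes X}(M) \colon \Act^{\radj}(F\otimes X) \to \iHom(M, F(X\otimes M))$, and $\xi^{(r)}_{F,X}$ feeds into this. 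Using~\eqref{eq:act-fun-adj-C-bimod-r-def-2}, $\pi_{F\otimes X}(M) \circ \xi^{(r)}_{F,X} = \iHomB^{\natural}_{X,M,F(X\otimes M)} \circ (\pi_F(X\otimes M) \otimes \id_X)$, and since $\iHomB^{\natural}_{X,M,N} = (\id \otimes \eval_X) \circ (\iHomB_{X,M,N}\otimes \id_X)$, the leg $\pi_F(X\otimes M)\otimes \id_X$ followed by $\iHomB_{X,M,F(X\otimes M)}$ matches exactly the top two arrows of the target diagram (modulo the $\eval_X$, which cancels against the coevaluation hidden in the identification — one must check this bookkeeping carefully). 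Assembling the three pieces yields precisely the stated outer square.

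For uniqueness: since $\mathcal M$ is (at least) finite and the family $\{\pi_F(N)\}_{N\in\mathcal M}$ is the universal dinatural transformation exhibiting $\Act^{\radj}(F)$ as an end, any two morphisms $\Act^{\radj}(F)\otimes X \to X \otimes \Act^{\radj}(F)$ that become equal after postcomposing with $\id_X \otimes \pi_F(M)$ for every $M$ must be equal — because $\{\id_X \otimes \pi_F(M)\}_M$ exhibits $X \otimes \Act^{\radj}(F)$ as the end $\int_{M}\iHom(M, X\otimes F(M))$ (the functor $X \otimes (-)$ is exact, hence preserves this end). This gives the ``unique'' clause.

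The main obstacle I anticipate is the coherence bookkeeping around $\iHomB$, $\iHomB^{\natural}$, and the identifications $(X\otimes Y)^* \cong Y^*\otimes X^*$ together with the $\eval_X/\coev_X$ pair, i.e. making sure all the duality isomorphisms and the strictness conventions line up so that the raw composite from the definition of $\sigma^{\mathbf F}$ literally matches the displayed diagram rather than matching it ``up to a canonical isomorphism.'' The relations collected in Lemma~\ref{lem:iHom-iso-B} (equations~\eqref{eq:lem-iHom-iso-B-1}--\eqref{eq:lem-iHom-iso-B-3}) and the auxiliary equations promised in Appendix~\ref{apdx:act-adj-structure} are exactly what is needed to discharge this, so the proof is essentially a careful diagram chase citing those identities, and I would relegate the fully expanded verification to the appendix.
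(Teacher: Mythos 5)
Your proposal is correct and takes essentially the same route as the paper: the commutativity is exactly the combination of Lemma~\ref{lem:act-fun-adj-C-bimod} with the naturality of $\pi$ in the functor variable (and the arrow labelled $\iHomB$ in the displayed diagram is indeed to be read as $\iHomB^{\natural}$, which is how the paper itself treats the analogous diagram in the appendix), while uniqueness comes from recognizing $\{\id_X \otimes \pi_F(M)\}_M$ as a universal dinatural transformation exhibiting $X \otimes \Act^{\radj}(F)$ as an end. The one small correction: "$X \otimes (-)$ is exact, hence preserves this end" is not quite the right justification — invoke instead that $X \otimes (-)$ has the left adjoint ${}^*X \otimes (-)$ and therefore preserves ends, or the Fubini-type argument the paper uses.
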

\begin{proof}
  The commutativity of this diagram follows from Lemma~\ref{lem:act-fun-adj-C-bimod}. By the Fubini theorem for ends, we see that $X \otimes \Act^{\radj}(F)$ is an end of the functor
  \begin{equation*}
    \mathcal{M}^{\op} \times \mathcal{M} \to \mathcal{C},
    \quad (M^{\op}, M') \mapsto X \otimes \iHom(M, F(M')).
  \end{equation*}
  The universal property proves the `uniqueness' part of this lemma.
\end{proof}

\subsection{Induction to the Drinfeld center}

The $k$-linear monoidal category
\begin{equation*}
  \mathcal{C}_{\mathcal{M}}^* := \REX_{\mathcal{C}}(\mathcal{M})^{\rev}
\end{equation*}
is called the {\em dual} of $\mathcal{C}$ with respect to $\mathcal{M}$. By Schauenburg's result \cite{MR1822847} (which we recall later), there is an equivalence $\mathcal{Z}(\mathcal{C}) \approx \mathcal{Z}(\mathcal{C}_{\mathcal{M}}^*)$ of $k$-linear braided monoidal categories. In this subsection, we show that $\mathcal{Z}(\Act_{\mathcal{M}}^{\radj})$ is right adjoint to the composition
\begin{equation*}
  \mathcal{Z}(\mathcal{C})
  \xrightarrow{\quad \text{Schauenburg's equivalence} \quad} \mathcal{Z}(\mathcal{C}_{\mathcal{M}}^*)
  \xrightarrow{\quad \text{the forgetful functor} \quad} \mathcal{C}_{\mathcal{M}}^*.
\end{equation*}

\newcommand{\Sch}{\uptheta}

We first recall Schauenburg's result \cite{MR1822847} on the Drinfeld center of the category of bimodules. Let $A$ be an algebra in $\mathcal{C}$ with multiplication $m: A \otimes A \to A$ and unit $u: \unitobj \to A$. Then the category ${}_A \mathcal{C}_A$ of $A$-bimodules in $\mathcal{C}$ is a $k$-linear abelian monoidal category with respect to the tensor product over $A$. There is a $k$-linear braided strong monoidal functor $\Sch_A: \mathcal{Z}(\mathcal{C}) \to \mathcal{Z}({}_A \mathcal{C}_A)$ defined as follows: For an object $\mathbf{V} = (V, \sigma) \in \mathcal{Z}(\mathcal{C})$, we set $\Sch_A(\mathbf{V}) = (A \otimes V, \widetilde{\sigma})$, where the left action of $A$ on $A \otimes V$ is given by $m \otimes \id_V$, the right action is given by the composition
\begin{equation*}
  (A \otimes V) \otimes A
  \xrightarrow{\quad \id_A \otimes \sigma_A \quad}
  A \otimes A \otimes V
  \xrightarrow{\quad m \otimes \id_V \quad}
  A \otimes V,
\end{equation*}
and the half-braiding $\widetilde{\sigma}$ is determined by the commutative diagram
\begin{equation*}
  \xymatrix@C=48pt@R=16pt{
    (A \otimes V) \otimes_A M
    \ar[rr]^{\widetilde{\sigma}_{M}} & & M \otimes_A (A \otimes M) \\
    A \otimes V \otimes M \ar[r]^{\id_A \otimes \sigma_M}
    \ar@{>>}[u]
    & X \otimes A \otimes M \ar[r]^{\id_X \otimes \, \triangleright_M}
    & M \otimes X \ar[u]_{\cong}
  }
\end{equation*}
for an $A$-bimodule $M$ in $\mathcal{C}$ with left action $\triangleright_M: A \otimes M \to M$. For a morphism $f$ in $\mathcal{Z}(\mathcal{C})$, we set $\Sch_A(f) = \id_A \otimes f$. The monoidal structure of $\Sch_A$ is given by the canonical isomorphism
\begin{equation*}
  \Sch_A(\mathbf{V}) \otimes_A \Sch_A(\mathbf{W}) = (A \otimes V) \otimes_A (A \otimes W) \cong A \otimes (V \otimes W) = \Sch_A(\mathbf{V} \otimes \mathbf{W})
\end{equation*}
for $\mathbf{V} = (V, \sigma), \mathbf{W} = (W, \tau) \in \mathcal{Z}(\mathcal{C})$. Schauenburg \cite{MR1822847} showed that the functor $\Sch_A$ is in fact an equivalence of $k$-linear braided monoidal categories.

Now let $\mathcal{M}$ be a finite left $\mathcal{C}$-module category. Then there is an algebra $A$ in $\mathcal{C}$ such that $\mathcal{M} \approx \mathcal{C}_A$ as a left $\mathcal{C}$-module categories. Moreover, the functor
\begin{equation}
  \label{eq:enriched-EW}
  {}_A \mathcal{C}_A \to \REX_{\mathcal{C}}(\mathcal{C}_A, \mathcal{C}_A)^{\rev},
  \quad M \mapsto (-) \otimes_A M
\end{equation}
is an equivalence of $k$-linear monoidal categories. Thus $\mathcal{Z}(\mathcal{C}_{\mathcal{M}}^*)$ and $\mathcal{Z}(\mathcal{C})$ are equivalent as $k$-linear braided monoidal categories.

Since we are interested in the general theory of finite tensor categories and their module categories, it is preferable to describe the equivalence $\mathcal{Z}(\mathcal{C}) \approx \mathcal{Z}(\mathcal{C}_{\mathcal{M}}^*)$ without referencing the algebra $A$ such that $\mathcal{M} \approx \mathcal{C}_A$. Thus, for a finite left $\mathcal{C}$-module category $\mathcal{M}$, we define the functor $\Sch_{\mathcal{M}}: \mathcal{Z}(\mathcal{C}) \to \mathcal{Z}(\mathcal{C}_{\mathcal{M}}^*)$ as follows: For an object $\mathbf{V} = (V, \sigma) \in \mathcal{Z}(\mathcal{C})$, we set $\Sch_{\mathcal{M}}(\mathbf{V}) = \Act(V)$ as an object of $\REX(\mathcal{M})$. We make $\Act(V)$ into a left $\mathcal{C}$-module functor by the structure morphism given by
\begin{equation*}
  (\sigma_{X})^{-1} \otimes \id_M:
  X \otimes \Act(V)(M)
  = X \otimes V \otimes M
  \to V \otimes X \otimes M
  = \Act(V)(X \otimes M)
\end{equation*}
for $X \in \mathcal{C}$ and $M \in \mathcal{M}$. The half-braiding of $\Sch_{\mathcal{M}}(\mathbf{V})$ is given by
\begin{equation*}
  s_{V,M}: (\Sch_{\mathcal{M}}(\mathbf{V}) \circ \mathbf{F})(M)
  = V \otimes F(M) \to F(V \otimes M)
  = (\mathbf{F} \circ \Sch_{\mathcal{M}}(\mathbf{V}))(M)
\end{equation*}
for $\mathbf{F} = (F, s) \in \mathcal{C}_{\mathcal{M}}^*$ and $M \in \mathcal{M}$. The following theorem is obtained by rephrasing Schauenburg's result.

\begin{theorem}
  \label{thm:categorical-Schauenburg}
  The functor $\Sch_{\mathcal{M}}: \mathcal{Z}(\mathcal{C}) \to \mathcal{Z}(\mathcal{C}_{\mathcal{M}}^*)$ is an equivalence of $k$-linear braided monoidal categories.
\end{theorem}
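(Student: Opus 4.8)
The plan is to deduce Theorem~\ref{thm:categorical-Schauenburg} from the classical Schauenburg equivalence $\Sch_A\colon \mathcal{Z}(\mathcal{C}) \to \mathcal{Z}({}_A\mathcal{C}_A)$ by transporting it along the equivalence~\eqref{eq:enriched-EW} and checking that the resulting composite coincides, up to natural isomorphism, with the functor $\Sch_{\mathcal{M}}$ defined intrinsically above. First I would fix an algebra $A \in \mathcal{C}$ together with an equivalence $\Theta\colon \mathcal{M} \xrightarrow{\ \approx\ } \mathcal{C}_A$ of left $\mathcal{C}$-module categories; such $A$ exists by the structure theory of finite module categories recalled in the preliminaries. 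This $\Theta$ induces an equivalence $\mathcal{C}_{\mathcal{M}}^* = \REX_{\mathcal{C}}(\mathcal{M})^{\rev} \xrightarrow{\ \approx\ } \REX_{\mathcal{C}}(\mathcal{C}_A)^{\rev}$ of monoidal categories (conjugation by $\Theta$), which in turn yields an equivalence $\mathcal{Z}(\mathcal{C}_{\mathcal{M}}^*) \approx \mathcal{Z}(\REX_{\mathcal{C}}(\mathcal{C}_A)^{\rev})$ of braided monoidal categories, since $\mathcal{Z}(-)$ is a 2-functor (as noted just before Theorem~\ref{thm:act-fun-adj-Z}) and hence sends monoidal equivalences to braided monoidal equivalences. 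Composing with $\mathcal{Z}$ applied to~\eqref{eq:enriched-EW} gives a braided monoidal equivalence $\mathcal{Z}(\mathcal{C}_{\mathcal{M}}^*) \approx \mathcal{Z}({}_A\mathcal{C}_A)$, and precomposing with $\Sch_A^{-1}$ produces a braided monoidal equivalence $\mathcal{Z}(\mathcal{C}) \approx \mathcal{Z}(\mathcal{C}_{\mathcal{M}}^*)$.

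The substance of the proof is then the identification of this transported functor with $\Sch_{\mathcal{M}}$. The strategy is to compute, for $\mathbf{V} = (V,\sigma) \in \mathcal{Z}(\mathcal{C})$, the image of $\mathbf{V}$ under the composite $\mathcal{Z}(\mathcal{C}) \xrightarrow{\Sch_A} \mathcal{Z}({}_A\mathcal{C}_A) \xrightarrow{\eqref{eq:enriched-EW}} \mathcal{Z}(\REX_{\mathcal{C}}(\mathcal{C}_A)^{\rev}) \xrightarrow{\ \Theta^{-1}\ } \mathcal{Z}(\mathcal{C}_{\mathcal{M}}^*)$ and compare it with $\Sch_{\mathcal{M}}(\mathbf{V})$. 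Under $\Sch_A$ the object $\mathbf{V}$ becomes the $A$-bimodule $A \otimes V$ with its prescribed bimodule structure and half-braiding; under~\eqref{eq:enriched-EW} this becomes the endofunctor $(-)\otimes_A (A \otimes V) \cong (-) \otimes V$ of $\mathcal{C}_A$, and after transporting back along $\Theta$ this is precisely the functor $M \mapsto V \otimes M$ on $\mathcal{M}$, i.e.\ $\Act(V)$. One must then check that the left $\mathcal{C}$-module structure coming from the right $A$-action on $A \otimes V$ (which involves $\sigma$) matches the structure morphism $(\sigma_X)^{-1} \otimes \id_M$ defining $\Sch_{\mathcal{M}}(\mathbf{V})$, and that the half-braiding $\widetilde{\sigma}$ translates, through the chain of equivalences, to the map $s_{V,M}\colon V \otimes F(M) \to F(V \otimes M)$. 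Both of these are diagram-chasing verifications: the module structure comparison reduces to unwinding the definition of the right $A$-action $(m \otimes \id_V)\circ(\id_A \otimes \sigma_A)$ together with how~\eqref{eq:enriched-EW} reads off a module-functor structure from a bimodule, while the half-braiding comparison uses the defining commutative square for $\widetilde{\sigma}$ together with the fact that under~\eqref{eq:enriched-EW} composition of endofunctors corresponds to $\otimes_A$ of bimodules. Finally I would check compatibility with the monoidal structures, which is immediate from the canonical isomorphism $(A\otimes V)\otimes_A(A\otimes W) \cong A \otimes (V\otimes W)$ being carried by~\eqref{eq:enriched-EW} to the evident composition isomorphism $\Act(V)\circ\Act(W) \cong \Act(V\otimes W)$.

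The main obstacle I anticipate is purely bookkeeping: Schauenburg's functor is stated in terms of $A$-bimodules and the tensor product $\otimes_A$, whereas $\Sch_{\mathcal{M}}$ is stated intrinsically in terms of the module category $\mathcal{M}$ and composition of module endofunctors, so the equivalence~\eqref{eq:enriched-EW} must be threaded through every structure morphism — the bimodule actions, the half-braiding, and the monoidal constraint — without error, and one must be careful about the $\rev$ appearing in $\mathcal{C}_{\mathcal{M}}^* = \REX_{\mathcal{C}}(\mathcal{M})^{\rev}$ and about the direction of the half-braiding. There is no conceptual difficulty beyond this: once the dictionary $\eqref{eq:enriched-EW}$ between $A$-bimodules and $\mathcal{C}$-module endofunctors is set up carefully, the theorem is exactly Schauenburg's result reread in the language of module categories. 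Accordingly, the proof will largely consist of writing "this follows from Schauenburg's theorem together with the equivalence~\eqref{eq:enriched-EW}" and supplying the translation of each piece of structure, with the explicit formulas for the module structure and half-braiding of $\Sch_{\mathcal{M}}(\mathbf{V})$ being verified by inspection.
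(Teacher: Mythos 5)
Your proposal is correct and follows essentially the same route as the paper: reduce to the case $\mathcal{M} = \mathcal{C}_A$ (the paper does this by checking $\widetilde{F}\circ\Sch_{\mathcal{M}}=\Sch_{\mathcal{N}}$ for a module equivalence, which is your conjugation by $\Theta$), then identify $\Sch_{\mathcal{M}}$ with the composite of Schauenburg's $\Sch_A$ and the equivalence~\eqref{eq:enriched-EW}. The only point to be careful about, which you partly anticipate, is that the identification of $(-)\otimes_A(A\otimes V)$ with $\Act(V)$ is not the canonical isomorphism onto $(-)\otimes V$ but is given componentwise by $\sigma_M\colon V\otimes M\to M\otimes V$ followed by $M\otimes V\cong M\otimes_A(A\otimes V)$, exactly as in the paper's comparison isomorphism.
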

\begin{proof}
  If finite left $\mathcal{C}$-module categories $\mathcal{M}$ and $\mathcal{N}$ are equivalent, then there is an equivalence $F: \mathcal{C}_{\mathcal{M}}^* \to \mathcal{C}_{\mathcal{N}}^*$ of $k$-linear monoidal categories. It is easy to check
  \begin{equation*}
    \widetilde{F} \circ \Sch_{\mathcal{M}} = \Sch_{\mathcal{N}},
  \end{equation*}
  where $\widetilde{F}: \mathcal{Z}(\mathcal{C}_{\mathcal{M}}^*) \to \mathcal{Z}(\mathcal{C}_{\mathcal{N}}^*)$ is the braided monoidal equivalence induced by the monoidal equivalence $F$. Thus, to show that $\Sch_{\mathcal{M}}$ is an equivalence, we may assume that $\mathcal{M} = \mathcal{C}_A$ for some algebra $A$ in $\mathcal{C}$. We consider the equivalence
  \begin{equation*}
    \Sch'_{\mathcal{M}} := \Big( \mathcal{Z}(\mathcal{C})
    \xrightarrow{\quad \Sch_A \quad} \mathcal{Z}({}_A \mathcal{C}_A)
    \xrightarrow{\quad \text{by \eqref{eq:enriched-EW}} \quad} \mathcal{Z}(\mathcal{C}_{\mathcal{M}}^*) \Big)
  \end{equation*}
  of $k$-linear braided monoidal categories. One can check that $\Sch_{\mathcal{M}} \cong \Sch_{\mathcal{M}}'$ as monoidal functors via the isomorphism given by
  \begin{equation*}
    \Sch_{\mathcal{M}}(\mathbf{V})(M)
    = V \otimes M
    \xrightarrow{\quad \sigma_{M} \quad}
    M \otimes V
    \xrightarrow{\quad \cong \quad} M \otimes_A (A \otimes V)
    = \Sch_{\mathcal{M}}'(\mathbf{V})(M)
  \end{equation*}
  for $\mathbf{V} = (V, \sigma) \in \mathcal{Z}(\mathcal{C})$ and $M \in \mathcal{M}$. Thus $\Sch_{\mathcal{M}}$ is also an equivalence of $k$-linear braided monoidal categories.
\end{proof}

Now we prove the result mentioned at the beginning of this subsection:

\begin{theorem}
  \label{thm:induction-Drinfeld-center}
  Let $\mathsf{U}: \mathcal{Z}(\mathcal{C}_{\mathcal{M}}^*) \to \mathcal{C}_{\mathcal{M}}^*$ be the forgetful functor. Then
  \begin{equation*}
    (\mathsf{U} \circ \Sch_{\mathcal{M}}: \mathcal{Z}(\mathcal{C}) \to \mathcal{C}_{\mathcal{M}}^*,
    \ \mathcal{Z}(\Act_{\mathcal{M}}^{\radj}): \mathcal{C}_{\mathcal{M}}^* \to \mathcal{Z}(\mathcal{C}))
  \end{equation*}
  is an adjoint pair.
\end{theorem}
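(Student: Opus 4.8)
The plan is to reduce the statement to Theorem~\ref{thm:act-fun-adj-Z}. The point is that $\mathsf{U} \circ \Sch_{\mathcal{M}}$ is, after unwinding the definitions, nothing but the functor $\mathcal{Z}(\Act_{\mathcal{M}})$. First I would fix the identification of the underlying $k$-linear category of $\mathcal{C}_{\mathcal{M}}^* = \REX_{\mathcal{C}}(\mathcal{M})^{\rev}$ with $\REX_{\mathcal{C}}(\mathcal{M}) = \mathcal{Z}(\REX(\mathcal{M}))$. Under this identification the functor $\mathcal{Z}(\Act_{\mathcal{M}}^{\radj}): \mathcal{C}_{\mathcal{M}}^* \to \mathcal{Z}(\mathcal{C})$ appearing in the statement is literally the right adjoint constructed in Theorem~\ref{thm:act-fun-adj-Z}; indeed, as a functor between the underlying categories, $\mathcal{Z}(\Act_{\mathcal{M}}^{\radj})$ depends only on the bimodule functor $\Act_{\mathcal{M}}^{\radj}$ and its structure morphisms $\xi^{(\ell)}$, $\xi^{(r)}$, and does not see the reversal of the tensor product. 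Hence the pair in Theorem~\ref{thm:induction-Drinfeld-center} shares its right-hand member with the pair in Theorem~\ref{thm:act-fun-adj-Z}, and it suffices to prove $\mathsf{U} \circ \Sch_{\mathcal{M}} \cong \mathcal{Z}(\Act_{\mathcal{M}})$ and then invoke uniqueness of adjoints.

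To establish that isomorphism I would simply compare the explicit formulas recorded above. On an object $\mathbf{V} = (V, \sigma) \in \mathcal{Z}(\mathcal{C})$, the functor $\Sch_{\mathcal{M}}$ outputs the endofunctor $\Act_{\mathcal{M}}(V) = V \otimes (-)$ of $\mathcal{M}$ with the left $\mathcal{C}$-module structure $(\sigma_X)^{-1} \otimes \id_M: X \otimes V \otimes M \to V \otimes X \otimes M$, together with a half-braiding making it an object of $\mathcal{Z}(\mathcal{C}_{\mathcal{M}}^*)$; applying $\mathsf{U}$ discards the half-braiding and keeps precisely the $\mathcal{C}$-module functor $(\Act_{\mathcal{M}}(V), \{(\sigma_X)^{-1} \otimes \id_M\})$. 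By the formula for the left $\mathcal{C}$-module structure of $\mathcal{Z}(\Act_{\mathcal{M}})(\mathbf{V})$ recorded in the discussion following Theorem~\ref{thm:act-fun-adj-Z}, this is exactly $\mathcal{Z}(\Act_{\mathcal{M}})(\mathbf{V})$; and on morphisms both functors act by $\Act_{\mathcal{M}}$. Hence $\mathsf{U} \circ \Sch_{\mathcal{M}} = \mathcal{Z}(\Act_{\mathcal{M}})$ as functors $\mathcal{Z}(\mathcal{C}) \to \REX_{\mathcal{C}}(\mathcal{M})$, and Theorem~\ref{thm:act-fun-adj-Z} finishes the argument.

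I expect the only real (and modest) obstacle to be bookkeeping around the reversal and the forgetful functor: one must check that the $(-)^{\rev}$ in the definition of $\mathcal{C}_{\mathcal{M}}^*$, the direction in which the half-braiding $s$ is used to lift $\Act_{\mathcal{M}}(V)$ into $\mathcal{Z}(\mathcal{C}_{\mathcal{M}}^*)$, and the identification $\mathcal{Z}(\REX(\mathcal{M})) \cong \REX_{\mathcal{C}}(\mathcal{M})$ are mutually compatible, so that $\mathsf{U} \circ \Sch_{\mathcal{M}}$ genuinely lands in the same category as $\mathcal{Z}(\Act_{\mathcal{M}})$ with the same module structure. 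None of this affects the conclusion, since $\mathsf{U}$ forgets the half-braiding, but it should be spelled out. If one prefers not to match functors on the nose, an equivalent route is to verify the adjunction on Hom-sets directly: combine the isomorphism $\Hom_{\mathcal{C}}(V, \Act_{\mathcal{M}}^{\radj}(F)) \cong \Nat(\Act_{\mathcal{M}}(V), F)$ from Theorem~\ref{thm:action-adj-by-end} with the observation (essentially Lemma~\ref{lem:act-fun-adj-half-braiding}) that it carries morphisms $\mathbf{V} \to \mathcal{Z}(\Act_{\mathcal{M}}^{\radj})(\mathbf{F})$ in $\mathcal{Z}(\mathcal{C})$ onto $\mathcal{C}$-module natural transformations $\Act_{\mathcal{M}}(V) \to F$; this is, however, precisely the $2$-functoriality of $\mathcal{Z}$ already exploited in Theorem~\ref{thm:act-fun-adj-Z}, so the first route is cleaner.
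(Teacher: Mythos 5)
Your proposal is correct and follows essentially the same route as the paper: the paper's proof also observes that $\mathsf{U} \circ \Sch_{\mathcal{M}}$ coincides with $\mathcal{Z}(\Act_{\mathcal{M}})$ (by unwinding the definition of $\Sch_{\mathcal{M}}$, via Theorem~\ref{thm:categorical-Schauenburg}) and then invokes the lifted adjunction of Theorem~\ref{thm:act-fun-adj-Z}. Your extra bookkeeping about the reversal and the identification $\mathcal{Z}(\REX(\mathcal{M})) \cong \REX_{\mathcal{C}}(\mathcal{M})$ is a reasonable elaboration of what the paper leaves implicit.
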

\begin{proof}
  By Theorem \ref{thm:categorical-Schauenburg}, the functor $\mathsf{U} \circ \Sch_{\mathcal{M}}$ is identical to $\mathcal{Z}(\Act_{\mathcal{M}})$ and therefore it is left adjoint to $\mathcal{Z}(\Act_{\mathcal{M}}^{\radj})$.
\end{proof}

\begin{corollary}
  Let $\mathsf{U}_{\mathcal{C}}: \mathcal{Z}(\mathcal{C}) \to \mathcal{C}$ and $\mathsf{U}_{\mathcal{D}}: \mathcal{Z}(\mathcal{D}) \to \mathcal{D}$ be the forgetful functors, where $\mathcal{D} = \mathcal{C}_{\mathcal{M}}^*$. Then $\mathsf{U}_{\mathcal{D}}$ has a right adjoint. The composition
  \begin{equation*}
    \mathcal{D}
    \xrightarrow{\quad \mathsf{U}_{\mathcal{D}}^{\radj} \quad}
    \mathcal{Z}(\mathcal{D})
    \xrightarrow{\quad \Sch_{\mathcal{M}}^{-1} \quad}
    \mathcal{Z}(\mathcal{C})
    \xrightarrow{\quad \mathsf{U}_{\mathcal{C}} \quad} \mathcal{C}
  \end{equation*}
  sends an object $\mathbf{F} = (F, s) \in \mathcal{D}$ to the end $\int_{M \in \mathcal{M}} \iHom(M, F(M))$.
\end{corollary}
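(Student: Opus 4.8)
\emph{Proof proposal.} The plan is to obtain this as a formal consequence of Theorems~\ref{thm:categorical-Schauenburg} and~\ref{thm:induction-Drinfeld-center}, with no new computation; Theorem~\ref{thm:induction-Drinfeld-center} already carries all the nontrivial content, namely the adjunction $\mathcal{Z}(\Act_{\mathcal{M}}) \dashv \mathcal{Z}(\Act_{\mathcal{M}}^{\radj})$ together with the identification $\mathsf{U}_{\mathcal{D}} \circ \Sch_{\mathcal{M}} \cong \mathcal{Z}(\Act_{\mathcal{M}})$. First, since $\Sch_{\mathcal{M}}: \mathcal{Z}(\mathcal{C}) \to \mathcal{Z}(\mathcal{D})$ is an equivalence by Theorem~\ref{thm:categorical-Schauenburg}, a chosen quasi-inverse $\Sch_{\mathcal{M}}^{-1}$ is simultaneously a left and a right adjoint of $\Sch_{\mathcal{M}}$. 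Writing $\mathsf{U}_{\mathcal{D}} = (\mathsf{U}_{\mathcal{D}} \circ \Sch_{\mathcal{M}}) \circ \Sch_{\mathcal{M}}^{-1}$ exhibits $\mathsf{U}_{\mathcal{D}}$ as a composite of two functors each admitting a right adjoint: $\Sch_{\mathcal{M}}^{-1}$ has right adjoint $\Sch_{\mathcal{M}}$, and $\mathsf{U}_{\mathcal{D}} \circ \Sch_{\mathcal{M}}$ has right adjoint $\mathcal{Z}(\Act_{\mathcal{M}}^{\radj})$ by Theorem~\ref{thm:induction-Drinfeld-center}. Hence $\mathsf{U}_{\mathcal{D}}$ has a right adjoint, and by the usual formula for right adjoints of composites it is given, up to natural isomorphism, by
\[
  \mathsf{U}_{\mathcal{D}}^{\radj} \;\cong\; \Sch_{\mathcal{M}} \circ \mathcal{Z}(\Act_{\mathcal{M}}^{\radj}): \mathcal{D} \longrightarrow \mathcal{Z}(\mathcal{D}).
\]

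Second, I substitute this expression into the composition appearing in the statement. Since $\Sch_{\mathcal{M}}^{-1} \circ \Sch_{\mathcal{M}} \cong \id_{\mathcal{Z}(\mathcal{C})}$, we obtain
\[
  \Sch_{\mathcal{M}}^{-1} \circ \mathsf{U}_{\mathcal{D}}^{\radj} \;\cong\; \mathcal{Z}(\Act_{\mathcal{M}}^{\radj}): \mathcal{D} \longrightarrow \mathcal{Z}(\mathcal{C}),
\]
and therefore the full composition $\mathsf{U}_{\mathcal{C}} \circ \Sch_{\mathcal{M}}^{-1} \circ \mathsf{U}_{\mathcal{D}}^{\radj}$ is naturally isomorphic to $\mathsf{U}_{\mathcal{C}} \circ \mathcal{Z}(\Act_{\mathcal{M}}^{\radj})$. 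By the explicit description of $\mathcal{Z}(\Act_{\mathcal{M}}^{\radj})$ recorded after Theorem~\ref{thm:act-fun-adj-Z}, for an object $\mathbf{F} = (F, s) \in \mathcal{D}$ the object $\mathcal{Z}(\Act_{\mathcal{M}}^{\radj})(\mathbf{F})$ has underlying object $\Act_{\mathcal{M}}^{\radj}(F)$ (equipped with the half-braiding $\sigma^{\mathbf F}$), so applying $\mathsf{U}_{\mathcal{C}}$ forgets the half-braiding and returns
\[
  \Act_{\mathcal{M}}^{\radj}(F) \;=\; \int_{M \in \mathcal{M}} \iHom(M, F(M))
\]
by Theorem~\ref{thm:action-adj-by-end}. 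This is exactly the claimed value of the composition on objects (the whole statement being understood up to natural isomorphism, as is forced by uniqueness of right adjoints).

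The argument is entirely formal, so there is no genuine obstacle beyond bookkeeping; the only points needing a little care are (i) keeping track of the direction of the Schauenburg equivalence actually used here, namely $\Sch_{\mathcal{M}}^{-1}: \mathcal{Z}(\mathcal{D}) \to \mathcal{Z}(\mathcal{C})$, and (ii) checking that the right adjoint $\mathsf{U}_{\mathcal{D}}^{\radj}$ produced by transport along $\Sch_{\mathcal{M}}$ coincides with the customary induction functor $\mathcal{C}_{\mathcal{M}}^* \to \mathcal{Z}(\mathcal{C}_{\mathcal{M}}^*)$ — both of which are already subsumed in Theorem~\ref{thm:induction-Drinfeld-center}.
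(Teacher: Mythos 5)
Your proposal is correct and follows essentially the same route as the paper: both obtain $\mathsf{U}_{\mathcal{D}}^{\radj} \cong \Sch_{\mathcal{M}} \circ \mathcal{Z}(\Act_{\mathcal{M}}^{\radj})$ from Theorem~\ref{thm:induction-Drinfeld-center} (using that $\Sch_{\mathcal{M}}$ is an equivalence, Theorem~\ref{thm:categorical-Schauenburg}), cancel $\Sch_{\mathcal{M}}^{-1} \circ \Sch_{\mathcal{M}}$ to identify the composition with $\mathsf{U}_{\mathcal{C}} \circ \mathcal{Z}(\Act_{\mathcal{M}}^{\radj})$, and conclude via the explicit description of $\mathcal{Z}(\Act_{\mathcal{M}}^{\radj})$ and Theorem~\ref{thm:action-adj-by-end}. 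The only difference is that you spell out the composite-adjoint bookkeeping that the paper leaves implicit.
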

\begin{proof}
  Theorem \ref{thm:induction-Drinfeld-center} implies that $\Sch_{\mathcal{M}} \circ \mathcal{Z}(\Act_{\mathcal{M}}^{\radj})$ is right adjoint to $\mathsf{U}_{\mathcal{D}}$. Thus $\mathsf{U}_{\mathcal{D}}^{\radj}$ exists and is isomorphic to $\Sch_{\mathcal{M}} \circ \mathcal{Z}(\Act_{\mathcal{M}}^{\radj})$. Hence the composition in question is isomorphic to $\mathsf{U}_{\mathcal{C}} \circ \mathcal{Z} (\Act_{\mathcal{M}}^{\radj})$. Now the result follows from the explicit description of $\mathcal{Z}(\Act_{\mathcal{M}}^{\radj})$ given in the previous subsection.
\end{proof}

\section{Integral over a topologizing full subcategory}
\label{sec:integral-over-fullsub}

\newcommand{\Top}{\mathfrak{Top}}

\subsection{Topologizing full subcategory}
\label{subsec:topolo-full-sub}

We first introduce the following terminology and notation: A full subcategory of an abelian category is said to be {\em topologizing} \cite{MR1347919} if it is closed under finite direct sums and subquotients. We denote by $\Top(\mathcal{A})$ the class of topologizing full subcategories of an abelian category $\mathcal{A}$.

Let $\mathcal{M}$ be a finite module category over a finite tensor category. In Section~\ref{sec:adj-of-act}, we have considered several `integrals' over the category $\mathcal{M}$. In this section, based on our results on adjoints of the action functor, we extend techniques used in \cite{MR3631720} and provide a framework to deal with `integrals' of the form $\int_{X \in \mathcal{S}} \iHom(X, X)$ for some $\mathcal{S} \in \Top(\mathcal{M})$.

We first summarize basic results on topologizing full subcategories of a finite abelian category. Let $\mathcal{M}$ be a finite abelian category, and let $\mathcal{S}$ be a topologizing full subcategory of $\mathcal{M}$ with inclusion functor $i: \mathcal{S} \to \mathcal{M}$. For $M \in \mathcal{M}$, we set
\begin{equation}
  \label{eq:coreflector-def}
  i^{\sharp}(M) = \text{(the largest subobject of $M$ belonging to $\mathcal{S}$)}.
\end{equation}
By the assumption that $\mathcal{S}$ is a topologizing full subcategory, one can extend the assignment $M \mapsto i^{\sharp}(M)$ to a $k$-linear functor from $\mathcal{M}$ to $\mathcal{S}$. Dually, we set
\begin{equation}
  \label{eq:reflector-def}
  \upkappa_{\mathcal{S}}(M) = \bigcap \, \{ X \subset M \mid M/X \in \mathcal{S} \}
  \quad \text{and} \quad
  i^{\flat}(M) = M / \upkappa_{\mathcal{S}}(M)
\end{equation}
for $M \in \mathcal{M}$. One can also extend the assignment $M \mapsto i^{\flat}(M)$ to a $k$-linear functor from $\mathcal{M}$ to $\mathcal{S}$. It is easy to see that $i^{\sharp}$ and $i^{\flat}$ are a right and a left adjoint of $i$, respectively. We now define
\begin{equation}
  \label{eq:idempo-monad-def}
  \uptau_{\mathcal{S}} := i \circ i^{\flat}
  \quad \text{and} \quad
  \uptau'_{\mathcal{S}} := i \circ i^{\sharp}.
\end{equation}
Since $i^{\flat} \dashv i \dashv i^{\sharp}$, we have natural isomorphisms
\begin{equation}
  \label{eq:reflector-2}
  \Hom_{\mathcal{M}}(\uptau_{\mathcal{S}} (M), N)
  \cong \Hom_{\mathcal{S}}(i^{\flat}(M), i^{\sharp}(N))
  \cong \Hom_{\mathcal{M}}(M, \uptau'_{\mathcal{S}}(N))
\end{equation}
for $M, N \in \mathcal{M}$. Thus $\uptau_{\mathcal{S}} \in \REX(\mathcal{M})$ and $\uptau'_{\mathcal{S}} = \uptau_{\mathcal{S}}^{\radj}$. Moreover, since $i^{\flat} \circ i = \id_{\mathcal{S}}$, the endofunctor $\uptau_{\mathcal{S}}$ is an idempotent monad on $\mathcal{M}$ whose category of modules coincides with $\mathcal{S}$. By this observation, we have the following consequence:

\begin{lemma}
  A topologizing full subcategory of a finite abelian category is a finite abelian category such that the inclusion functor preserves and reflects exact sequences.
\end{lemma}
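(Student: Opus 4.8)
The plan is to check the three assertions separately: $\mathcal{S}$ is abelian, the inclusion $i\colon\mathcal{S}\to\mathcal{M}$ preserves and reflects exact sequences, and $\mathcal{S}$ is finite.

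For the first two, I would argue that every construction needed to test the abelian axioms in $\mathcal{S}$ can be performed inside $\mathcal{M}$. Since $\mathcal{S}$ is closed under finite direct sums it is additive, and since it is closed under subquotients, the kernel and cokernel taken in $\mathcal{M}$ of any morphism $f$ between objects of $\mathcal{S}$ again lie in $\mathcal{S}$; as $i$ is fully faithful, these objects with their structure maps satisfy the universal properties of the kernel and cokernel of $f$ in $\mathcal{S}$. Consequently the image and coimage of $f$ in $\mathcal{S}$ coincide, as subobjects of the relevant objects, with those computed in $\mathcal{M}$, so the canonical morphism from the coimage to the image is an isomorphism and $\mathcal{S}$ is abelian with $i$ exact. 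Because a sequence $X\to Y\to Z$ in $\mathcal{S}$ is exact precisely when the subobjects $\Img(X\to Y)$ and $\Ker(Y\to Z)$ of $Y$ agree, and these subobjects are the ones obtained in $\mathcal{M}$, exactness in $\mathcal{S}$ is detected by applying $i$; hence $i$ both preserves and reflects exact sequences.

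For finiteness I would exhibit a finite-dimensional algebra $A$ with $\mathcal{S}\approx\lmod{A}$. Hom spaces of $\mathcal{S}$ are Hom spaces of $\mathcal{M}$, hence finite-dimensional, and every object of $\mathcal{S}$ has finite length since $\mathcal{M}$ is a length category and, by the previous paragraph, a composition series in $\mathcal{M}$ of an object of $\mathcal{S}$ is one in $\mathcal{S}$. Now choose a projective generator $P$ of $\mathcal{M}$ and put $P' := i^{\flat}(P)$; by \eqref{eq:reflector-def} this is a quotient of $P$, so it has finite length. The adjunction $i^{\flat}\dashv i$ yields a natural isomorphism $\Hom_{\mathcal{S}}(P',-)\cong\Hom_{\mathcal{M}}(P,i(-))$; the right-hand functor is exact because $\Hom_{\mathcal{M}}(P,-)$ is exact and $i$ preserves exactness, and it is faithful because $i$ is faithful and $\Hom_{\mathcal{M}}(P,-)$ is faithful. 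Hence $P'$ is a projective generator of $\mathcal{S}$, and $A := \End_{\mathcal{S}}(P')^{\op}$ is finite-dimensional since $\End_{\mathcal{S}}(P')\cong\Hom_{\mathcal{M}}(P,i(P'))$. In the length category $\mathcal{S}$ every object is a quotient of a finite direct sum of copies of $P'$, so the standard Morita-type recognition theorem for finite abelian categories gives an equivalence $\Hom_{\mathcal{S}}(P',-)\colon\mathcal{S}\xrightarrow{\ \approx\ }\lmod{A}$, proving that $\mathcal{S}$ is finite abelian.

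The only step that is not purely formal is this last one: producing a projective generator of $\mathcal{S}$ and invoking the recognition of finite abelian categories through a projective generator with finite-dimensional endomorphism algebra. Both points are made transparent by the description of $i^{\flat}$ — equivalently, of the idempotent monad $\uptau_{\mathcal{S}} = i\circ i^{\flat}$ whose module category is $\mathcal{S}$ — recalled just above, so I do not anticipate any real difficulty.
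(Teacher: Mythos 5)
Your argument is correct, but it takes a different route from the paper. The paper obtains the lemma as a consequence of the observation made just before it: $\uptau_{\mathcal{S}} = i \circ i^{\flat}$ is a $k$-linear right exact \emph{idempotent monad} on $\mathcal{M}$ whose category of modules is $\mathcal{S}$; identifying $\mathcal{M} \approx \lmod{A}$ and $\REX(\mathcal{M}) \approx \bimod{A}{A}$, this monad becomes $(A/I) \otimes_A (-)$ for an ideal $I$ of $A$, so $\mathcal{S} \approx \lmod{(A/I)}$ and the inclusion is restriction of scalars along $A \twoheadrightarrow A/I$, which visibly preserves and reflects exactness; this identification is also what feeds into Lemma~\ref{lem:Rosen}. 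You instead verify abelianness by hand (kernels and cokernels of morphisms in $\mathcal{S}$, computed in $\mathcal{M}$, stay in $\mathcal{S}$ by the subquotient-closure, and full faithfulness transfers the universal properties), deduce preservation and reflection of exactness from the coincidence of these (co)kernels, and then prove finiteness by exhibiting the projective generator $P' = i^{\flat}(P)$ via the adjunction $\Hom_{\mathcal{S}}(P',-) \cong \Hom_{\mathcal{M}}(P, i(-))$ and invoking the Morita-type recognition of finite abelian categories. What your approach buys is a self-contained, elementary proof that never needs the bimodule description of $\REX(\mathcal{M})$ or the monadic formalism, at the cost of re-verifying standard facts and of appealing to the recognition theorem (including the small glossed steps that faithfulness plus projectivity plus finite length makes $P'$ a generator, and that $\mathcal{S}$ then has finitely many simples); what the paper's route buys is that the same identification $\mathcal{S} \approx \lmod{(A/I)}$ is reused immediately afterwards for the correspondence between topologizing subcategories and ideals, so nothing extra has to be proved.
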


Now we choose a finite-dimensional algebra $A$ such that $\mathcal{M} \approx \lmod{A}$. If we identify $\REX(\mathcal{M})$ with $\bimod{A}{A}$, then $\id_{\mathcal{M}} \in \REX(\mathcal{M})$ corresponds to the $A$-bimodule $A$. Thus a subobject of $\id_{\mathcal{M}}$ in $\REX(\mathcal{M})$ corresponds to an ideal of $A$. By abuse of terminology, we call a subobject of $\id_{\mathcal{M}}$ in $\REX(\mathcal{M})$ an {\em ideal} of $\mathcal{M}$. Then we have the following correspondence ({\it cf}. Rosenberg \cite[Chapter III]{MR1347919}):

\begin{lemma}
  \label{lem:Rosen}
  For a finite abelian category $\mathcal{M}$, there is a one-to-one correspondence between the class $\Top(\mathcal{M})$ and the set of ideals of $\mathcal{M}$.
\end{lemma}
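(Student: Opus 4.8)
The plan is to exhibit the bijection of Lemma~\ref{lem:Rosen} directly, using the idempotent monad picture established just above. Given a topologizing full subcategory $\mathcal{S}$ with inclusion $i$, I would send $\mathcal{S}$ to the endofunctor $\uptau_{\mathcal{S}} = i \circ i^{\flat}$ together with the canonical morphism $\uptau_{\mathcal{S}} \to \id_{\mathcal{M}}$. The first task is to check that this morphism is monic in $\REX(\mathcal{M})$, i.e. that $\uptau_{\mathcal{S}}$ is an ideal of $\mathcal{M}$ in the sense defined above. Since the counit $i^{\flat} \circ i \cong \id_{\mathcal{S}}$ says $\uptau_{\mathcal{S}}$ is an idempotent monad, the structure morphism $\uptau_{\mathcal{S}} \to \id_{\mathcal{M}}$ (which is the unit of this monad read the other way, or rather the component landing in $\mathcal{M}$) is a split epimorphism onto its image followed by the inclusion of that image; concretely, under an identification $\mathcal{M} \approx \lmod{A}$ the functor $\uptau_{\mathcal{S}}$ corresponds to a bimodule $I \subset A$ which is an ideal, and this is visibly a sub-bimodule of $A = \id_{\mathcal{M}}$. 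So I would phrase this either abstractly (idempotent monads over $\id$ are subobjects of $\id$) or, more cheaply, just transport everything to $\bimod{A}{A}$ and observe that a topologizing full subcategory of $\lmod{A}$ is exactly $\lmod{(A/I)}$ for a two-sided ideal $I$, with $\uptau_{\mathcal{S}}$ corresponding to $I \otimes_A (-)$.

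Conversely, given an ideal $J \subset \id_{\mathcal{M}}$ in $\REX(\mathcal{M})$, I would define $\mathcal{S}_J$ to be the full subcategory of $\mathcal{M}$ on those $M$ for which the map $J(M) \to M$ is an isomorphism (equivalently, transporting to $\bimod{A}{A}$, the modules $M$ with $I M = M$, i.e. the $A/\mathrm{Ann}$-modules, where $I$ is the ideal corresponding to $J$). One checks $\mathcal{S}_J$ is closed under finite direct sums and subquotients: direct sums because $J$ is additive, subquotients because $J$ is right exact and the class of $M$ with $J(M) \xrightarrow{\sim} M$ is closed under quotients, while closure under subobjects follows because $J$ applied to a subobject $N \subset M$ with $J(M)\cong M$ still surjects onto $N$ (right exactness again) and the composite $J(N)\to N \hookrightarrow M$ factors the iso, forcing $J(N)\to N$ onto. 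So $\mathcal{S}_J \in \Top(\mathcal{M})$.

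Then I would verify the two round trips. Starting from $\mathcal{S}$: $\uptau_{\mathcal{S}}$ is the idempotent monad with module category $\mathcal{S}$, so $\mathcal{S}_{\uptau_{\mathcal{S}}}$ — the objects on which $\uptau_{\mathcal{S}} \to \id$ is invertible — is precisely the category of $\uptau_{\mathcal{S}}$-modules, which was noted above to be $\mathcal{S}$. Starting from an ideal $J$: one must see that $\uptau_{\mathcal{S}_J} \cong J$ as subobjects of $\id_{\mathcal{M}}$. In the algebraic model this is the statement that for a two-sided ideal $I \subset A$, the largest quotient-closed ... — more simply, $\mathcal{S}_J = \lmod{(A/I)}$ and the associated idempotent monad $i \circ i^{\flat}$ is $(A/I) \otimes_{A} (-)$... wait, that is not $I\otimes_A(-)$; here I would instead use the \emph{left} adjoint description: $i^{\flat}$ is the left adjoint of the inclusion $\lmod{(A/I)} \hookrightarrow \lmod{A}$, which is $M \mapsto M/IM = (A/I)\otimes_A M$, so $\uptau_{\mathcal{S}_J}(M) = (A/I)\otimes_A M$, and the natural map $\uptau_{\mathcal{S}_J} \to \id$ is the quotient $M \twoheadrightarrow M/IM$ — but that is an \emph{epi}, not the ideal $I$. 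This signals that the intended correspondence pairs $\mathcal{S}$ with $\uptau'_{\mathcal{S}} = i \circ i^{\sharp}$ viewed via the \emph{kernel}, or pairs ideals with quotient functors; I would reconcile this by noting that ideals $J\subset\id_{\mathcal{M}}$ correspond bijectively to quotients $\id_{\mathcal{M}}\twoheadrightarrow\id_{\mathcal{M}}/J$, and it is the quotient functor $\uptau_{\mathcal{S}}$ (left adjoint side) that one reads off, with $J = \ker(\id_{\mathcal{M}}\to\uptau_{\mathcal{S}})$.

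The main obstacle is exactly this bookkeeping: getting the variance right (subobject of $\id$ versus quotient of $\id$, $i^{\sharp}$ versus $i^{\flat}$) and making sure the bijection is stated with the correct one of the two adjoints, since both $\uptau_{\mathcal{S}}$ and $\uptau'_{\mathcal{S}}$ are available and only one of them relates cleanly to a \emph{sub}object of $\id_{\mathcal{M}}$. Once the dictionary $\mathcal{S} \leftrightarrow (\ker(\id_{\mathcal{M}} \to \uptau_{\mathcal{S}}))$ is fixed, everything reduces to the elementary fact in $\bimod{A}{A}$ that two-sided ideals $I \subset A$ are in bijection with Serre-type (here: topologizing) subcategories $\lmod{(A/I)}$ of $\lmod{A}$, which is classical; I would cite Rosenberg \cite[Chapter III]{MR1347919} for the general statement and give the short module-theoretic argument for completeness.
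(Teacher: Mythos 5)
Your final dictionary is the paper's: after the self-correction in your third paragraph you attach to $\mathcal{S}$ the kernel $\upkappa_{\mathcal{S}} = \ker(\id_{\mathcal{M}} \to \uptau_{\mathcal{S}})$, i.e.\ the sub-bimodule $I = \upkappa_{\mathcal{S}}(A) \subset A$ under $\REX(\mathcal{M}) \simeq \bimod{A}{A}$, and conversely you send an ideal $I$ to the module category of the idempotent monad $\id_{\mathcal{M}}/I \cong (A/I)\otimes_A(-)$, reducing the bijection to the classical ideal/subcategory dictionary and citing Rosenberg -- which is exactly how the paper treats this lemma (it gives no more detailed proof than that). One point still needs to be explicitly retracted rather than left implicit: your second-paragraph definition of $\mathcal{S}_J$ as the objects on which $J(M) \to M$ is an isomorphism (equivalently $IM = M$) is not merely a variance slip but gives the wrong subcategory -- for any nilpotent ideal $I$ the condition $IM = M$ forces $M = 0$ by Nakayama, so all nilpotent ideals would be sent to the zero subcategory and the correspondence would fail to be injective. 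The correct converse, which your final paragraph implicitly adopts, is $I \mapsto \{\, M \mid IM = 0 \,\} = \lmod{(A/I)}$, the category of modules over the quotient monad $\id_{\mathcal{M}}/I$; with that replacement (and with the subobject of $\id_{\mathcal{M}}$ taken to be $I$, not $\uptau_{\mathcal{S}}$ or $\uptau'_{\mathcal{S}}$, the latter not even being right exact in general) your argument matches the paper's.
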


For $\mathcal{S} \in \Top(\mathcal{M})$, we define $\upkappa_{\mathcal{S}}$ by \eqref{eq:reflector-def}. The correspondence of the above lemma assigns $\upkappa_{\mathcal{S}} \subset \id_{\mathcal{S}}[$ to $\mathcal{S}$. Conversely, given an ideal $I$ of $\mathcal{M}$, we consider the quotient $\tau := \id_{\mathcal{M}}/I$. If we identify $\mathcal{M}$ with $\lmod{A}$ as above, then $I$ can be regarded as an ideal of the algebra $A$ and the functor $\tau$ is identified with $(A/I) \otimes_A (-)$. Thus $\tau$ is a $k$-linear right exact idempotent monad on $\mathcal{M}$. The correspondence of Lemma~\ref{lem:Rosen} assigns the category of $\tau$-modules to the ideal $I$.

\subsection{Integral over a full subcategory}
\label{subsec:integral-over}

Let $\mathcal{C}$ be a finite tensor category, and let $\mathcal{M}$ be a finite left $\mathcal{C}$-module category. Given $\mathcal{S} \in \Top(\mathcal{M})$, we consider the end
\begin{equation*}
  A'_{\mathcal{S}} := \Act^{\radj}_{\mathcal{M}}(\uptau_{\mathcal{S}}) = \int_{M \in \mathcal{M}} \iHom(M, \uptau_{\mathcal{S}}(M)),
\end{equation*}
where $\uptau_{\mathcal{S}}$ is defined by \eqref{eq:idempo-monad-def}. Let $i: \mathcal{S} \to \mathcal{M}$ be the inclusion functor. By applying Lemma~\ref{lem:end-adj} to the adjunction $i^{\flat} \dashv i$, we see that the end of the functor
\begin{equation}
  \label{eq:adjoint-alg-of-S-1}
  \mathcal{S}^{\op} \times \mathcal{S} \to \mathcal{C},
  \quad (X, X') \mapsto \iHom(i(X), i(X'))
\end{equation}
exists and is canonically isomorphic to $A'_{\mathcal{S}}$. We denote the end of \eqref{eq:adjoint-alg-of-S-1} by
\begin{equation*}
  A_{\mathcal{S}} = \int_{X \in \mathcal{S}} \iHom(X, X)
\end{equation*}
with omitting the inclusion functor. Let $\beta_{\mathcal{S}}: A'_{\mathcal{S}} \to A_{\mathcal{S}}$ be the canonical isomorphism given by Lemma~\ref{lem:end-adj}. If we denote by
\begin{equation*}
  \pi_{\mathcal{S}}(X): A_{\mathcal{S}} \to \iHom(X, X)
  \quad \text{and} \quad
  \quad \pi'_{\mathcal{S}}(M): A'_{\mathcal{S}} \to \iHom(M, \uptau_{\mathcal{S}}(M))
\end{equation*}
the respective universal dinatural transformations, then the isomorphism $\beta_{\mathcal{S}}$ is characterized as a unique morphism in $\mathcal{C}$ such that the equation
\begin{equation}
  \label{eq:end-adj-canonical-iso}
  \pi_{\mathcal{S}}(X) \circ \beta_{\mathcal{S}} = \pi'_{\mathcal{S}}(X)
\end{equation}
holds for all $X \in \mathcal{S}$.

We recall that $\uptau_{\mathcal{S}}$ is an idempotent monad on $\mathcal{M}$. Thus $A_{\mathcal{S}}'$ is an algebra in $\mathcal{C}$ as the image of an algebra under the monoidal functor $\uprho_{\mathcal{M}}^{\radj}$. On the other hand, by the universal property of the end $A_{\mathcal{S}}$, we can define
\begin{equation}
  \label{eq:A_S-unit-multiplication}
  m_{\mathcal{S}}: A_{\mathcal{S}} \otimes A_{\mathcal{S}} \to A_{\mathcal{S}}
  \quad \text{and} \quad
  u_{\mathcal{S}}: \unitobj \to A_{\mathcal{S}}
\end{equation}
to be unique morphisms such that the equations
\begin{equation*}
  \pi_{\mathcal{S}}(X) \circ m_{\mathcal{S}} = \icomp^{\mathcal{M}}_{X, X, X} \circ (\pi_{\mathcal{S}}(X) \otimes \pi_{\mathcal{S}}(X))
  \quad \text{and} \quad
  \pi_{\mathcal{S}}(X) \circ u_{\mathcal{S}} = \icoev_{\unitobj, X}
\end{equation*}
hold for all objects $X \in \mathcal{S}$. It is easy to see that $A_{\mathcal{S}}$ is an algebra in $\mathcal{C}$ with multiplication $m_{\mathcal{S}}$ and unit $u_{\mathcal{S}}$.

\begin{lemma}
  \label{lem:end-adj-alg-iso}
  The morphism $\beta_{\mathcal{S}}$ is an isomorphism of algebras in $\mathcal{C}$.
\end{lemma}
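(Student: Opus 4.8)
The plan is to verify that the isomorphism $\beta_{\mathcal{S}}: A'_{\mathcal{S}} \to A_{\mathcal{S}}$ is compatible with the two algebra structures; since $\beta_{\mathcal{S}}$ is already an isomorphism in $\mathcal{C}$, it will then automatically be an isomorphism of algebras. Recall that the algebra structure on $A'_{\mathcal{S}} = \Act^{\radj}_{\mathcal{M}}(\uptau_{\mathcal{S}})$ is the one transported from the monoid structure of $\uptau_{\mathcal{S}}$ in $(\REX(\mathcal{M}), \circ)$ — that is, from the multiplication $\nu: \uptau_{\mathcal{S}} \circ \uptau_{\mathcal{S}} \to \uptau_{\mathcal{S}}$ and unit $\zeta: \id_{\mathcal{M}} \to \uptau_{\mathcal{S}}$ of the idempotent monad $\uptau_{\mathcal{S}}$ — along the lax monoidal functor $\Act^{\radj}_{\mathcal{M}}$ of Lemma~\ref{lem:act-fun-adj-monoidal}: explicitly, the multiplication of $A'_{\mathcal{S}}$ is $m'_{\mathcal{S}} := \Act^{\radj}(\nu) \circ \mu^{(2)}_{\uptau_{\mathcal{S}}, \uptau_{\mathcal{S}}}$ and its unit is $u'_{\mathcal{S}} := \Act^{\radj}(\zeta) \circ \mu^{(0)}$. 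I would begin by recording two elementary observations: first, under the identification $A'_{\mathcal{S}} = \Act^{\radj}(\uptau_{\mathcal{S}})$ one has $\pi'_{\mathcal{S}} = \pi_{\uptau_{\mathcal{S}}}$ in the notation of Lemma~\ref{lem:act-fun-adj-monoidal}; second, since $\upkappa_{\mathcal{S}}(X) = 0$ for every $X \in \mathcal{S}$, the functor $\uptau_{\mathcal{S}}$ together with $\nu$ and $\zeta$ restricts to the identity on $\mathcal{S}$, so that $\uptau_{\mathcal{S}}(X) = X$, $\nu_X = \id_X$ and $\zeta_X = \id_X$ for $X \in \mathcal{S}$ (this is precisely what makes the equation $\pi_{\mathcal{S}}(X) \circ \beta_{\mathcal{S}} = \pi'_{\mathcal{S}}(X)$ of \eqref{eq:end-adj-canonical-iso} type-correct).

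Next I would check that $\beta_{\mathcal{S}} \circ m'_{\mathcal{S}} = m_{\mathcal{S}} \circ (\beta_{\mathcal{S}} \otimes \beta_{\mathcal{S}})$. As both sides are morphisms $A'_{\mathcal{S}} \otimes A'_{\mathcal{S}} \to A_{\mathcal{S}}$, by the universal property of the end $A_{\mathcal{S}}$ it suffices to compare the composites with $\pi_{\mathcal{S}}(X)$ for each $X \in \mathcal{S}$. For the right-hand side, the defining relation of $m_{\mathcal{S}}$ (stated just after \eqref{eq:A_S-unit-multiplication}) followed by $\pi_{\mathcal{S}}(X) \circ \beta_{\mathcal{S}} = \pi'_{\mathcal{S}}(X)$ gives $\icomp^{\mathcal{M}}_{X,X,X} \circ (\pi'_{\mathcal{S}}(X) \otimes \pi'_{\mathcal{S}}(X))$. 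For the left-hand side, using \eqref{eq:end-adj-canonical-iso} first reduces it to $\pi'_{\mathcal{S}}(X) \circ m'_{\mathcal{S}}$; then functoriality of $\Act^{\radj}$ applied to $\nu$ gives $\pi_{\uptau_{\mathcal{S}}}(X) \circ \Act^{\radj}(\nu) = \iHom(X, \nu_X) \circ \pi_{\uptau_{\mathcal{S}} \circ \uptau_{\mathcal{S}}}(X)$, and \eqref{eq:act-fun-adj-monoidal-2} gives $\pi_{\uptau_{\mathcal{S}} \circ \uptau_{\mathcal{S}}}(X) \circ \mu^{(2)}_{\uptau_{\mathcal{S}}, \uptau_{\mathcal{S}}} = \icomp^{\mathcal{M}}_{X, \uptau_{\mathcal{S}}(X), \uptau_{\mathcal{S}}\uptau_{\mathcal{S}}(X)} \circ (\pi_{\uptau_{\mathcal{S}}}(\uptau_{\mathcal{S}}(X)) \otimes \pi_{\uptau_{\mathcal{S}}}(X))$. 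Substituting $\uptau_{\mathcal{S}}(X) = X$, $\nu_X = \id_X$ and $\pi_{\uptau_{\mathcal{S}}} = \pi'_{\mathcal{S}}$ collapses this to the same morphism $\icomp^{\mathcal{M}}_{X,X,X} \circ (\pi'_{\mathcal{S}}(X) \otimes \pi'_{\mathcal{S}}(X))$, as required.

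Then I would check $\beta_{\mathcal{S}} \circ u'_{\mathcal{S}} = u_{\mathcal{S}}$ in the same spirit: composing with $\pi_{\mathcal{S}}(X)$ and using \eqref{eq:end-adj-canonical-iso} gives $\pi'_{\mathcal{S}}(X) \circ u'_{\mathcal{S}} = \pi_{\uptau_{\mathcal{S}}}(X) \circ \Act^{\radj}(\zeta) \circ \mu^{(0)}$; functoriality gives $\pi_{\uptau_{\mathcal{S}}}(X) \circ \Act^{\radj}(\zeta) = \iHom(X, \zeta_X) \circ \pi_{\id_{\mathcal{M}}}(X) = \pi_{\id_{\mathcal{M}}}(X)$ because $\zeta_X = \id_X$; and \eqref{eq:act-fun-adj-monoidal-0} gives $\pi_{\id_{\mathcal{M}}}(X) \circ \mu^{(0)} = \icoev_{\unitobj, X}$, which equals $\pi_{\mathcal{S}}(X) \circ u_{\mathcal{S}}$ by the definition of $u_{\mathcal{S}}$. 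The universal property of $A_{\mathcal{S}}$ then yields $\beta_{\mathcal{S}} \circ u'_{\mathcal{S}} = u_{\mathcal{S}}$, and we are done.

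There is no real obstacle here: the argument is pure bookkeeping with universal properties. The only points requiring a modicum of care are keeping the two universal dinatural families $\pi_{\mathcal{S}}$ and $\pi'_{\mathcal{S}}$ distinct (they are related by \eqref{eq:end-adj-canonical-iso}), transporting the monoid structure of $\uptau_{\mathcal{S}}$ correctly along the lax monoidal $\Act^{\radj}$, and the functoriality identity $\pi_G(M) \circ \Act^{\radj}(\phi) = \iHom(M, \phi_M) \circ \pi_F(M)$ for a morphism $\phi: F \to G$ in $\REX(\mathcal{M})$, which follows from naturality in $F$ of the adjunction isomorphism \eqref{eq:act-fun-adj-iso} together with the mate description \eqref{eq:act-fun-adj-mate-2}. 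The strictness facts $\uptau_{\mathcal{S}}|_{\mathcal{S}} = \id_{\mathcal{S}}$ and $\nu_X = \zeta_X = \id_X$ are what make the two computations in each comparison land at literally the same morphism.
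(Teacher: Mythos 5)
Your proposal is correct and follows essentially the same route as the paper: compose both sides with the universal dinatural transformation $\pi_{\mathcal{S}}(X)$, use the characterization \eqref{eq:end-adj-canonical-iso} of $\beta_{\mathcal{S}}$ together with the formulas of Lemma~\ref{lem:act-fun-adj-monoidal} and the fact that $\uptau_{\mathcal{S}}$ (with its monad structure) restricts to the identity on $\mathcal{S}$, and conclude by the universal property of $A_{\mathcal{S}}$. The paper's proof is just a terser version of this computation; your explicit handling of $\Act^{\radj}(\nu)$, $\Act^{\radj}(\zeta)$ and the functoriality identity for $\pi$ fills in exactly the steps the paper leaves to the reader.
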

\begin{proof}
  Noting $\uptau_{\mathcal{S}}(X) = X$ for all $X \in \mathcal{S}$, we easily verify that the equations
  \begin{gather*}
    \pi_{\mathcal{S}}(X) \circ \beta_{\mathcal{S}} \circ \mu^{(2)}_{\uptau_{\mathcal{S}}, \uptau_{\mathcal{S}}}
    = \icomp_{X,X,X}
    = \pi_{\mathcal{S}}(X) \circ m_{\mathcal{S}} \circ (\beta_{\mathcal{S}} \otimes \beta_{\mathcal{S}}), \\
    \pi_{\mathcal{S}}(X) \circ \beta_{\mathcal{S}} \circ \mu^{(0)}
    = \icoev_{\unitobj, X} = \pi_{\mathcal{S}}(X) \circ u_{\mathcal{S}}
  \end{gather*}
  hold for all objects $X \in \mathcal{S}$. By the universal property of the end $A_{\mathcal{S}}$, we conclude that $\beta_{\mathcal{S}}$ is a morphism of algebras.
\end{proof}

For $\mathcal{S} \in \Top(\mathcal{M})$, we denote by $q_{\mathcal{S}}: \id_{\mathcal{M}} \to \uptau_{\mathcal{S}}$ the quotient morphism. We recall that the kernel of $q_{\mathcal{S}}$ is $\upkappa_{\mathcal{S}}$. For $\mathcal{S}_1, \mathcal{S}_2 \in \Top(\mathcal{M})$ with $\mathcal{S}_1 \supset \mathcal{S}_2$, we have $\upkappa_{\mathcal{S}_1} \subset \upkappa_{\mathcal{S}_2}$ as subobjects of $\id_{\mathcal{M}}$. Thus there is a unique morphism $q_{\mathcal{S}_1|\mathcal{S}_2}: \uptau_{\mathcal{S}_1} \to \uptau_{\mathcal{S}_2}$ such that $q_{\mathcal{S}_1|\mathcal{S}_2} \circ q_{\mathcal{S}_1} = q_{\mathcal{S}_2}$.

For $\mathcal{S}_1, \mathcal{S}_2$ with $\mathcal{S}_1 \supset \mathcal{S}_2$, we also define a morphism $\phi_{\mathcal{S}_1 | \mathcal{S}_2}: A_{\mathcal{S}_1} \to A_{\mathcal{S}_2}$ to be a unique morphism such that the equation
\begin{equation}
  \label{eq:canonical-epi}
  \pi_{\mathcal{S}_2}(X) \circ \phi_{\mathcal{S}_1 | \mathcal{S}_2} = \pi_{\mathcal{S}_1}(X)
\end{equation}
holds for all objects $X \in \mathcal{S}_2$.

\begin{lemma}
  \label{lem:canonical-epi}
  With the above notation, we have
  \begin{equation*}
    \phi_{\mathcal{S}_1|\mathcal{S}_2} \circ \beta_{\mathcal{S}_1}
    = \beta_{\mathcal{S}_2} \circ \uprho_{\mathcal{M}}^{\radj}(q_{\mathcal{S}_1|\mathcal{S}_2}).
  \end{equation*}
\end{lemma}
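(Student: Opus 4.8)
The plan is to verify the asserted identity by testing it against the universal dinatural transformation $\pi_{\mathcal{S}_2}(X)$ for every object $X \in \mathcal{S}_2$; since $A_{\mathcal{S}_2}$ is the end defining $\pi_{\mathcal{S}_2}$, and both composites in the statement are morphisms $A_{\mathcal{S}_1} \to A_{\mathcal{S}_2}$, it suffices to show that they agree after postcomposition with each $\pi_{\mathcal{S}_2}(X)$. So the first step is to compute $\pi_{\mathcal{S}_2}(X) \circ \phi_{\mathcal{S}_1|\mathcal{S}_2} \circ \beta_{\mathcal{S}_1}$. Using the defining equation~\eqref{eq:canonical-epi} of $\phi_{\mathcal{S}_1|\mathcal{S}_2}$ this collapses to $\pi_{\mathcal{S}_1}(X) \circ \beta_{\mathcal{S}_1}$, which by~\eqref{eq:end-adj-canonical-iso} equals $\pi'_{\mathcal{S}_1}(X)$, the universal dinatural transformation of the end $A'_{\mathcal{S}_1} = \Act_{\mathcal{M}}^{\radj}(\uptau_{\mathcal{S}_1})$ evaluated at $X \in \mathcal{S}_1 \supset \mathcal{S}_2$ (recall $\uptau_{\mathcal{S}_1}(X) = X$ for $X \in \mathcal{S}_1$).

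**The other side.**
Next I would compute $\pi_{\mathcal{S}_2}(X) \circ \beta_{\mathcal{S}_2} \circ \Act_{\mathcal{M}}^{\radj}(q_{\mathcal{S}_1|\mathcal{S}_2})$. Again by~\eqref{eq:end-adj-canonical-iso}, $\pi_{\mathcal{S}_2}(X) \circ \beta_{\mathcal{S}_2} = \pi'_{\mathcal{S}_2}(X)$, so this becomes $\pi'_{\mathcal{S}_2}(X) \circ \Act_{\mathcal{M}}^{\radj}(q_{\mathcal{S}_1|\mathcal{S}_2})$. Now $\Act_{\mathcal{M}}^{\radj}(q_{\mathcal{S}_1|\mathcal{S}_2})$ is the morphism of ends induced by the natural transformation $q_{\mathcal{S}_1|\mathcal{S}_2}\colon \uptau_{\mathcal{S}_1} \to \uptau_{\mathcal{S}_2}$, so by the functoriality of the end construction (the action of $\Act_{\mathcal{M}}^{\radj}$ on morphisms, as in the proof of Theorem~\ref{thm:action-adj-by-end}) it is characterized by
\begin{equation*}
  \pi'_{\mathcal{S}_2}(M) \circ \Act_{\mathcal{M}}^{\radj}(q_{\mathcal{S}_1|\mathcal{S}_2})
  = \iHom(M, (q_{\mathcal{S}_1|\mathcal{S}_2})_M) \circ \pi'_{\mathcal{S}_1}(M)
\end{equation*}
for all $M \in \mathcal{M}$. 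Specializing to $M = X \in \mathcal{S}_2$, the transition map $(q_{\mathcal{S}_1|\mathcal{S}_2})_X\colon \uptau_{\mathcal{S}_1}(X) \to \uptau_{\mathcal{S}_2}(X)$ is the identity on $X$, since both $\uptau_{\mathcal{S}_i}(X) = X$ and $q_{\mathcal{S}_1|\mathcal{S}_2} \circ q_{\mathcal{S}_1} = q_{\mathcal{S}_2}$ forces $(q_{\mathcal{S}_1|\mathcal{S}_2})_X = \id_X$ once one notes $(q_{\mathcal{S}_i})_X = \id_X$ for $X \in \mathcal{S}_i$. Hence the right-hand side is $\pi'_{\mathcal{S}_1}(X)$, matching the computation of the first composite.

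**Conclusion and main obstacle.**
Having shown $\pi_{\mathcal{S}_2}(X) \circ (\text{LHS}) = \pi'_{\mathcal{S}_1}(X) = \pi_{\mathcal{S}_2}(X) \circ (\text{RHS})$ for all $X \in \mathcal{S}_2$, the universal property of $A_{\mathcal{S}_2}$ as the end of~\eqref{eq:adjoint-alg-of-S-1} gives LHS $=$ RHS, completing the proof. The only genuinely delicate point is bookkeeping: one must keep straight the distinction between the two descriptions of the adjoint algebra ($A_{\mathcal{S}}$ via the end over $\mathcal{S}$ versus $A'_{\mathcal{S}}$ via the end over $\mathcal{M}$) and carefully identify the transition morphism $(q_{\mathcal{S}_1|\mathcal{S}_2})_X$ as the identity on objects of the smaller subcategory; once these identifications are made, the argument is a routine chase through the universal properties. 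I anticipate no obstruction beyond this.
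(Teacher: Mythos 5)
Your proposal is correct and follows essentially the same route as the paper's proof: test both composites against $\pi_{\mathcal{S}_2}(X)$ for $X \in \mathcal{S}_2$, reduce each side to the universal dinatural transformation of $A'_{\mathcal{S}_1}$ using \eqref{eq:canonical-epi}, \eqref{eq:end-adj-canonical-iso}, the functoriality of $\Act_{\mathcal{M}}^{\radj}$ and the fact that $(q_{\mathcal{S}_1|\mathcal{S}_2})_X=\id_X$, and conclude by the universal property of the end $A_{\mathcal{S}_2}$. Your bookkeeping of $\pi'_{\mathcal{S}_1}(X)$ versus $\pi'_{\mathcal{S}_2}(X)$ in the second computation is in fact the careful reading of the argument, so no further changes are needed.
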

\begin{proof}
  For all objects $X \in \mathcal{S}_2$, we have
  \begin{gather*}
    \pi_{\mathcal{S}_2}(X) \circ \phi_{\mathcal{S}_1|\mathcal{S}_2} \circ \beta_{\mathcal{S}_1}
    = \pi_{\mathcal{S}_1}(X) \circ \beta_{\mathcal{S}_1}
    = \pi'_{\mathcal{S}_1}(X)
  \end{gather*}
  by~\eqref{eq:end-adj-canonical-iso} and~\eqref{eq:canonical-epi}. Noting $\uptau_{\mathcal{S}_1}(X) = X$ and $(q_{\mathcal{S}_1|\mathcal{S}_2})_X = \id_X$, we also have
  \begin{align*}
    \pi_{\mathcal{S}_2}(X) \circ \beta_{\mathcal{S}_2} \circ \uprho_{\mathcal{M}}^{\radj}(q_{\mathcal{S}_1|\mathcal{S}_2})
    & = \pi'_{\mathcal{S}_2}(X) \circ \uprho_{\mathcal{M}}^{\radj}(q_{\mathcal{S}_1|\mathcal{S}_2}) \\
    & = \iHom(\id_X, (q_{\mathcal{S}_1|\mathcal{S}_2})_X) \circ \pi'_{\mathcal{S}_2}(X)
    = \pi'_{\mathcal{S}_2}(X).
  \end{align*}
  The claim follows from the universal property of $A_{\mathcal{S}_2}$.
\end{proof}

For $\mathcal{S}_1, \mathcal{S}_2, \mathcal{S}_3 \in \Top(\mathcal{M})$ with $\mathcal{S}_1 \supset \mathcal{S}_2 \supset \mathcal{S}_3$, we have
\begin{equation}
  \label{eq:canonical-epi-transitive}
  q_{\mathcal{S}_2|\mathcal{S}_3} \circ q_{\mathcal{S}_1|\mathcal{S}_2} = q_{\mathcal{S}_1|\mathcal{S}_3}
  \quad \text{and} \quad
  \phi_{\mathcal{S}_2 | \mathcal{S}_3} \circ \phi_{\mathcal{S}_1 | \mathcal{S}_2} = \phi_{\mathcal{S}_1 | \mathcal{S}_3}.
\end{equation}
Lemma~\ref{lem:canonical-epi} says that the inverse system $(\{ A_{\mathcal{S}} \}, \{ \phi_{\mathcal{S}_1|\mathcal{S}_2} \} )$ in $\mathcal{C}$ is obtained from the inverse system $( \{ \uptau_{\mathcal{S}} \}, \{ q_{\mathcal{S}_1|\mathcal{S}_2} \})$ in $\REX(\mathcal{M})$ by applying $\Act_{\mathcal{M}}^{\radj}$. We note that an exact functor preserves epimorphisms. By Theorem~\ref{thm:action-adj-by-end} and Lemma~\ref{lem:canonical-epi}, we have:

\begin{lemma}
  \label{lem:canonical-epi-2}
  If $\mathcal{M}$ is an exact $\mathcal{C}$-module category, then $(\{ A_{\mathcal{S}} \}, \{ \phi_{\mathcal{S}_1|\mathcal{S}_2} \})$ is an inverse system of epimorphisms in $\mathcal{C}$.
\end{lemma}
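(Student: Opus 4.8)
The plan is to deduce the claim from Theorem~\ref{thm:action-adj-by-end}(a) by way of the compatibility recorded in Lemma~\ref{lem:canonical-epi}. The relations~\eqref{eq:canonical-epi-transitive} already exhibit $(\{ A_{\mathcal{S}} \}, \{ \phi_{\mathcal{S}_1|\mathcal{S}_2} \})$ as an inverse system indexed by $\Top(\mathcal{M})$ ordered by inclusion, so the only thing left to verify is that each transition morphism $\phi_{\mathcal{S}_1|\mathcal{S}_2}$, for $\mathcal{S}_1 \supset \mathcal{S}_2$ in $\Top(\mathcal{M})$, is an epimorphism in $\mathcal{C}$.

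First I would observe that $q_{\mathcal{S}_1|\mathcal{S}_2}: \uptau_{\mathcal{S}_1} \to \uptau_{\mathcal{S}_2}$ is an epimorphism in $\REX(\mathcal{M})$: the morphism $q_{\mathcal{S}_2}: \id_{\mathcal{M}} \to \uptau_{\mathcal{S}_2}$ is a quotient morphism, hence epic, and it factors as $q_{\mathcal{S}_2} = q_{\mathcal{S}_1|\mathcal{S}_2} \circ q_{\mathcal{S}_1}$, so $q_{\mathcal{S}_1|\mathcal{S}_2}$ is epic because any morphism admitting an epimorphism as a left factor is itself an epimorphism. Since $\mathcal{M}$ is an exact $\mathcal{C}$-module category, Theorem~\ref{thm:action-adj-by-end}(a) shows that $\Act_{\mathcal{M}}^{\radj}$ is exact, and an exact (in particular right exact) additive functor between abelian categories preserves epimorphisms; hence $\Act_{\mathcal{M}}^{\radj}(q_{\mathcal{S}_1|\mathcal{S}_2})$ is an epimorphism in $\mathcal{C}$. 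Finally, Lemma~\ref{lem:canonical-epi} gives $\phi_{\mathcal{S}_1|\mathcal{S}_2} \circ \beta_{\mathcal{S}_1} = \beta_{\mathcal{S}_2} \circ \Act_{\mathcal{M}}^{\radj}(q_{\mathcal{S}_1|\mathcal{S}_2})$; since $\beta_{\mathcal{S}_1}$ and $\beta_{\mathcal{S}_2}$ are isomorphisms, the right-hand side is an epimorphism, and therefore so is $\phi_{\mathcal{S}_1|\mathcal{S}_2}$.

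There is essentially no obstacle here: the argument is a short chain of formal facts, and the only points needing a word of justification beyond the cited results are that $q_{\mathcal{S}_1|\mathcal{S}_2}$ is epic (immediate from its defining factorization through $q_{\mathcal{S}_2}$) and that an exact functor preserves epimorphisms (standard). The substantive input is Theorem~\ref{thm:action-adj-by-end}(a) together with the naturality of $\beta$ already established in Lemma~\ref{lem:canonical-epi}.
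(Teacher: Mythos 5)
Your proof is correct and follows essentially the same route as the paper: the paper likewise notes that the system $(\{\uptau_{\mathcal{S}}\},\{q_{\mathcal{S}_1|\mathcal{S}_2}\})$ consists of epimorphisms in $\REX(\mathcal{M})$, invokes the exactness of $\Act_{\mathcal{M}}^{\radj}$ from Theorem~\ref{thm:action-adj-by-end} (exact functors preserve epimorphisms), and transports the conclusion to the $\phi_{\mathcal{S}_1|\mathcal{S}_2}$ via the algebra isomorphisms $\beta_{\mathcal{S}}$ using Lemma~\ref{lem:canonical-epi}. Only a cosmetic remark: your phrase ``admitting an epimorphism as a left factor'' should read that $q_{\mathcal{S}_1|\mathcal{S}_2}$ is the \emph{left} (outer) factor of the epimorphism $q_{\mathcal{S}_2}=q_{\mathcal{S}_1|\mathcal{S}_2}\circ q_{\mathcal{S}_1}$, which is the standard fact that $g\circ f$ epic implies $g$ epic.
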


\newcommand{\Quo}{\mathfrak{Quo}}
\newcommand{\Sub}{\mathfrak{Sub}}

We use the above observation to state the main result of this section.
For an object $X$ of an essentially small category $\mathcal{E}$, we denote by $\Quo(X)$ and $\Sub(X)$ the set of quotient objects of $X$ and the set of subobjects of $X$, respectively. We introduce partial orders on these sets as follows: For $Q_1, Q_2 \in \Quo(X)$, we write $Q_1 \ge Q_2$ if there is a morphism $Q_1 \to Q_2$ in $\mathcal{E}$ compatible with the quotient morphisms from $X$. Dually, for $S_1, S_2 \in \Sub(X)$, we write $S_1 \ge S_2$ if there is a morphism $S_2 \to S_1$ in $\mathcal{E}$ compatible with the inclusion morphisms to $X$.

\begin{theorem}
  \label{thm:adjoint-alg-quotients}
  Let $\mathcal{M}$ be an exact $\mathcal{C}$-module category. Then the map
  \begin{equation*}
    \Top(\mathcal{M}) \to \Quo(A_{\mathcal{M}}),
    \quad \mathcal{S} \mapsto A_{\mathcal{S}} = \int_{X \in \mathcal{S}} \iHom(X, X)
  \end{equation*}
  preserves the order. If, moreover, $\mathcal{M}$ is indecomposable, then this map reflects the order.
\end{theorem}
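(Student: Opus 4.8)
The plan is to transport the order on $\Quo(A_{\mathcal{M}})$ back to $\REX(\mathcal{M})$ along the functor $\Act_{\mathcal{M}}^{\radj}$, which is exact in general and, when $\mathcal{M}$ is indecomposable, also faithful (Theorem~\ref{thm:action-adj-by-end}). Throughout, recall that under Lemma~\ref{lem:Rosen} the topologizing subcategory $\mathcal{S}$ corresponds to the ideal $\upkappa_{\mathcal{S}} \subseteq \id_{\mathcal{M}}$, that $\uptau_{\mathcal{S}} = \id_{\mathcal{M}}/\upkappa_{\mathcal{S}}$, and that from \eqref{eq:reflector-def} one reads off directly $\mathcal{S}_1 \supseteq \mathcal{S}_2 \iff \upkappa_{\mathcal{S}_1} \subseteq \upkappa_{\mathcal{S}_2}$ as subobjects of $\id_{\mathcal{M}}$.

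First I would set up the dictionary between quotients of $A_{\mathcal{M}}$ and ideals of $\mathcal{M}$. Since $\mathcal{M}$ is exact, Lemma~\ref{lem:canonical-epi-2} (with $\mathcal{M}$ in the role of $\mathcal{S}_1$) shows that $\phi_{\mathcal{M}|\mathcal{S}} \colon A_{\mathcal{M}} \to A_{\mathcal{S}}$ is an epimorphism, so the map in the statement really does land in $\Quo(A_{\mathcal{M}})$. By Lemma~\ref{lem:canonical-epi} we have $\phi_{\mathcal{M}|\mathcal{S}} \circ \beta_{\mathcal{M}} = \beta_{\mathcal{S}} \circ \Act_{\mathcal{M}}^{\radj}(q_{\mathcal{M}|\mathcal{S}})$ with $\beta_{\mathcal{M}}, \beta_{\mathcal{S}}$ isomorphisms, and since $q_{\mathcal{M}|\mathcal{S}} \colon \id_{\mathcal{M}} \twoheadrightarrow \uptau_{\mathcal{S}}$ has kernel $\upkappa_{\mathcal{S}}$ while $\Act_{\mathcal{M}}^{\radj}$ is exact, this gives
\begin{equation*}
  \Ker(\phi_{\mathcal{M}|\mathcal{S}}) = \beta_{\mathcal{M}}\left( \Act_{\mathcal{M}}^{\radj}(\upkappa_{\mathcal{S}}) \right) \subseteq A_{\mathcal{M}} .
\end{equation*}
Consequently, for $\mathcal{S}_1, \mathcal{S}_2 \in \Top(\mathcal{M})$ one has $A_{\mathcal{S}_1} \ge A_{\mathcal{S}_2}$ in $\Quo(A_{\mathcal{M}})$ if and only if $\Ker(\phi_{\mathcal{M}|\mathcal{S}_1}) \subseteq \Ker(\phi_{\mathcal{M}|\mathcal{S}_2})$, which by the displayed formula and the invertibility of $\beta_{\mathcal{M}}$ is equivalent to $\Act_{\mathcal{M}}^{\radj}(\upkappa_{\mathcal{S}_1}) \subseteq \Act_{\mathcal{M}}^{\radj}(\upkappa_{\mathcal{S}_2})$ as subobjects of $\Act_{\mathcal{M}}^{\radj}(\id_{\mathcal{M}})$.

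Order preservation is then immediate: if $\mathcal{S}_1 \supseteq \mathcal{S}_2$ then $\upkappa_{\mathcal{S}_1} \subseteq \upkappa_{\mathcal{S}_2}$, hence $\Act_{\mathcal{M}}^{\radj}(\upkappa_{\mathcal{S}_1}) \subseteq \Act_{\mathcal{M}}^{\radj}(\upkappa_{\mathcal{S}_2})$, hence $A_{\mathcal{S}_1} \ge A_{\mathcal{S}_2}$; explicitly the comparison morphism is $\phi_{\mathcal{S}_1|\mathcal{S}_2}$ of \eqref{eq:canonical-epi}, which satisfies $\phi_{\mathcal{S}_1|\mathcal{S}_2} \circ \phi_{\mathcal{M}|\mathcal{S}_1} = \phi_{\mathcal{M}|\mathcal{S}_2}$ by \eqref{eq:canonical-epi-transitive}. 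For order reflection assume in addition that $\mathcal{M}$ is indecomposable, so $\Act_{\mathcal{M}}^{\radj}$ is faithful and exact, and use the following general fact: a faithful exact functor $G$ between abelian categories reflects the inclusion order among subobjects of a fixed object $X$. Indeed, for subobjects $K_1, K_2 \hookrightarrow X$ one has $K_1 \subseteq K_2$ iff the composite $K_1 \hookrightarrow X \twoheadrightarrow X/K_2$ vanishes; applying the exact functor $G$ turns this composite into $G(K_1) \to G(X) \twoheadrightarrow G(X)/G(K_2)$, which vanishes iff $G(K_1) \subseteq G(K_2)$, and since $G$ is faithful the vanishing of the image forces the vanishing of the original, i.e. $K_1 \subseteq K_2$. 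Applying this with $G = \Act_{\mathcal{M}}^{\radj}$, $X = \id_{\mathcal{M}}$ and $K_i = \upkappa_{\mathcal{S}_i}$, the condition $A_{\mathcal{S}_1} \ge A_{\mathcal{S}_2}$ yields $\upkappa_{\mathcal{S}_1} \subseteq \upkappa_{\mathcal{S}_2}$, that is, $\mathcal{S}_1 \supseteq \mathcal{S}_2$.

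The main obstacle is the order-reflection step: it rests on the faithfulness of $\Act_{\mathcal{M}}^{\radj}$ — which is exactly where indecomposability of $\mathcal{M}$ enters — together with the lemma that a faithful exact functor reflects subobject inclusions. The remainder is routine bookkeeping with the universal properties defining $\phi_{\mathcal{S}_1|\mathcal{S}_2}$, $q_{\mathcal{S}_1|\mathcal{S}_2}$ and the comparison isomorphisms $\beta_{\mathcal{S}}$.
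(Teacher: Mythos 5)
Your proof is correct and takes essentially the same route as the paper: order preservation comes from Lemma~\ref{lem:canonical-epi-2}, and reflection is reduced, via the identification of $\Ker(\phi_{\mathcal{M}|\mathcal{S}})$ with $\Act^{\radj}_{\mathcal{M}}(\upkappa_{\mathcal{S}})$ (Lemma~\ref{lem:canonical-epi} plus exactness), to the fact that the exact faithful functor $\Act^{\radj}_{\mathcal{M}}$ of Theorem~\ref{thm:action-adj-by-end} reflects inclusions among subobjects of $\id_{\mathcal{M}}$. The paper implements this last step by checking that $\Act^{\radj}_{\mathcal{M}}$ annihilates the subquotient $\upkappa_{\mathcal{S}_1}/(\upkappa_{\mathcal{S}_1}\cap\upkappa_{\mathcal{S}_2})$ while you check vanishing of the composite $\upkappa_{\mathcal{S}_1}\hookrightarrow\id_{\mathcal{M}}\twoheadrightarrow\uptau_{\mathcal{S}_2}$, which is the same argument in different clothing.
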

\begin{proof}
  Lemma~\ref{lem:canonical-epi-2} means that the map in question preserves the order. To complete the proof, we suppose that $\mathcal{M}$ is indecomposable. Let $\mathcal{S}_1$ and $\mathcal{S}_2$ be topologizing full subcategory of $\mathcal{M}$ such that $A_{\mathcal{S}_1} \ge A_{\mathcal{S}_2}$ in $\Quo(A_{\mathcal{M}})$. Then we have $\Act_{\mathcal{M}}^{\radj}(\upkappa_{\mathcal{S}_1}) \le \Act_{\mathcal{M}}^{\radj}(\upkappa_{\mathcal{S}_2})$ in $\Sub(A_{\mathcal{M}})$. Since $\Act^{\radj}_{\mathcal{M}}$ is exact, we have
  \begin{equation*}
    \Act^{\radj}_{\mathcal{M}}\left( \frac{\upkappa_{\mathcal{S}_2}}
      {\upkappa_{\mathcal{S}_2} \cap \upkappa_{\mathcal{S}_1}} \right)
    = \frac{\Act^{\radj}_{\mathcal{M}}(\upkappa_{\mathcal{S}_2})}
    {\Act_{\mathcal{M}}^{\radj}(\upkappa_{\mathcal{S}_2}) \cap \Act_{\mathcal{M}}^{\radj}(\upkappa_{\mathcal{S}_1})}
    = \frac{\Act^{\radj}_{\mathcal{M}}(\upkappa_{\mathcal{S}_2})}
    {\Act_{\mathcal{M}}^{\radj}(\upkappa_{\mathcal{S}_2})}
    = 0.
  \end{equation*}
  Since $\Act_{\mathcal{M}}^{\radj}$ is faithful by Theorem~\ref{thm:action-adj-by-end}, we have $\upkappa_{\mathcal{S}_2} / (\upkappa_{\mathcal{S}_2} \cap \upkappa_{\mathcal{S}_1}) = 0$. This implies that $\upkappa_{\mathcal{S}_1} \subset \upkappa_{\mathcal{S}_2}$. Hence $\mathcal{S}_1 \subset \mathcal{S}_2$. The proof is done.
\end{proof}

The dual of Theorem~\ref{thm:adjoint-alg-quotients} is also interesting. Let $\ActLex_{\mathcal{M}}: \mathcal{C} \to \LEX(\mathcal{M})$ be the left exact version of the action functor. By Remark~\ref{rem:lex-version} and the dual of Lemma~\ref{lem:end-adj} (see \cite[Lemma 3.9]{MR2869176}), the coend of the functor
\begin{equation}
  \label{eq:adjoint-coalg-of-S-1}
  \mathcal{S}^{\op} \times \mathcal{S} \to \mathcal{C},
  \quad (X, X') \mapsto \icoHom(X, X')
\end{equation}
exists for all $\mathcal{S} \in \Top(\mathcal{M})$ and is canonically isomorphic to the coend
\begin{equation*}
  \ActLex_{\mathcal{M}}^{\ladj}(\uptau^{\radj}_{\mathcal{S}}) = \int^{M \in \mathcal{M}} \icoHom(M, \uptau^{\radj}_{\mathcal{S}}(M)).
\end{equation*}
We denote the coend of \eqref{eq:adjoint-coalg-of-S-1} by $L_{\mathcal{S}} = \int^{X \in \mathcal{S}} \icoHom(X, X)$. Since the duality functor is an anti-equivalence, the object ${}^* \! A_{\mathcal{S}}$ is also a coend of the functor \eqref{eq:adjoint-coalg-of-S-1} with universal dinatural transformation
\begin{equation*}
  {}^* \pi_{\mathcal{S}}: {}^* \! A_{\mathcal{S}} \to {}^* \iHom(X, X) = \icoHom(X, X)
  \quad (X \in \mathcal{S}).
\end{equation*}
Thus there is an isomorphism ${}^* \! A_{\mathcal{S}} \cong L_{\mathcal{S}}$ respecting the universal dinatural transformations. By the above observation, we now obtain the following theorem:

\begin{theorem}
  \label{thm:adjoint-alg-quotients-dual}
  Let $\mathcal{M}$ be an exact $\mathcal{C}$-module category. Then the map
  \begin{equation*}
    \Top(\mathcal{M}) \to \Sub(L_{\mathcal{M}}),
    \quad \mathcal{S} \mapsto L_{\mathcal{S}}
  \end{equation*}
  preserves the order. If, moreover, $\mathcal{M}$ is indecomposable, then this map reflects the order.
\end{theorem}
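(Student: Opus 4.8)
The plan is to deduce this statement from Theorem~\ref{thm:adjoint-alg-quotients} by transporting everything through the right duality functor ${}^*(-)\colon \mathcal{C}^{\op}\to\mathcal{C}^{\rev}$, using the identification ${}^*\! A_{\mathcal{S}}\cong L_{\mathcal{S}}$, compatible with the universal dinatural transformations, that was recorded in the discussion preceding the statement. Since ${}^*(-)$ is a $k$-linear anti-equivalence, it is exact and carries epimorphisms to monomorphisms; applying it to a quotient $p\colon A_{\mathcal{M}}\twoheadrightarrow Q$ yields a monomorphism ${}^*p\colon {}^*\! Q\hookrightarrow {}^*\! A_{\mathcal{M}}$, hence a bijection $\Quo(A_{\mathcal{M}})\to\Sub({}^*\! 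A_{\mathcal{M}})$, $Q\mapsto {}^*\! Q$. Unwinding the definitions of the partial orders on $\Quo$ and $\Sub$ given above, this bijection is order-preserving: a witness $Q_1\to Q_2$ for $Q_1\ge Q_2$ is turned by ${}^*(-)$ into a morphism ${}^*\! Q_2\to {}^*\! Q_1$ compatible with the inclusions into ${}^*\! A_{\mathcal{M}}$, which is precisely a witness for ${}^*\! Q_1\ge {}^*\! Q_2$ in $\Sub({}^*\! A_{\mathcal{M}})$. Post-composing with the order isomorphism $\Sub({}^*\! A_{\mathcal{M}})\cong\Sub(L_{\mathcal{M}})$ induced by the isomorphism ${}^*\! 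A_{\mathcal{M}}\cong L_{\mathcal{M}}$, one obtains an order-preserving bijection $\Theta\colon\Quo(A_{\mathcal{M}})\to\Sub(L_{\mathcal{M}})$ sending $A_{\mathcal{S}}$ to $L_{\mathcal{S}}$ for every $\mathcal{S}\in\Top(\mathcal{M})$.

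Granting this, both assertions are immediate. For the first, if $\mathcal{S}_1\supset\mathcal{S}_2$ in $\Top(\mathcal{M})$ then $A_{\mathcal{S}_1}\ge A_{\mathcal{S}_2}$ in $\Quo(A_{\mathcal{M}})$ by Theorem~\ref{thm:adjoint-alg-quotients}, so $L_{\mathcal{S}_1}=\Theta(A_{\mathcal{S}_1})\ge\Theta(A_{\mathcal{S}_2})=L_{\mathcal{S}_2}$ in $\Sub(L_{\mathcal{M}})$; hence $\mathcal{S}\mapsto L_{\mathcal{S}}$ preserves the order. For the second, assume in addition that $\mathcal{M}$ is indecomposable and that $L_{\mathcal{S}_1}\ge L_{\mathcal{S}_2}$ in $\Sub(L_{\mathcal{M}})$. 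Applying $\Theta^{-1}$ (equivalently, the quasi-inverse anti-equivalence $(-)^*$) gives $A_{\mathcal{S}_1}\ge A_{\mathcal{S}_2}$ in $\Quo(A_{\mathcal{M}})$, and the order-reflecting part of Theorem~\ref{thm:adjoint-alg-quotients}---whose proof uses that $\Act_{\mathcal{M}}^{\radj}$ is faithful and exact (Theorem~\ref{thm:action-adj-by-end})---forces $\mathcal{S}_1\supset\mathcal{S}_2$. Thus $\mathcal{S}\mapsto L_{\mathcal{S}}$ also reflects the order.

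The only point requiring care is the verification that $\Theta(A_{\mathcal{S}})=L_{\mathcal{S}}$, i.e.\ that the isomorphisms ${}^*\! A_{\mathcal{S}}\cong L_{\mathcal{S}}$ and ${}^*\! A_{\mathcal{M}}\cong L_{\mathcal{M}}$ intertwine the monomorphism ${}^*\phi_{\mathcal{M}\mid\mathcal{S}}$ with the canonical morphism $L_{\mathcal{S}}\to L_{\mathcal{M}}$ (obtained, as in Lemma~\ref{lem:canonical-epi} and Remark~\ref{rem:lex-version}, by applying $\ActLex_{\mathcal{M}}^{\ladj}$ to $\uptau'_{\mathcal{S}}\hookrightarrow\id_{\mathcal{M}}$). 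Since ${}^*\! A_{\mathcal{S}}$ is itself a coend of $(X,X')\mapsto\icoHom(X,X')$ over $\mathcal{S}$, with universal dinatural transformation ${}^*\pi_{\mathcal{S}}(X)$, it suffices to compare the two composites $\icoHom(X,X)\to L_{\mathcal{M}}$ obtained by precomposing the two candidate maps with ${}^*\pi_{\mathcal{S}}(X)$; using that these isomorphisms respect universal dinatural transformations, together with the dual of the defining equation $\pi_{\mathcal{S}_2}(X)\circ\phi_{\mathcal{S}_1\mid\mathcal{S}_2}=\pi_{\mathcal{S}_1}(X)$ taken in the case $\mathcal{S}_1=\mathcal{M}$, both composites are seen to equal the universal dinatural transformation of $L_{\mathcal{M}}$ at $X$. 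This is routine bookkeeping, and indeed I expect no genuine obstacle here: all the real content---the existence of the relevant (co)ends and the exactness and faithfulness of $\Act_{\mathcal{M}}^{\radj}$, hence the order-reflecting statement---already resides in Theorems~\ref{thm:action-adj-by-end} and~\ref{thm:adjoint-alg-quotients}, and the task is simply to track these facts through the duality anti-equivalence.
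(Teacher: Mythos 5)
Your proposal is correct and takes essentially the same route as the paper: the paper also deduces the statement from Theorem~\ref{thm:adjoint-alg-quotients} by transporting it through the duality anti-equivalence, using that ${}^*\!A_{\mathcal{S}}$ is a coend of $(X,X')\mapsto\icoHom(X,X')$ and hence ${}^*\!A_{\mathcal{S}}\cong L_{\mathcal{S}}$ compatibly with the universal dinatural transformations. Your final compatibility check of the inclusions $L_{\mathcal{S}}\to L_{\mathcal{M}}$ with ${}^*\phi_{\mathcal{M}\mid\mathcal{S}}$ merely makes explicit bookkeeping that the paper leaves implicit.
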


\subsection{Integral over a module full subcategory}
\label{subsec:integral-over-mod-subcat}

Let $\mathcal{C}$ be a finite tensor category, and let $\mathcal{M}$ be a finite left $\mathcal{C}$-module category. We introduce the following terminology:

\begin{definition}
  A {\em $\mathcal{C}$-module full subcategory} of $\mathcal{M}$ is a topologizing full subcategory of $\mathcal{M}$ closed under the action of $\mathcal{C}$.
\end{definition}

Let $\mathcal{S}$ be a $\mathcal{C}$-module full subcategory of $\mathcal{M}$, and let $\upkappa_{\mathcal{S}}$ and $\uptau_{\mathcal{S}}$ be the endofunctors on $\mathcal{M}$ defined by~\eqref{eq:reflector-def} and~\eqref{eq:idempo-monad-def}, respectively. Then we have
\begin{equation*}
  (V \otimes M)(V \otimes \upkappa_{\mathcal{S}}(M))
  \cong V \otimes (M / \upkappa_{\mathcal{S}}(M))
  = V \otimes \uptau_{\mathcal{S}}(M) \in \mathcal{S}
\end{equation*}
for all $V \in \mathcal{C}$ and $M \in \mathcal{M}$. Thus we have a natural transformation
\begin{equation*}
  \uptau_{\mathcal{S}}(V \otimes M) \to V \otimes \uptau_{\mathcal{S}}(M)
  \quad (V \in \mathcal{C}, M \in \mathcal{M})
\end{equation*}
making $\uptau_{\mathcal{S}} \in \REX(\mathcal{M})$ an oplax $\mathcal{C}$-module endofunctor on $\mathcal{M}$. Since $\mathcal{C}$ is rigid, we may regard $\uptau_{\mathcal{S}}$ as a strong $\mathcal{C}$-module functor.

By Theorem~\ref{thm:act-fun-adj-Z}, we endow the algebra $A'_{\mathcal{S}} = \Act^{\radj}(\uptau_{\mathcal{S}})$ with a half-braiding $\sigma'_{\mathcal{S}}$ such that $(A'_{\mathcal{S}}, \sigma'_{\mathcal{S}})$ is an algebra in $\mathcal{Z}(\mathcal{C})$. Since $A_{\mathcal{S}}$ is isomorphic to $A'_{\mathcal{S}}$, the algebra $A_{\mathcal{S}}$ also give rise to an algebra in $\mathcal{Z}(\mathcal{C})$. By Lemma~\ref{lem:act-fun-adj-half-braiding}, the half-braiding
\begin{equation*}
  \sigma_{\mathcal{S}}(V): A_{\mathcal{S}} \otimes V \to V \otimes A_{\mathcal{S}} \quad (V \in \mathcal{C})
\end{equation*}
of $A_{\mathcal{S}}$ inherited from $\Act^{\radj}(\uptau_{\mathcal{S}})$ is the unique morphism such that the diagram
\begin{equation}
  \label{eq:act-fun-adj-half-bra}
  \xymatrix@C=32pt@R=16pt{
    A_{\mathcal{S}} \otimes V
    \ar[rr]^-{\pi_{\mathcal{S}}(V \otimes X) \otimes \id_V}
    \ar[d]_{\sigma_{\mathcal{S}}(V)}
    & & \iHom(V \otimes X, V \otimes X) \otimes V
    \ar[d]^{\iHomB_{V, X, V\otimes X}} \\
    V \otimes A_{\mathcal{S}}
    \ar[r]^-{\id_V \otimes \pi_{\mathcal{S}}}
    & V \otimes \iHom(X, X)
    \ar[r]^{\iHomA_{V,X,X}}
    & \iHom(V \otimes X, X)
  }
\end{equation}
commutes for all objects $X \in \mathcal{S}$. We write $\mathbf{A}_{\mathcal{S}} := (A_{\mathcal{S}}, \sigma_{\mathcal{S}}) \in \mathcal{Z}(\mathcal{C})$. The following result is well-known in the case where $\mathcal{M} = \mathcal{S} = \mathcal{C}$.

\begin{theorem}
  \label{thm:comm-alg-from-mod-subcat}
  The algebra $\mathbf{A}_{\mathcal{S}} \in \mathcal{Z}(\mathcal{C})$ is commutative.
\end{theorem}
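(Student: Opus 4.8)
The plan is to prove commutativity of $\mathbf{A}_{\mathcal{S}}$ by exploiting the universal property of $A_{\mathcal{S}}$ as an end together with the explicit description of the half-braiding $\sigma_{\mathcal{S}}$ in~\eqref{eq:act-fun-adj-half-bra} and of the multiplication $m_{\mathcal{S}}$. Recall that, in a braided category, an algebra $(A, m, u, \sigma)$ with half-braiding $\sigma$ is commutative precisely when $m \circ \sigma_A = m$, where $\sigma_A \colon A \otimes A \to A \otimes A$ is the self-braiding. So the goal is to verify $m_{\mathcal{S}} \circ \sigma_{\mathcal{S}}(A_{\mathcal{S}}) = m_{\mathcal{S}}$ as morphisms $A_{\mathcal{S}} \otimes A_{\mathcal{S}} \to A_{\mathcal{S}}$. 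Since $A_{\mathcal{S}}$ is an end (with universal dinatural transformation $\pi_{\mathcal{S}}$), and $A_{\mathcal{S}} \otimes A_{\mathcal{S}}$ is again an end by the Fubini theorem for ends, both sides are determined by their composites with $\pi_{\mathcal{S}}(X)$ for $X \in \mathcal{S}$; so it suffices to check the equality after post-composing with each $\pi_{\mathcal{S}}(X)$ and reducing everything to morphisms built out of $\ieval$, $\icoev$, $\icomp$, $\iHomA$, $\iHomB$ and the half-braiding of $\mathcal{C}$ itself.

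First I would reduce to the case $\mathcal{S} = \mathcal{M}$. Indeed, $\uptau_{\mathcal{S}}$ is an idempotent monad whose module category is $\mathcal{S}$; the inclusion $i \colon \mathcal{S} \hookrightarrow \mathcal{M}$ is an exact $\mathcal{C}$-module functor (a $\mathcal{C}$-module full subcategory), and $\mathcal{S}$ is itself a finite left $\mathcal{C}$-module category with internal Hom functor computed by restriction. So $A_{\mathcal{S}} = \int_{X \in \mathcal{S}} \iHom_{\mathcal{M}}(X, X)$ is exactly the object $A_{\mathcal{S}}$ associated to the module category $\mathcal{S}$ itself, and its algebra and half-braiding structures transport accordingly. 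Hence it is enough to show that $\mathbf{A}_{\mathcal{M}} = \uprho_{\mathcal{M}}^{\radj}(\id_{\mathcal{M}})$, with the half-braiding of Theorem~\ref{thm:act-fun-adj-Z} coming from the trivial $\mathcal{C}$-module structure of $\id_{\mathcal{M}}$, is a commutative algebra in $\mathcal{Z}(\mathcal{C})$.

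For the core computation I would argue as follows. Write $\beta \colon \iHom(X, X) \otimes \iHom(X, X) \to \iHom(X, X)$ for the morphism such that $\pi_{\mathcal{S}}(X) \circ m_{\mathcal{S}} = \icomp^{\mathcal{M}}_{X,X,X} \circ (\pi_{\mathcal{S}}(X) \otimes \pi_{\mathcal{S}}(X))$. The self-braiding $\sigma_{\mathcal{S}}(A_{\mathcal{S}})$, by~\eqref{eq:act-fun-adj-half-bra} applied with $V = A_{\mathcal{S}}$ and combined once more with the universal property of $A_{\mathcal{S}}$ in the second tensor factor, can be rewritten — after composing with $\pi_{\mathcal{S}}(X) \otimes \pi_{\mathcal{S}}(X)$ on the right and $\pi_{\mathcal{S}}(X)$ on the left — in terms of $\iHomA$, $\iHomB$ and $\icomp^{\mathcal{M}}$ evaluated on a single object $X$. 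The key identity to invoke is the compatibility of $\icomp^{\mathcal{M}}$ with the $\mathcal{C}$-bimodule structure maps $\iHomA, \iHomB$ from Lemma~\ref{lem:iHom-iso-B} (and the equations promised in Appendix~\ref{apdx:act-adj-structure}): this says precisely that composition in the $\mathcal{C}$-enriched category $\mathcal{M}$ is a morphism of $\mathcal{C}$-bimodules, so that "sliding" a $V$ past $\iHom(X,X) \otimes \iHom(X,X)$ via $\iHomB$ then $\iHomA$ agrees with sliding it past $\iHom(X,X)$ after first composing. Tracking this through, both $\pi_{\mathcal{S}}(X) \circ m_{\mathcal{S}} \circ \sigma_{\mathcal{S}}(A_{\mathcal{S}})$ and $\pi_{\mathcal{S}}(X) \circ m_{\mathcal{S}}$ collapse to the same composite $\icomp^{\mathcal{M}}_{X,X,X} \circ (\pi_{\mathcal{S}}(X) \otimes \pi_{\mathcal{S}}(X))$ (the two instances of $\iHomB^{-1}$ and $\iHomA$ cancel against the corresponding ones produced by the half-braiding, using~\eqref{eq:lem-iHom-iso-B-2} and~\eqref{eq:lem-iHom-iso-B-3}), and the universal property of the end $A_{\mathcal{S}}$ forces $m_{\mathcal{S}} \circ \sigma_{\mathcal{S}}(A_{\mathcal{S}}) = m_{\mathcal{S}}$.

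The main obstacle I anticipate is the bookkeeping in that last step: the self-braiding $\sigma_{\mathcal{S}}(A_{\mathcal{S}})$ is defined on a tensor product of two copies of the end, so one must invoke the Fubini theorem to regard $A_{\mathcal{S}} \otimes A_{\mathcal{S}}$ as an end over $\mathcal{S} \times \mathcal{S}$ (or carefully over the diagonal via $\icomp$), and then unwind~\eqref{eq:act-fun-adj-half-bra} with $V = A_{\mathcal{S}}$ using dinaturality of $\pi_{\mathcal{S}}$ to replace the "$V$" strand by a second $\iHom(X,X)$ strand — a diagram chase that is conceptually routine but notationally heavy. The substantive input is entirely the $\mathcal{C}$-bimodule compatibility of $\icomp^{\mathcal{M}}$, i.e. the fact that the enriched composition respects $\iHomA$ and $\iHomB$, which is exactly what makes the two "slides" of $V$ agree. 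Once that identity is in hand the commutativity drops out of the universal property, with no further hypotheses on $\mathcal{M}$ needed (exactness and indecomposability played no role here).
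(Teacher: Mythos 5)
Your proposal is correct and follows essentially the same route as the paper's proof in Appendix~\ref{apdx:act-adj-structure}: check $m_{\mathcal{S}}\circ\sigma_{\mathcal{S}}(A_{\mathcal{S}})=m_{\mathcal{S}}$ after composing with each $\pi_{\mathcal{S}}(W)$, using the defining diagram of the half-braiding, the compatibility of $\icomp$ with $\iHomA$ and $\iHomB^{\natural}$ (Lemma~\ref{lem:apdx-comp-and-alpha}, which is the precise form of your ``composition is a bimodule morphism'' identity), dinaturality of $\pi_{\mathcal{S}}$, and the universal property of the end. The only packaging difference is your preliminary reduction to $\mathcal{S}=\mathcal{M}$; the paper instead keeps $\mathcal{S}$ general and uses closedness under the $\mathcal{C}$-action exactly once, to apply dinaturality of $\pi_{\mathcal{S}}$ along $\ieval_{W,W}\colon [W,W]\otimes W\to W$, which makes sense because $[W,W]\otimes W\in\mathcal{S}$.
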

\begin{proof}
  We postpone the proof of this theorem to Appendix \ref{apdx:act-adj-structure} since it requires some technical results on the natural isomorphisms $\iHomA$ and $\iHomB$.
\end{proof}

Let $\mathcal{S}_1$ and $\mathcal{S}_2$ be $\mathcal{C}$-module full subcategories of $\mathcal{M}$ such that $\mathcal{S}_1 \supset \mathcal{S}_2$. We have introduced the morphism $\phi_{\mathcal{S}_1 | \mathcal{S}_2}: A_{\mathcal{S}_1} \to A_{\mathcal{S}_2}$ in $\mathcal{C}$ in the previous subsection. By the definition of $\phi_{\mathcal{S}_1|\mathcal{S}_2}$ and the above explicit description of the half-braiding, we have the following result:

\begin{theorem}
  $\phi_{\mathcal{S}_1|\mathcal{S}_2}: \mathbf{A}_{\mathcal{S}_1} \to \mathbf{A}_{\mathcal{S}_2}$ is a morphism in $\mathcal{Z}(\mathcal{C})$.
\end{theorem}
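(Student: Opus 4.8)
The plan is to prove that $\phi := \phi_{\mathcal{S}_1|\mathcal{S}_2}$ intertwines the half-braidings, i.e. that
\begin{equation*}
  (\id_V \otimes \phi) \circ \sigma_{\mathcal{S}_1}(V) = \sigma_{\mathcal{S}_2}(V) \circ (\phi \otimes \id_V)
\end{equation*}
for every $V \in \mathcal{C}$; this is exactly the condition for $\phi$ to be a morphism $\mathbf{A}_{\mathcal{S}_1} \to \mathbf{A}_{\mathcal{S}_2}$ in $\mathcal{Z}(\mathcal{C})$. I would obtain it by lifting $\phi$ through the functor $\mathcal{Z}(\Act_{\mathcal{M}}^{\radj})$ of Theorem~\ref{thm:act-fun-adj-Z}, using Lemma~\ref{lem:canonical-epi} to recognize $\phi$ in that picture.

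\emph{Step 1.} I would first check that the quotient morphism $q := q_{\mathcal{S}_1|\mathcal{S}_2} \colon \uptau_{\mathcal{S}_1} \to \uptau_{\mathcal{S}_2}$ is a morphism in $\REX_{\mathcal{C}}(\mathcal{M}) = \mathcal{Z}(\REX(\mathcal{M}))$, i.e. that it is compatible with the strong $\mathcal{C}$-module structures on $\uptau_{\mathcal{S}_1}$ and $\uptau_{\mathcal{S}_2}$ introduced in Subsection~\ref{subsec:integral-over-mod-subcat}. Recall that the structure morphism $\uptau_{\mathcal{S}_i}(V \otimes M) \to V \otimes \uptau_{\mathcal{S}_i}(M)$ is the unique arrow induced on the quotient $\uptau_{\mathcal{S}_i}(V \otimes M)$ by $\id_V \otimes q_{\mathcal{S}_i, M} \colon V \otimes M \to V \otimes \uptau_{\mathcal{S}_i}(M)$, and $q$ itself is the unique arrow induced by the inclusion $\upkappa_{\mathcal{S}_1} \subset \upkappa_{\mathcal{S}_2}$. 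Since all arrows in sight are canonically induced from $\id_{V \otimes M}$, the required square commutes by the uniqueness property of morphisms out of a quotient object; this is a short diagram chase, not a genuine obstacle.

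\emph{Step 2.} Applying the functor $\mathcal{Z}(\Act_{\mathcal{M}}^{\radj})$ to $q$ then yields a morphism $\Act_{\mathcal{M}}^{\radj}(q) \colon (A'_{\mathcal{S}_1}, \sigma'_{\mathcal{S}_1}) \to (A'_{\mathcal{S}_2}, \sigma'_{\mathcal{S}_2})$ in $\mathcal{Z}(\mathcal{C})$, where $\sigma'_{\mathcal{S}_i}$ is the half-braiding on $A'_{\mathcal{S}_i} = \Act_{\mathcal{M}}^{\radj}(\uptau_{\mathcal{S}_i})$ from Subsection~\ref{subsec:integral-over-mod-subcat}. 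By Lemma~\ref{lem:canonical-epi} we have $\phi = \beta_{\mathcal{S}_2} \circ \Act_{\mathcal{M}}^{\radj}(q) \circ \beta_{\mathcal{S}_1}^{-1}$, and by construction $\mathbf{A}_{\mathcal{S}_i} = (A_{\mathcal{S}_i}, \sigma_{\mathcal{S}_i})$ is the transport of $(A'_{\mathcal{S}_i}, \sigma'_{\mathcal{S}_i})$ along the isomorphism $\beta_{\mathcal{S}_i}$, so each $\beta_{\mathcal{S}_i}$ is an isomorphism in $\mathcal{Z}(\mathcal{C})$. Hence $\phi$, being a composite of morphisms in $\mathcal{Z}(\mathcal{C})$, is a morphism $\mathbf{A}_{\mathcal{S}_1} \to \mathbf{A}_{\mathcal{S}_2}$ there.

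Alternatively, matching the hint in the text, one can argue directly from the characterizing diagram~\eqref{eq:act-fun-adj-half-bra}: by the Fubini theorem $V \otimes A_{\mathcal{S}_2}$ is an end of $(X, X') \mapsto V \otimes \iHom(X, X')$ over $\mathcal{S}_2$ with projections $\id_V \otimes \pi_{\mathcal{S}_2}(X)$, so it suffices to show that $(\id_V \otimes \phi) \circ \sigma_{\mathcal{S}_1}(V)$ and $\sigma_{\mathcal{S}_2}(V) \circ (\phi \otimes \id_V)$ agree after composing with $\id_V \otimes \pi_{\mathcal{S}_2}(X)$ for each $X \in \mathcal{S}_2$; one then substitutes \eqref{eq:canonical-epi} and \eqref{eq:act-fun-adj-half-bra} and uses naturality of $\iHomA$ and $\iHomB$ in their module-object arguments together with $\uptau_{\mathcal{S}_i}(X) = X$ for $X \in \mathcal{S}_2 \subset \mathcal{S}_1$. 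Either way, the only point needing real care is Step~1 (the $\mathcal{C}$-linearity of $q_{\mathcal{S}_1|\mathcal{S}_2}$), and even that reduces to a uniqueness-of-induced-map argument.
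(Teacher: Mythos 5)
Your argument is correct, but your primary route is organized differently from the paper's. The paper treats this theorem as an immediate consequence of the explicit characterization \eqref{eq:act-fun-adj-half-bra} of the half-braidings together with the defining property \eqref{eq:canonical-epi} of $\phi_{\mathcal{S}_1|\mathcal{S}_2}$ — i.e.\ exactly your ``alternative'' paragraph: since the projections $\id_V \otimes \pi_{\mathcal{S}_2}(X)$, $X \in \mathcal{S}_2$, are jointly monic on the end $V \otimes A_{\mathcal{S}_2}$, one checks the intertwining relation componentwise, and both sides reduce via \eqref{eq:canonical-epi} and \eqref{eq:act-fun-adj-half-bra} (applied once for $\mathcal{S}_2$ and once for $\mathcal{S}_1$) to $\iHomB^{\natural}_{V,X,V\otimes X} \circ (\pi_{\mathcal{S}_1}(V\otimes X)\otimes \id_V)$; the only real inputs are $X, V\otimes X \in \mathcal{S}_2 \subset \mathcal{S}_1$, so closure of $\mathcal{S}_2$ under the $\mathcal{C}$-action is what matters, not naturality of $\iHomA,\iHomB$ or the identity $\uptau_{\mathcal{S}_i}(X)=X$ as you suggest. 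Your main route (Steps 1--2) is a genuinely more structural variant: you upgrade $q_{\mathcal{S}_1|\mathcal{S}_2}$ to a morphism in $\REX_{\mathcal{C}}(\mathcal{M}) = \mathcal{Z}(\REX(\mathcal{M}))$ by a quotient-uniqueness chase, push it through $\mathcal{Z}(\Act_{\mathcal{M}}^{\radj})$ of Theorem~\ref{thm:act-fun-adj-Z}, and then identify $\phi_{\mathcal{S}_1|\mathcal{S}_2} = \beta_{\mathcal{S}_2}\circ \Act_{\mathcal{M}}^{\radj}(q_{\mathcal{S}_1|\mathcal{S}_2})\circ\beta_{\mathcal{S}_1}^{-1}$ via Lemma~\ref{lem:canonical-epi}, using that $\sigma_{\mathcal{S}_i}$ is by definition the half-braiding transported along $\beta_{\mathcal{S}_i}$, so that each $\beta_{\mathcal{S}_i}$ is an isomorphism in $\mathcal{Z}(\mathcal{C})$. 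This buys you a computation-free proof (no $\iHomA$/$\iHomB$ manipulations) and makes transparent that the whole inverse system $(\{\mathbf{A}_{\mathcal{S}}\},\{\phi_{\mathcal{S}_1|\mathcal{S}_2}\})$ lives in $\mathcal{Z}(\mathcal{C})$, at the cost of the extra verification in Step 1 that the module-functor structures on $\uptau_{\mathcal{S}_1},\uptau_{\mathcal{S}_2}$ are compatible with $q_{\mathcal{S}_1|\mathcal{S}_2}$; that verification is valid as you sketch it, since both composites $\uptau_{\mathcal{S}_1}(V\otimes M)\to V\otimes\uptau_{\mathcal{S}_2}(M)$ agree after precomposition with the epimorphism $V\otimes M \twoheadrightarrow \uptau_{\mathcal{S}_1}(V\otimes M)$.
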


Thus, if $\mathcal{M}$ is exact, then we have an order-preserving map
\begin{equation*}
  \{ \text{$\mathcal{C}$-module full subcategories of $\mathcal{M}$} \}
  \to \{ \text{quotient algebras of $\mathbf{A}_{\mathcal{M}}$ in $\mathcal{Z}(\mathcal{C})$} \}
\end{equation*}
defined by $\mathcal{S} \mapsto \mathbf{A}_{\mathcal{S}}$. If, moreover, $\mathcal{M}$ is indecomposable, then this map reflects the order.

\section{Class functions and characters}
\label{sec:class-function}

\subsection{The space of class functions}

Let $\mathcal{C}$ be a finite tensor category. By the result of the last section, we have the algebra $A_{\mathcal{M}} \in \mathcal{C}$ for each finite left $\mathcal{C}$-module category $\mathcal{M}$. The vector space $\Hom_{\mathcal{C}}(A_{\mathcal{M}}, \unitobj)$ with $\mathcal{M} = \mathcal{C}$ is called the space of class functions in \cite{MR3631720} as it generalizes the usual notion of class functions on a finite group. The aim of this section is to explore the structure of the space of class functions and its generalization to module categories. We first introduce the following notation:

\begin{definition}
  For a finite left $\mathcal{C}$-module category $\mathcal{M}$, we define the space of class functions of $\mathcal{M}$ by $\CF(\mathcal{M}) = \Hom_{\mathcal{C}}(A_{\mathcal{M}}, \unitobj)$.
\end{definition}

Let $\mathsf{U}: \mathcal{Z}(\mathcal{C}) \to \mathcal{C}$ be the forgetful functor. To study class functions, we consider the functor $\mathsf{Z} := \mathsf{U}^{\radj} \circ \mathsf{U}$. There is an equivalence $\Act': \mathcal{C} \to \mathcal{C}_{\mathcal{C}}^*$ given by $\Act'(V) = \id_{\mathcal{C}} \otimes V$. By applying Theorem~\ref{thm:induction-Drinfeld-center} to $\mathcal{M} = \mathcal{C}$, we have
\begin{equation*}
  \mathsf{Z}(V) = \Act_{\mathcal{C}}^{\radj} \Act'(V) = \int_{X \in \mathcal{C}} X \otimes V \otimes X^*
  \quad (V \in \mathcal{C}).
\end{equation*}
Now let $\pi^{\mathsf{Z}}_V(X): \mathsf{Z}(V) \to X \otimes V \otimes X^*$ ($V, X \in \mathcal{C}$) be the universal dinatural transformation for the end $\mathsf{Z}(V)$. The assignment $V \mapsto \mathsf{Z}(V)$ extends to an endofunctor on $\mathcal{C}$ in such a way that $\pi^{\mathsf{Z}}_V(X)$ is natural in $V$ and dinatural in $X$. By the universal property, we define natural transformations $\Delta^{\mathsf{Z}}: \mathsf{Z} \to \mathsf{Z}^2$ and $\varepsilon^{\mathsf{Z}}: \mathsf{Z} \to \id_{\mathcal{C}}$ by
\begin{gather*}
  (\id_{X} \otimes \pi^{\mathsf{Z}}_{V}(Y) \otimes \id_{X^*}) \circ \pi^{\mathsf{Z}}_{\mathsf{Z}(V)}(X) \circ \Delta^{\mathsf{Z}}_V
  = \pi^{\mathsf{Z}}_V(X \otimes Y)
  \quad \text{and} \quad
  \varepsilon^{\mathsf{Z}}_{V} = \pi^{\mathsf{Z}}_{X}(\unitobj)
\end{gather*}
for all objects $V, X, Y \in \mathcal{C}$. The functor $\mathsf{Z}$ is a comonad on $\mathcal{C}$ with comultiplication $\Delta^{\mathsf{Z}}$ and counit $\varepsilon^{\mathsf{Z}}$.

Given an object $\mathbf{V} = (V, \sigma) \in \mathcal{Z}(\mathcal{C})$, we define the morphism $\delta: V \to \mathsf{Z}(V)$ in $\mathcal{C}$ to be the unique morphism such that the equation
\begin{equation*} 
  \pi^{\mathsf{Z}}_V(X) \circ \delta = (\sigma_X \otimes \id_{X^*}) \circ (\id_V \otimes \coev_X)
\end{equation*}
holds for all objects $X \in \mathcal{C}$. The assignment $(V, \sigma) \mapsto (V, \delta)$ allows us to identify $\mathcal{Z}(\mathcal{C})$ with the category of $\mathsf{Z}$-comodules. If we identify them, then a right adjoint of $\mathsf{U}$ is given by the free $\mathsf{Z}$-comodule functor
\begin{equation*}
  \mathsf{U}^{\radj}: \mathcal{C} \to (\text{the category of $\mathsf{Z}$-comodules}),
  \quad V \mapsto (\mathsf{Z}(V), \Delta^{\mathsf{Z}}_V).
\end{equation*}
By Theorem~\ref{thm:comm-alg-from-mod-subcat}, for each finite left $\mathcal{C}$-module category $\mathcal{M}$, there is a commutative algebra $\mathbf{A}_{\mathcal{M}} = (A_{\mathcal{M}}, \sigma_{\mathcal{M}})$ in $\mathcal{Z}(\mathcal{C})$ such that $A_{\mathcal{M}} = \mathsf{U}(\mathbf{A}_{\mathcal{M}})$.

\begin{definition}
  Let $\mathcal{M}$ be as above, and let $\delta_{\mathcal{M}}: A_{\mathcal{M}} \to \mathsf{Z}(A_{\mathcal{M}})$ be the coaction of $\mathsf{Z}$ associated to the half-braiding $\sigma_{\mathcal{M}}$. For $f \in \CF(\mathcal{C})$ and $g \in \CF(\mathcal{M})$, we define their product $f \star g \in \CF(\mathcal{M})$ by
  \begin{equation}
    \label{eq:class-ft-action}
    f \star g = f \circ \mathsf{Z}(g) \circ \delta_{\mathcal{M}}.
  \end{equation}
\end{definition}

In particular, we have a binary operation on $\CF(\mathcal{C})$ by considering the case where $\mathcal{M} = \mathcal{C}$ in the above definition. As we have observed in \cite{MR3631720}, $\CF(\mathcal{C})$ is an associative unital algebra with respect $\star$. Moreover, we have:

\begin{lemma}
  \label{lem:CF-C-module-CF-M}
  $\CF(\mathcal{M})$ is a left $\CF(\mathcal{C})$-module by $\star$.
\end{lemma}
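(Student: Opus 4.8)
The plan is to derive the two module axioms formally from the comonad laws for $\mathsf{Z}$ and the comodule laws for $\mathbf{A}_{\mathcal{M}}$ and $\mathbf{A}_{\mathcal{C}}$; the one genuinely nonformal ingredient is an auxiliary identification of $\mathbf{A}_{\mathcal{C}}$. Concretely, I first claim that $\mathbf{A}_{\mathcal{C}}$ is the cofree $\mathsf{Z}$-comodule on $\unitobj$, i.e.\ $\mathbf{A}_{\mathcal{C}}\cong\mathsf{U}^{\radj}(\unitobj)$ in $\mathcal{Z}(\mathcal{C})$; equivalently, the coaction $\delta_{\mathcal{C}}\colon A_{\mathcal{C}}\to\mathsf{Z}(A_{\mathcal{C}})$ coincides with the comultiplication $\Delta^{\mathsf{Z}}_{\unitobj}$. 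To see this I would apply Theorem~\ref{thm:act-fun-adj-Z} with $\mathcal{M}=\mathcal{C}$: since $\mathbf{A}_{\mathcal{C}}\cong\mathcal{Z}(\Act^{\radj}_{\mathcal{C}})(\id_{\mathcal{C}})$ and $\mathcal{Z}(\Act^{\radj}_{\mathcal{C}})$ is right adjoint to $\mathcal{Z}(\Act_{\mathcal{C}})=\mathsf{U}\circ\Sch_{\mathcal{C}}$, one has
\[
  \Hom_{\mathcal{Z}(\mathcal{C})}(\mathbf{V},\mathbf{A}_{\mathcal{C}})
  \cong \Hom_{\REX_{\mathcal{C}}(\mathcal{C})}\big(\mathcal{Z}(\Act_{\mathcal{C}})(\mathbf{V}),\id_{\mathcal{C}}\big)
\]
naturally in $\mathbf{V}\in\mathcal{Z}(\mathcal{C})$; a short computation with the half-braiding shows that a $\mathcal{C}$-module natural transformation from $\mathcal{Z}(\Act_{\mathcal{C}})(\mathbf{V})$ to $\id_{\mathcal{C}}$ is the same datum as a morphism $\mathsf{U}(\mathbf{V})\to\unitobj$ in $\mathcal{C}$, so the right-hand side is naturally $\Hom_{\mathcal{C}}(\mathsf{U}(\mathbf{V}),\unitobj)$ and the claim follows by the Yoneda lemma. (Alternatively, $\delta_{\mathcal{C}}=\Delta^{\mathsf{Z}}_{\unitobj}$ can be checked by a direct diagram chase comparing the defining equation~\eqref{eq:act-fun-adj-half-bra} of $\sigma_{\mathcal{C}}$ — specialised to $\mathcal{M}=\mathcal{C}$, where $\iHomA$ and $\iHomB$ are associativity and evaluation — with the definition of $\Delta^{\mathsf{Z}}$, both phrased through the universal dinatural transformations $\pi_{\mathcal{C}}(X)=\pi^{\mathsf{Z}}_{\unitobj}(X)$.) I expect this identification to be the main obstacle of the proof; once it is available, everything else is routine, and in particular the unit of the algebra $(\CF(\mathcal{C}),\star)$ is $\varepsilon^{\mathsf{Z}}_{\unitobj}\colon A_{\mathcal{C}}\to\unitobj$, a two-sided $\star$-unit by the two counit axioms of $\mathsf{Z}$.

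Granting $\delta_{\mathcal{C}}=\Delta^{\mathsf{Z}}_{\unitobj}$, I would check the module axioms directly. For the unit law, the naturality of $\varepsilon^{\mathsf{Z}}$ at $g\colon A_{\mathcal{M}}\to\unitobj$ followed by the counit axiom $\varepsilon^{\mathsf{Z}}_{A_{\mathcal{M}}}\circ\delta_{\mathcal{M}}=\id_{A_{\mathcal{M}}}$ for the comodule $\mathbf{A}_{\mathcal{M}}$ gives
\[
  \varepsilon^{\mathsf{Z}}_{\unitobj}\star g
  =\varepsilon^{\mathsf{Z}}_{\unitobj}\circ\mathsf{Z}(g)\circ\delta_{\mathcal{M}}
  =g\circ\varepsilon^{\mathsf{Z}}_{A_{\mathcal{M}}}\circ\delta_{\mathcal{M}}
  =g .
\]
For associativity, take $f_1,f_2\in\CF(\mathcal{C})$ and $g\in\CF(\mathcal{M})$. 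From~\eqref{eq:class-ft-action} one has $(f_1\star f_2)\star g=f_1\circ\mathsf{Z}(f_2)\circ\delta_{\mathcal{C}}\circ\mathsf{Z}(g)\circ\delta_{\mathcal{M}}$, whereas functoriality of $\mathsf{Z}$ gives $f_1\star(f_2\star g)=f_1\circ\mathsf{Z}(f_2)\circ\mathsf{Z}\mathsf{Z}(g)\circ\mathsf{Z}(\delta_{\mathcal{M}})\circ\delta_{\mathcal{M}}$. Applying the coassociativity $\mathsf{Z}(\delta_{\mathcal{M}})\circ\delta_{\mathcal{M}}=\Delta^{\mathsf{Z}}_{A_{\mathcal{M}}}\circ\delta_{\mathcal{M}}$ of $\mathbf{A}_{\mathcal{M}}$ and then the naturality of $\Delta^{\mathsf{Z}}$ at $g$, namely $\mathsf{Z}\mathsf{Z}(g)\circ\Delta^{\mathsf{Z}}_{A_{\mathcal{M}}}=\Delta^{\mathsf{Z}}_{\unitobj}\circ\mathsf{Z}(g)$, turns the second expression into $f_1\circ\mathsf{Z}(f_2)\circ\Delta^{\mathsf{Z}}_{\unitobj}\circ\mathsf{Z}(g)\circ\delta_{\mathcal{M}}$, which equals the first because $\delta_{\mathcal{C}}=\Delta^{\mathsf{Z}}_{\unitobj}$. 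This establishes the lemma.

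It may be cleaner to package the argument conceptually. The adjunction $\mathsf{U}\dashv\mathsf{U}^{\radj}$ together with $\mathbf{A}_{\mathcal{C}}\cong\mathsf{U}^{\radj}(\unitobj)$ yields, for every finite left $\mathcal{C}$-module category $\mathcal{N}$, a natural isomorphism $\CF(\mathcal{N})=\Hom_{\mathcal{C}}(A_{\mathcal{N}},\unitobj)\cong\Hom_{\mathcal{Z}(\mathcal{C})}(\mathbf{A}_{\mathcal{N}},\mathbf{A}_{\mathcal{C}})$ under which $\star$ corresponds to composition in $\mathcal{Z}(\mathcal{C})$ (the verification being the computation above read backwards). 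Then $(\CF(\mathcal{C}),\star)\cong\End_{\mathcal{Z}(\mathcal{C})}(\mathbf{A}_{\mathcal{C}})$ as algebras and $\CF(\mathcal{M})\cong\Hom_{\mathcal{Z}(\mathcal{C})}(\mathbf{A}_{\mathcal{M}},\mathbf{A}_{\mathcal{C}})$ is tautologically a left module over it via composition, giving the result with no further computation. I would adopt whichever of the two presentations keeps the half-braiding bookkeeping shortest.
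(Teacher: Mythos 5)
Your proposal is correct and takes essentially the same route as the paper: the paper likewise remarks $\mathbf{A}_{\mathcal{C}} = \mathsf{U}^{\radj}(\unitobj)$ (so $\delta_{\mathcal{C}} = \Delta^{\mathsf{Z}}_{\unitobj}$) and then transports $\star$ to composition in $\mathcal{Z}(\mathcal{C})$ via $\Phi_{\mathcal{N}}(f) = \mathsf{Z}(f)\circ\delta_{\mathcal{N}}$, which is exactly your final ``conceptual packaging,'' while your direct check of the unit and associativity axioms is the same computation unfolded, using the identical comonad/comodule identities. If anything, your justification of the identification $\delta_{\mathcal{C}} = \Delta^{\mathsf{Z}}_{\unitobj}$ is more detailed than the paper's one-line remark.
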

\begin{proof}
  We remark $\mathbf{A}_{\mathcal{C}} = \mathsf{U}^{\radj}(\unitobj)$. Thus there is an isomorphism
  \begin{equation*}
    \Phi_{\mathcal{M}}: \CF(\mathcal{M}) \to \Hom_{\mathcal{Z}(\mathcal{C})}(\mathbf{A}_{\mathcal{M}}, \mathbf{A}_{\mathcal{C}}),
    \quad f \mapsto \mathsf{Z}(f) \circ \delta_{\mathcal{M}}.
  \end{equation*}
  Then, noting $\delta_{\mathcal{C}} = \Delta^{\mathsf{Z}}_{\unitobj}$, we compute
  \begin{gather*}
    \Phi_{\mathcal{C}}(f) \circ \Phi_{\mathcal{M}}(g)
    = \mathsf{Z}(f) \circ \Delta^{\mathsf{Z}}_{\unitobj} \circ \mathsf{Z}(g) \circ \delta_{\mathcal{M}}
    = \mathsf{Z}(f) \circ \mathsf{Z}^2(g) \circ \Delta^{\mathsf{Z}}_{A_{\mathcal{M}}} \circ \delta_{\mathcal{M}} \\
    = \mathsf{Z}(f) \circ \mathsf{Z}^2(g) \circ \mathsf{Z}(\delta_{\mathcal{M}}) \circ \delta_{\mathcal{M}}
    = \mathsf{Z}(f \star g) \circ \delta_{\mathcal{M}} = \Phi_{\mathcal{M}}(f \star g)
  \end{gather*}
  for all elements $f \in \CF(\mathcal{C})$ and $g \in \CF(\mathcal{M})$. Since the composition of morphisms is unital and associative, the action $\star: \CF(\mathcal{C}) \times \CF(\mathcal{M}) \to \CF(\mathcal{M})$ is also unital and associative. The proof is done.
\end{proof}

We set $F = \CF(\mathcal{C})$ and $E = \End_{\mathcal{Z}}(\mathbf{A}_{\mathcal{C}})$ for simplicity. The proof of the above lemma implies the following interesting consequence:

\begin{theorem}
  \label{thm:CF-C-module-CF-M}
  There is an isomorphism $E \cong F$ of algebras. Moreover, the left $F$-module $\CF(\mathcal{M})$ corresponds to the left $E$-module  $\Hom_{\mathcal{Z}(\mathcal{C})}(\mathbf{A}_{\mathcal{M}}, \mathbf{A}_{\mathcal{C}})$ under the isomorphism $E \cong F$.
\end{theorem}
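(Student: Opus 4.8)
The plan is that this theorem is essentially a restatement of Lemma~\ref{lem:CF-C-module-CF-M} together with its proof: one specializes $\mathcal{M}$ to $\mathcal{C}$ to get the algebra isomorphism, and then re-reads the general computation as an equivariance statement. So I would not reprove anything, only reorganize.

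First I would recall from the proof of Lemma~\ref{lem:CF-C-module-CF-M} the linear isomorphism $\Phi_{\mathcal{M}}: \CF(\mathcal{M}) \to \Hom_{\mathcal{Z}(\mathcal{C})}(\mathbf{A}_{\mathcal{M}}, \mathbf{A}_{\mathcal{C}})$, $f \mapsto \mathsf{Z}(f) \circ \delta_{\mathcal{M}}$. This is an isomorphism because $\mathbf{A}_{\mathcal{C}} = \mathsf{U}^{\radj}(\unitobj)$, so the adjunction $\mathsf{U} \dashv \mathsf{U}^{\radj}$ identifies $\Hom_{\mathcal{Z}(\mathcal{C})}(\mathbf{A}_{\mathcal{M}}, \mathsf{U}^{\radj}(\unitobj))$ with $\Hom_{\mathcal{C}}(A_{\mathcal{M}}, \unitobj) = \CF(\mathcal{M})$. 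Taking $\mathcal{M} = \mathcal{C}$ yields a linear isomorphism $\Phi_{\mathcal{C}}: F = \CF(\mathcal{C}) \to E = \End_{\mathcal{Z}}(\mathbf{A}_{\mathcal{C}})$. Next I would invoke the identity established inside the proof of Lemma~\ref{lem:CF-C-module-CF-M}, namely $\Phi_{\mathcal{C}}(f) \circ \Phi_{\mathcal{M}}(g) = \Phi_{\mathcal{M}}(f \star g)$ for all $f \in \CF(\mathcal{C})$ and $g \in \CF(\mathcal{M})$; specializing to $\mathcal{M} = \mathcal{C}$ shows that $\Phi_{\mathcal{C}}$ carries $\star$ to composition in $E$, hence is a multiplicative bijection from the associative algebra $(\CF(\mathcal{C}), \star)$ onto $(\End_{\mathcal{Z}}(\mathbf{A}_{\mathcal{C}}), \circ)$. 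Since $(\CF(\mathcal{C}), \star)$ is unital by \cite{MR3631720} and a multiplicative bijection out of a unital algebra necessarily sends the unit to a two-sided identity of the target (which is then forced to be $\id_{\mathbf{A}_{\mathcal{C}}}$), it follows that $\Phi_{\mathcal{C}}$ is an isomorphism $E \cong F$ of unital algebras. If one prefers, the unitality of $\Phi_{\mathcal{C}}$ can also be checked directly by evaluating it on the unit of $\CF(\mathcal{C})$, using $\delta_{\mathcal{C}} = \Delta^{\mathsf{Z}}_{\unitobj}$ and the counit axiom for the comonad $\mathsf{Z}$.

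For the module statement I would transport the left $E$-module structure on $\Hom_{\mathcal{Z}(\mathcal{C})}(\mathbf{A}_{\mathcal{M}}, \mathbf{A}_{\mathcal{C}})$ given by post-composition along the algebra isomorphism $\Phi_{\mathcal{C}}: F \to E$, and observe that the very identity $\Phi_{\mathcal{C}}(f) \circ \Phi_{\mathcal{M}}(g) = \Phi_{\mathcal{M}}(f \star g)$ now reads precisely as the assertion that $\Phi_{\mathcal{M}}$ is $F$-linear. Being simultaneously a linear isomorphism, $\Phi_{\mathcal{M}}$ is therefore an isomorphism of left $F$-modules, which is exactly the claimed correspondence. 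I do not anticipate a real obstacle here, since the whole argument only collects consequences of Lemma~\ref{lem:CF-C-module-CF-M}; the only mildly delicate point is the purely formal upgrade of $\Phi_{\mathcal{C}}$ from a multiplicative map to a unital algebra homomorphism, which is handled as indicated above.
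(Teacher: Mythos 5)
Your proposal is correct and follows exactly the route the paper intends: the theorem is stated as a direct consequence of the proof of Lemma~\ref{lem:CF-C-module-CF-M}, using the isomorphisms $\Phi_{\mathcal{M}}$ and the identity $\Phi_{\mathcal{C}}(f) \circ \Phi_{\mathcal{M}}(g) = \Phi_{\mathcal{M}}(f \star g)$, just as you do. Your extra remark on upgrading the multiplicative bijection $\Phi_{\mathcal{C}}$ to a unital algebra isomorphism is a harmless and valid elaboration of what the paper leaves implicit.
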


\subsection{Pivotal module category}
\label{subsec:pivotal-mod-cat}

We recall that a {\em pivotal monoidal category} is a rigid monoidal category $\mathcal{C}$ equipped with a pivotal structure, that is, an isomorphism $X \to X^{**}$ ($X \in \mathcal{C}$) of monoidal functors. Let $\mathcal{C}$ be a pivotal finite tensor category with pivotal structure $j$. For an object $X \in \mathcal{C}$, we set
\begin{equation*}
  \trace_{\mathcal{C}}(X) = \eval_{X^*} \circ (j_X \otimes \id_{X^*})
\end{equation*}
and define the {\em internal character} \cite{MR3631720} of $X$ by 
\begin{equation*}
  \mathrm{ch}(X) = \trace_{\mathcal{C}}(X) \circ \pi_{\mathcal{C}}(X) \in \CF(\mathcal{C}).
\end{equation*}
Some applications of this notion are given in \cite{MR3631720}. It is interesting to extend results of \cite{MR3631720} to module categories. We first introduce the notion of {\em pivotal module category}. To give its precise definition, we recall the following notion:

\begin{definition}[{\cite{2016arXiv161204561F}}]
  For an exact $\mathcal{C}$-module category $\mathcal{M}$, there is a unique functor $\Ser_{\mathcal{M}}: \mathcal{M} \to \mathcal{M}$ equipped with a natural isomorphism
  \begin{equation}
    \label{eq:def-relative-Serre}
    \iHom(M, N)^* \cong \iHom(N, \Ser_{\mathcal{M}}(M))
  \end{equation}
  for $M, N \in \mathcal{M}$. We call $\Ser_{\mathcal{M}}$ the {\em relative Serre functor} of $\mathcal{M}$.
\end{definition}

Let $\mathcal{M}$ be an exact left $\mathcal{C}$-module category. We make $\mathcal{M}^{\op} \times \mathcal{M}$ a $\mathcal{C}$-bimodule category by~\eqref{eq:Mop-M-actions}. Then $\iHom$ is a $\mathcal{C}$-bimodule functor. Given a strong monoidal functor $T: \mathcal{C} \to \mathcal{C}$ and a left $\mathcal{C}$-module category $\mathcal{N}$, we denote by ${}_T \mathcal{N}$ the left $\mathcal{C}$-module category whose underlying category is $\mathcal{N}$ but the action of $\mathcal{C}$ on $\mathcal{N}$ is twisted by $T$. The functor
\begin{equation*}
  \mathcal{M}^{\op} \times \mathcal{M} \to {}_{(-)^{**}} \mathcal{C},
  \quad (M, N) \mapsto \iHom(N, M)^*
\end{equation*}
is a $\mathcal{C}$-bimodule functor by \eqref{eq:iHom-iso-a} and \eqref{eq:iHom-iso-b}. By \cite[Lemma 4.22]{2016arXiv161204561F}, there is a unique natural isomorphism
\begin{equation}
  \label{eq:relative-Serre-module-functor}
  X^{**} \otimes \Ser_{\mathcal{M}}(M) \to \Ser_{\mathcal{M}}(X \otimes M) \quad (X \in \mathcal{C}, M \in \mathcal{M})
\end{equation}
making $\Ser_{\mathcal{M}}$ a left $\mathcal{C}$-module functor $\Ser_{\mathcal{M}}: \mathcal{M} \to {}_{(-)^{**}} \mathcal{M}$ such that \eqref{eq:def-relative-Serre} is an isomorphism of $\mathcal{C}$-bimodule functors.

\begin{definition}
  \label{def:pivotal-mod-cat}
  Let $\mathcal{C}$ be a pivotal finite tensor category with pivotal structure $j$, and let $\mathcal{M}$ be an exact $\mathcal{C}$-module category. A {\em pivotal structure} of $\mathcal{M}$ is a natural isomorphism $j': \id_{\mathcal{M}} \to \Ser_{\mathcal{M}}$ such that the equation
  \begin{equation}
    \label{eq:mod-cat-piv-str-def}
    j_{X \otimes M}
    = \Big( X \otimes M
    \xrightarrow{\quad j_X \otimes j_M \quad}
    X^{**} \otimes \Ser_{\mathcal{M}}(M)
    \xrightarrow[\cong]{\quad \text{\eqref{eq:relative-Serre-module-functor}} \quad}
    \Ser_{\mathcal{M}}(X \otimes M) \Big)
  \end{equation}
  holds for all $X \in \mathcal{C}$ and $M \in \mathcal{M}$. A {\em pivotal left $\mathcal{C}$-module category} is an exact left $\mathcal{C}$-module category equipped with a pivotal structure. Let $\mathcal{M}$ be such a category, and let $j'$ be the pivotal structure of $\mathcal{M}$. Then we define the {\em trace}
  \begin{equation*}
    \trace_{\mathcal{M}}(M): \iHom(M, M) \to \unitobj \quad (M \in \mathcal{M})
  \end{equation*}
  to be the morphism corresponding to $j'_M: M \to \Ser_{\mathcal{M}}(M)$ via
  \begin{gather*}
    \Hom_{\mathcal{M}}(M, \Ser_{\mathcal{M}}(M))
    \cong \Hom_{\mathcal{M}}(\unitobj, \iHom(M, \Ser_{\mathcal{M}}(M)) \\
    \cong \Hom_{\mathcal{M}}(\unitobj, \iHom(M, M)^*)
    \cong \Hom_{\mathcal{M}}(\iHom(M, M), \unitobj).
  \end{gather*}
\end{definition}

\begin{remark}
  Let $\mathcal{C}$ and $\mathcal{M}$ be as above. Then, for a morphism $f: M \to M$ in $\mathcal{M}$, the {\em pivotal trace} $\mathrm{ptr}(f) \in k$ is defined by
  \begin{equation}
    \label{eq:def-ptrace}
    \mathrm{ptr}(f) \cdot \id_{\unitobj}
    = \trace_{\mathcal{M}}(M) \circ \iHom(\id_{M}, f) \circ \icoev_{\unitobj, M}.
  \end{equation}
  As in the ordinary trace, the pivotal trace is cyclic, multiplicative with respect to $\otimes$ and additive with respect to exact sequences; see Propositions~\ref{prop:apdx-piv-trace} and \ref{apdx:prop-additivity-piv-trace} in Appendix~\ref{sec:additivity-trace}.
\end{remark}

\subsection{Internal characters for module categories}

Let $\mathcal{C}$ be a pivotal finite tensor category, and let $\mathcal{M}$ be an pivotal exact left $\mathcal{C}$-module category. We now define:

\begin{definition}
  \label{def:internal-character}
  The {\em internal character} of $M \in \mathcal{M}$ is defined by
  \begin{equation}
    \label{eq:def-internal-char}
    \ich_{\mathcal{M}}(M) = \trace_{\mathcal{M}}(M) \circ \pi_{\mathcal{M}}(M) \in \CF(\mathcal{M}).
  \end{equation}
\end{definition}

We give basic properties of internal characters:

\begin{lemma}
  \label{lem:character-multiplicative}
  For all $X \in \mathcal{C}$ and $M \in \mathcal{M}$, we have
  \begin{equation*}
    \ich_{\mathcal{C}}(X) \star \ich_{\mathcal{M}}(M) = \ich_{\mathcal{M}}(X \otimes M).
  \end{equation*}
\end{lemma}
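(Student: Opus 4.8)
The plan is to unfold the right-hand side $\ich_{\mathcal{C}}(X) \star \ich_{\mathcal{M}}(M)$ by repeatedly using the universal properties that characterize the various ingredients of the $\star$-product, and to reduce the whole statement to a single ``internal'' identity of morphisms out of $\iHom(X \otimes M, X \otimes M)$. Write $h = \ich_{\mathcal{M}}(M) = \trace_{\mathcal{M}}(M) \circ \pi_{\mathcal{M}}(M) \colon A_{\mathcal{M}} \to \unitobj$. By the definition~\eqref{eq:class-ft-action} of $\star$, $\ich_{\mathcal{C}}(X) \star \ich_{\mathcal{M}}(M) = \trace_{\mathcal{C}}(X) \circ \pi_{\mathcal{C}}(X) \circ \mathsf{Z}(h) \circ \delta_{\mathcal{M}}$. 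Under the identification $A_{\mathcal{C}} = \mathsf{Z}(\unitobj)$ one has $\pi_{\mathcal{C}}(X) = \pi^{\mathsf{Z}}_{\unitobj}(X)$, so naturality of $\pi^{\mathsf{Z}}$ in its first variable gives $\pi_{\mathcal{C}}(X) \circ \mathsf{Z}(h) = (\id_X \otimes h \otimes \id_{X^*}) \circ \pi^{\mathsf{Z}}_{A_{\mathcal{M}}}(X)$, while the definition of the coaction $\delta_{\mathcal{M}}$ from the half-braiding gives $\pi^{\mathsf{Z}}_{A_{\mathcal{M}}}(X) \circ \delta_{\mathcal{M}} = (\sigma_{\mathcal{M}}(X) \otimes \id_{X^*}) \circ (\id_{A_{\mathcal{M}}} \otimes \coev_X)$. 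Substituting and then inserting the explicit formula~\eqref{eq:act-fun-adj-half-bra} for $\sigma_{\mathcal{M}}(X)$ (with $\mathcal{S} = \mathcal{M}$), every occurrence of $(\id_X \otimes \pi_{\mathcal{M}}(M)) \circ \sigma_{\mathcal{M}}(X)$ is rewritten as $\iHomA_{X,M,M}^{-1} \circ \iHomB^{\natural}_{X,M,X \otimes M} \circ (\pi_{\mathcal{M}}(X \otimes M) \otimes \id_X)$; since $\pi_{\mathcal{M}}(X \otimes M)$ then slides to the left past $\id \otimes \coev_X$, this exhibits $\ich_{\mathcal{C}}(X) \star \ich_{\mathcal{M}}(M) = \Theta \circ \pi_{\mathcal{M}}(X \otimes M)$, where $\Theta \colon \iHom(X \otimes M, X \otimes M) \to \unitobj$ is the explicit morphism assembled from $\trace_{\mathcal{C}}(X)$, $\trace_{\mathcal{M}}(M)$, $\iHomA_{X,M,M}^{-1}$, $\iHomB^{\natural}_{X,M,X \otimes M}$ and $\coev_X$. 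Since $\ich_{\mathcal{M}}(X \otimes M) = \trace_{\mathcal{M}}(X \otimes M) \circ \pi_{\mathcal{M}}(X \otimes M)$, it suffices to prove the internal identity $\Theta = \trace_{\mathcal{M}}(X \otimes M)$.

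To prove $\Theta = \trace_{\mathcal{M}}(X \otimes M)$ I would transport both sides along the bijection $\Hom_{\mathcal{C}}(\iHom(X \otimes M, X \otimes M), \unitobj) \cong \Hom_{\mathcal{M}}(X \otimes M, \Ser_{\mathcal{M}}(X \otimes M))$ coming from the relative Serre isomorphism~\eqref{eq:def-relative-Serre}, and show that $\Theta$ corresponds to $j'_{X \otimes M}$ — which is, by Definition~\ref{def:pivotal-mod-cat}, exactly the correspondent of $\trace_{\mathcal{M}}(X \otimes M)$. On the $\Theta$ side one uses that~\eqref{eq:def-relative-Serre} is an isomorphism of $\mathcal{C}$-bimodule functors, so it intertwines $\iHomA$, $\iHomB$ on the source with the left $\mathcal{C}$-module structure~\eqref{eq:relative-Serre-module-functor} of $\Ser_{\mathcal{M}}$ on the target, together with the fact that $\trace_{\mathcal{C}}(X) = \eval_{X^*} \circ (j_X \otimes \id_{X^*})$ and the zigzag identities for $\eval_X, \coev_X$; after this bookkeeping the correspondent of $\Theta$ becomes the composite $X \otimes M \xrightarrow{j_X \otimes j'_M} X^{**} \otimes \Ser_{\mathcal{M}}(M) \xrightarrow{\eqref{eq:relative-Serre-module-functor}} \Ser_{\mathcal{M}}(X \otimes M)$, which by the pivotal-compatibility axiom~\eqref{eq:mod-cat-piv-str-def} equals $j'_{X \otimes M}$. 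This final step is, in effect, the morphism-level refinement of the multiplicativity of the pivotal trace recorded in Proposition~\ref{prop:apdx-piv-trace}.

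The first paragraph is purely formal once one is careful about the several universal properties; the real work is the last step. The main obstacle is the coherence bookkeeping needed to identify the correspondent of $\Theta$: one must keep track simultaneously of $\iHomA$, $\iHomB$, $\iHomB^{\natural}$, the duality morphisms of $\mathcal{C}$, the bimodule-naturality squares for~\eqref{eq:def-relative-Serre}, and the module structure~\eqref{eq:relative-Serre-module-functor} of $\Ser_{\mathcal{M}}$, for which the auxiliary identities collected in Appendix~\ref{apdx:act-adj-structure} are needed. No single step is deep, but assembling them so that $\Theta$ visibly reduces to $j'_{X \otimes M}$ is where the care lies.
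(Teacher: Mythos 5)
Your proposal is correct and follows essentially the same route as the paper: the first paragraph reproduces the paper's own unwinding of $\star$ via $\delta_{\mathcal{M}}$, $\pi^{\mathsf{Z}}$ and the explicit half-braiding, which reduces the claim to the identity $\trace_{\mathcal{M}}(X \otimes M) \circ \iHomC_{X,M,M,X} = \trace_{\mathcal{C}}(X) \circ (\id_X \otimes \trace_{\mathcal{M}}(M) \otimes \id_{X^*})$, i.e.\ your ``$\Theta = \trace_{\mathcal{M}}(X \otimes M)$'' is exactly Lemma~\ref{lem:pivotal-trace-mod-cat-dinat}. Your second paragraph is precisely the paper's proof of that lemma: transport along the relative Serre isomorphism (there carried out by dualizing and composing with $\iHomD$, using its bimodule compatibility~\eqref{eq:iHom-appendix-eq-1}, Lemma~\ref{lem:apdx-coev-XM} and naturality of $\iHomC$) and conclude with the pivotal compatibility~\eqref{eq:mod-cat-piv-str-def}.
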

\begin{proof}
  Straightforward. See Appendix \ref{sec:additivity-trace} for the detail.
\end{proof}

\begin{lemma}
  \label{lem:character-additive}
  The internal character is additive in exact sequences: For any exact sequence $0 \to M_1 \to M_2 \to M_3 \to 0$ in $\mathcal{M}$, we have
  \begin{equation*}
    \ich_{\mathcal{M}}(M_2) = \ich_{\mathcal{M}}(M_1) + \ich_{\mathcal{M}}(M_2).
  \end{equation*}
\end{lemma}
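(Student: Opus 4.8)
The displayed identity should of course read $\ich_{\mathcal{M}}(M_2) = \ich_{\mathcal{M}}(M_1) + \ich_{\mathcal{M}}(M_3)$; this additivity is exactly what makes the linear map $\ich_{\mathcal{M}}$ descend to the Grothendieck group in the main theorem of this section. The plan is to introduce an enriched internal character that records an endomorphism, prove its cyclicity, and then deduce additivity by reducing a general extension to a split one. For $M \in \mathcal{M}$ and $f \in \End_{\mathcal{M}}(M)$ I would set $\ich_{\mathcal{M}}(M; f) := \trace_{\mathcal{M}}(M) \circ \iHom(\id_M, f) \circ \pi_{\mathcal{M}}(M) \in \CF(\mathcal{M})$, so that $\ich_{\mathcal{M}}(M) = \ich_{\mathcal{M}}(M; \id_M)$ and $f \mapsto \ich_{\mathcal{M}}(M; f)$ is $k$-linear. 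The central intermediate claim is the cyclicity
\[
  \ich_{\mathcal{M}}(M; g \circ f) = \ich_{\mathcal{M}}(N; f \circ g)
  \qquad (f \colon M \to N,\ g \colon N \to M).
\]
Using the dinaturality of the universal transformation $\pi_{\mathcal{M}}$ of the end $A_{\mathcal{M}} = \Act^{\radj}_{\mathcal{M}}(\id_{\mathcal{M}})$ (applied to the morphism $f$, respectively $g$), both sides can be rewritten, reducing the claim to the identity $\trace_{\mathcal{M}}(M) \circ \iHom(f, g) \circ \pi_{\mathcal{M}}(N) = \trace_{\mathcal{M}}(N) \circ \iHom(g, f) \circ \pi_{\mathcal{M}}(M)$. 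This is the ``internal'' counterpart of the cyclicity of the pivotal trace recorded in Proposition~\ref{prop:apdx-piv-trace}, and I would prove it by the same diagram chase, using the naturality of the defining isomorphism~\eqref{eq:def-relative-Serre} of the relative Serre functor $\Ser_{\mathcal{M}}$ together with the pivotal axiom~\eqref{eq:mod-cat-piv-str-def}.

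Granting cyclicity and $k$-linearity in the endomorphism slot, the lemma is immediate when the sequence $0 \to M_1 \xrightarrow{\iota} M_2 \xrightarrow{p} M_3 \to 0$ splits: a splitting gives idempotents $e_1 = \iota r$ and $e_3 = s p$ in $\End_{\mathcal{M}}(M_2)$ with $e_1 + e_3 = \id_{M_2}$, $r \iota = \id_{M_1}$ and $p s = \id_{M_3}$, whence $\ich_{\mathcal{M}}(M_2) = \ich_{\mathcal{M}}(M_2; e_1) + \ich_{\mathcal{M}}(M_2; e_3) = \ich_{\mathcal{M}}(M_1; r\iota) + \ich_{\mathcal{M}}(M_3; ps) = \ich_{\mathcal{M}}(M_1) + \ich_{\mathcal{M}}(M_3)$. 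For a general (possibly non-split) extension I would use that $\mathcal{M}$, being exact, has $\iHom$ exact in each variable: the functor $(-) \otimes M$ is exact and preserves projectives, so its right adjoint $\iHom(M, -)$ is exact, and~\eqref{eq:def-relative-Serre} then transports this to exactness in the first variable. Applying $\iHom(M_2, -)$, $\iHom(-, M_2)$ and their analogues for $M_1$ and $M_3$ to the short exact sequence produces a finite filtration of $\iHom(M_2, M_2)$ whose associated graded consists of the four ``blocks'' $\iHom(M_a, M_b)$ with $a, b \in \{1, 3\}$. One then checks that $\trace_{\mathcal{M}}(M_2) \circ \pi_{\mathcal{M}}(M_2)$ splits along this filtration into a sum of contributions, the two off-diagonal blocks $\iHom(M_1, M_3)$ and $\iHom(M_3, M_1)$ contributing $0$ and the two diagonal blocks contributing $\ich_{\mathcal{M}}(M_1)$ and $\ich_{\mathcal{M}}(M_3)$; this is the $\CF(\mathcal{M})$-valued analogue of the additivity of the pivotal trace (Proposition~\ref{apdx:prop-additivity-piv-trace}) and is proved by the same computation.

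The hard part will be precisely this last step. There is no splitting of the sequence at our disposal, so one must work with the filtration of $\iHom(M_2, M_2)$ and, above all, control how the trace morphism $\trace_{\mathcal{M}}(M_2)$ — which is assembled from the relative Serre functor and the pivotal structure via~\eqref{eq:mod-cat-piv-str-def} — restricts to the relevant subobjects and descends to the subquotients, verifying in particular that the off-diagonal blocks contribute nothing (the categorical form of the fact that the trace of a block-triangular matrix sees only the diagonal blocks). This is a routine but lengthy diagram chase, entirely parallel to the proof of additivity of the pivotal trace, and it is the only place where the exactness of $\mathcal{M}$ is used; it is therefore natural to defer it to Appendix~\ref{sec:additivity-trace}.
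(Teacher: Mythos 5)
Your proposal is correct and follows essentially the same route as the paper: the lemma is reduced, exactly as in \cite[Lemma 2.5.1]{MR2803849}, to an additivity statement for the ``trace'' $t(f)=\trace_{\mathcal{M}}(M)\circ\iHom(\id_M,f)\circ\pi_{\mathcal{M}}(M)$ built from the two dinatural transformations $\pi_{\mathcal{M}}$ and $\trace_{\mathcal{M}}$, using exactness of $\iHom$ in each variable, and the ``hard last step'' you defer (the $3\times 3$ diagram chase showing that only the diagonal blocks $\iHom(M_1,M_1)$ and $\iHom(M_3,M_3)$ contribute) is precisely Proposition~\ref{apdx:prop-additivity-piv-trace} in Appendix~\ref{sec:additivity-trace}. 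Your preliminary cyclicity and split-case discussion is harmless but not needed, since the general dinatural-trace additivity handles all extensions at once.
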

\begin{proof}
  It is well-known that the pivotal trace is additive in exact sequences. One can find a detailed proof of this fact in \cite[Lemma 2.5.1]{MR2803849}. The proof of this lemma goes along the same line; see Appendix \ref{sec:additivity-trace} for the detail.
\end{proof}

For a finite abelian category $\mathcal{A}$, we denote by $\Gr(\mathcal{A})$ the Grothendieck group of $\mathcal{A}$, that is, the quotient of the additive group generated by the isomorphism classes of objects of $\mathcal{A}$ by the relation $[M_2] = [M_1] + [M_3]$ for all exact sequences $0 \to M_1 \to M_2 \to M_3 \to 0$ in $\mathcal{A}$. We set $\Gr_k(\mathcal{A}) = k \otimes_{\mathbb{Z}} \Gr(\mathcal{A})$.

Now let $\{ L_1, \dotsc, L_n \}$ be a complete set of representatives of isomorphism classes of simple objects of $\mathcal{M}$. As a generalization of the main result of \cite{MR3631720}, we prove the following theorem:

\begin{theorem}
  \label{thm:irr-char-lin-indep}
  The set $\{ \ich(L_i) \}_{i = 1}^n \subset \CF(\mathcal{M})$ is linearly independent.
\end{theorem}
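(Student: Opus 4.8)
The plan is to push the problem down to the semisimple part of $\mathcal{M}$, where it becomes an essentially trivial linear-algebra statement. Let $\mathcal{M}_1 \subseteq \mathcal{M}$ be the topologizing full subcategory consisting of all semisimple objects (the bottom of the socle filtration). Since $\mathcal{M}$ is exact, Lemma~\ref{lem:canonical-epi-2} tells us that the canonical morphism $\phi := \phi_{\mathcal{M} \,|\, \mathcal{M}_1} : A_{\mathcal{M}} \to A_{\mathcal{M}_1}$ is an epimorphism in $\mathcal{C}$, so precomposition with $\phi$ yields an \emph{injective} linear map $\Hom_{\mathcal{C}}(A_{\mathcal{M}_1}, \unitobj) \hookrightarrow \Hom_{\mathcal{C}}(A_{\mathcal{M}}, \unitobj) = \CF(\mathcal{M})$. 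First I would observe that each internal character $\ich_{\mathcal{M}}(L_i)$ lies in the image of this map: as $L_i \in \mathcal{M}_1$, equation~\eqref{eq:canonical-epi} gives $\pi_{\mathcal{M}_1}(L_i) \circ \phi = \pi_{\mathcal{M}}(L_i)$, hence $\ich_{\mathcal{M}}(L_i) = \trace_{\mathcal{M}}(L_i) \circ \pi_{\mathcal{M}}(L_i) = c_i \circ \phi$, where $c_i := \trace_{\mathcal{M}}(L_i) \circ \pi_{\mathcal{M}_1}(L_i) \in \Hom_{\mathcal{C}}(A_{\mathcal{M}_1}, \unitobj)$. Thus it suffices to prove that $c_1, \dots, c_n$ are linearly independent.

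Now $\mathcal{M}_1$ is a semisimple finite abelian category whose simple objects are precisely $L_1, \dots, L_n$, so the end $A_{\mathcal{M}_1} = \int_{X \in \mathcal{M}_1} \iHom(X, X)$ can be computed explicitly. I would carry out the standard dinaturality computation over a semisimple category --- using $\End_{\mathcal{M}}(L_i) = k$, as $k$ is algebraically closed --- to show that the universal dinatural transformation assembles into an isomorphism
\[
  A_{\mathcal{M}_1} \;\cong\; \bigoplus_{i=1}^{n} \iHom(L_i, L_i)
\]
under which $\pi_{\mathcal{M}_1}(L_i)$ becomes the projection onto the $i$-th summand. (Alternatively one may identify $A_{\mathcal{M}_1}$ with $\Act_{\mathcal{M}}^{\radj}(\uptau_{\mathcal{M}_1})$ via Lemma~\ref{lem:end-adj} applied to $i^{\flat} \dashv i$, where $\uptau_{\mathcal{M}_1}(M) = M / \rad(M)$, but the direct computation seems shortest.) Consequently $\Hom_{\mathcal{C}}(A_{\mathcal{M}_1}, \unitobj) \cong \bigoplus_{i=1}^{n} \Hom_{\mathcal{C}}(\iHom(L_i, L_i), \unitobj)$, and under this identification $c_i$ lies entirely in the $i$-th summand, where it equals $\trace_{\mathcal{M}}(L_i)$.

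To finish I would check that $\trace_{\mathcal{M}}(L_i) \neq 0$ for each $i$: by Definition~\ref{def:pivotal-mod-cat} the morphism $\trace_{\mathcal{M}}(L_i) \in \Hom_{\mathcal{C}}(\iHom(L_i, L_i), \unitobj)$ corresponds, under the chain of natural isomorphisms given there, to the component $j'_{L_i} : L_i \to \Ser_{\mathcal{M}}(L_i)$ of the pivotal structure of $\mathcal{M}$, which is an isomorphism and hence nonzero. Therefore $c_i$ is a nonzero element of the $i$-th direct summand of $\Hom_{\mathcal{C}}(A_{\mathcal{M}_1}, \unitobj)$; since $c_1, \dots, c_n$ are supported in pairwise distinct summands they are linearly independent, and applying the injection $(-) \circ \phi$ shows that $\{ \ich_{\mathcal{M}}(L_i) \}_{i=1}^{n}$ is linearly independent in $\CF(\mathcal{M})$, which is the assertion. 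As a by-product this shows that the $\ich_{\mathcal{M}}(L_i)$ form a basis of $\Hom_{\mathcal{C}}(A_{\mathcal{M}_1}, \unitobj)$, anticipating the discussion of the filtration in Section~\ref{sec:filtr-space-class}.

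The only step requiring genuine care is the explicit evaluation of the end $A_{\mathcal{M}_1}$ over the semisimple category $\mathcal{M}_1$, together with the compatibility of the resulting direct-sum decomposition with the universal dinatural transformation $\pi_{\mathcal{M}_1}$; everything else is formal manipulation with the universal property of ends and with the definitions of $\ich_{\mathcal{M}}$ and $\trace_{\mathcal{M}}$. Note that, in contrast to the applications in Section~\ref{sec:integral-over-fullsub}, no indecomposability hypothesis on $\mathcal{M}$ is needed here.
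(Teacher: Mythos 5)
Your proposal is correct and follows essentially the same route as the paper: pass to the semisimple subcategory $\mathcal{S} = \mathcal{M}_1$, use the epimorphism $\phi_{\mathcal{M}|\mathcal{S}}: A_{\mathcal{M}} \to A_{\mathcal{S}}$ (from exactness) to embed $\CF(\mathcal{S}) \cong \bigoplus_i \Hom_{\mathcal{C}}(\iHom(L_i,L_i),\unitobj)$ into $\CF(\mathcal{M})$, and observe that $\ich_{\mathcal{M}}(L_i)$ is the image of $\trace_{\mathcal{M}}(L_i)$ sitting in the $i$-th summand. Your explicit check that $\trace_{\mathcal{M}}(L_i) \neq 0$ via the isomorphism $j'_{L_i}$ is a detail the paper leaves implicit, but the argument is the same.
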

\begin{proof}
  The proof goes along the same way as \cite{MR3631720}. Let $\mathcal{S}$ be the full subcategory of $\mathcal{M}$ consisting of all semisimple objects of $\mathcal{M}$. Then, since $\mathcal{S}$ is semisimple, we may assume $A_{\mathcal{S}} = \bigoplus_{i = 1}^m \iHom(L_i, L_i)$ and $\pi_{\mathcal{S}}(L_i)$ is the projection to $\iHom(L_i, L_i)$ for $i = 1, \dotsc, n$. Let $\phi_{\mathcal{M}|\mathcal{S}}: A_{\mathcal{M}} \to A_{\mathcal{S}}$ be the morphism defined by~\eqref{eq:canonical-epi}. Since $\phi_{\mathcal{M}|\mathcal{S}}$ is an epimorphism, the map
  \begin{equation*}
    \bigoplus_{i = 1}^n \Hom_{\mathcal{C}}(\iHom(L_i, L_i), \unitobj)
    = \CF(\mathcal{S})
    \xrightarrow{\quad \Hom_{\mathcal{C}}(\phi_{\mathcal{M}|\mathcal{S}}, \unitobj) \quad}
    \CF(\mathcal{M})
  \end{equation*}
  is injective. Since the morphism $\ich_{\mathcal{M}}(L_i)$ is the image of the morphism $\trace_{\mathcal{M}}(L_i)$ under this map, the set $\{ \ich(L_i) \}_{i = 1}^n$ is linearly independent in $\CF(\mathcal{M})$.
\end{proof}

Let $\mathcal{C}$ and $\mathcal{M}$ be as above. By Lemma \ref{lem:character-additive} and Theorem~\ref{thm:irr-char-lin-indep}, the linear map
\begin{equation*}
  \ich_{\mathcal{M}}: \Gr_k(\mathcal{M}) \to \CF(\mathcal{M}),
  \quad [M] \mapsto \ich_{\mathcal{M}}(M)
  \quad (M \in \mathcal{M})
\end{equation*}
is well-defined and injective. Lemma~\ref{lem:character-multiplicative} implies that $\ich_{\mathcal{C}}: \Gr_k(\mathcal{C}) \to \CF(\mathcal{C})$ is an algebra map and $\ich_{\mathcal{M}}: \Gr_k(\mathcal{M}) \to \CF(\mathcal{M})$ is $\Gr_k(\mathcal{C})$-linear if we view $\CF(\mathcal{M})$ as a left $\Gr_k(\mathcal{C})$-module through the algebra map $\ich_{\mathcal{C}}$.

By the proof of the above lemma, we see that the linear map $\ich_{\mathcal{M}}$ is bijective if $\mathcal{M}$ is semisimple. We have proved that, under the assumption that $\mathcal{C}$ is unimodular in the sense of \cite{MR2119143}, the map $\ich_{\mathcal{C}}: \Gr_k(\mathcal{C}) \to \CF(\mathcal{C})$ is bijective if and only if $\mathcal{C}$ is semisimple \cite{MR3631720}. It would be interesting to establish an analogous result for module categories. The unimodularity of module categories, introduced in \cite{2016arXiv161204561F}, may be useful to formulate such a result.

\subsection{Class functions of the dual tensor category}

Let $\mathcal{C}$ be a finite tensor category, and let $\mathcal{M}$ be an indecomposable exact left $\mathcal{C}$-module category. As an application of our results, we give the following description of the algebra of class functions of the dual tensor category:

\begin{theorem}
  \label{thm:CF-D-D}
  $\CF(\mathcal{C}_{\mathcal{M}}^*) \cong \End_{\mathcal{Z}(\mathcal{C})}(\mathbf{A}_{\mathcal{M}})$ as algebras.
\end{theorem}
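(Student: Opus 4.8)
The plan is to apply Theorem~\ref{thm:CF-C-module-CF-M} not to $\mathcal{C}$ but to the dual category $\mathcal{D} := \mathcal{C}_{\mathcal{M}}^*$ itself, and then to transport the resulting description across Schauenburg's equivalence. Under the standing hypotheses ($\mathcal{M}$ exact and indecomposable) $\mathcal{D}$ is a finite tensor category, so the ``$E \cong F$'' part of Theorem~\ref{thm:CF-C-module-CF-M}, which holds for an arbitrary finite tensor category, yields an isomorphism of algebras
\begin{equation*}
  \CF(\mathcal{C}_{\mathcal{M}}^*) \cong \End_{\mathcal{Z}(\mathcal{C}_{\mathcal{M}}^*)}(\mathbf{A}_{\mathcal{C}_{\mathcal{M}}^*}),
\end{equation*}
where $\mathbf{A}_{\mathcal{C}_{\mathcal{M}}^*}$ denotes the canonical commutative algebra of $\mathcal{D}$ in $\mathcal{Z}(\mathcal{D})$. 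It therefore remains to identify the right-hand side with $\End_{\mathcal{Z}(\mathcal{C})}(\mathbf{A}_{\mathcal{M}})$.

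First I would pin down $\mathbf{A}_{\mathcal{C}_{\mathcal{M}}^*}$. As recorded in the proof of Lemma~\ref{lem:CF-C-module-CF-M}, the canonical algebra of any finite tensor category $\mathcal{D}$ is $\mathbf{A}_{\mathcal{D}} = \mathsf{U}_{\mathcal{D}}^{\radj}(\unitobj_{\mathcal{D}})$ with its free-comodule algebra structure over the comonad $\mathsf{U}_{\mathcal{D}} \mathsf{U}_{\mathcal{D}}^{\radj}$, where $\mathsf{U}_{\mathcal{D}} \colon \mathcal{Z}(\mathcal{D}) \to \mathcal{D}$ is the forgetful functor. For $\mathcal{D} = \mathcal{C}_{\mathcal{M}}^*$ the unit object is $\id_{\mathcal{M}} \in \REX_{\mathcal{C}}(\mathcal{M})$, and the Corollary following Theorem~\ref{thm:induction-Drinfeld-center} gives $\mathsf{U}_{\mathcal{C}_{\mathcal{M}}^*}^{\radj} \cong \Sch_{\mathcal{M}} \circ \mathcal{Z}(\Act_{\mathcal{M}}^{\radj})$. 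This is an isomorphism between right adjoints of strong monoidal functors; one checks that it respects the induced lax monoidal (equivalently, the comonad) structures, so evaluating at $\id_{\mathcal{M}}$ yields an algebra isomorphism $\mathbf{A}_{\mathcal{C}_{\mathcal{M}}^*} \cong \Sch_{\mathcal{M}}\big(\mathcal{Z}(\Act_{\mathcal{M}}^{\radj})(\id_{\mathcal{M}})\big)$ in $\mathcal{Z}(\mathcal{C}_{\mathcal{M}}^*)$. Finally, $\mathcal{Z}(\Act_{\mathcal{M}}^{\radj})(\id_{\mathcal{M}})$ is, essentially by definition, the commutative algebra $\mathbf{A}_{\mathcal{M}} = (A_{\mathcal{M}}, \sigma_{\mathcal{M}}) \in \mathcal{Z}(\mathcal{C})$ of Subsection~\ref{subsec:integral-over-mod-subcat}: this is the instance $\mathcal{S} = \mathcal{M}$, so that $\uptau_{\mathcal{M}} = \id_{\mathcal{M}}$, with the algebra structure matched via Lemma~\ref{lem:end-adj-alg-iso} and the half-braiding via Lemma~\ref{lem:act-fun-adj-half-braiding} (equivalently~\eqref{eq:act-fun-adj-half-bra}).

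It now suffices to observe that the $k$-linear braided monoidal equivalence $\Sch_{\mathcal{M}} \colon \mathcal{Z}(\mathcal{C}) \to \mathcal{Z}(\mathcal{C}_{\mathcal{M}}^*)$ of Theorem~\ref{thm:categorical-Schauenburg} induces, for the algebra object $\mathbf{A}_{\mathcal{M}}$, an isomorphism of endomorphism algebras
\begin{equation*}
  \End_{\mathcal{Z}(\mathcal{C})}(\mathbf{A}_{\mathcal{M}})
  \xrightarrow{\ \cong\ }
  \End_{\mathcal{Z}(\mathcal{C}_{\mathcal{M}}^*)}\big(\Sch_{\mathcal{M}}(\mathbf{A}_{\mathcal{M}})\big)
  \cong \End_{\mathcal{Z}(\mathcal{C}_{\mathcal{M}}^*)}(\mathbf{A}_{\mathcal{C}_{\mathcal{M}}^*}),
\end{equation*}
which, combined with the first paragraph, gives $\CF(\mathcal{C}_{\mathcal{M}}^*) \cong \End_{\mathcal{Z}(\mathcal{C})}(\mathbf{A}_{\mathcal{M}})$ as algebras. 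I expect the only real difficulty to be the bookkeeping in the middle paragraph: one must make sure that the chain $\mathbf{A}_{\mathcal{C}_{\mathcal{M}}^*} \cong \Sch_{\mathcal{M}}\big(\mathcal{Z}(\Act_{\mathcal{M}}^{\radj})(\id_{\mathcal{M}})\big) \cong \Sch_{\mathcal{M}}(\mathbf{A}_{\mathcal{M}})$ is simultaneously compatible with the algebra multiplications, the half-braidings, and the comodule structures over the respective comonads --- that is, that nothing is lost when one passes from ``isomorphic as objects of the center'' to ``isomorphic as algebras in the center''. Everything else is a formal transport of structure along equivalences and adjunctions.
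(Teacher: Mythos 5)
Your proposal is correct and follows essentially the same route as the paper: apply Theorem~\ref{thm:CF-C-module-CF-M} to the finite tensor category $\mathcal{D} = \mathcal{C}_{\mathcal{M}}^*$, identify $\mathsf{U}_{\mathcal{D}}^{\radj}(\unitobj_{\mathcal{D}})$ with $\Sch_{\mathcal{M}}(\mathbf{A}_{\mathcal{M}})$ via Theorem~\ref{thm:induction-Drinfeld-center}, and transport endomorphism algebras across the braided monoidal equivalence $\Sch_{\mathcal{M}}$ of Theorem~\ref{thm:categorical-Schauenburg}. The paper's proof is just a condensed version of this argument (using $\Sch_{\mathcal{M}}^{-1}$ instead of $\Sch_{\mathcal{M}}$), so no further comparison is needed.
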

\begin{proof}
  Set $\mathcal{D} = \mathcal{C}_{\mathcal{M}}^*$. Let $\mathsf{U}: \mathcal{Z}(\mathcal{D}) \to \mathcal{D}$ be the forgetful functor. By Theorems~\ref{thm:categorical-Schauenburg}, \ref{thm:induction-Drinfeld-center} and~\ref{thm:CF-C-module-CF-M}, we have isomorphisms
  \begin{equation*}
    \CF(\mathcal{D}) \cong \End_{\mathcal{Z}(\mathcal{D})}(\mathsf{U}^{\radj}_{\mathcal{D}}(\unitobj_{\mathcal{D}}))
    \cong \End_{\mathcal{Z}(\mathcal{C})}(\Sch_{\mathcal{M}}^{-1} \mathsf{U}_{\mathcal{D}}^{\radj}(\unitobj_{\mathcal{D}}))
    \cong \End_{\mathcal{Z}(\mathcal{C})}(\mathbf{A}_{\mathcal{M}})
  \end{equation*}
  of algebras. The proof is done.
\end{proof}

A semisimple finite tensor category is called a {\em fusion category} \cite{MR2183279}. Our results give some new results on fusion categories. For example:

\begin{corollary}
  Suppose that the base field $k$ is of characteristic zero. Let $\mathcal{C}$ be a fusion category, and let $\mathcal{M}$ be an indecomposable exact left $\mathcal{C}$-module category such that $\mathcal{C}_{\mathcal{M}}^*$ admits a pivotal structure. Then there is an isomorphism
  \begin{equation*}
    \Gr_k(\mathcal{C}_{\mathcal{M}}^*) \cong \End_{\mathcal{Z}(\mathcal{C})}(\mathbf{A}_{\mathcal{M}})
  \end{equation*}
  of algebras.
\end{corollary}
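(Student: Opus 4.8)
The plan is to deduce the corollary from Theorem~\ref{thm:CF-D-D} together with the internal character theory of Section~\ref{sec:class-function}, applied to $\mathcal{D} := \mathcal{C}_{\mathcal{M}}^*$ regarded as a module category over itself. The first, purely categorical, step is to record that in characteristic zero an exact module category over a fusion category is semisimple; hence $\mathcal{M}$ is semisimple, and consequently $\mathcal{D} = \mathcal{C}_{\mathcal{M}}^*$ is again a fusion category \cite{MR2183279}. In particular $\mathcal{D}$ is a semisimple finite tensor category which, by hypothesis, carries a pivotal structure.

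Next I would verify that the regular left $\mathcal{D}$-module category $\mathcal{D}$ (acting on itself by $\otimes$) is a pivotal exact left $\mathcal{D}$-module category. It is exact, being the regular module category of a finite tensor category, and semisimple because $\mathcal{D}$ is. From $\iHom_{\mathcal{D}}(M,N) = N \otimes M^*$ and the defining isomorphism~\eqref{eq:def-relative-Serre} one computes $\Ser_{\mathcal{D}} \cong (-)^{**}$, so a pivotal structure of $\mathcal{D}$ in the monoidal sense (a natural isomorphism $\id \to (-)^{**}$ of monoidal functors) is exactly a pivotal structure of $\mathcal{D}$ as a $\mathcal{D}$-module category in the sense of Definition~\ref{def:pivotal-mod-cat}; this is precisely the $\mathcal{M} = \mathcal{C}$ situation of \cite{MR3631720}.

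Finally I would invoke the results of Section~\ref{sec:class-function} with both ``$\mathcal{C}$'' and ``$\mathcal{M}$'' there taken to be $\mathcal{D}$. By Lemma~\ref{lem:character-multiplicative} the internal character map $\ich_{\mathcal{D}} \colon \Gr_k(\mathcal{D}) \to \CF(\mathcal{D})$ is a homomorphism of $k$-algebras; it is injective by Theorem~\ref{thm:irr-char-lin-indep}; and it is bijective because the module category $\mathcal{D}$ is semisimple (as observed after Theorem~\ref{thm:irr-char-lin-indep}, using that $A_{\mathcal{D}} \cong \bigoplus_{L \in \Irr(\mathcal{D})} L \otimes L^*$, so $\dim_k \CF(\mathcal{D}) = |\Irr(\mathcal{D})| = \dim_k \Gr_k(\mathcal{D})$). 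Hence $\ich_{\mathcal{D}}$ is an isomorphism of algebras $\Gr_k(\mathcal{C}_{\mathcal{M}}^*) \cong \CF(\mathcal{C}_{\mathcal{M}}^*)$, and composing it with the algebra isomorphism $\CF(\mathcal{C}_{\mathcal{M}}^*) \cong \End_{\mathcal{Z}(\mathcal{C})}(\mathbf{A}_{\mathcal{M}})$ of Theorem~\ref{thm:CF-D-D} proves the corollary. The only non-formal point is the categorical input of the first two paragraphs: that characteristic zero forces $\mathcal{M}$, and hence $\mathcal{D}$, to be semisimple fusion, and that the assumed pivotal structure on $\mathcal{D}$ is literally a pivotal structure on $\mathcal{D}$ as its own module category via $\Ser_{\mathcal{D}} \cong (-)^{**}$; once these are in place everything else is assembling Theorem~\ref{thm:CF-D-D}, Lemma~\ref{lem:character-multiplicative} and Theorem~\ref{thm:irr-char-lin-indep}.
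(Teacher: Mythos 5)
Your proposal is correct and follows essentially the same route as the paper: the paper's (very terse) proof likewise observes that $\mathcal{C}_{\mathcal{M}}^*$ is a pivotal fusion category in characteristic zero and then combines the internal character isomorphism $\Gr_k(\mathcal{C}_{\mathcal{M}}^*)\cong\CF(\mathcal{C}_{\mathcal{M}}^*)$ from the previous subsection with Theorem~\ref{thm:CF-D-D}. Your write-up merely makes explicit the intermediate checks (semisimplicity of $\mathcal{M}$, $\Ser_{\mathcal{D}}\cong(-)^{**}$ for the regular module, bijectivity of $\ich$ in the semisimple case) that the paper leaves implicit.
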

\begin{proof}
  $\mathcal{C}_{\mathcal{M}}^*$ is a pivotal fusion categories by the assumption \cite{MR2183279}. Thus the result follows from the results of the previous subsection.
\end{proof}

The following result generalizes \cite[Example 2.18]{2013arXiv1309.4822O}:

\begin{corollary}
  Under the same assumption on the above corollary, the following two assertions are equivalent:
  \begin{enumerate}
  \item The Grothendieck ring of $\mathcal{C}_{\mathcal{M}}^*$ is commutative.
  \item The object $\mathbf{A}_{\mathcal{M}} \in \mathcal{Z}(\mathcal{C})$ is multiplicity-free.
  \end{enumerate}
\end{corollary}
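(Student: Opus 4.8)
The plan is to read off the equivalence from the algebra isomorphism $\Gr_k(\mathcal{C}_{\mathcal{M}}^*) \cong \End_{\mathcal{Z}(\mathcal{C})}(\mathbf{A}_{\mathcal{M}})$ supplied by the previous corollary, together with the semisimplicity of $\mathcal{Z}(\mathcal{C})$. First I would reduce assertion (1) to a statement about $E := \End_{\mathcal{Z}(\mathcal{C})}(\mathbf{A}_{\mathcal{M}})$: the Grothendieck group $\Gr(\mathcal{C}_{\mathcal{M}}^*)$ is free abelian and hence embeds as a subring into its coefficient extension $\Gr_k(\mathcal{C}_{\mathcal{M}}^*) = k \otimes_{\mathbb{Z}} \Gr(\mathcal{C}_{\mathcal{M}}^*)$, so that $\Gr(\mathcal{C}_{\mathcal{M}}^*)$ is commutative if and only if $\Gr_k(\mathcal{C}_{\mathcal{M}}^*)$ is; by the previous corollary this is equivalent to the commutativity of $E$.

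Next I would analyze $E$ using that $\mathcal{Z}(\mathcal{C})$ is semisimple. Since $\mathcal{C}$ is a fusion category over a field of characteristic zero, its Drinfeld center $\mathcal{Z}(\mathcal{C})$ is again a fusion category, in particular a finite semisimple $k$-linear category. Writing $\mathbf{A}_{\mathcal{M}} \cong \bigoplus_{i} V_i^{\oplus n_i}$ with the $V_i$ pairwise non-isomorphic simple objects of $\mathcal{Z}(\mathcal{C})$ and $n_i \ge 1$, Schur's lemma (using that $k$ is algebraically closed) yields an algebra isomorphism
\begin{equation*}
  E \cong \prod_i \mathrm{Mat}_{n_i}(k).
\end{equation*}
A finite product of matrix algebras over $k$ is commutative precisely when every $n_i$ equals $1$, that is, precisely when $\mathbf{A}_{\mathcal{M}}$ is multiplicity-free as an object of $\mathcal{Z}(\mathcal{C})$ --- and this is assertion (2). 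Chaining the two equivalences completes the proof.

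I do not expect any genuine obstacle here; the argument is essentially a straightforward unwinding of the isomorphism furnished by the previous corollary, and the only non-formal ingredient is the semisimplicity of the Drinfeld center $\mathcal{Z}(\mathcal{C})$, a standard fact for fusion categories in characteristic zero.
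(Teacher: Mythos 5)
Your proposal is correct and follows essentially the same route as the paper: both reduce commutativity of the Grothendieck ring to commutativity of $\Gr_k(\mathcal{C}_{\mathcal{M}}^*)\cong\End_{\mathcal{Z}(\mathcal{C})}(\mathbf{A}_{\mathcal{M}})$ via the preceding corollary and then use that $\mathcal{Z}(\mathcal{C})$ is a fusion category in characteristic zero. The only difference is that you spell out the Schur's-lemma decomposition $\End_{\mathcal{Z}(\mathcal{C})}(\mathbf{A}_{\mathcal{M}})\cong\prod_i \mathrm{Mat}_{n_i}(k)$, which the paper leaves implicit.
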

\begin{proof}
  Since $k$ is of characteristic zero, $\mathcal{Z}(\mathcal{C})$ is a fusion category \cite{MR2183279}. Moreover, the ring $\Gr(\mathcal{D})$ is commutative if and only if the $k$-algebra $\Gr_k(\mathcal{D})$ is. Now the claim follows from the above corollary.
\end{proof}

\section{A filtration on the space of class functions}
\label{sec:filtr-space-class}

\subsection{A filtration on the space of class functions}

Let $\mathcal{M}$ be a finite abelian category. For an object $M \in \mathcal{M}$, we denote by $\socle(M)$ the socle of $M$. Every object $M \in \mathcal{M}$ has a canonical filtration
\begin{equation*}
  0 = M_0 \subset M_1 \subset M_2 \subset M_3 \subset \dotsb \subset M
\end{equation*}
such that $M_{i+1}/M_{i} = \socle(M/M_{i})$. We denote $M_n$ by $\socle_n(M)$. Then the assignment $M \mapsto \socle_n(M)$ extends to a $k$-linear left exact endofunctor on $\mathcal{M}$, which we call the {\em $n$-th socle functor}. The number
\begin{equation*}
  \Lw(M) = \min \{ n = 0, 1, 2, \dotsc \mid \socle_n(M) = M \}
\end{equation*}
is called the {\em Loewy length} of $M$. We define $\mathcal{M}_n$ to be the full subcategory of $\mathcal{M}$ consisting of all objects $M$ with $\Lw(M) \le n$. Since $\mathcal{M}$ is finite, the number
\begin{equation*}
  \Lw(\mathcal{M})
  := \min \{ n = 0, 1, 2, \dotsc \mid \mathcal{M}_n = \mathcal{M} \}
  = \max \{ \Lw(M) \mid M \in \mathcal{M} \}
\end{equation*}
is finite. We call $\Lw(\mathcal{M})$ the {\em Loewy length} of $\mathcal{M}$ and the filtration
\begin{equation}
  \label{eq:socle-filt-cat}
  0 = \mathcal{M}_0 \subset \mathcal{M}_1 \subset \mathcal{M}_2 \subset \dotsb \subset \mathcal{M}_w = \mathcal{M}
  \quad (w = \Lw(\mathcal{M}))
\end{equation}
the {\em socle filtration} of $\mathcal{M}$.

It is easy to see that each $\mathcal{M}_n$ is a topologizing full subcategory of $\mathcal{M}$. Thus, if $\mathcal{C}$ is a finite tensor category and $\mathcal{M}$ is an exact left $\mathcal{C}$-module category with Loewy length $w$, then we have a series
\begin{equation}
  \label{eq:Loewy-filtration-algebra}
  A_{\mathcal{M}} = A_{\mathcal{M}_w}
  \twoheadrightarrow A_{\mathcal{M}_{w-1}}
  \twoheadrightarrow \dotsb
  \twoheadrightarrow A_{\mathcal{M}_2}
  \twoheadrightarrow A_{\mathcal{M}_1}
\end{equation}
of epimorphisms of of algebras in $\mathcal{C}$ by Theorem~\ref{thm:adjoint-alg-quotients}. Applying $\Hom_{\mathcal{C}}(-, \unitobj)$ to this series, we obtain the filtration of the space of class functions
\begin{equation}
  \label{eq:Loewy-filtration-CF}
  \CF_1(\mathcal{M})
  \subset \CF_2(\mathcal{M})
  \subset \dotsb
  \subset \CF_{w-1}(\mathcal{M})
  \subset \CF_{w}(\mathcal{M})
  = \CF(\mathcal{M}),
\end{equation}
where $\CF_n(\mathcal{M}) = \Hom_{\mathcal{C}}(A_{\mathcal{M}_n}, \unitobj)$. In this section, we investigate how this filtration relates to representation-theoretic properties of $\mathcal{M}$.

\subsection{Jacobson radical functor}

For further study of the series \eqref{eq:Loewy-filtration-algebra} and the filtration \eqref{eq:Loewy-filtration-CF}, we introduce the following abstract definition of the Jacobson radical: Let $\mathcal{M}$ be a finite abelian category. For an object $M \in \mathcal{M}$, we define the subobject $\rad(M)$ of $M$ to be the intersection of all maximal subobjects of $M$. It is easy to see that $M \mapsto \rad(M)$ extends to a $k$-linear right exact endofunctor on $\mathcal{M}$. We call $\rad \in \REX(\mathcal{M})$ the {\em Jacobson radical functor} of $\mathcal{M}$.

We rephrase several known results in the representation theory in terms of the Jacobson radical functor. Let $A$ be a finite-dimensional algebra such that $\mathcal{M} \approx \lmod{A}$, and let $J$ be the Jacobson radical of $A$. Then the Jacobson radical functor may be identified with $J \otimes_A(-)$. Thus we have the series
\begin{equation}
  \label{eq:Loewy-series}
  \id_{\mathcal{M}} =: \rad^0
  \supset \rad
  \supset \rad^2
  \supset \dotsb
  \supset \rad^{w-1}
  \supset \rad^{w} = 0
  \quad (w = \Lw(\mathcal{M}))
\end{equation}
of subobjects in $\REX(\mathcal{M})$. We have $\rad_{\mathcal{M}}^i \ne \rad_{\mathcal{M}}^{i+1}$ for all $i = 0, \dotsc, w - 1$ by the Nakayama lemma.

For a positive integer $n$, we define the {\em $n$-th capital functor} $\capital_n \in \REX(\mathcal{M})$ as the quotient object $\id_{\mathcal{M}}/\rad^n$. If we identify $\REX(\mathcal{M})$ with $\bimod{A}{A}$, then this functor corresponds to the bimodule $A/J^n$ and therefore
\begin{equation}
  \label{eq:n-capital}
  \capital_n(M) = (A/J^n) \otimes_A M \cong M/J^n M
\end{equation}
for all $M \in \mathcal{M}$. By Sakurai \cite[Lemma 2.3]{MR3656722}, there is an adjunction
\begin{equation}
  \label{eq:Sakurai-adjunctoin}
  \Hom_{\mathcal{M}}(\capital_n(M), M') \cong \Hom_{\mathcal{M}}(M, \socle_n(M'))
  \quad (M, M' \in \mathcal{M}).
\end{equation}
The $n$-th term $\mathcal{M}_n$ of the socle filtration \eqref{eq:socle-filt-cat} coincides with the full subcategory of $\mathcal{M}$ consisting of all objects $M$ such that $\socle_n(M) = M$. Comparing \eqref{eq:Sakurai-adjunctoin} with \eqref{eq:reflector-2}, we have $\capital_n = \uptau_{\mathcal{M}_n}$ with the notation in Subsection~\ref{subsec:topolo-full-sub}. In other words, $\mathcal{M}_n$ corresponds to $\rad^n$ via the correspondence of Lemma~\ref{lem:Rosen}.

Now we consider the case where $\mathcal{C}$ is a finite tensor category and $\mathcal{M}$ is an exact left $\mathcal{C}$-module category with Loewy length $w$. There is a series
\begin{equation}
  \label{eq:Loewy-filtration-capital}
  \id_{\mathcal{M}} = \capital_w
  \twoheadrightarrow \capital_{w-1}
  \twoheadrightarrow \dotsb
  \twoheadrightarrow \capital_2
  \twoheadrightarrow \capital_1
\end{equation}
of epimorphisms in $\REX(\mathcal{M})$. We have a canonical isomorphism
\begin{equation*}
  \ActRex^{\radj}(\capital_n) \cong \int_{X \in \mathcal{M}_n} \iHom(X, X),
\end{equation*}
and the series \eqref{eq:Loewy-filtration-algebra} is obtained by applying $\Act^{\radj}$ to \eqref{eq:Loewy-filtration-capital}.

\subsection{Reynolds ideal and its generalization}

Let $A$ be a finite-dimensional algebra. For $n \in \mathbb{Z}_{+}$, we define the {\em $n$-th Reynolds ideal} \cite{2017arXiv170103799S} of $A$ by
\begin{equation}
  \label{eq:n-Reynolds-def-1}
  \Rey_n(A) = \socle_n(A) \cap Z(A),
\end{equation}
where $\socle_n(A)$ is the $n$-th socle of the left $A$-module $A$. As $\Rey_n(A)$ is a Morita invariant \cite{2017arXiv170103799S}, it is natural to expect that the $n$-th Reynolds ideal of a finite abelian category is defined in an intrinsic way. For $n = 1$, this was achieved by Gainutdinov and Runkel in \cite{2017arXiv170300150G}. By using the Jacobson radical functor, we propose the following definition, which is different to \cite{2017arXiv170300150G}:

\begin{definition}
  \label{def:Reynolds-n}
  Let $\mathcal{M}$ be a finite abelian category. For a non-negative positive integer $n$, we define the $n$-th {\em Reynolds ideal} of $\mathcal{M}$ by
  \begin{equation*}
    \Rey_n(\mathcal{M}) = \{ \xi \in \End(\id_{\mathcal{M}}) \mid \xi \circ i_n = 0 \},
  \end{equation*}
  where $i_n: \rad^n \to \id_{\mathcal{M}}^{}$ is the inclusion morphism.
\end{definition}

Let $A$ be a finite-dimensional algebra. We explain that $\Rey_n(\mathcal{M})$ can be identified with the $n$-th Reynolds ideal of $A$ when $\mathcal{M} = \lmod{A}$. Let $J$ be the Jacobson radical of $A$. Then the $n$-th socle of $M \in \lmod{A}$ is given by
\begin{equation*}
  \socle_n(M) = \{ m \in M \mid \text{$r m = 0$ for all $r \in J^n$} \},
\end{equation*}
and hence the $n$-th Reynolds ideal of $A$ is expressed as follows:
\begin{equation}
  \label{eq:n-Reynolds-def-2}
  \Rey_n(A) = \{ z \in Z(A) \mid \text{$r z = 0$ for all $r \in J^n$} \}.
\end{equation}
If $\mathcal{M} = \lmod{A}$, then $\REX(\mathcal{M})$ can be identified with $\bimod{A}{A}$. Under this identification, $\id_{\mathcal{M}}$ and $\rad^n$ correspond to the $A$-bimodule $A$ and its subbimodule $J^n$, respectively. By \eqref{eq:n-Reynolds-def-2}, it is easy to check that the isomorphism $Z(A) \cong \End(\id_{\mathcal{M}})$ restricts to an isomorphism $\Rey_n(A) \cong \Rey_n(\lmod{A})$ for each $n$.

We consider the case where $\mathcal{C}$ is a finite tensor category and $\mathcal{M}$ is an indecomposable exact left $\mathcal{C}$-module category with action functor $\Act = \Act_{\mathcal{M}}$. Then we have an adjunction isomorphism
\begin{equation}
  \label{eq:Hom-1-A-and-center}
  \Hom_{\mathcal{C}}(\unitobj, A_{\mathcal{M}})
  = \Hom_{\mathcal{C}}(\unitobj, \Act^{\radj}(\id_{\mathcal{M}}))
  \cong \Nat(\Act(\unitobj), \id_{\mathcal{M}})
  = \End(\id_{\mathcal{C}}).
\end{equation}
Moreover, since $\Act^{\radj}$ is exact by Theorem~\ref{thm:action-adj-by-end}, the object $J_{\mathcal{M}}^n := \Act^{\radj}(\rad^n)$ is a subobject of $A_{\mathcal{M}}$. The following description of $\Rey_n(\mathcal{M})$ may be regarded as a generalization of \eqref{eq:n-Reynolds-def-2}.

\begin{lemma}
  \label{lem:Reynolds-1}
  For an indecomposable exact $\mathcal{C}$-module category $\mathcal{M}$, we define
  \begin{equation*}
    R_n(\mathcal{M}) = \{ a \in \Hom_{\mathcal{C}}(\unitobj, A_{\mathcal{M}}) \mid m \circ (a \otimes i) = 0 \},
  \end{equation*}
  where $m$ is the multiplication of $A_{\mathcal{M}}$ and $i: J_{\mathcal{M}}^n \to A_{\mathcal{M}}$ is the inclusion morphism. Then \eqref{eq:Hom-1-A-and-center} restricts to an isomorphism between $R_n(\mathcal{M})$ and $\Rey_n(\mathcal{M})$.
\end{lemma}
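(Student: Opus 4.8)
The plan is to transport the defining condition of $R_n(\mathcal{M})$ through the isomorphism~\eqref{eq:Hom-1-A-and-center} and recognize it as the defining condition of $\Rey_n(\mathcal{M})$; the crucial input is that $\Act^{\radj}$ is faithful.

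First I would pin down three identifications. Write $\Theta\colon \End(\id_{\mathcal{M}}) \xrightarrow{\cong} \Hom_{\mathcal{C}}(\unitobj, A_{\mathcal{M}})$ for the inverse of~\eqref{eq:Hom-1-A-and-center}. Since the mate of a natural transformation $\xi\colon \id_{\mathcal{M}} \to \id_{\mathcal{M}}$ (viewed as a natural transformation $\Act(\unitobj) \to \id_{\mathcal{M}}$) under $\Act \dashv \Act^{\radj}$ is $\Act^{\radj}(\xi) \circ \eta_{\unitobj}$, and since comparing~\eqref{eq:act-fun-adj-unit} at $X = \unitobj$ with~\eqref{eq:act-fun-adj-monoidal-0} forces $\eta_{\unitobj} = \mu^{(0)}$ by the universal property of the end $A_{\mathcal{M}}$, one gets $\Theta(\xi) = \Act^{\radj}(\xi) \circ \mu^{(0)}$. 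Moreover $A_{\mathcal{M}} = \Act^{\radj}(\id_{\mathcal{M}})$ carries its algebra structure as the image of the monoidal unit $\id_{\mathcal{M}}$ of $(\REX(\mathcal{M}), \circ)$ under the lax monoidal functor $\Act^{\radj}$, so its multiplication is $m = \mu^{(2)}_{\id_{\mathcal{M}}, \id_{\mathcal{M}}}$. Finally, since $\Act^{\radj}$ is exact (Theorem~\ref{thm:action-adj-by-end}(a)), the inclusion $i\colon J_{\mathcal{M}}^n \hookrightarrow A_{\mathcal{M}}$ is nothing but $\Act^{\radj}(i_n)$, where $i_n\colon \rad^n \hookrightarrow \id_{\mathcal{M}}$.

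The main computation then reads, for $\xi \in \End(\id_{\mathcal{M}})$,
\begin{align*}
  m \circ (\Theta(\xi) \otimes i)
  &= \mu^{(2)}_{\id_{\mathcal{M}}, \id_{\mathcal{M}}} \circ \big( \Act^{\radj}(\xi) \otimes \Act^{\radj}(i_n) \big) \circ \big( \mu^{(0)} \otimes \id_{J_{\mathcal{M}}^n} \big) \\
  &= \Act^{\radj}(\xi \circ i_n) \circ \mu^{(2)}_{\id_{\mathcal{M}}, \rad^n} \circ \big( \mu^{(0)} \otimes \id_{J_{\mathcal{M}}^n} \big) \\
  &= \Act^{\radj}(\xi \circ i_n),
\end{align*}
where the second equality is the naturality of $\mu^{(2)}$ in both of its arguments along $\xi$ and $i_n$ (the relevant horizontal composite of natural transformations being the vertical composite $\xi \circ i_n\colon \rad^n \Rightarrow \id_{\mathcal{M}}$ because the first argument is $\id_{\mathcal{M}}$), and the third equality is the left unit axiom of the lax monoidal functor $\Act^{\radj}$ at the object $\rad^n$. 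Because $\mathcal{M}$ is indecomposable and exact, $\Act^{\radj}$ is faithful (Theorem~\ref{thm:action-adj-by-end}(b)), so $m \circ (\Theta(\xi) \otimes i) = 0$ if and only if $\xi \circ i_n = 0$; equivalently $\Theta(\xi) \in R_n(\mathcal{M})$ if and only if $\xi \in \Rey_n(\mathcal{M})$. Since $\Theta$ is a linear isomorphism, it restricts to an isomorphism $\Rey_n(\mathcal{M}) \cong R_n(\mathcal{M})$, which is the assertion.

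I expect the only nonformal step to be the middle equality of the display, which needs the monoidal-structure data of $\Act^{\radj}$ from Lemma~\ref{lem:act-fun-adj-monoidal} carried through carefully, together with the interchange law for natural transformations. Should one prefer to avoid invoking naturality of $\mu^{(2)}$ at the level of natural transformations, one can instead postcompose with each universal dinatural transformation $\pi_{\rad^n}(M)$ and verify the same identity using~\eqref{eq:act-fun-adj-monoidal-2} and the standard fact that, in the $\mathcal{C}$-enriched structure on $\mathcal{M}$, internal post-composition by $\xi_{\rad^n(M)}$ coincides with $\iHom(M, \xi_{\rad^n(M)})$.
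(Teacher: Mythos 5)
Your proof is correct and takes essentially the same route as the paper's: both write $a = \Act^{\radj}(\widetilde{a}) \circ \mu^{(0)}$ and $i = \Act^{\radj}(i_n)$, compute $m \circ (a \otimes i) = \Act^{\radj}(\widetilde{a} \circ i_n)$ from the naturality of $\mu^{(2)}$ together with the unit axiom of the lax monoidal functor $\Act^{\radj}$, and conclude by its faithfulness (Theorem~\ref{thm:action-adj-by-end}). Your intermediate index $\mu^{(2)}_{\id_{\mathcal{M}},\,\rad^n}$ is in fact the correct one after applying naturality.
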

\begin{proof}
  We use the monoidal structure of $\Act^{\radj}$ described in Lemma~\ref{lem:act-fun-adj-monoidal}. Let $a: \unitobj \to A_{\mathcal{M}}$ be a morphism in $\mathcal{C}$, and let $\widetilde{a} \in \End(\id_{\mathcal{C}})$ be the natural transformation corresponding to $a$ via~\eqref{eq:Hom-1-A-and-center}. By the definition of $\mu^{(0)}$, we have $a = \Act^{\radj}(\widetilde{a}) \circ \mu^{(0)}$. Let $i_n: \rad^n \to \id_{\mathcal{M}}$ be the inclusion morphism. Since $i = \Act^{\radj}(i_n)$, we have
  \begin{align*}
    m \circ (a \otimes i)
    & = \mu^{(2)}_{\id_{\mathcal{M}}, \id_{\mathcal{M}}}
    \circ (\Act^{\radj}(\widetilde{a}) \otimes \Act^{\radj}(i_n))
    \circ (\mu^{(0)} \otimes \id_{J^n_{\mathcal{M}}}) \\
    & = \Act^{\radj}(\widetilde{a} \circ i_n)
      \circ \mu^{(2)}_{\id_{\mathcal{M}}, \id_{\mathcal{M}}}
      \circ (\mu^{(0)} \otimes \id_{J^n_{\mathcal{M}}})
      = \Act^{\radj}(\widetilde{a} \circ i_n).
  \end{align*}
  Thus, by the faithfulness of $\Act^{\radj}$ (Theorem~\ref{thm:action-adj-by-end}), the morphism $a$ belongs to $R_n(\mathcal{M})$ if and only if $\widetilde{a} \in \Rey_n(\mathcal{M})$. The proof is done.
\end{proof}

We recall that an algebra $A$ in $\mathcal{C}$ with multiplication $m$ is said to be {\em Frobenius} if there is an isomorphism $\phi: A \to A^*$ of right $A$-modules in $\mathcal{C}$. Given such an isomorphism $\phi$, we define
\begin{equation*}
  e_{\phi} = \eval_A \circ (\phi \otimes \id_A)
  \quad \text{and} \quad
  d_{\phi} = (\id_A \otimes \phi^{-1}) \circ \coev_A.
\end{equation*}
Then the triple $(A, e_{\phi}, d_{\phi})$ is a left dual object of $A$. Thus the map
\begin{equation}
  \label{eq:Frob-alg-Fourier-1}
  \Hom_{\mathcal{C}}(A, \unitobj) \to \Hom_{\mathcal{C}}(\unitobj, A)
  \quad \xi \mapsto (\xi \otimes \id_A) \circ d_{\phi}
\end{equation}
is an isomorphism of vector spaces with inverse
\begin{equation}
  \label{eq:Frob-alg-Fourier-2}
  \Hom_{\mathcal{C}}(\unitobj, A) \to \Hom_{\mathcal{C}}(A, \unitobj),
  \quad a \mapsto  e_{\phi} \circ (a \otimes \id_A).
\end{equation}
The $A$-linearity of $\phi$ imply
\begin{gather}
  \label{eq:Frob-basis-1}
  e_{\phi} \circ (m \otimes \id_A) = e_{\phi} \circ (\id_A \otimes m), \\
  \label{eq:Frob-basis-2}
  (\id_A \otimes m) \circ (d_{\phi} \otimes \id_A)
  = (m \otimes \id_A) \circ (\id_A \otimes d_{\phi}).
\end{gather}
The following lemma may be well-known:

\begin{lemma}
  \label{lem:Reynolds-2}
  Let $J$ be an ideal of $A$ with inclusion morphism $i: J \to A$. Then the isomorphisms \eqref{eq:Frob-alg-Fourier-1} and \eqref{eq:Frob-alg-Fourier-2} restricts to an isomorphism
  \begin{equation*}
    \Hom_{\mathcal{C}}(A/J, \unitobj) \cong \{ a \in \Hom_{\mathcal{C}}(\unitobj, A) \mid m \circ (a \otimes i) = 0 \}.
  \end{equation*}
\end{lemma}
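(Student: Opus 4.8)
The plan is to reduce the statement to a single identity between two morphisms out of $J$. Write $i : J \to A$ for the inclusion and $p : A \to A/J$ for the canonical projection. Since $p$ is an epimorphism and $\Hom_{\mathcal{C}}(-, \unitobj)$ is left exact and contravariant, composition with $p$ identifies $\Hom_{\mathcal{C}}(A/J, \unitobj)$ with the subspace $\{ \xi \in \Hom_{\mathcal{C}}(A, \unitobj) \mid \xi \circ i = 0 \}$ of $\Hom_{\mathcal{C}}(A, \unitobj)$. As \eqref{eq:Frob-alg-Fourier-1} and \eqref{eq:Frob-alg-Fourier-2} are mutually inverse bijections, it is enough to show that, for $\xi \in \Hom_{\mathcal{C}}(A, \unitobj)$ and the corresponding element $a := (\xi \otimes \id_A) \circ d_{\phi} \in \Hom_{\mathcal{C}}(\unitobj, A)$, one has $\xi \circ i = 0$ if and only if $m \circ (a \otimes i) = 0$.

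The argument will rest on two elementary observations. First, since $a$ and $\xi$ correspond to each other under the inverse bijections, $\xi = e_{\phi} \circ (a \otimes \id_A)$, and precomposing with $i$ gives the key identity $\xi \circ i = e_{\phi} \circ (a \otimes i)$. Second, setting $\lambda := e_{\phi} \circ (u \otimes \id_A) : A \to \unitobj$, where $u : \unitobj \to A$ is the unit of $A$, one obtains $e_{\phi} = \lambda \circ m$ by precomposing \eqref{eq:Frob-basis-1} with $u \otimes \id_A \otimes \id_A$ and using the unit axiom.

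With these in hand, one direction is immediate: from $\xi \circ i = e_{\phi} \circ (a \otimes i) = \lambda \circ m \circ (a \otimes i)$ we see that $m \circ (a \otimes i) = 0$ forces $\xi \circ i = 0$. For the converse, assume $\xi \circ i = 0$ and set $g := m \circ (a \otimes i) : J \to A$. Since $(A, e_{\phi}, d_{\phi})$ is a left dual object of $A$, the zigzag identities give $g = \bigl( (e_{\phi} \circ (g \otimes \id_A)) \otimes \id_A \bigr) \circ (\id_J \otimes d_{\phi})$, so it suffices to prove $e_{\phi} \circ (g \otimes \id_A) = 0$. Using \eqref{eq:Frob-basis-1},
\[
  e_{\phi} \circ (g \otimes \id_A)
  = e_{\phi} \circ (m \otimes \id_A) \circ (a \otimes i \otimes \id_A)
  = e_{\phi} \circ (\id_A \otimes m) \circ (a \otimes i \otimes \id_A)
  = e_{\phi} \circ \bigl( a \otimes (m \circ (i \otimes \id_A)) \bigr).
\]
Because $J$ is an ideal, $m \circ (i \otimes \id_A) : J \otimes A \to A$ factors as $i \circ \mu$ for a unique morphism $\mu : J \otimes A \to J$; substituting and invoking the key identity yields $e_{\phi} \circ (g \otimes \id_A) = (\xi \circ i) \circ \mu = 0$, hence $g = 0$. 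This establishes the two inclusions; since \eqref{eq:Frob-alg-Fourier-1} then restricts to an injection of $\Hom_{\mathcal{C}}(A/J, \unitobj)$ onto $\{ a \mid m \circ (a \otimes i) = 0 \}$ and \eqref{eq:Frob-alg-Fourier-2} restricts to its inverse, the claimed isomorphism follows.

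The whole argument is routine; the only points that need a little attention are the categorical form $e_{\phi} = \lambda \circ m$ of the statement that a Frobenius form recovers the pairing, the use of the ideal property of $J$ to factor one-sided multiplication through $i$, and the non-degeneracy of the pairing $(e_{\phi}, d_{\phi})$ used in the converse direction. I do not expect any genuine obstacle beyond bookkeeping with the tensor factors, so the main work is simply writing these steps out carefully.
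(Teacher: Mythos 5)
Your proof is correct and follows essentially the same route as the paper: both reduce the statement to the equivalence $\xi \circ i = 0 \Leftrightarrow m \circ (a \otimes i) = 0$ for corresponding $\xi$ and $a$, and verify it using the Frobenius associativity identities \eqref{eq:Frob-basis-1}--\eqref{eq:Frob-basis-2} together with the fact that $m \circ (i \otimes \id_A)$ factors through $J$. The only cosmetic difference is organizational: you introduce $\lambda = e_{\phi} \circ (u \otimes \id_A)$ and use the zigzag identity to reduce $g = 0$ to $e_{\phi} \circ (g \otimes \id_A) = 0$, whereas the paper substitutes $a = (\xi \otimes \id_A) \circ d_{\phi}$ and applies \eqref{eq:Frob-basis-2} directly -- the same computation carried out from the other side of the duality.
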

\begin{proof}
  Let $\xi: A \to \unitobj$ be a morphism in $\mathcal{C}$, and let $a: \unitobj \to A$ be the morphism corresponding to $\xi$ by~\eqref{eq:Frob-alg-Fourier-1} and~\eqref{eq:Frob-alg-Fourier-2}. We first suppose that $\xi$ belongs to $\Hom_{\mathcal{C}}(A/J, \unitobj)$, that is, $\xi \circ i = 0$. Then we compute
  \begin{align*}
    m \circ (a \otimes i)
    & = (\xi \otimes \id_A) \circ (\id_A \otimes m) \circ (d_{\phi} \otimes \id_A) \circ i \\
    & = (\xi \otimes \id_A) \circ (m \otimes \id_A) \circ (\id_A \otimes d_{\phi}) \circ i \\
    & = ((\xi \circ m \circ (i \otimes \id_A)) \otimes \id_A) \circ (\id_J \otimes d_{\phi})
  \end{align*}
  by \eqref{eq:Frob-basis-2}. Since $J$ is an ideal of $A$, the image of $m \circ (i \otimes \id_A)$ is contained in $J$. Thus we have $\xi \circ m \circ (i \otimes \id_A) = 0$. Therefore $m \circ (a \otimes i) = 0$. If, conversely, this equation holds, then we have
  \begin{align*}
    \xi \circ i
    = e_{\phi} \circ (i \otimes a)
    & = e_{\phi} \circ (m \otimes \id_A) \circ (u \otimes i \otimes a) \\
    & = e_{\phi} \circ (\id_A \otimes m) \circ (u \otimes i \otimes a) = 0
  \end{align*}
  by \eqref{eq:Frob-basis-1}, where $u: \unitobj \to A$ is the unit of $A$. Thus $\xi \in \Hom_{\mathcal{C}}(A/J, \unitobj)$. The proof is done.
\end{proof}

Now we have the following representation-theoretic description of $\CF_n$.
  
\begin{theorem}
  Let $\mathcal{M}$ be an indecomposable exact $\mathcal{C}$-module category. If $A_{\mathcal{M}}$ is a Frobenius algebra, then the isomorphism
  \begin{equation}
    \label{eq:Fourier-trans}
    \CF(\mathcal{M})
    = \Hom_{\mathcal{C}}(A_{\mathcal{M}}, \unitobj)
    \xrightarrow[\cong]{\quad \eqref{eq:Frob-alg-Fourier-1} \quad}
    \Hom_{\mathcal{C}}(\unitobj, A_{\mathcal{M}})
    \xrightarrow[\cong]{\quad \eqref{eq:Hom-1-A-and-center} \quad} \End(\id_{\mathcal{M}})
  \end{equation}
  restricts to isomorphisms
  \begin{equation*}
    \CF_n(\mathcal{M}) \cong \Rey_n(\mathcal{M}) \quad (n = 1, 2, 3, \dotsc).
  \end{equation*}
\end{theorem}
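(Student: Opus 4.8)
The plan is to re-express the $n$-th term of each of the two filtrations appearing in \eqref{eq:Fourier-trans} in terms of the single subobject $J_{\mathcal{M}}^n = \Act^{\radj}(\rad^n) \subseteq A_{\mathcal{M}}$, and then to quote Lemmas~\ref{lem:Reynolds-2} and~\ref{lem:Reynolds-1} in succession; nothing beyond bookkeeping should be required. Concretely, the left-hand term $\CF_n(\mathcal{M})$ will be matched with $\Hom_{\mathcal{C}}(A_{\mathcal{M}}/J_{\mathcal{M}}^n, \unitobj)$, the middle term $R_n(\mathcal{M})$ of Lemma~\ref{lem:Reynolds-1} serves as the intermediary, and the right-hand term is $\Rey_n(\mathcal{M})$.

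First I would identify $\CF_n(\mathcal{M})$ as a subspace of $\CF(\mathcal{M})$. Since the series \eqref{eq:Loewy-filtration-algebra} is obtained by applying $\Act^{\radj}$ to \eqref{eq:Loewy-filtration-capital} and $\Act^{\radj}$ is exact (Theorem~\ref{thm:action-adj-by-end}), the canonical epimorphism $\phi_{\mathcal{M}|\mathcal{M}_n}\colon A_{\mathcal{M}} \to A_{\mathcal{M}_n}$ is $\Act^{\radj}$ applied to the quotient $\id_{\mathcal{M}} \to \capital_n$, whose kernel is $\rad^n$; hence $\ker(\phi_{\mathcal{M}|\mathcal{M}_n}) = \Act^{\radj}(\rad^n) = J_{\mathcal{M}}^n$ and $A_{\mathcal{M}_n} \cong A_{\mathcal{M}}/J_{\mathcal{M}}^n$ as algebras. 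Therefore the image of the inclusion $\Hom_{\mathcal{C}}(\phi_{\mathcal{M}|\mathcal{M}_n}, \unitobj)\colon \CF_n(\mathcal{M}) \hookrightarrow \CF(\mathcal{M})$ is precisely $\Hom_{\mathcal{C}}(A_{\mathcal{M}}/J_{\mathcal{M}}^n, \unitobj) = \{ \xi\colon A_{\mathcal{M}} \to \unitobj \mid \xi \circ i = 0 \}$, where $i\colon J_{\mathcal{M}}^n \to A_{\mathcal{M}}$ is the inclusion.

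Next I would note that $J_{\mathcal{M}}^n$ is an ideal of $A_{\mathcal{M}}$: it is the kernel of the algebra epimorphism $\phi_{\mathcal{M}|\mathcal{M}_n}$ (the maps in \eqref{eq:Loewy-filtration-algebra} are algebra epimorphisms by Theorem~\ref{thm:adjoint-alg-quotients}), and the kernel of an algebra epimorphism in a finite tensor category is an ideal because the tensor product is exact. Now Lemma~\ref{lem:Reynolds-2}, applied with $A = A_{\mathcal{M}}$ and $J = J_{\mathcal{M}}^n$, shows that the isomorphism \eqref{eq:Frob-alg-Fourier-1} carries $\Hom_{\mathcal{C}}(A_{\mathcal{M}}/J_{\mathcal{M}}^n, \unitobj)$ isomorphically onto $R_n(\mathcal{M})$, and Lemma~\ref{lem:Reynolds-1} shows that \eqref{eq:Hom-1-A-and-center} carries $R_n(\mathcal{M})$ isomorphically onto $\Rey_n(\mathcal{M})$. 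Composing these with the identification of the previous paragraph — and observing that \eqref{eq:Fourier-trans} is exactly \eqref{eq:Frob-alg-Fourier-1} followed by \eqref{eq:Hom-1-A-and-center} — yields the asserted isomorphism $\CF_n(\mathcal{M}) \cong \Rey_n(\mathcal{M})$, compatibly with the inclusions into $\CF(\mathcal{M})$ and $\End(\id_{\mathcal{M}})$ as $n$ varies.

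The only point requiring care is the first step: tracking the subspace $\CF_n(\mathcal{M})$ of $\CF(\mathcal{M})$ through the identifications $\capital_n = \uptau_{\mathcal{M}_n}$, $A_{\mathcal{M}_n} \cong \Act^{\radj}(\capital_n)$, and $\ker(\phi_{\mathcal{M}|\mathcal{M}_n}) = J_{\mathcal{M}}^n$, so as to be sure one lands on $\Hom_{\mathcal{C}}(A_{\mathcal{M}}/J_{\mathcal{M}}^n, \unitobj)$ as a literal subspace and not merely an abstractly isomorphic one. Once that is settled, the rest is a formal concatenation of Lemmas~\ref{lem:Reynolds-1} and~\ref{lem:Reynolds-2}, both of which have already been stated in exactly the form needed, so I do not anticipate any real obstacle.
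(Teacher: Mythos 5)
Your proposal is correct and follows essentially the same route as the paper: the paper's proof simply observes that $J_{\mathcal{M}}^n = \Act^{\radj}(\rad^n)$ is an ideal of $A_{\mathcal{M}}$ and then applies Lemmas~\ref{lem:Reynolds-1} and~\ref{lem:Reynolds-2} to this ideal, exactly as you do. Your extra bookkeeping (identifying $\CF_n(\mathcal{M})$ with $\Hom_{\mathcal{C}}(A_{\mathcal{M}}/J_{\mathcal{M}}^n,\unitobj)$ via the exactness of $\Act^{\radj}$ and Lemma~\ref{lem:canonical-epi}, and justifying the ideal property via the algebra epimorphism $\phi_{\mathcal{M}|\mathcal{M}_n}$) only spells out what the paper leaves implicit.
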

\begin{proof}
  The subobject $\Act^{\radj}(\rad^n)$ is an ideal of $A_{\mathcal{M}} = \Act^{\radj}(\id_{\mathcal{M}})$. The proof is done by applying the above two lemmas to this ideal.
\end{proof}

A finite tensor category $\mathcal{D}$ is said to be {\em unimodular} \cite{MR2119143} if the projective cover of the unit object $\unitobj \in \mathcal{D}$ is also an injective hull of $\unitobj$. Following \cite{MR3632104}, a finite tensor category $\mathcal{D}$ is unimodular if and only if the algebra $\mathsf{R}(\unitobj) \in \mathcal{Z}(\mathcal{D})$ is Frobenius, where $\mathsf{R}: \mathcal{D} \to \mathcal{Z}(\mathcal{D})$ is a right adjoint of the forgetful functor.

Let $\mathcal{M}$ be an indecomposable exact left $\mathcal{C}$-module category. Then $\mathcal{D} := \mathcal{C}_{\mathcal{M}}^*$ is a finite tensor category. By Theorem~\ref{thm:induction-Drinfeld-center} and the above-mentioned fact, the algebra $\mathbf{A}_{\mathcal{M}} \in \mathcal{Z}(\mathcal{C})$ is Frobenius if and only if $\mathcal{D}$ is unimodular. Thus the algebra $A_{\mathcal{M}} \in \mathcal{C}$ is Frobenius if $\mathcal{D}$ is unimodular. By the above theorem, we have:

\begin{corollary}
  \label{cor:CF-n-and-Reynolds}
  Let $\mathcal{M}$ be an indecomposable exact $\mathcal{C}$-module category. If $\mathcal{C}_{\mathcal{M}}^*$ is unimodular, then we have $\CF_n(\mathcal{M}) \cong \Rey_n(\mathcal{M})$.
\end{corollary}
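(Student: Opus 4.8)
The plan is to deduce the statement from the theorem established just above, which says that the composite isomorphism \eqref{eq:Fourier-trans} restricts to isomorphisms $\CF_n(\mathcal{M}) \cong \Rey_n(\mathcal{M})$ for every $n$ as soon as $A_{\mathcal{M}}$ is a Frobenius algebra in $\mathcal{C}$. So the entire content is to show that unimodularity of $\mathcal{D} := \mathcal{C}_{\mathcal{M}}^*$ forces $A_{\mathcal{M}}$ to be Frobenius, which I would establish by transporting the Frobenius property first along Schauenburg's equivalence and then along the forgetful functor.

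First I would invoke the characterization of unimodularity recalled just before the corollary, due to \cite{MR3632104}: $\mathcal{D}$ is unimodular if and only if $\mathsf{R}(\unitobj_{\mathcal{D}}) \in \mathcal{Z}(\mathcal{D})$ is a Frobenius algebra, where $\mathsf{R}$ is right adjoint to the forgetful functor $\mathcal{Z}(\mathcal{D}) \to \mathcal{D}$. By Theorem~\ref{thm:induction-Drinfeld-center} we may take $\mathsf{R} = \Sch_{\mathcal{M}} \circ \mathcal{Z}(\Act_{\mathcal{M}}^{\radj})$; this functor sends $\unitobj_{\mathcal{D}}$ — that is, $\id_{\mathcal{M}}$ equipped with its canonical $\mathcal{C}$-module structure, regarded as an object of $\mathcal{C}_{\mathcal{M}}^*$ — first to $\mathbf{A}_{\mathcal{M}} = (A_{\mathcal{M}}, \sigma_{\mathcal{M}})$ under $\mathcal{Z}(\Act_{\mathcal{M}}^{\radj})$ and then to $\Sch_{\mathcal{M}}(\mathbf{A}_{\mathcal{M}})$. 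Since $\Sch_{\mathcal{M}}$ is a braided monoidal equivalence (Theorem~\ref{thm:categorical-Schauenburg}), it preserves and reflects the Frobenius property, so $\mathcal{D}$ is unimodular if and only if $\mathbf{A}_{\mathcal{M}}$ is a Frobenius algebra in $\mathcal{Z}(\mathcal{C})$.

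Next I would push this along the forgetful functor $\mathsf{U}: \mathcal{Z}(\mathcal{C}) \to \mathcal{C}$. Being a strong monoidal functor between rigid monoidal categories, $\mathsf{U}$ preserves dual objects and therefore carries Frobenius algebras to Frobenius algebras; since $\mathsf{U}(\mathbf{A}_{\mathcal{M}}) = A_{\mathcal{M}}$ as algebras in $\mathcal{C}$, it follows that $A_{\mathcal{M}}$ is a Frobenius algebra in $\mathcal{C}$ whenever $\mathcal{D}$ is unimodular. Applying the preceding theorem then yields $\CF_n(\mathcal{M}) \cong \Rey_n(\mathcal{M})$ for all $n$, which is the assertion.

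The argument is little more than bookkeeping, so I do not expect a genuine obstacle. The one step that deserves care is the identification of $\mathsf{R}(\unitobj_{\mathcal{D}})$ with $\Sch_{\mathcal{M}}(\mathbf{A}_{\mathcal{M}})$ by unwinding the chain of adjunctions and equivalences in Theorems~\ref{thm:categorical-Schauenburg} and~\ref{thm:induction-Drinfeld-center}, so that the dictionary of \cite{MR3632104} is applied to the correct object; the auxiliary facts used — that a braided monoidal equivalence preserves and reflects Frobenius algebras, and that a strong monoidal functor between rigid categories sends Frobenius algebras to Frobenius algebras — are routine.
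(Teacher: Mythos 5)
Your proposal is correct and follows essentially the same route as the paper: the paper also combines the unimodularity criterion of \cite{MR3632104} with Theorem~\ref{thm:induction-Drinfeld-center} to conclude that $\mathbf{A}_{\mathcal{M}}\in\mathcal{Z}(\mathcal{C})$ (hence $A_{\mathcal{M}}\in\mathcal{C}$, via the forgetful functor) is Frobenius when $\mathcal{C}_{\mathcal{M}}^*$ is unimodular, and then applies the preceding theorem. Your write-up merely makes explicit the bookkeeping that the paper leaves implicit.
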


In particular, if $\mathcal{C}$ is unimodular, then $\CF_n(\mathcal{C}) \cong \Rey_n(\mathcal{C})$.

\subsection{Symmetric linear forms on an algebra}

For a finite-dimensional algebra $A$ with Jacobson radical $J$, we set
\begin{align*}
  \Sym(A) & = \{ f \in A^* \mid \text{$f(a b) = f(b a)$ for all $a, b \in A$} \}, \\
  \Sym_n(A) & = \{ f \in \Sym(A) \mid f(J^n) = 0 \}
              \quad (n \in \mathbb{Z}_{+}).
\end{align*}
If $G$ is a finite group, then $\Sym(k G)$ is the space of class functions on $G$. Thus, for a finite module category $\mathcal{M}$ such that $\mathcal{M} \approx \lmod{A}$, it is natural to ask how $\CF(\mathcal{M})$ relates to $\Sym(A)$. To consider this problem, we first introduce the following categorical definition of the space of symmetric linear forms:

\begin{definition}
  \label{def:SF-and-SFn}
  For a finite abelian category $\mathcal{M}$ and $n \in \mathbb{Z}_{+}$, we set
  \begin{equation*}
    \Sym(\mathcal{M}) := \Nat(\id_{\mathcal{M}}, \Nak_{\mathcal{M}})
    \quad \text{and} \quad
    \Sym_n(\mathcal{M}) = \{ f \in \Sym(\mathcal{M}) \mid f \circ i_n = 0 \},
  \end{equation*}
  where $i_n: \rad^n \to \id_{\mathcal{M}}$ is the inclusion morphism.
\end{definition}

If $\mathcal{M}$ is a finite abelian category such that $\mathcal{M} \approx \lmod{A}$, then $\REX(\mathcal{M})$ can be identified with $\bimod{A}{A}$. Under this identification, $\id_{\mathcal{M}}$ and $\Nak_{\mathcal{M}}$ correspond to the $A$-bimodules $A$ and $A^*$, respectively. Thus we have
\begin{equation}
  \label{eq:SLF-R-mod}
  \Sym(\mathcal{M}) \cong \Hom_{\bimod{A}{A}}(A, A^*)
  \cong \Sym(A),
\end{equation}
where the second isomorphism is given by $f \mapsto f(1)$. If we identify $\Sym(\mathcal{M})$ with $\Sym(A)$ by this isomorphism, then $\Sym_n(\mathcal{M})$ is identified with $\Sym_n(A)$.

\begin{remark}
  \label{rem:SF-n-and-Rey-n}
  Let $\mathcal{M}$ be a finite abelian category. We suppose that $\mathcal{M}$ is symmetric Frobenius and choose an isomorphism $\lambda: \id_{\mathcal{M}} \to \mathbb{N}_{\mathcal{M}}$. Then the map
  \begin{equation*}
    \End(\id_{\mathcal{M}}) \to \Sym(\mathcal{M}), \quad z \mapsto \lambda \circ z
  \end{equation*}
  is an isomorphism. By Definitions~\ref{def:Reynolds-n} and~\ref{def:SF-and-SFn}, we also have isomorphisms
  \begin{equation*}
    \Rey_n(\mathcal{M}) \to \Sym_n(\mathcal{M}), \quad z \mapsto \lambda \circ z
    \quad (n \in \mathbb{Z}_{+}).
  \end{equation*}
  In ring-theoretic terms, this means: Let $A$ be a symmetric Frobenius algebra, and let $\lambda: A \to A^*$ be an isomorphism of $A$-bimodules. For each $n \in \mathbb{Z}_{+}$, the isomorphism $\lambda$ restricts to an isomorphism between $\Rey_n(A)$ and $\Sym_n(A)$.
\end{remark}

Now we consider the case where $\mathcal{M}$ is an exact module category over a finite tensor category $\mathcal{C}$. Although $\CF(\mathcal{M})$ is an analogue of the space of class functions, it does not seem to be isomorphic to $\Sym(\mathcal{M})$ in general. To see when they are isomorphic, we provide the following lemma:

\begin{lemma}
  \label{lem:SLF-lemma}
  There is a natural isomorphism
  \begin{equation*}
    \Hom_{\mathcal{C}}(\Act^{\radj}(\Ser_{\mathcal{M}} \circ F), X^{**})
    \cong \Nat(F, X \otimes \Nak_{\mathcal{M}})
    \quad (F \in \REX(\mathcal{M}), V \in \mathcal{C}).
  \end{equation*}
\end{lemma}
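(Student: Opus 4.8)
The plan is to establish the isomorphism by a chain of adjunction isomorphisms, combining the adjunction $\Act \dashv \Act^{\radj}$ from Theorem~\ref{thm:action-adj-by-end} with the defining property of the relative Serre functor and the Nakayama functor. The key input from the excerpt is the defining natural isomorphism $\iHom(M, N)^* \cong \iHom(N, \Ser_{\mathcal{M}}(M))$ of the relative Serre functor, together with the fact (recorded in the discussion of the Nakayama functor) that $\Nak_{\mathcal{M}} \circ G^{\ladj} \cong G^{\radj} \circ \Nak_{\mathcal{M}}$ for exact $G$, and the Eilenberg--Watts-type identification of $\REX(\mathcal{M})$.

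First I would rewrite the left-hand side using the adjunction $\Act \dashv \Act^{\radj}$:
\begin{equation*}
  \Hom_{\mathcal{C}}(\Act^{\radj}(\Ser_{\mathcal{M}} \circ F), X^{**})
  \not\cong \Nat(\Act(X^{**}), \Ser_{\mathcal{M}} \circ F)
\end{equation*}
--- but this is in the wrong variance, so instead I would first move $X^{**}$ across a duality: $\Hom_{\mathcal{C}}(\Act^{\radj}(\Ser_{\mathcal{M}} \circ F), X^{**}) \cong \Hom_{\mathcal{C}}({}^{*}(X^{**}), \Act^{\radj}(\Ser_{\mathcal{M}} \circ F)^{*})$ is not directly helpful either. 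The cleaner route is to dualize the target of $\Act^{\radj}$ using Theorem~\ref{thm:action-left-adj-by-coend} and Remark~\ref{rem:lex-version}: one has $\Act^{\radj}(G)^{*} \cong \Act^{\ladj}((\,)^{\ladj} \text{ of something})$, but rather than chase this, I would use the more robust approach via the end formula directly. Unwinding $\Act^{\radj}(\Ser_{\mathcal{M}} \circ F) = \int_{M} \iHom(M, \Ser_{\mathcal{M}}(F(M)))$ and using the relative Serre isomorphism $\iHom(M, \Ser_{\mathcal{M}}(F(M))) \cong \iHom(F(M), M)^{*}$ applied with the roles of the two arguments, one rewrites the left side as a hom out of an end, which by continuity of $\Hom$ and the Fubini/parameter theorem becomes a dinatural family, i.e.\ an object of $\Nat$.

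Concretely, the key steps in order are: (1) use $\iHom(N,\Ser_{\mathcal{M}}(M))\cong \iHom(M,N)^{*}$ to get $\iHom(M,\Ser_{\mathcal{M}}(F(M))) \cong \iHom(F(M), M)^{*}$; (2) apply $\Hom_{\mathcal{C}}(-, X^{**})$ and pass the dual across, turning $\Hom_{\mathcal{C}}(\iHom(F(M),M)^{*}, X^{**})$ into $\Hom_{\mathcal{C}}({}^{*}X^{*}\!\otimes \iHom(F(M),M), \unitobj)$ or, more usefully, into $\Hom_{\mathcal{C}}(X^{*}, \iHom(F(M),M))$ --- here I expect to use $Z^{*}\cong{}^{*}Z$ for the relevant objects only up to the pivotal-type twist by $(-)^{**}$, which is exactly why $X^{**}$ (and not $X$) appears on the left; (3) rewrite $\Hom_{\mathcal{C}}(X^{*}, \iHom(F(M), M))$ via the internal-Hom adjunction~\eqref{eq:iHom-def} as $\Hom_{\mathcal{M}}(X^{*}\otimes F(M), M)$, hence as $\Hom_{\mathcal{M}}(F(M), X\otimes M)$ after moving $X^{*}$ to the other side (using $X^{**}$ again, consistently with the left-hand side); (4) recognize, via the Nakayama-functor description of $\REX(\mathcal{M})$ and the identification $\Psi_{\mathcal{M},\mathcal{M}}$, that the functor $M\mapsto X\otimes\Nak_{\mathcal{M}}(M)$ sits in the place of the ``$M$'' on the right once one accounts for the fact that $\Nak_{\mathcal{M}}\cong \mathrm{id}_{\mathcal{M}}$ is false in general --- so in fact the honest computation produces $\Hom_{\mathcal{M}}(F(M), X\otimes\Nak_{\mathcal{M}}(M))$ rather than $\Hom_{\mathcal{M}}(F(M), X\otimes M)$, because the relative Serre functor of an exact module category is intertwined with $\Nak_{\mathcal{M}}$ (by \cite{2016arXiv161204561F}); and (5) assemble the dinatural family over $M\in\mathcal{M}$ into $\Nat(F, X\otimes\Nak_{\mathcal{M}})$, using that ends commute with $\Hom$ and with the exact functor $X\otimes(-)$. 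Naturality in $F$ and $X$ is then automatic from the naturality of each step.

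The main obstacle I anticipate is bookkeeping the left/right dual and the $(-)^{**}$-twists correctly: the relative Serre functor is a $\mathcal{C}$-module functor only for the twisted action $\Ser_{\mathcal{M}}\colon\mathcal{M}\to{}_{(-)^{**}}\mathcal{M}$ (cf.~\eqref{eq:relative-Serre-module-functor}), and the internal Hom is a $\mathcal{C}$-bimodule functor $\mathcal{M}^{\op}\times\mathcal{M}\to\mathcal{C}$ with a right action involving ${}^{*}X$; reconciling these is precisely what forces $X^{**}$ on the left-hand side and what must be tracked through steps (2)--(4) so that every instance of a dual matches up. A secondary point requiring care is the appearance of $\Nak_{\mathcal{M}}$ on the right: this should come out of the relative-Serre-versus-Nakayama comparison together with the fact that, for $\mathcal{M}$ a general finite abelian category (not assumed symmetric Frobenius), $\Nak_{\mathcal{M}}$ is not the identity; one must invoke the structural results on $\Nak$ recalled in Subsection~\ref{subsec:fin-ab-cat} rather than silently dropping it. Once the variance and the twist are pinned down, each individual isomorphism is a routine application of an adjunction already available in the excerpt, and the Fubini/parameter theorem for ends (which, as in Lemma~\ref{lem:act-fun-adj-half-braiding}, lets one recognize ends of parametrized functors) finishes the proof.
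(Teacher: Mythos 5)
Your plan breaks down at the assembly step (5), and the failure is not a bookkeeping issue but the very point of the lemma. The end $\Act^{\radj}(\Ser_{\mathcal{M}}\circ F)=\int_{M\in\mathcal{M}}\iHom(M,\Ser_{\mathcal{M}}(F(M)))$ sits in the \emph{contravariant} argument of $\Hom_{\mathcal{C}}(-,X^{**})$, and $\Hom$ out of an end (a limit) is not an end of $\Hom$'s; only $\Hom$ into an end, or out of a coend, commutes, and dualizing the end into a coend just moves the problem into the covariant slot. Moreover, if the commutation were legitimate, your objectwise chain would not give what you expect: Serre duality $\iHom(M,\Ser_{\mathcal{M}}F(M))\cong\iHom(F(M),M)^*$, the isomorphism $\Hom_{\mathcal{C}}(A^*,X^{**})\cong\Hom_{\mathcal{C}}(X^*,A)$, the internal-Hom adjunction and the adjunction $(X^*\otimes-)\dashv(X\otimes-)$ on $\mathcal{M}$ yield exactly $\Hom_{\mathcal{M}}(F(M),X\otimes M)$ -- no Nakayama functor appears objectwise, so the assembled family would be $\Nat(F,X\otimes\id_{\mathcal{M}})$, not $\Nat(F,X\otimes\Nak_{\mathcal{M}})$. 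The objectwise identification you posit in step (4), $\Hom_{\mathcal{C}}(\iHom(M,\Ser_{\mathcal{M}}F(M)),X^{**})\cong\Hom_{\mathcal{M}}(F(M),X\otimes\Nak_{\mathcal{M}}(M))$, is in fact false: take $\mathcal{M}=\mathcal{C}$, $F=\id_{\mathcal{C}}$, $X=\unitobj$, $M=\unitobj$ with $\mathcal{C}$ not unimodular; the left side is $k$ while the right side is $\Hom_{\mathcal{C}}(\unitobj,D^*)=0$, $D$ being the distinguished invertible object. Since $\Nak_{\mathcal{M}}\cong D^*\otimes\Ser_{\mathcal{M}}$ differs from $\Ser_{\mathcal{M}}$ only by tensoring with the invertible $D$, an objectwise Serre-versus-Nakayama comparison can only twist $X$; it can never insert $\Nak_{\mathcal{M}}(M)$.

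The mechanism that actually produces $X\otimes\Nak_{\mathcal{M}}$ is global, at the level of the functor category, and this is how the paper argues: since $\Act$ is exact, $\Nak_{\mathcal{C}}\circ\Act^{\ladj}\cong\Act^{\radj}\circ\Nak_{\REX(\mathcal{M})}$ together with $\Nak_{\REX(\mathcal{M})}(G)=\Nak_{\mathcal{M}}\circ G\circ\Nak_{\mathcal{M}}$ (Subsection~\ref{subsec:fin-ab-cat}) converts the left-hand side into $\Hom_{\mathcal{C}}(\Act^{\ladj}(\Nak_{\mathcal{M}}^{-1}\Ser_{\mathcal{M}}F\Nak_{\mathcal{M}}^{-1}),\Nak_{\mathcal{C}}^{-1}(X^{**}))$, which by the adjunction $\Act^{\ladj}\dashv\Act$ of Theorem~\ref{thm:action-left-adj-by-coend} equals $\Nat(\Nak_{\mathcal{M}}^{-1}\Ser_{\mathcal{M}}F\Nak_{\mathcal{M}}^{-1},\Nak_{\mathcal{C}}^{-1}(X^{**})\otimes\id_{\mathcal{M}})$; moving the pre-composed $\Nak_{\mathcal{M}}^{-1}$ across (an equivalence, $\mathcal{M}$ being Frobenius) is precisely what places $\Nak_{\mathcal{M}}$ in the target, and the identities $\Nak_{\mathcal{M}}^{-1}\circ\Ser_{\mathcal{M}}\cong D\otimes(-)$ and $\Nak_{\mathcal{C}}^{-1}(X^{**})\cong D\otimes X$ (using the module-functor structure of $\Ser_{\mathcal{M}}$ and invertibility of $D$) cancel $D$ and give $\Nat(F,X\otimes\Nak_{\mathcal{M}})$. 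You correctly sense that the Nakayama relations must be invoked, but without this functor-level detour through $\Act^{\ladj}$ there is no valid route from your objectwise isomorphisms to the statement.
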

\begin{proof}
  Let $D$ be the distinguished invertible object of $\mathcal{C}$ introduced in \cite{MR2097289}. Then there are natural isomorphisms
  \begin{equation*}
    \Nak_{\mathcal{C}}(X) \cong D^* \otimes X^{**}
    \quad \text{and} \quad
    \Nak_{\mathcal{M}}(M) \cong D^* \otimes \Ser_{\mathcal{M}}(M)
  \end{equation*}
  for $X \in \mathcal{C}$ and $M \in \mathcal{M}$ \cite{2016arXiv161204561F}. Since $\Ser_{\mathcal{M}}: \mathcal{M} \to {}_{(-)^{**}}\mathcal{M}$ is a $\mathcal{C}$-module functor, and since $D$ is an invertible object, we have natural isomorphisms
  \begin{equation*}
    (\Nak^{-1}_{\mathcal{M}} \circ \Ser_{\mathcal{M}})(M)
    \cong \Ser_{\mathcal{M}}^{-1}(D \otimes \Ser_{\mathcal{M}}(M))
    \cong D^{**} \otimes \Ser_{\mathcal{M}}^{-1} \Ser_{\mathcal{M}}(M)
    \cong D \otimes M
  \end{equation*}
  for $M \in \mathcal{M}$. By using these isomorphisms and basic results on the Nakayama functor recalled in Subsection~\ref{subsec:fin-ab-cat}, we have natural isomorphisms
  \begin{align*}
    & \Hom_{\mathcal{C}}(\Act^{\radj}(\Ser_{\mathcal{M}} \circ F), X^{**}) \\
    & \cong \Hom_{\mathcal{C}}(\Act^{\ladj}(\Nak^{-1}_{\mathcal{M}} \circ \Ser_{\mathcal{M}} \circ F \circ \Nak^{-1}_{\mathcal{M}}),
      \ \Nak^{-1}_{\mathcal{C}}(X^{**})) \\
    & \cong \Nat(\Nak^{-1}_{\mathcal{M}} \circ \Ser_{\mathcal{M}} \circ F \circ \Nak^{-1}_{\mathcal{M}},
      \ \Nak^{-1}_{\mathcal{C}}(X^{**}) \otimes \id_{\mathcal{M}}) \\
    & \cong \Nat(\Nak^{-1}_{\mathcal{M}} \circ \Ser_{\mathcal{M}} \circ F,
      \ \Nak_{\mathcal{C}}^{-1}(X^{**}) \otimes \Nak_{\mathcal{M}}) \\
    & \cong \Nat(D \otimes F,
      \ D \otimes X \otimes \Nak_{\mathcal{M}}) \cong \Nat(F, X \otimes \Nak_{\mathcal{M}})
  \end{align*}
  for $F \in \REX(\mathcal{M})$ and $X \in \mathcal{C}$. The proof is done.
\end{proof}

The following theorem is an immediate consequence of the above lemma.

\begin{theorem}
  \label{thm:CF-n-and-SLF-n}
  If $\mathcal{M}$ is an exact $\mathcal{C}$-module category whose relative Serre functor is isomorphic to the identity functor, then there is a natural isomorphism
  \begin{equation*}
    \Hom_{\mathcal{C}}(\Act^{\radj}(F), X^{**})
    \cong \Nat(F, X \otimes \Nak_{\mathcal{M}})
  \end{equation*}
  for $F \in \REX(\mathcal{M})$ and $X \in \mathcal{C}$. In particular, we have an isomorphism
  \begin{equation*}
    \CF(\mathcal{M}) \cong \Sym(\mathcal{M}),
  \end{equation*}
  which restricts to isomorphisms
  \begin{equation*}
    \CF_n(\mathcal{M}) \cong \Sym_n(\mathcal{M})
    \quad (n \in \mathbb{Z}_{+}).
  \end{equation*}
\end{theorem}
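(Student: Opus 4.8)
The plan is to derive everything formally from Lemma~\ref{lem:SLF-lemma} together with the exactness of $\Act^{\radj}$. Since the relative Serre functor of $\mathcal{M}$ is isomorphic to $\id_{\mathcal{M}}$, for every $F \in \REX(\mathcal{M})$ we have $\Ser_{\mathcal{M}} \circ F \cong F$, so the isomorphism of Lemma~\ref{lem:SLF-lemma} specializes at once to the asserted natural isomorphism $\Hom_{\mathcal{C}}(\Act^{\radj}(F), X^{**}) \cong \Nat(F, X \otimes \Nak_{\mathcal{M}})$. Taking $F = \id_{\mathcal{M}}$ and $X = \unitobj$ and using $\unitobj^{**} \cong \unitobj$ gives
\[
  \CF(\mathcal{M}) = \Hom_{\mathcal{C}}(A_{\mathcal{M}}, \unitobj) = \Hom_{\mathcal{C}}(\Act^{\radj}(\id_{\mathcal{M}}), \unitobj) \cong \Nat(\id_{\mathcal{M}}, \Nak_{\mathcal{M}}) = \Sym(\mathcal{M}).
\]

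For the filtered refinement, I would feed the short exact sequence $0 \to \rad^n \xrightarrow{i_n} \id_{\mathcal{M}} \to \capital_n \to 0$ in $\REX(\mathcal{M})$ through $\Act^{\radj}$. Since $\Act^{\radj}$ is exact (Theorem~\ref{thm:action-adj-by-end}) and $\capital_n = \uptau_{\mathcal{M}_n}$, this produces a short exact sequence $0 \to \Act^{\radj}(\rad^n) \to A_{\mathcal{M}} \to A_{\mathcal{M}_n} \to 0$ in $\mathcal{C}$, and applying $\Hom_{\mathcal{C}}(-, \unitobj)$, which is left exact, identifies $\CF_n(\mathcal{M}) = \Hom_{\mathcal{C}}(A_{\mathcal{M}_n}, \unitobj)$ with the kernel of the map $\CF(\mathcal{M}) \to \Hom_{\mathcal{C}}(\Act^{\radj}(\rad^n), \unitobj)$ given by precomposition with $\Act^{\radj}(i_n)$. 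On the other side, $\Sym_n(\mathcal{M})$ is by definition the kernel of the precomposition map $\Sym(\mathcal{M}) \to \Nat(\rad^n, \Nak_{\mathcal{M}})$ induced by $i_n$. Because the isomorphism of the first display is natural in $F$ (contravariantly), these two precomposition maps fit into a commuting square with the vertical isomorphisms $\CF(\mathcal{M}) \cong \Sym(\mathcal{M})$; hence the kernels correspond and $\CF_n(\mathcal{M}) \cong \Sym_n(\mathcal{M})$.

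The only point requiring care is the naturality in $F$ of the isomorphism in Lemma~\ref{lem:SLF-lemma}, together with its compatibility with the chosen isomorphism $\Ser_{\mathcal{M}} \cong \id_{\mathcal{M}}$: each isomorphism in the chain forming that lemma is natural in $F$, and one must check that the substitution $\Ser_{\mathcal{M}} \circ F \cong F$ is made consistently so that the square relating the two precomposition maps genuinely commutes. Once this is verified the argument is entirely formal, and I expect no other obstacles.
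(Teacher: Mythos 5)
Your proposal is correct and follows essentially the same route as the paper, which simply records the theorem as an immediate consequence of Lemma~\ref{lem:SLF-lemma}: specialize $\Ser_{\mathcal{M}} \cong \id_{\mathcal{M}}$, take $F = \id_{\mathcal{M}}$, $X = \unitobj$, and use naturality in $F$ (applied to $i_n: \rad^n \to \id_{\mathcal{M}}$, together with exactness of $\Act^{\radj}$ and $\Act^{\radj}(\capital_n) \cong A_{\mathcal{M}_n}$) to match the two filtrations. The details you supply for the kernel identification are precisely what the paper leaves implicit, so there is nothing to add.
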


\subsection{Dimension of $\CF_1$}

For a finite abelian category $\mathcal{A}$, we denote by $\Irr(\mathcal{A})$ the set of isomorphism classes of simple objects of $\mathcal{M}$. Let $\mathcal{C}$ be a finite tensor category, and let $\mathcal{M}$ be an exact $\mathcal{C}$-module category. Then, by the proof of Theorem~\ref{thm:irr-char-lin-indep}, we have isomorphisms
\begin{equation}
  \label{eq:CF1-iso}
  \CF_1(\mathcal{M})
  \cong \bigoplus_{L \in \Irr(\mathcal{M})} \Hom_{\mathcal{C}}(\iHom(L, L), \unitobj)
  \cong \bigoplus_{L \in \Irr(\mathcal{M})} \Hom_{\mathcal{C}}(L, \Ser_{\mathcal{M}}(L)).
\end{equation}
Thus, by Schur's lemma, we have
\begin{equation*}
  \dim_k \CF_1(\mathcal{M}) = \# \{ L \in \Irr(\mathcal{C}) \mid \Ser_{\mathcal{M}}(L) \cong L \}.
\end{equation*}
We suppose, moreover, that $\mathcal{C}$ is a pivotal finite tensor category and $\mathcal{M}$ is a pivotal $\mathcal{C}$-module category with pivotal structure $j'$. Again by the proof of Theorem~\ref{thm:irr-char-lin-indep}, the internal character of $L \in \Irr(\mathcal{M})$ corresponds to $j'_L$ via~\eqref{eq:CF1-iso}. Thus the set $\{ \ich(L) \mid L \in \Irr(\mathcal{M}) \}$ of `irreducible characters' is a basis of $\CF_1(\mathcal{M})$.

\subsection{Dimension of $\CF_2$}

As we have seen in the above, the dimension of $\CF_1$ is expressed in representation-theoretic terms. It is interesting to give such an expression for the dimension of $\CF_n$ for $n \ge 2$. Here we give the following result:

\begin{theorem}
  \label{thm:dim-CF2}
  Let $\mathcal{C}$ be a finite tensor category. For an exact $\mathcal{C}$-module category $\mathcal{M}$ such that $\Ser_{\mathcal{M}} \cong \id_{\mathcal{M}}$, there is an isomorphism
  \begin{equation*}
    \CF_2(\mathcal{M}) = \CF_1(\mathcal{M}) \oplus \bigoplus_{L \in \Irr(\mathcal{M})} \Ext_{\mathcal{M}}^1(L, L).
  \end{equation*}
\end{theorem}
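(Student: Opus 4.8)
The plan is to reduce the statement to a purely ring-theoretic computation about symmetric linear forms, using the hypothesis $\Ser_{\mathcal{M}} \cong \id_{\mathcal{M}}$ to replace $\CF_n$ by $\Sym_n$. First I would invoke Theorem~\ref{thm:CF-n-and-SLF-n}: since $\Ser_{\mathcal{M}} \cong \id_{\mathcal{M}}$, it provides compatible isomorphisms $\CF_n(\mathcal{M}) \cong \Sym_n(\mathcal{M})$ for all $n$, so that the inclusion $\CF_1(\mathcal{M}) \subset \CF_2(\mathcal{M})$ corresponds to $\Sym_1(\mathcal{M}) \subset \Sym_2(\mathcal{M})$. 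Choosing a finite-dimensional algebra $A$ with $\mathcal{M} \approx \lmod{A}$ and radical $J$, the identification~\eqref{eq:SLF-R-mod} (and the remark following it, noting that $\rad^n \in \REX(\mathcal{M})$ corresponds to the bimodule $J^n$) identifies $\Sym_n(\mathcal{M})$ with $\Sym_n(A)$, while $\Ext^1_{\mathcal{M}}(L,L) \cong \Ext^1_A(S,S)$ for $L \leftrightarrow S$. As all the spaces occurring are Morita invariant, I may assume $A$ is basic, so that $A/J = \prod_{i=1}^{n} k e_i$ and $\Irr(\mathcal{M})$ is indexed by $\{1,\dots,n\}$. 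Thus it suffices to prove the vector-space isomorphism $\Sym_2(A) \cong \Sym_1(A) \oplus \bigoplus_{i} \Ext^1_A(S_i,S_i)$.

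Next I would unwind $\Sym_n(A)$ homologically. Writing $\HH_0(B) := B/[B,B]$ for the zeroth Hochschild homology of an algebra $B$, one has $\Sym_n(A) = \{\, f \in A^* \mid f([A,A]) = 0,\ f(J^n) = 0 \,\}$, i.e.\ $\Sym_n(A)$ is the annihilator of $[A,A]+J^n$; hence $\Sym_n(A) \cong \HH_0(A/J^n)^{*}$. The algebra surjection $A/J^2 \twoheadrightarrow A/J$ induces a surjection $\HH_0(A/J^2) \twoheadrightarrow \HH_0(A/J)$ whose $k$-linear dual is exactly the inclusion $\Sym_1(A) \hookrightarrow \Sym_2(A)$, so $\Sym_2(A)/\Sym_1(A) \cong K^{*}$, where $K = \ker\big(\HH_0(A/J^2) \to \HH_0(A/J)\big)$. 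Now $K$ is the image of $J/J^2$ under $J/J^2 \hookrightarrow A/J^2 \twoheadrightarrow \HH_0(A/J^2)$, so $K = (J/J^2)/\big((J/J^2)\cap[A/J^2,A/J^2]\big)$.

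The one non-formal step is to pin down the subspace $(J/J^2)\cap[A/J^2,A/J^2]$. Here I would apply the Wedderburn--Malcev theorem to obtain a subalgebra $B \subseteq A/J^2$ with $A/J^2 = B \oplus J/J^2$ and $B \cong A/J$. Since $J/J^2$ is a sub-bimodule over $B$ with $(J/J^2)^2 = 0$, expanding the commutator of $b+m$ and $b'+m'$ ($b,b'\in B$, $m,m'\in J/J^2$) shows $[A/J^2,A/J^2] = [B,B] \oplus [B,J/J^2]$ with $[B,B]\subseteq B$ and $[B,J/J^2]\subseteq J/J^2$; hence $(J/J^2)\cap[A/J^2,A/J^2] = [B,J/J^2]$ and $K = (J/J^2)/[B,J/J^2]$. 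As $B \cong A/J = \prod_i k e_i$, a direct commutator computation gives $[B,J/J^2] = \bigoplus_{i\neq j} e_i(J/J^2)e_j$, so $K \cong \bigoplus_i e_i(J/J^2)e_i$; and by the standard relation between $J/J^2$ and the $\Ext$-quiver of a basic algebra, $\dim_k e_i(J/J^2)e_i = \dim_k \Ext^1_A(S_i,S_i)$. Therefore $K \cong \bigoplus_i \Ext^1_A(S_i,S_i)$ as vector spaces, hence so is $K^{*}$, and since any short exact sequence of vector spaces splits we get $\Sym_2(A) \cong \Sym_1(A)\oplus\bigoplus_i\Ext^1_A(S_i,S_i)$. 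Unwinding the identifications of the first paragraph then yields $\CF_2(\mathcal{M}) \cong \CF_1(\mathcal{M}) \oplus \bigoplus_{L\in\Irr(\mathcal{M})}\Ext^1_{\mathcal{M}}(L,L)$.

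I expect the main obstacle to be precisely the equality $(J/J^2)\cap[A/J^2,A/J^2] = [B,J/J^2]$: without the Wedderburn--Malcev splitting one only obtains the inclusion $\Sym_2(A)/\Sym_1(A) \hookrightarrow \bigoplus_i\Ext^1_A(S_i,S_i)$ (equivalently $\CF_2(\mathcal{M})/\CF_1(\mathcal{M}) \hookrightarrow \Nat(\rad/\rad^2,\Nak_{\mathcal{M}})$, obtained by applying $\Hom_{\mathcal{C}}(-,\unitobj)$ to the short exact sequence $0 \to \Act^{\radj}(\rad/\rad^2) \to A_{\mathcal{M}_2} \to A_{\mathcal{M}_1} \to 0$ coming from $0 \to \rad/\rad^2 \to \capital_2 \to \capital_1 \to 0$), and surjectivity would then have to be deduced from a separate vanishing of the relevant connecting map into $\Ext^1_{\mathcal{C}}(A_{\mathcal{M}_1},\unitobj)$, equivalently into a first Hochschild homology group.
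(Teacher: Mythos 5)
Your proposal is correct, and it reaches the theorem by a genuinely different route for the ring-theoretic core. The paper also begins exactly as you do: it invokes Theorem~\ref{thm:CF-n-and-SLF-n} to replace $\CF_n(\mathcal{M})$ by $\Sym_n(\mathcal{M}) \cong \Sym_n(A)$ and reduces to a statement about symmetric linear forms on a finite-dimensional algebra, and it likewise reduces to the basic algebra $A^b = eAe$ (Lemmas around \eqref{eq:CF2-lem-SF-iso} and \eqref{eq:CF2-lem-Ext-iso}). The divergence is in the key lemma: the paper proves the stronger Theorem~\ref{thm:dim-SF2}, which exhibits explicit maps $\Trace^*_{A,L}: \Ext^1_A(L,L) \to \Sym(A)$, $\xi \mapsto \Trace\circ\xi$, shows they are injective, and realizes $\Sym_2(A)$ as an internal direct sum $\Sym_1(A) \oplus \bigoplus_L \Img(\Trace^*_{A,L})$; the proof passes to the dual pointed coalgebra $C = A^*$, identifies $\Sym_n(A)$ with $C_{n-1} \cap \Sym(A)$ via the coradical filtration, identifies $\Ext^1_A(g,h)$ with skew-primitives $P_{g,h}$, and applies the Taft--Wilson theorem to kill the off-diagonal part. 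You instead dualize $\Sym_n(A) \cong \HH_0(A/J^n)^*$ and compute the kernel of $\HH_0(A/J^2) \to \HH_0(A/J)$ by splitting $A/J^2 = B \oplus J/J^2$ via Wedderburn--Malcev (legitimate here since $k$ is algebraically closed), obtaining $(J/J^2)\cap[A/J^2,A/J^2] = [B,J/J^2] = \bigoplus_{i\neq j} e_i(J/J^2)e_j$ and hence $\Sym_2(A)/\Sym_1(A) \cong \big(\bigoplus_i e_i(J/J^2)e_i\big)^*$, which has the right dimension by the standard identification of $e_i(J/J^2)e_i$ with $\Ext^1_A(S_i,S_i)$. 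Since Theorem~\ref{thm:dim-CF2} only asserts an isomorphism of vector spaces, your dimension-count argument suffices and is arguably more elementary (no coalgebra duality or Taft--Wilson); what the paper's construction buys is a canonical splitting by trace maps on extension classes, which connects to pseudo-trace functions and makes the decomposition functorial rather than an abstract choice of complement. Your closing worry about the intersection $(J/J^2)\cap[A/J^2,A/J^2]$ is already resolved by the Wedderburn--Malcev step you describe, so no gap remains.
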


To prove Theorem~\ref{thm:dim-CF2}, we recall the following expression of $\Ext^1$: Let $A$ be a finite-dimensional algebra. Given $X \in \lmod{A}$, we denote by $g_X: A \to \End_k(X)$ the algebra map induced by the action of $A$ on $X$. For $V, W \in \lmod{A}$, the vector space $\Ext_{A}^1(V, W)$ is identified with the set of equivalence classes of short exact sequences of the form $0 \to W \to X \to V \to 0$ in $\lmod{A}$. If $X \in \lmod{A}$ fits into such an exact sequence, then we may assume that $X = V \oplus W$ as a vector space and the algebra map $g_X$ is given by
\begin{equation*}
  g_X(a) =
  \begin{pmatrix}
    g_V(a) & 0 \\
    \xi(a) & g_W(a)
  \end{pmatrix}
  \in \End_k(X)
  \quad (a \in A)
\end{equation*}
for some $\xi \in \Hom_k(A, \Hom_k(V, W))$. Since $g_X$ is an algebra map, we have
\begin{equation}
  \label{eq:skew-primitive}
  \xi(1) = 0
  \quad \text{and} \quad
  \xi(a b) = \xi(a) \circ g_V(b) + g_W(a) \circ \xi(b)
  \quad (a, b \in A).
\end{equation}
We define $\partial: \Hom_k(V, W) \to \Hom_k(A, \Hom_k(V, W))$ by
\begin{equation*}
  \partial(f)(a) = f \circ g_V(a) - g_W(a) \circ f
  \quad  (f \in \Hom_k(V, W), a \in A).
\end{equation*}
For two linear maps $\xi_i: A \to \Hom_k(V, W)$ ($i = 1, 2$) satisfying~\eqref{eq:skew-primitive}, the corresponding short exact sequences are equivalent if and only if $\xi_1 - \xi_2 \in \Img(\partial)$. Thus the vector space $\Ext_{A}^1(V, W)$ is identified with
\begin{equation}
  \label{eq:matrix-expression-of-Ext1}
  \mathcal{E}_A^1(V, W) := \{ \text{$\xi \in \Hom_k(A, \Hom_k(V, W))$ satisfying \eqref{eq:skew-primitive}} \} / \Img(\partial).
\end{equation}
Theorem~\ref{thm:dim-CF2} is in fact an immediate consequence of Theorem~\ref{thm:CF-n-and-SLF-n} and the following theorem:

\begin{theorem}
  \label{thm:dim-SF2}
  For $M \in \lmod{A}$, we define
  \begin{equation*}
    \Trace^*_{A, M}: \Ext_{A}^1(M, M) = \mathcal{E}^1_A(M, M) \to \Sym(A),
    \quad \xi \mapsto \Trace \circ \xi.
  \end{equation*}
  The map $\Trace^*_{A,L}$ is injective for all $L \in \Irr(A)$. Moreover, we have
  \begin{equation*}
    \Sym_2(A) = \Sym_1(A) \oplus \bigoplus_{L \in \Irr(A)} \Img(\Trace^*_{A,L}).
  \end{equation*}
\end{theorem}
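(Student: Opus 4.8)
The plan is to reduce the whole statement to the case $J^2 = 0$ and then to read everything off the structure of $A/J$ as a product of matrix algebras. First I would record that $\Trace^*_{A,M}$ is well defined for every $M$: if $\xi$ satisfies the cocycle conditions~\eqref{eq:skew-primitive}, then cyclicity of the trace gives $\Trace(\xi(ab)) = \Trace(\xi(ba))$, so $\Trace\circ\xi \in \Sym(A)$, while $\Trace\circ\partial(f) = 0$ again by cyclicity, so $\Trace^*_{A,M}$ descends to $\mathcal{E}^1_A(M,M) = \Ext_A^1(M,M)$. Next I would observe that all three of $\Sym_1(A)$, $\Sym_2(A)$ and $\Trace^*_{A,L}$ (for $L$ simple) depend only on $A/J^2$: a symmetric form kills $J^n$ exactly when it factors through $A/J^n$, and any cocycle $\xi\colon A\to\End_k(L)$ representing a class in $\Ext^1_A(L,L)$ automatically kills $J^2$ since $g_L$ kills $J$. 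Hence I may assume $J^2 = 0$, so that $\Sym_2(A) = \Sym(A)$ and $\Sym_1(A) = \Sym(A/J)$.

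Since $k$ is algebraically closed, $A/J$ is separable, so by the Wedderburn--Malcev theorem I may fix a subalgebra $B \subseteq A$ with $A = B \oplus J$ and $B \xrightarrow{\ \sim\ } A/J \cong \prod_{L\in\Irr(A)}\End_k(L)$. A short computation, splitting each element into its $B$- and $J$-components and using $J^2 = 0$, shows $\Sym(A) = \Sym(B)\oplus\Sym_B(J)$ inside $A^* = B^*\oplus J^*$, where $\Sym_B(J) := \{f\in J^*\mid f(bj)=f(jb)\text{ for all }b\in B,\ j\in J\}$ and $\Sym(B)$ is regarded in $A^*$ by extension by zero; moreover $\Sym(B) = \Sym_1(A)$ and, $B$ being semisimple, $\Sym(B) = \bigoplus_L k\,\Trace_L$. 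Thus the theorem reduces to showing that $\Sym_B(J) = \bigoplus_L\Img(\Trace^*_{A,L})$ and that each $\Trace^*_{A,L}$ is injective.

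To that end I would first identify $\Ext^1_A(L,L)$, for $L$ simple, with the space $\Hom_{B\text{-}B}(J,\End_k(L))$ of $B$-bimodule maps: every class has a representative cocycle vanishing on $B$ (derivations out of the separable algebra $B$ are inner), this representative is unique (the $B$-central elements of $\End_k(L)$ are just the scalars, by Schur's lemma and Burnside's theorem, so $\partial$ kills no such representative nontrivially), and a cocycle vanishing on $B$ is exactly a $B$-bimodule map $\psi = \xi|_J\colon J\to\End_k(L)$. Under this identification $\Trace^*_{A,L}$ becomes $\psi\mapsto\Trace_L\circ\psi$ (extended by zero on $B$). Writing $e_L\in B$ for the central idempotents, such a $\psi$ factors through the block $e_LJe_L$, which is a bimodule over $e_LBe_L = \End_k(L)$ and hence isomorphic to $\End_k(L)^{\oplus m_L}$ with $m_L = \dim_k\Ext^1_A(L,L)$; the bimodule maps to $\End_k(L)$ are then the linear combinations of the $m_L$ coordinate projections, and composing with $\Trace_L$ yields $m_L$ functionals supported on distinct summands, hence linearly independent, which gives injectivity of $\Trace^*_{A,L}$. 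Finally, for $f\in\Sym_B(J)$, testing $f(e_Lj)=f(je_L)$ forces $f$ to be supported on $\bigoplus_L e_LJe_L$, and on each such block $f$ is a symmetric form on the matrix algebra $\End_k(L)^{\oplus m_L}$, hence a combination of traces of the projections, i.e.\ an element of $\Img(\Trace^*_{A,L})$; the blocks for distinct $L$ are disjoint, so the sum is direct. Combined with the previous paragraph this yields $\Sym_2(A) = \Sym_1(A)\oplus\bigoplus_L\Img(\Trace^*_{A,L})$.

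The step requiring the most care is the identification $\Ext^1_A(L,L)\cong\Hom_{B\text{-}B}(J,\End_k(L))$ and checking that it is honestly compatible with $\Trace^*_{A,L}$ and with the block decomposition $J = \bigoplus_{L',L''}e_{L'}Je_{L''}$; once the reduction to $J^2=0$ and the Wedderburn splitting are in place, everything else is the elementary representation theory of $M_n(k)$.
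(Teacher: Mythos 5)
Your proof is correct, but it takes a genuinely different route from the paper. The paper first Morita-reduces to the basic algebra $eAe$ (with compatibility lemmas for $\Sym$ and $\Trace^*$), then dualizes: for $A$ basic, $C=A^*$ is a pointed coalgebra, $\Sym_n(A)=C_{n-1}\cap\Sym(A)$ via the coradical filtration, $\Ext^1_A(g,h)$ becomes the space of $(g,h)$-skew-primitives, and the Taft--Wilson theorem gives the decomposition of $C_1$ from which the symmetry condition kills the off-diagonal skew-primitive parts. You instead reduce modulo $J^2$ (legitimate, since cocycles for simple modules and symmetric forms in $\Sym_2$ all factor through $A/J^2$, compatibly with $\Trace^*$ and coboundaries), choose a Wedderburn--Malcev splitting $A=B\oplus J$ with $B\cong\prod_L\End_k(L)$, and then everything is elementary bimodule theory over matrix algebras: $\Sym(A)=\Sym(B)\oplus\Sym_B(J)$, every class in $\Ext^1_A(L,L)$ has a unique cocycle representative vanishing on $B$ (separability of $B$ makes restricted derivations inner, Schur--Burnside gives uniqueness), such cocycles are exactly the $B$-bimodule maps $J\to\End_k(L)$, and the block $e_LJe_L\cong\End_k(L)^{\oplus m_L}$ identifies both $\Img(\Trace^*_{A,L})$ and the relevant piece of $\Sym_B(J)$ with the span of the traces of the coordinate projections. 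What your approach buys is self-containedness: no pointed-coalgebra machinery or Taft--Wilson, no reduction to basic algebras, and the non-basic case is handled directly; the cost is invoking Wedderburn--Malcev and the Whitehead-type vanishing of derivations on separable algebras, which play the role that Taft--Wilson plays in the paper. Both arguments work in arbitrary characteristic (your final step, that a linear form on $M_n(k)$ vanishing on commutators is a multiple of the trace, holds in any characteristic since the commutators span the traceless matrices), so I see no gap.
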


\begin{remark}
  Our construction of the map $\Trace^*_{A,M}$ is inspired from the construction of {\em pseudo-trace functions} introduced by Miyamoto \cite{MR2046807} and further studied in \cite{MR2663653,MR2722373,MR3045342} in relation with conformal field theory and vertex operator algebras.
\end{remark}

\begin{remark}
  Theorem~\ref{thm:dim-SF2} is inspired by the following Okuyama's result: For a symmetric Frobenius algebra $A$, Okuyama \cite{Okuyama81} showed
  \begin{equation*}
    \dim_k \Rey_2(A) = |\Irr(A)| + \sum_{L \in \Irr(A} \dim_k \Ext_{A}^1(L, L)
  \end{equation*}
  (see also Koshitani's review \cite[Section 2]{Koshitani2016}). This formula follows from the above theorem and Remark~\ref{rem:SF-n-and-Rey-n}. We note that Theorem~\ref{thm:dim-SF2} does not require $A$ to be a symmetric Frobenius algebra.
\end{remark}

We give a proof of Theorem~\ref{thm:dim-SF2}. Let $A$ be a finite-dimensional algebra, and write $\Irr(A) = \{ S_1, \dotsc, S_m \}$. For each $i = 1, \dotsc, m$, we fix a primitive idempotent $e_i \in A$ such that $A e_i$ is a projective cover of $S_i$. Set $e = e_1 + \dotsb + e_m$. Then $A^b := e A e$ is a basic algebra and the functor
\begin{equation}
  \label{eq:basic-equiv}
  \lmod{A} \to \lmod{A^b}, \quad X \mapsto e X
\end{equation}
is an equivalence. The following lemma is well-known \cite{MR0009024}, but we give a proof from the viewpoint of Definition~\ref{def:SF-and-SFn}.

\begin{lemma}
  The following map is bijective:
  \begin{equation}
    \label{eq:CF2-lem-SF-iso}
    \Sym(A) \to \Sym(A^b), \quad f \mapsto f|_{A^b}
  \end{equation}  
\end{lemma}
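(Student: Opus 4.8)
The plan is to derive the bijectivity of~\eqref{eq:CF2-lem-SF-iso} from the intrinsic description $\Sym(\mathcal{M}) = \Nat(\id_{\mathcal{M}}, \Nak_{\mathcal{M}})$ of Definition~\ref{def:SF-and-SFn}, rather than by manipulations inside $A$. First note that the map in~\eqref{eq:CF2-lem-SF-iso} is well-defined: if $f \in \Sym(A)$ then $f(ab) = f(ba)$ for all $a, b \in A$, in particular for $a, b \in A^b = eAe \subseteq A$, so $f|_{A^b} \in \Sym(A^b)$.

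Write $E \colon \lmod{A} \to \lmod{A^b}$, $X \mapsto eX$, for the equivalence~\eqref{eq:basic-equiv}, with quasi-inverse $E^{-1} = Ae \otimes_{eAe} (-)$. Since $E$ is exact and $E^{\ladj} \cong E^{\radj} \cong E^{-1}$, item~(3) of the recollection on the Nakayama functor in Subsection~\ref{subsec:fin-ab-cat} (applied to $F = E$) gives a canonical isomorphism $E \circ \Nak_{\lmod{A}} \circ E^{-1} \cong \Nak_{\lmod{A^b}}$; equivalently, the equivalence $\REX(\lmod{A}) \to \REX(\lmod{A^b})$, $G \mapsto E G E^{-1}$, carries $\id_{\lmod{A}}$ to $\id_{\lmod{A^b}}$ and $\Nak_{\lmod{A}}$ to $\Nak_{\lmod{A^b}}$. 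Taking $\Hom$-spaces in $\REX$, conjugation by $E$ therefore induces an isomorphism $\Sym(A) \cong \Nat(\id_{\lmod{A}}, \Nak_{\lmod{A}}) \cong \Nat(\id_{\lmod{A^b}}, \Nak_{\lmod{A^b}}) \cong \Sym(A^b)$ through~\eqref{eq:SLF-R-mod}.

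It remains to identify this abstract isomorphism with restriction. I would unwind~\eqref{eq:SLF-R-mod} on both ends. Under the Eilenberg--Watts identification $\REX(\lmod{A}) \cong \bimod{A}{A}$, the functor $G \mapsto E G E^{-1}$ corresponds to $M \mapsto e M e$ (one computes $E \circ (M \otimes_A (-)) \circ E^{-1} \cong eMe \otimes_{eAe} (-)$, using $M \otimes_A Ae \cong Me$); it sends the bimodule $A$ to $eAe = A^b$ and $A^{*}$ to $eA^{*}e$, and the isomorphism $eA^{*}e \cong (A^b)^{*}$ of $A^b$-bimodules encoding $E \circ \Nak_{\lmod{A}} \circ E^{-1} \cong \Nak_{\lmod{A^b}}$ is restriction of linear functionals. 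Tracing $f \in \Sym(A)$, viewed via~\eqref{eq:SLF-R-mod} as the bimodule map $A \to A^{*}$, $a \mapsto a \cdot f = f \cdot a$, through these identifications and evaluating the resulting bimodule map $A^b \to (A^b)^{*}$ at $1_{A^b} = e$ returns the functional $eye \mapsto f(eye)$ on $A^b$, that is $f|_{A^b}$. Hence~\eqref{eq:CF2-lem-SF-iso} is a bijection.

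The main obstacle will be the bookkeeping of the last paragraph: keeping the two halves of~\eqref{eq:SLF-R-mod}, the left and right $A$- and $A^b$-actions, and the identification of the transported Nakayama bimodule all aligned, so that the abstract Morita isomorphism is recognized as honest restriction; none of this is deep, but it requires care. A purely ring-theoretic alternative is to verify directly that $eAe \cap [A,A] = [eAe, eAe]$ and $eAe + [A,A] = A$ (using that $Ae$ is a progenerator, so $AeA = A$), but that is essentially the classical Morita-invariance argument for $HH_0$ and makes no use of Definition~\ref{def:SF-and-SFn}, so I would relegate it to a remark.
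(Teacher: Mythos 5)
Your proposal is correct and follows essentially the same route as the paper's proof: transport along the Morita equivalence $X \mapsto eX$, which induces the bimodule equivalence $M \mapsto eMe$, and then identify $\Sym$ on both sides via~\eqref{eq:SLF-R-mod}. The only difference is cosmetic: the paper passes directly through $\Hom_{\bimod{A}{A}}(A,A^*) \cong \Hom_{\bimod{A^b}{A^b}}(A^b,(A^b)^*)$ and simply asserts that the composite is restriction, whereas you phrase the middle step via the Nakayama functor and spell out that unwinding, which is a harmless elaboration of the same argument.
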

\begin{proof}
  The equivalence \eqref{eq:basic-equiv} induces an equivalence $\bimod{A}{A} \approx \bimod{A^b}{A^b}$ sending $M \in \bimod{A}{A}$ to $e M e$. By~\eqref{eq:SLF-R-mod} and this equivalence, we have
  \begin{equation*}
    \Sym(A)
    \cong \Hom_{\bimod{A}{A}}(A, A^*)
    \cong \Hom_{\bimod{A^b}{A^b}}(A^b, (A^b)^*)
    \cong \Sym(A^b).
  \end{equation*}
  The composition yields the map~\eqref{eq:CF2-lem-SF-iso}.
\end{proof}

The equivalence \eqref{eq:basic-equiv} also induces an isomorphism
\begin{equation}
  \label{eq:CF2-lem-Ext-iso}
  \mathcal{E}_{A}^{1}(V, W)
  = \Ext_A^{1}(V, W)
  \cong \Ext_{A^b}^{1}(e V, e W)
  = \mathcal{E}_{A^b}^{1}(e V, e W)
\end{equation}
for $V, W \in \mathcal{M}$, which sends $\xi \in \mathcal{E}_A^1(V, W)$ to
\begin{equation*}
  \xi^b: A^b \to \Hom_k(e V, e W),
  \quad e a e \mapsto e \xi(e a e)(e v)
  \quad (a \in A, v \in V).
\end{equation*}

\begin{lemma}
  \label{lem:ext-trace-diagram-commute}
  For $V \in \lmod{A}$, the following diagram commutes:
  \begin{equation*}
    \xymatrix@C=64pt@R=16pt{
      \mathcal{E}_{A}^1(V, V)
      \ar[r]^{\Trace^*_{A, V}}
      \ar[d]_{\text{\eqref{eq:CF2-lem-Ext-iso}}}
      & \Sym(A)
      \ar[d]^{\text{\eqref{eq:CF2-lem-SF-iso}}} \\
      \mathcal{E}_{A^b}^1(e V, e V)
      \ar[r]^{\Trace^*_{A^b, e V}}
      & \Sym(A^b)
    }
  \end{equation*}
\end{lemma}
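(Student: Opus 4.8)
The plan is to chase a cocycle $\xi$ representing a class in $\mathcal{E}_A^1(V,V)$ around the square and reduce the commutativity to an elementary trace identity. Following the top edge and then the right edge sends $\xi$ to the linear form on $A^b = eAe$ given by $eae \mapsto \Trace_V(\xi(eae))$; following the left edge and then the bottom edge sends $\xi$ first to $\xi^b$ (so that $\xi^b(eae) \in \End_k(eV)$ is the operator $ev \mapsto e\,\xi(eae)(ev)$) and then to the form $eae \mapsto \Trace_{eV}(\xi^b(eae))$. Since both isomorphisms \eqref{eq:CF2-lem-Ext-iso} and \eqref{eq:CF2-lem-SF-iso} are given by explicit formulas on elements, it suffices to verify, for an arbitrary such cocycle $\xi$ and each $a \in A$, the scalar equality
\[
  \Trace_V\bigl(\xi(eae)\bigr) = \Trace_{eV}\bigl(\xi^b(eae)\bigr).
\]

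To do this I would first observe that $\pi := g_V(e) \in \End_k(V)$ is idempotent with image $eV$, that $\id_V - \pi = g_V(1-e)$ is the complementary idempotent with image $(1-e)V$, and that for any $f \in \End_k(V)$ one has $\Trace_V(f) = \Trace_V(\pi f \pi) + \Trace_V\bigl((\id_V - \pi)\, f\, (\id_V - \pi)\bigr)$, where the first summand equals $\Trace_{eV}$ of the restriction of $\pi f \pi$ to $eV$. Unwinding the definition \eqref{eq:CF2-lem-Ext-iso} of the isomorphism $\mathcal{E}_A^1(V,V) \cong \mathcal{E}_{A^b}^1(eV,eV)$ against the block decomposition $V = eV \oplus (1-e)V$, one checks that $\xi^b(eae)$ is precisely the restriction of $\pi\,\xi(eae)\,\pi$ to $eV$. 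Hence the displayed equality is equivalent to the vanishing
\[
  (\id_V - \pi)\,\xi(eae)\,(\id_V - \pi) = 0.
\]

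This vanishing is immediate from the cocycle relation \eqref{eq:skew-primitive}: writing $eae = e\cdot(ae)$ gives $\xi(eae) = \xi(e)\,g_V(ae) + g_V(e)\,\xi(ae)$. In the second summand one has $(\id_V - \pi)\,g_V(e) = g_V((1-e)e) = 0$, while in the first summand $g_V(ae)\,(\id_V - \pi) = g_V(ae(1-e)) = 0$; thus each summand is annihilated either on the left or on the right by $\id_V - \pi$. This proves the vanishing, hence the scalar identity, hence the commutativity of the diagram.

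I do not anticipate a genuine obstacle here; the only point requiring care is the identification of $\xi^b(eae) \in \End_k(eV)$ with the compression $\pi\,\xi(eae)\,\pi|_{eV}$, which is purely a matter of unwinding the formula \eqref{eq:CF2-lem-Ext-iso} for the Morita-transfer of a cocycle together with the block decomposition of $\End_k(V)$ induced by the idempotent $e$.
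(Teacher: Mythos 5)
Your proposal is correct: like the paper, you reduce the commutativity to the element-wise identity $\Trace_V(\xi(eae))=\Trace_{eV}(\xi^b(eae))$ for a representing cocycle $\xi$ and verify it with the cocycle relation \eqref{eq:skew-primitive}, and your identification of $\xi^b(eae)$ with the compression $g_V(e)\,\xi(eae)\,g_V(e)|_{eV}$ is exactly what the formula below \eqref{eq:CF2-lem-Ext-iso} gives. The only difference is in how the identity is closed: the paper expands $\xi(e\cdot eae\cdot e)$ into three terms, uses cyclicity of the trace, and needs the auxiliary computation $g(e)\xi(e)g(e)=0$ (obtained from $\xi(e)=\xi(e^2)$), whereas you expand $\xi(e\cdot ae)$ once and prove the stronger operator-level vanishing $(\id_V-g_V(e))\,\xi(eae)\,(\id_V-g_V(e))=0$, after which only the elementary block decomposition $\Trace_V(f)=\Trace_V(\pi f\pi)+\Trace_V((\id_V-\pi)f(\id_V-\pi))$ is needed. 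Your variant is marginally cleaner (no auxiliary identity, essentially no appeal to cyclicity), while the paper's trace manipulation stays entirely at the level of the quantities being compared; both are complete proofs of the lemma.
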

\begin{proof}
  Let $g: A \to \End_k(V)$ be the algebra map defined by the action of $A$. For $\xi \in \mathcal{E}_A^1(V, V)$ and $a \in A$, we have $\xi(e) = \xi(e^2) = \xi(e) g(e) + g(e) \xi(e)$ by \eqref{eq:skew-primitive}. By multiplying $g(e)$ to both sides, we obtain $g(e) \xi(e) g(e) = 0$. Again by \eqref{eq:skew-primitive},
  \begin{align*}
    \Trace(\xi(e a e))
    & = \Trace(\xi(e \cdot e a e \cdot e)) \\
    & = \Trace(\xi(e) g(e a e) g(e)
      + g(e) \xi(e a e) g(e) + g(e) g(e a e) \xi(e)) \\
    & = \Trace(\xi(e) g(e a e) g(e))
      + \Trace(g(e) \xi(e a e) g(e))
      + \Trace(g(e) g(e a e) \xi(e)).
  \end{align*}
  The first term is zero, since $\Trace(\xi(e) g(e a e) g(e)) = \Trace(g(e) \xi(e) g(e) g(e a e))  = \Trace(0) = 0$.
  By a similar computation, the third term is also zero. Thus we have
  \begin{equation*}
    \Trace(\xi(e a e))
    = \Trace(g(e) \xi(e a e) g(e))
    = \Trace(\xi^b(e a e)).
  \end{equation*}
  This means that the diagram in question commutes.
\end{proof}

\begin{proof}[Proof of Theorem~\ref{thm:dim-SF2}]
  Let $A$ be a finite-dimensional algebra, and let $J$ be the Jacobson radical of $A$. Since the $n$-th power of the Jacobson radical of $A^b$ is $e J^n e$, we see that \eqref{eq:CF2-lem-SF-iso} restricts to isomorphisms
  \begin{equation}
    \label{eq:CF2-lem-SF-iso-n}
    \Sym_n(A) \to \Sym_n(A^b), \quad f \mapsto f|_{A^b} \quad (n \in \mathbb{Z}_{+})
  \end{equation}
  Thus, by Lemma~\ref{lem:ext-trace-diagram-commute}, it is sufficient to consider the case where $A$ is basic.

  We assume that $A$ is basic. Then $C := A^*$ is a pointed coalgebra. Let $\Delta$ and $\varepsilon$ denote the comultiplication and the counit of $C$, respectively. We note that the set $\Irr(A)$ is identified with the set
  \begin{equation*}
    G(C) := \{ c \in C \mid \text{$\Delta(c) = c \otimes c$ and $\varepsilon(c) = 1$} \}
  \end{equation*}
  of grouplike elements of $C$. Let $g, h \in G(C)$. By~\eqref{eq:matrix-expression-of-Ext1}, the vector space $\Ext_{A}^1(g, h)$ is identified with the space of $(g, h)$-skew-primitive elements
  \begin{equation*}
    P_{g,h} := \{ x \in C \mid \Delta(x) = x \otimes g + h \otimes x \},
  \end{equation*}
  and the map $\Trace_{A,g}^*$ is just the inclusion map $P_{g,g} \to C$. Thus, to prove this theorem, it is enough to show the following equation:
  \begin{equation}
    \label{eq:SLF2-basic-alg}
    \Sym_2(A) = C_0 \oplus \bigoplus_{g \in G(C)} P_{g,g}.
  \end{equation}
  Let $J$ be the Jacobson radical of $A$. By \cite[Proposition 5.2.9]{MR1243637}, the coradical filtration $\{ C_n \}_{n \ge 0}$ of $C$ is given by $C_n = (A/J^{n+1})^*$. Thus,
  \begin{equation}
    \label{eq:CF2-lem-SF1-SF2}
    \Sym_n(A) = C_{n-1} \cap \Sym(A)
     \quad (n \in \mathbb{Z}_{+}).
  \end{equation}
  For each $g, h \in G(C)$, we choose a subspace $P_{g,h}'$ of $P_{g,h}$ such that $P_{g,h} = P'_{g,h} \oplus k(g - h)$. The Taft-Wilson theorem \cite[Theorem 5.4.1]{MR1243637} states
  \begin{equation}
    \label{eq:Taft-Wilson}
    C_1 = C_0 \oplus \bigoplus_{g, h \in G(C)} P'_{g,h}.
  \end{equation}
  Thus $C_1 \otimes C_1$ is decomposed as follows:
  \begin{equation}
    \label{eq:C1-otimes-C1}
    \begin{aligned}
      C_1 \otimes C_1 & = (C_0 \otimes C_0) \\
      & \oplus \bigoplus_{f, g, h \in G(C)}
      (f \otimes P'_{g,h}) \oplus (P'_{g, h} \otimes f)
      \oplus \bigoplus_{e,f,g,h \in G(C)} P'_{e,f} \otimes P'_{g,h}.
    \end{aligned}
  \end{equation}
  Let $x \in \Sym_2(A)$. By~\eqref{eq:CF2-lem-SF1-SF2} and~\eqref{eq:Taft-Wilson}, we have $x = x_0 + \sum_{g, h \in G(C)} x_{g,h}$ for some $x_0 \in C_0$ and $x_{g,h} \in P'_{g,h}$. Let $\pi_{f,g,h}$ be the projection to $f \otimes P'_{g,h}$ along the direct sum decomposition~\eqref{eq:C1-otimes-C1}. Since $x \in \Sym(A)$, we have
  \begin{equation*}
    \delta_{f,h} \cdot h \otimes x_{g,h}
    = \pi_{f,g,h} \Delta(x)
    = \pi_{f,g,h} \Delta^{\mathrm{cop}}(x)
    = \delta_{f,g} \cdot g \otimes x_{g,h}.
  \end{equation*}
  This implies that $x_{g,h} = 0$ unless $g = h$. Hence,
  \begin{equation*}
    \Sym_2(A) \subset C_0 \oplus \bigoplus_{g \in G} P'_{g,g}
    = C_0 \oplus \bigoplus_{g \in G} P_{g,g}.
  \end{equation*}
  Thus the left-hand side of~\eqref{eq:SLF2-basic-alg} is contained in the right-hand side. It is easy to show the converse inclusion. The proof is done.
\end{proof}

\subsection{Examples}
\label{subsec:cf-examples}

We have no general results on $\CF_n$ for $n \ge 3$. Here we give some computational results on $\CF_n(\mathcal{C})$ for the case where $\mathcal{C}$ is the category of modules over a finite-dimensional Hopf algebra.

Let $H$ be a finite-dimensional Hopf algebra with comultiplication $\Delta$, counit $\varepsilon$ and antipode $S$. We use the Sweedler notation $\Delta(h) = h_{(1)} \otimes h_{(2)}$ to express the comultiplication of $h \in H$. Set $\mathcal{C} = \lmod{H}$. If we identify $\REX(\mathcal{C})$ with $\bimod{H}{H}$, then the action functor $\Act: \mathcal{C} \to \REX(\mathcal{C})$ is given by $\Act(X) = X \otimes_k H$, where the left and the right action of $H$ on $\Act(X)$ are given by
\begin{equation*}
  h \cdot (x \otimes h') = h_{(1)} x \otimes h_{(2)} h'
  \quad \text{and} \quad (x \otimes h') \cdot h = x \otimes h' h,
\end{equation*}
respectively, for $x \in X$ and $h, h' \in H$. A right adjoint of $\Act$ is given as follows: As a vector space, $\Act^{\radj}(M) = M$. The action of $H$ on $\Act^{\radj}(M)$ is given by
\begin{equation*}
  h \cdot m = h_{(1)} m S(h_{(2)}) \quad (h \in H, m \in M).
\end{equation*}
Indeed, one can check that the map
\begin{equation*}
  \Hom_{H}(\Act(X), M) \to \Hom_{\bimod{H}{H}}(X, \Act^{\radj}(M)),
  \quad f \mapsto f(1_H \otimes -)
\end{equation*}
is a natural isomorphism for $X \in \lmod{H}$ and $M \in \bimod{H}{H}$.

In particular, as remarked in \cite{MR3631720}, the algebra $A_{\mathcal{C}} = \Act^{\radj}(\id_{\mathcal{C}}) \in \mathcal{C}$ is the adjoint representation of $H$. Thus the space of class functions is given by
\begin{align*}
  \CF(\lmod{H})
  & = \{ f \in H^* \mid \text{$f(h_{(1)} x S(h_{(2)})) = \varepsilon(h) f(x)$ for all $h, x \in H$} \} \\
  & = \{ f \in H^* \mid \text{$f(a b) = f(b S^2(a))$ for all $a, b \in H$} \}.
\end{align*}
By the above description of $\Act^{\radj}$, we also have
\begin{equation*}
  \CF_n(\lmod{H}) = \{ f \in \CF(\lmod{H}) \mid f(J^n) = 0 \}
\end{equation*}
for all positive integer $n$, where $J$ is the Jacobson radical of $H$. As these expressions show, if $S^2$ is inner, then there are isomorphisms
\begin{equation}
  \label{eq:S2-inner-CF-and-SF}
  \CF(\lmod{H}) \cong \Sym(H)
  \quad \text{and} \quad
  \CF_n(\lmod{H}) \cong \Sym_n(H).
\end{equation}

\newcommand{\Mat}{\mathrm{Mat}}

\begin{example}
  Suppose that the base field $k$ is of characteristic $p > 0$. We consider the cyclic group $G = \langle g \mid g^p = 1 \rangle$ of order $p$. It is easy to see that the Jacobson radical of $k G$ is generated by $x := g - 1$. We note that the set $\{ 1, x, \dotsc, x^{p - 1} \}$ is a basis of $k G$. Since the square of the antipode of $k G$ is the identity, we have
  \begin{equation*}
    \CF_n(\lmod{k G}) = \Sym_n(k G) = \{ f \in (k G)^* \mid \text{$f(x^r) = 0$ for all $r \ge n$} \}.
  \end{equation*}
  Hence the dimension of $\CF_n := \CF_n(\lmod{k G})$ is given by
  \begin{equation*}
    \dim_k \CF_n = n \quad (n = 1, 2, \dotsc, p)
    \quad \text{and} \quad
    \dim_k \CF_n = p \quad (n > p).
  \end{equation*}
  A basis of $\CF := \CF(\lmod{k G})$ can be constructed in the following roundabout but interesting way: There is a matrix representation
  \begin{equation*}
    \renewcommand{\arraystretch}{.9}
    \renewcommand{\arraycolsep}{3pt}
    \rho: k G \to \Mat_p(k), \quad g \mapsto \left(
      \begin{array}{cccc}
        1 & & & 0\\
        1 & 1 \\
          & \ddots & \ddots \\
        0 & & 1 & 1
      \end{array}
    \right).
  \end{equation*}
  Let $\rho_{i j} \in (k G)^*$ be the $(i, j)$-entry of $\rho$. Then the set $\{ \rho_{n 1} \}_{n = 1, \dotsc, p}$ is a basis of $\CF$ such that $\rho_{n 1} \in \CF_n$ and $\rho_{n 1} \not \in \CF_{n-1}$ for all $n = 1, \dotsc, p$ (with convention $\CF_0 = \{ 0 \}$).
\end{example}

\begin{example}
  Suppose that the base field $k$ is of characteristic zero. Let $p \ge 2$ be an integer, and let $q \in k$ be a primitive $2 p$-th root of unity. The algebra $\overline{U}_q := \overline{U}_q(\mathfrak{sl}_2)$ is generated by $E$, $F$ and $K$ subject to the relations
  \begin{equation*}
    E^p = F^p = 0,
    \ K^{2 p} = 1,
    \ K E = q^2 E K,
    \ K F = q^{-2} F K,
    \ [E, F] = \frac{K - K^{-1}}{q - q^{-1}}.
  \end{equation*}
  The algebra $\overline{U}_q$ has the Hopf algebra structure determined by
  \begin{equation*}
    \Delta(E) = E \otimes K + 1 \otimes E,
    \quad \Delta(F) = F \otimes 1 + K^{-1} \otimes F,
    \quad \Delta(K) = K \otimes K.
  \end{equation*}
  The antipode is given by $S(E) = -E K^{-1}$, $S(F) = - K F$ and $S(K) = K^{-1}$ on the generators. Thus $S^2$ is the inner automorphism implemented by $K$. In view of~\eqref{eq:S2-inner-CF-and-SF}, we consider $\Sym_n(\overline{U}_q)$ instead of $\CF_n(\lmod{\overline{U}_q})$.

  An explicit basis of $\Sym(\overline{U}_q)$ is given by Arike \cite{MR2722373}. We recall his construction: For $\alpha \in \{ +, - \}$ and $s \in \{ 1, \dotsc, p \}$, there is an $s$-dimensional simple left $\overline{U}_q$-module $\mathcal{X}_s^{\alpha}$ (see \cite[Subsection 3.4]{MR2722373} for notations). The module $\mathcal{X}_p^{\alpha}$ is projective. For $s < p$, the module $\mathcal{X}_s^{\alpha}$ is not projective. Let $\mathcal{P}_{s}^{\alpha}$ be the projective cover of $\mathcal{X}_s^{\alpha}$. Arike \cite[Subsection 5.1]{MR2722373} showed that $\mathcal{P}_{s}^{\alpha}$ has a matrix presentation of the form
  \begin{equation*}
    \rho^{\alpha}_s: \overline{U}_q \to \Mat_{2p}(k),
    \quad \rho^{\alpha}_s(x) = \left(
      \begin{array}{cccc}
        g_{s}^{\alpha}(x) & 0 & 0 & 0 \\
        a_{s}^{\alpha}(x) & g_{p-s}^{-\alpha}(x) & 0 & 0 \\
        b_{s}^{\alpha}(x) & 0 & g_{p-s}^{-\alpha}(x) & 0 \\
        h_{s}^{\alpha}(x) & a_{p-s}^{-\alpha}(x) & b_{p-s}^{-\alpha}(x) & g_{s}^{\alpha}(x) \\
      \end{array}
    \right),
  \end{equation*}
  where $g_{s}^{\alpha}: \overline{U}_q \to \Mat_s(k)$ is a matrix presentation of $\mathcal{X}_{s}^{\alpha}$ and $a_{s}^{\alpha}$, $b_{s}^{\alpha}$ and $h_{s}^{\alpha}$ are certain matrix-valued linear functions on $\overline{U}_q$ (given by $a_s^{+} = A_{p-s,s}$, $a_s^{-} = C_{s,p-s}$, $b_s^{+} = B_{p-s,s}$, $b_s^{-} = D_{s,p-s}$ $h_{s}^+ = H_s$ and $h_{s}^- = \tilde{H}_s$ with Arike's original notation). Now we define linear forms $\chi_s^{\alpha}$ ($\alpha \in \{ +, - \}$, $s = 1, \dotsc, p$) and $\varphi_{s'}$ ($s' = 1, \dotsc, p - 1$) on $\overline{U}_q$ by
  \begin{equation*}
    \chi_{s}^{\alpha}(x) = \Trace(g_s^{\alpha}(x))
    \quad \text{and} \quad
    \varphi_{s'}(x) = \Trace(h_{s'}^{+}(x)) + \Trace(h_{p-s'}^{-}(x))
    \quad (x \in \overline{U}_q).
  \end{equation*}
  Then the following set is a basis of $\Sym(\overline{U}_q)$ \cite[Theorem 5.5]{MR2722373}:
  \begin{equation*}
    \{ \chi^+_s, \chi^-_s \mid s = 1, \dotsc, p \} \cup \{ \varphi_s \mid s = 1, \dotsc, p - 1 \}.
  \end{equation*}

  Arike's basis respects the filtration of $\Sym(\overline{U}_q)$. More precisely, we have:
  \begin{align}
    \label{eq:Uq-sl2-SLF-1}
    \Sym_1(\overline{U}_q)
    & = \mathrm{span} \{ \chi_s^{+}, \chi_s^{-} \mid 1 \le s \le p \}, \\
    \label{eq:Uq-sl2-SLF-2}
    \Sym_2(\overline{U}_q)
    & = \Sym_1(\overline{U}_q), \\
    \label{eq:Uq-sl2-SLF-3}
    \Sym_3(\overline{U}_q)
    & = \Sym_2(\overline{U}_q) \oplus \mathrm{span} \{ \varphi_s \mid 1 \le s \le p - 1 \}.
  \end{align}
  Indeed, \eqref{eq:Uq-sl2-SLF-1} follows from the fact that $\Sym_1(\overline{U}_q)$ is spanned by the characters of simple modules. Equation \eqref{eq:Uq-sl2-SLF-2} follows from Theorem~\ref{thm:dim-SF2} and the fact that the self-extension vanishes for every simple $\overline{U}_q$-module \cite[p.379]{MR1284788}. To show \eqref{eq:Uq-sl2-SLF-3}, we note $\Lw(\overline{U}_q) = 3$ \cite[p.367]{MR1284788}. Thus we have $\Sym_n(\overline{U}_q) = \Sym(\overline{U}_q)$ for $n \ge 3$. This implies~\eqref{eq:Uq-sl2-SLF-3}.
\end{example}

\section{Hochschild (co)homology}
\label{sec:hochsch-cohom}

\subsection{Hochschild (co)homology of a finite abelian category}

For an algebra $A$, the Hochschild homology and the Hochschild cohomology of $A$ are defined by
\begin{equation*}
  \HH_{\bullet}(A) = \Tor_{\bullet}^{A^e}(A, A)
  \quad \text{and} \quad
  \HH^{\bullet}(A) = \Ext^{\bullet}_{A^e}(A, A),
\end{equation*}
respectively, where $A^e = A \otimes_k A^{\op}$. We note that the 0-th Hochschild cohomology $\HH^0(A) = \Hom_{A^e}(A,A)$ is the center of $A$. It has been known that the modular group $\mathrm{SL}_2(\mathbb{Z})$ acts projectively on the center of a ribbon factorizable Hopf algebra \cite{MR2985696}. Recently, Lentner, Mierach, Schweigert and Sommerh\"auser \cite{2017arXiv170704032L} showed that $\mathrm{SL}_2(\mathbb{Z})$ also acts projectively on the higher Hochschild cohomology of such a Hopf algebra.

A modular tensor category (in the sense of Kerler-Lyubashenko \cite{MR1862634}) is a category-theoretical counterpart of a ribbon factorizable Hopf algebra. The aim of this section is to extend the construction of \cite{2017arXiv170704032L} to modular tensor categories. To accomplish this, we first need to discuss what the Hochschild cohomology of a finite abelian category is. Our proposal is the following definition:

\begin{definition}
  For a finite abelian category $\mathcal{M}$, we define the Hochschild cohomology $\HH^{\bullet}(\mathcal{M})$ of $\mathcal{M}$ by $\HH^{\bullet}(\mathcal{M}) = \Ext^{\bullet}_{\REX(\mathcal{M})}(\id_{\mathcal{M}}, \id_{\mathcal{M}})$.
\end{definition}

If $\mathcal{M} \approx \lmod{A}$ for some finite-dimensional algebra $A$, then $\REX(\mathcal{M})$ is equivalent to $\bimod{A}{A}$ and the identity functor $\id_{\mathcal{M}} \in \REX(\mathcal{M})$ corresponds to $A$ via the equivalence. Since a category equivalence preserves $\Ext^{\bullet}$, we have
\begin{equation*}
  \HH^{\bullet}(\mathcal{M}) 
  = \Ext^{\bullet}_{\REX(\mathcal{M})}(\id_{\mathcal{M}}, \id_{\mathcal{M}})
  \cong \Ext^{\bullet}_{\bimod{A}{A}}(A, A) = \HH^{\bullet}(A),
\end{equation*}
which justifies the definition.

Although it is not directly related to our main purpose of this section, it is also interesting to give a definition of the Hochschild homology of a finite abelian category. Our proposal is:

\begin{definition}
  For a finite abelian category $\mathcal{M}$, we define the Hochschild homology $\HH_{\bullet}(\mathcal{M})$ of $\mathcal{M}$ by $\HH_{\bullet}(\mathcal{M}) = \Ext^{\bullet}_{\REX(\mathcal{M})}(\id_{\mathcal{M}}, \Nak_{\mathcal{M}})^*$, where $\Nak_{\mathcal{M}}$ is the Nakayama functor on $\mathcal{M}$.
\end{definition}

This definition is justified as follows: If $M \leftarrow P_0 \leftarrow P_1 \leftarrow \cdots$ is a projective resolution of $M \in \bimod{A}{A}$, then $\Tor^{A^e}_{\bullet}(A, M)$ is the homology of
\begin{equation}
  \label{eq:chain-cpx-HH_n}
  0 \leftarrow A \otimes_{A^e} P_0 \leftarrow A \otimes_{A^e} P_1 \leftarrow \cdots.
\end{equation}
By the tensor-hom adjunction, the dual of this chain complex is:
\begin{equation*}
  (A \otimes_{A^e} P_{\bullet})^*
  = \Hom_k(A \otimes_{A^e} P_{\bullet}, k)
  \cong \Hom_{A^e}(P_{\bullet}, \Hom_k(A, k))
  = \Hom_{A^e}(P_{\bullet}, A^*).
\end{equation*}
Thus we have $\Tor^{A^e}_{\bullet}(A, M)^* \cong \Ext_{A^e}^{\bullet}(M, A^*)$ by taking the cohomology of the dual of \eqref{eq:chain-cpx-HH_n}. If $\mathcal{M} = \lmod{A}$, then $A$ and $A^*$ corresponds to $\id_{\mathcal{M}}$ and $\Nak_{\mathcal{M}}$, respectively, via the equivalence $\bimod{A}{A} \approx \REX(\mathcal{M})$. Hence we have
\begin{equation*}
  \HH_{\bullet}(\mathcal{M})
  = \Ext_{\REX(\mathcal{M})}^{\bullet}(\id_{\mathcal{M}}, \Nak_{\mathcal{M}})^*
  \cong \Ext_{A^e}^{\bullet}(A, A^*)^*
  \cong \Tor^{A^e}_{\bullet}(A, A) = \HH_{\bullet}(A).
\end{equation*}

\subsection{Formulas of $\HH^{\bullet}$ and $\HH_{\bullet}$ by the adjoint algebra}

Let $H$ be a finite-dimensional Hopf algebra, and let $A$ be the adjoint representation of $H$. It is known that there is an isomorphism
\begin{equation}
  \label{eq:hochschild-adj-alg}
  \Ext_{H}^{\bullet}(k, A) \cong \HH^{\bullet}(H),
\end{equation}
where $k$ is the trivial $H$-module. We now generalize this result to exact module categories. Let $\mathcal{C}$ be a finite tensor category, and let $\mathcal{M}$ be a finite left $\mathcal{C}$-module category over $\mathcal{C}$ with action functor $\Act: \mathcal{C} \to \REX(\mathcal{M})$. 

\begin{theorem}
  \label{thm:Hochschild-adj}
  If $\mathcal{M}$ is exact, then there is a natural isomorphism
  \begin{equation*}
    \Ext_{\mathcal{C}}^{\bullet}(V, \Act^{\radj}(F)) \cong \Ext_{\REX(\mathcal{M})}^{\bullet}(\Act(V), F)
  \end{equation*}
  for $V \in \mathcal{C}$, $F \in \REX(\mathcal{M})$.
\end{theorem}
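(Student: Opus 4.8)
The plan is to compute both Ext-spaces from a single projective resolution in $\mathcal{C}$ and to transport it to $\REX(\mathcal{M})$ along the adjunction $\Act \dashv \Act^{\radj}$.

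First I would fix a projective resolution $P_\bullet \to V$ of $V$ in $\mathcal{C}$, which exists because $\mathcal{C}$ is a finite abelian category. Since $\mathcal{M}$ is exact, Lemma~\ref{lem:action-exact-mod-cat} shows that $\Act$ preserves projective objects, and by Lemma~\ref{lem:act-fun-exact} the functor $\Act$ is exact; applying it to the exact sequence $\cdots \to P_1 \to P_0 \to V \to 0$ therefore yields a projective resolution $\Act(P_\bullet) \to \Act(V)$ of $\Act(V)$ in the finite abelian category $\REX(\mathcal{M})$, which also has enough projectives so that $\Ext_{\REX(\mathcal{M})}^{\bullet}$ may be computed this way.

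Next I would apply the adjunction isomorphism recorded in Theorem~\ref{thm:action-adj-by-end}, namely $\Hom_{\mathcal{C}}(X, \Act^{\radj}(F)) \cong \Hom_{\REX(\mathcal{M})}(\Act(X), F)$, which is natural in $X$, to the complex $P_\bullet$. This produces an isomorphism of cochain complexes $\Hom_{\mathcal{C}}(P_\bullet, \Act^{\radj}(F)) \cong \Hom_{\REX(\mathcal{M})}(\Act(P_\bullet), F)$, and taking cohomology gives the desired isomorphism $\Ext^{\bullet}_{\mathcal{C}}(V, \Act^{\radj}(F)) \cong \Ext^{\bullet}_{\REX(\mathcal{M})}(\Act(V), F)$. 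Naturality in $F$ is immediate from the naturality of the adjunction isomorphism in $F$, and naturality in $V$ follows from the standard comparison-of-resolutions argument together with the functoriality of $\Act$.

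The only substantive input is the preservation of projectives by $\Act$, which is precisely Lemma~\ref{lem:action-exact-mod-cat} and fails without the exactness hypothesis on $\mathcal{M}$; the rest is the usual mechanism for deriving an adjunction between exact functors, so there is no serious obstacle. For completeness I would note that the dual route also works: take an injective resolution $F \to I^\bullet$ in $\REX(\mathcal{M})$; since $\Act^{\radj}$ is the right adjoint of the exact functor $\Act$ it preserves injective objects, and it is exact when $\mathcal{M}$ is exact by Theorem~\ref{thm:action-adj-by-end}(a), so $\Act^{\radj}(I^\bullet)$ is an injective resolution of $\Act^{\radj}(F)$ in $\mathcal{C}$, and applying the adjunction to $I^\bullet$ yields the same conclusion.
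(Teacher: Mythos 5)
Your proposal is correct and follows essentially the same route as the paper: take a projective resolution $P_\bullet \to V$ in $\mathcal{C}$, use Lemmas~\ref{lem:act-fun-exact} and~\ref{lem:action-exact-mod-cat} to conclude that $\Act(P_\bullet) \to \Act(V)$ is a projective resolution in $\REX(\mathcal{M})$, apply the adjunction isomorphism termwise, and take cohomology. The remarks on naturality and the dual argument via injective resolutions are harmless additions beyond what the paper records.
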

\begin{proof}
  We set $\mathcal{E} = \REX(\mathcal{M})$ for simplicity. Let $V \leftarrow P^0 \leftarrow P^1 \leftarrow \dotsb$ be a projective resolution of $V$ in $\mathcal{C}$. By applying $\Hom_{\mathcal{E}}(-, \Act^{\radj}(F))$ to this resolution, we have the following commutative diagram:
  \begin{equation*}
    \xymatrix@C=16pt@R=16pt{
      0 \ar[r]
      & \Hom_{\mathcal{C}}(P^0, \Act^{\radj}(F)) \ar[r] \ar[d]_{\cong} 
      & \Hom_{\mathcal{C}}(P^1, \Act^{\radj}(F)) \ar[r] \ar[d]_{\cong} 
      & \Hom_{\mathcal{C}}(P^2, \Act^{\radj}(F)) \ar[r] \ar[d]_{\cong} & \dotsb \\
      0 \ar[r]
      & \Hom_{\mathcal{E}}(\Act(P^0), F) \ar[r]
      & \Hom_{\mathcal{E}}(\Act(P^1), F) \ar[r]
      & \Hom_{\mathcal{E}}(\Act(P^2), F) \ar[r] & \dotsb
    }
  \end{equation*}
  By Lemmas~\ref{lem:act-fun-exact} and~\ref{lem:action-exact-mod-cat}, the sequence $0 \leftarrow \Act(V) \leftarrow \Act(P^0) \leftarrow \Act(P^1) \leftarrow \dotsb$ is a projective resolution of $\Act(V)$. Now the claim is proved by taking the cohomology of the rows of the above commutative diagram.
\end{proof}

\begin{theorem}
  If $\mathcal{M}$ is an exact $\mathcal{C}$-module category and the relative Serre functor of $\mathcal{M}$ is isomorphic to $\id_{\mathcal{M}}$, then there is a natural isomorphism
  \begin{equation*}
    \Ext_{\mathcal{C}}^{\bullet}(\Act^{\radj}(F), V^{**}) \cong \Ext_{\REX(\mathcal{M})}^{\bullet}(F, V \otimes \Nak_{\mathcal{M}})
  \end{equation*}
  for $V \in \mathcal{C}$ and $F \in \REX(\mathcal{M})$
\end{theorem}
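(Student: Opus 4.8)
The plan is to upgrade Theorem~\ref{thm:CF-n-and-SLF-n} from a $\Hom$-level statement to an $\Ext$-level statement by evaluating both sides on a single well-chosen resolution, in close parallel to the proof of Theorem~\ref{thm:Hochschild-adj}. First I would fix an injective resolution $V \to W^{\bullet}$ of $V$ in $\mathcal{C}$. Since the double-dual functor $(-)^{**} \colon \mathcal{C} \to \mathcal{C}$ is a $k$-linear monoidal autoequivalence of $\mathcal{C}$, it is exact and preserves injective objects, so $V^{**} \to (W^{\bullet})^{**}$ is an injective resolution of $V^{**}$ in $\mathcal{C}$; hence $\Ext_{\mathcal{C}}^{\bullet}(\Act^{\radj}(F), V^{**})$ is the cohomology of the complex $\Hom_{\mathcal{C}}(\Act^{\radj}(F), (W^{\bullet})^{**})$.

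Next I would treat the other side. Consider the functor $T \colon \mathcal{C} \to \REX(\mathcal{M})$, $X \mapsto X \otimes \Nak_{\mathcal{M}}$, where $X \otimes \Nak_{\mathcal{M}}$ denotes the left action of $\mathcal{C}$ on $\REX(\mathcal{M})$ from \eqref{eq:Rex-action}; note that $T = (- \circ \Nak_{\mathcal{M}}) \circ \Act$. I claim $T$ is exact and preserves injective objects. Exactness is immediate, since exactness in $\REX(\mathcal{M})$ is tested pointwise on $\mathcal{M}$ and the action of $\mathcal{C}$ on $\mathcal{M}$ is exact in its first variable. For injectives: $\mathcal{C}$ is Frobenius, so an injective object of $\mathcal{C}$ is projective; by Lemma~\ref{lem:action-exact-mod-cat} the functor $\Act$ carries it to a projective object of $\REX(\mathcal{M})$; and $\REX(\mathcal{M}) \approx \mathcal{M}^{\op} \boxtimes \mathcal{M}$ is again Frobenius, because $\mathcal{M}$ is exact hence Frobenius, so this object is also injective. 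Finally, precomposition by the equivalence $\Nak_{\mathcal{M}}$ is an exact autoequivalence of $\REX(\mathcal{M})$ and therefore preserves injectives. Applying $T$ to $V \to W^{\bullet}$ thus produces an injective resolution $V \otimes \Nak_{\mathcal{M}} \to W^{\bullet} \otimes \Nak_{\mathcal{M}}$ in $\REX(\mathcal{M})$, so $\Ext_{\REX(\mathcal{M})}^{\bullet}(F, V \otimes \Nak_{\mathcal{M}})$ is the cohomology of $\Nat(F, W^{\bullet} \otimes \Nak_{\mathcal{M}})$.

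It then remains to identify the two complexes. By Theorem~\ref{thm:CF-n-and-SLF-n} (whose hypothesis $\Ser_{\mathcal{M}} \cong \id_{\mathcal{M}}$ is exactly the one in force here), there is an isomorphism $\Hom_{\mathcal{C}}(\Act^{\radj}(F), X^{**}) \cong \Nat(F, X \otimes \Nak_{\mathcal{M}})$ natural in $X \in \mathcal{C}$; applying it with $X = W^{j}$ and invoking naturality in $X$ to see it commutes with the differentials induced by $W^{j} \to W^{j+1}$, we obtain an isomorphism of cochain complexes, and passing to cohomology yields the desired isomorphism, with naturality in $V$ and $F$ inherited from all the maps involved. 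The step that requires the most care is the claim that $T$ preserves injectives, i.e.\ the interplay between the Frobenius property of $\mathcal{C}$ and of $\REX(\mathcal{M})$, Lemma~\ref{lem:action-exact-mod-cat}, and the Nakayama functor; once that is in hand, the remainder is the routine transport of a $\Hom$-level natural isomorphism through a resolution.
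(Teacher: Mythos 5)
Your argument is correct, and it reaches the paper's statement by a mirror-image of the paper's own proof rather than by the same resolution. The paper takes a projective resolution $P^{\bullet} \to F$ in $\REX(\mathcal{M})$, applies $\Hom_{\REX(\mathcal{M})}(-, V \otimes \Nak_{\mathcal{M}})$, and uses the natural isomorphism of Lemma~\ref{lem:SLF-lemma} (with $\Ser_{\mathcal{M}} \cong \id_{\mathcal{M}}$) to identify the resulting complex with $\Hom_{\mathcal{C}}(\Act^{\radj}(P^{\bullet}), V^{**})$; this is exactly the pattern of Theorem~\ref{thm:Hochschild-adj}, and it tacitly relies on $\Act^{\radj}$ being exact (Theorem~\ref{thm:action-adj-by-end}) and carrying projectives to projectives, so that $\Act^{\radj}(P^{\bullet})$ is a projective resolution of $\Act^{\radj}(F)$. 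You instead resolve the second variable: an injective resolution $V \to W^{\bullet}$ in $\mathcal{C}$ is pushed through the exact, injective-preserving functors $(-)^{**}$ and $T = (- \circ \Nak_{\mathcal{M}}) \circ \Act$, and the degreewise identification is again the same Hom-level isomorphism (Theorem~\ref{thm:CF-n-and-SLF-n}), now used via its naturality in the $\mathcal{C}$-variable. What your route buys is that you never need $\Act^{\radj}$ to preserve projectives (the point the paper leaves implicit); what it costs is the verification that $T$ preserves injectives, which you carry out correctly using Lemma~\ref{lem:action-exact-mod-cat} together with the Frobenius property of $\mathcal{C}$ and of $\REX(\mathcal{M}) \approx \mathcal{M}^{\op} \boxtimes \mathcal{M}$ (the latter holds because $\mathcal{M}$ is exact, hence Frobenius).

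One small caveat: your justification of the exactness of $T$ by the phrase ``exactness in $\REX(\mathcal{M})$ is tested pointwise'' is imprecise as stated, since an exact sequence in $\REX(\mathcal{M})$ need not be pointwise exact (evaluation at a non-projective object is only right exact). The implication you actually use --- pointwise exactness implies exactness in $\REX(\mathcal{M})$ --- is true (check at a projective generator, or pass to the bimodule model), but it is cleaner to note that $T$ is the composite of the exact action functor $\Act$ (Lemma~\ref{lem:act-fun-exact}) with precomposition by the equivalence $\Nak_{\mathcal{M}}$, hence exact. With that adjustment the proof is complete.
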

\begin{proof}
  We set $\mathcal{E} = \REX(\mathcal{M})$ for simplicity. Let $F \leftarrow P^0 \leftarrow P^1 \leftarrow \dotsb$ be a projective resolution of $F$ in $\mathcal{E}$. By applying $\Hom_{\mathcal{C}}(-, V \otimes \Nak_{\mathcal{M}})$ to this resolution and using Lemma~\ref{lem:SLF-lemma}, we obtain the following commutative diagram:
  \begin{equation*}
    \xymatrix@C=16pt@R=16pt{
      0 \ar[r]
      & \Hom_{\mathcal{E}}(P^0, V \otimes \Nak_{\mathcal{M}}) \ar[r] \ar[d]_{\cong} 
      & \Hom_{\mathcal{E}}(P^1, V \otimes \Nak_{\mathcal{M}}) \ar[r] \ar[d]_{\cong} 
      & \Hom_{\mathcal{E}}(P^2, V \otimes \Nak_{\mathcal{M}}) \ar[r] \ar[d]_{\cong} & \dotsb \\
      0 \ar[r]
      & \Hom_{\mathcal{C}}(\Act^{\radj}(P^0), V^{**}) \ar[r]
      & \Hom_{\mathcal{C}}(\Act^{\radj}(P^1), V^{**}) \ar[r]
      & \Hom_{\mathcal{C}}(\Act^{\radj}(P^2), V^{**}) \ar[r] & \dotsb
    }
  \end{equation*}
  The claim is proved by taking the cohomology of the rows of this commutative diagram.
\end{proof}

Specializing the above theorems, we obtain:

\begin{corollary}
  For an exact left $\mathcal{C}$-module category $\mathcal{M}$, we have
  \begin{equation}
    \label{eq:HH-adj-alg-1}
    \HH^{\bullet}(\mathcal{M}) \cong \Ext_{\mathcal{C}}^{\bullet}(\unitobj, A_{\mathcal{M}}).
  \end{equation}
  If $\Ser_{\mathcal{M}} \cong \id_{\mathcal{M}}$, then we also have an isomorphism
  \begin{equation}
    \label{eq:HH-adj-alg-2}
    \HH_{\bullet}(\mathcal{M}) \cong \Ext_{\mathcal{C}}^{\bullet}(A_{\mathcal{M}}, \unitobj)^*.
  \end{equation}  
\end{corollary}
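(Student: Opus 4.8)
The plan is to obtain both isomorphisms as immediate specializations of the two theorems just proved, applied to $V = \unitobj$ and $F = \id_{\mathcal{M}}$, after matching up the relevant definitions. First I would recall that, by construction, $A_{\mathcal{M}} = \Act^{\radj}_{\mathcal{M}}(\id_{\mathcal{M}})$, and that since $\Act_{\mathcal{M}} \colon \mathcal{C} \to \REX(\mathcal{M})$ is a strong monoidal functor one has $\Act_{\mathcal{M}}(\unitobj) \cong \id_{\mathcal{M}}$ in $\REX(\mathcal{M})$. Applying Theorem~\ref{thm:Hochschild-adj} with $V = \unitobj$ and $F = \id_{\mathcal{M}}$ then gives
\[
  \Ext_{\mathcal{C}}^{\bullet}(\unitobj, A_{\mathcal{M}})
  \cong \Ext_{\REX(\mathcal{M})}^{\bullet}(\Act_{\mathcal{M}}(\unitobj), \id_{\mathcal{M}})
  \cong \Ext_{\REX(\mathcal{M})}^{\bullet}(\id_{\mathcal{M}}, \id_{\mathcal{M}})
  = \HH^{\bullet}(\mathcal{M}),
\]
the last equality being the definition of the Hochschild cohomology of $\mathcal{M}$. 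This proves \eqref{eq:HH-adj-alg-1}.

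For \eqref{eq:HH-adj-alg-2}, I would assume $\Ser_{\mathcal{M}} \cong \id_{\mathcal{M}}$ and invoke the second of the two theorems above with the same choice $V = \unitobj$, $F = \id_{\mathcal{M}}$. Here one uses that $\unitobj^{**} \cong \unitobj$, because the double-duality functor $(-)^{**}$ is a strong monoidal autoequivalence of $\mathcal{C}$, and that $\unitobj \otimes \Nak_{\mathcal{M}} \cong \Nak_{\mathcal{M}}$. This yields
\[
  \Ext_{\mathcal{C}}^{\bullet}(A_{\mathcal{M}}, \unitobj)
  \cong \Ext_{\REX(\mathcal{M})}^{\bullet}(\id_{\mathcal{M}}, \Nak_{\mathcal{M}}).
\]
Taking $k$-linear duals of both sides and using the definition $\HH_{\bullet}(\mathcal{M}) = \Ext_{\REX(\mathcal{M})}^{\bullet}(\id_{\mathcal{M}}, \Nak_{\mathcal{M}})^*$ completes the proof.

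There is essentially no obstacle here: all the substance is contained in the two preceding theorems, and the corollary is a bookkeeping exercise. The only points deserving a line of justification are the canonical identifications $\Act_{\mathcal{M}}(\unitobj) \cong \id_{\mathcal{M}}$ and $\unitobj^{**} \cong \unitobj$, together with the (routine) remark that these are compatible with the $\Ext$-functoriality, so that the displayed isomorphisms of graded vector spaces are indeed the asserted ones. I would also note at the end that \eqref{eq:hochschild-adj-alg} is recovered by taking $\mathcal{C} = \lmod{H}$ for a finite-dimensional Hopf algebra $H$ and $\mathcal{M} = \mathcal{C}$, since in that case $A_{\mathcal{M}}$ is the adjoint representation of $H$; this ties the corollary to the classical statement mentioned in the introduction.
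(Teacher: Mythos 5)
Your proposal is correct and is exactly the argument the paper intends: the corollary is stated as an immediate specialization of the two preceding theorems at $V=\unitobj$, $F=\id_{\mathcal{M}}$, using $\Act_{\mathcal{M}}(\unitobj)\cong\id_{\mathcal{M}}$, $\unitobj^{**}\cong\unitobj$, and the definitions of $A_{\mathcal{M}}$, $\HH^{\bullet}(\mathcal{M})$ and $\HH_{\bullet}(\mathcal{M})$. Nothing further is needed.
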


We consider the case where $\mathcal{C} = \lmod{H}$ for some finite-dimensional Hopf algebra $H$ and $\mathcal{M} = \mathcal{C}$. Let $A$ be the adjoint representation of $H$. Since $A_{\mathcal{M}} \cong A$ in this case, the isomorphism \eqref{eq:HH-adj-alg-1} specializes to \eqref{eq:hochschild-adj-alg} in this case. Since the relative Serre functor of $\mathcal{M}$ is the double dual functor, $\Ser_{\mathcal{M}} \cong \id_{\mathcal{M}}$ if and only if the square of the antipode of $H$ is inner. If this is the case, then we have an isomorphism $\HH_{\bullet}(H) \cong \Ext_{H}^{\bullet}(A, k)^*$ by \eqref{eq:HH-adj-alg-2}.

\subsection{Modular group action on the Hochschild cohomology}

Let $\mathcal{C}$ be a ribbon finite tensor category with braiding $\sigma$ and twist $\theta$. Then the coend $L := \int^{X \in \mathcal{C}} X^* \otimes X$ has a natural structure of a Hopf algebra in $\mathcal{C}$. We note that the algebra $A := (A_{\mathcal{C}}, m_{\mathcal{C}}, u_{\mathcal{C}})$ is dual to the coalgebra $L$, and thus $A$ is also a Hopf algebra (see \eqref{eq:A_S-unit-multiplication} for the definition of $m_{\mathcal{C}}$ and $u_{\mathcal{C}}$). By using the universal property, we define $Q: \unitobj \to A \otimes A$ to be the unique morphism such that the equation
\begin{equation*}
  (\pi_{\mathcal{C}}(X) \otimes \pi_{\mathcal{C}}(Y)) \circ Q
  = (\id_{X} \otimes \sigma_{Y,X^*} \sigma_{X^*, Y} \otimes \id_{Y^*}) \circ (\coev_X \otimes \coev_Y)
\end{equation*}
holds for all $X, Y \in \mathcal{C}$. The morphism $Q$ is dual to the Hopf pairing $\omega: L \otimes L \to \unitobj$ used in \cite{MR1324034,MR1352517,MR1354257} to define the modularity of $\mathcal{C}$. Thus we say that $\mathcal{C}$ is a {\em modular tensor category} if there is a morphism $e: A \otimes A \to \unitobj$ such that $(A, e, Q)$ is a left dual object of $A$ \cite[Definition 5.2.7]{MR1862634}.

Now we suppose that $\mathcal{C}$ is a modular tensor category. Then the Hopf algebra $A$ has a morphism $\lambda: A \to \unitobj$, unique up to sign, such that
\begin{equation}
  \label{eq:ribbon-normalized-condition}
  (\id_A \otimes \lambda) \circ \underline{\Delta}
  = u_{\mathcal{C}} \circ \lambda = (\lambda \otimes \id_A) \circ \underline{\Delta}
  \quad \text{and} \quad (\lambda \otimes \lambda) \circ Q = \id_{\unitobj},
\end{equation}
where $\underline{\Delta}$ is the comultiplication of $A$. We fix such a morphism $\lambda$ and then define two morphisms $\mathfrak{S}, \mathfrak{T}: A \to A$ by
\begin{equation*}
  \mathfrak{S} = (\lambda \otimes \id_A) \circ (m_{\mathcal{C}} \otimes \id_A) \circ (\id_A \otimes Q)
  \quad \text{and} \quad
  \mathfrak{T} = \int_{X \in \mathcal{C}} \theta_X \otimes \id_{X^*}.
\end{equation*}
The morphisms $\mathfrak{S}$ and $\mathfrak{T}$ are the dual of the morphisms $S$ and $T$, respectively, given in \cite[Definition 6.3]{MR1324034}. Thus $\mathfrak{S}$ and $\mathfrak{T}$ are invertible and there is an element $c \in k^{\times}$ such that the following `modular relation' hold:
\begin{equation}
  \label{eq:modular-relation}
  (\mathfrak{S} \mathfrak{T})^3 = c \cdot \mathfrak{S}^2
  \quad \text{and} \quad
  \mathfrak{S}^4 = \theta_{A}^{-1}.
\end{equation}

\begin{theorem}
  \label{thm:modular-group-action}
  With the above notation, we set
  \begin{equation*}
    \widetilde{\mathfrak{S}} = \Ext_{\mathcal{C}}^{\bullet}(\unitobj, \mathfrak{S})
    \quad \text{and} \quad
    \widetilde{\mathfrak{T}} = \Ext_{\mathcal{C}}^{\bullet}(\unitobj, \mathfrak{T}).
  \end{equation*}
  Then we have a well-defined projective representation
  \begin{equation*}
    \mathrm{SL}_2(\mathbb{Z}) \to \mathrm{PGL}\Big( \HH^{\bullet}(\mathcal{C}) \Big),
    \quad 
    \begin{pmatrix}
      0 & -1 \\ 1 & 0
    \end{pmatrix} \mapsto \widetilde{\mathfrak{S}},
    \quad
    \begin{pmatrix}
      1 & 1 \\ 0 & 1
    \end{pmatrix} \mapsto \widetilde{\mathfrak{T}}.
  \end{equation*}
\end{theorem}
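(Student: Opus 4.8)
The plan is to carry the morphisms $\mathfrak{S},\mathfrak{T}\colon A\to A$ across the isomorphism $\HH^{\bullet}(\mathcal{C})\cong\Ext_{\mathcal{C}}^{\bullet}(\unitobj,A)$ provided by \eqref{eq:HH-adj-alg-1} (applied with $\mathcal{M}=\mathcal{C}$, so that $A_{\mathcal{C}}=A$), and then to check that the resulting operators obey the defining relations of $\mathrm{SL}_2(\mathbb{Z})$ up to scalars. First I would recall that $\Ext_{\mathcal{C}}^{\bullet}(\unitobj,-)$ is a $k$-linear, degree-preserving functor from $\mathcal{C}$ to graded $k$-vector spaces, its values being the cohomology groups of the right derived functor of $\Hom_{\mathcal{C}}(\unitobj,-)$; in particular it sends a composite $g\circ f$ to $\Ext_{\mathcal{C}}^{\bullet}(\unitobj,g)\circ\Ext_{\mathcal{C}}^{\bullet}(\unitobj,f)$, identities to identities, and isomorphisms to isomorphisms. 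Since $\mathfrak{S}$ and $\mathfrak{T}$ are invertible, $\widetilde{\mathfrak{S}}=\Ext_{\mathcal{C}}^{\bullet}(\unitobj,\mathfrak{S})$ and $\widetilde{\mathfrak{T}}=\Ext_{\mathcal{C}}^{\bullet}(\unitobj,\mathfrak{T})$ are degree-preserving invertible endomorphisms of $\HH^{\bullet}(\mathcal{C})$.

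Next I would apply $\Ext_{\mathcal{C}}^{\bullet}(\unitobj,-)$ to the modular relations \eqref{eq:modular-relation}, obtaining $(\widetilde{\mathfrak{S}}\widetilde{\mathfrak{T}})^{3}=c\cdot\widetilde{\mathfrak{S}}^{2}$ and $\widetilde{\mathfrak{S}}^{4}=\Ext_{\mathcal{C}}^{\bullet}(\unitobj,\theta_{A})^{-1}$. The heart of the argument is then the lemma that the twist acts trivially on these Ext-groups: $\Ext_{\mathcal{C}}^{\bullet}(\unitobj,\theta_{V})=\id$ for every $V\in\mathcal{C}$. To see this, note that $\theta$ is a natural automorphism of $\id_{\mathcal{C}}$ with $\theta_{\unitobj}=\id_{\unitobj}$, so every morphism $\phi\colon\unitobj\to V$ satisfies $\theta_{V}\circ\phi=\phi\circ\theta_{\unitobj}=\phi$. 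Choosing an injective resolution $V\to I^{\bullet}$, the family $\{\theta_{I^{n}}\}_{n}$ is a chain endomorphism lifting $\theta_{V}$ (it commutes with the differentials by naturality of $\theta$), and by the same computation it induces the identity on each $\Hom_{\mathcal{C}}(\unitobj,I^{n})$, hence on the cohomology of the complex $\Hom_{\mathcal{C}}(\unitobj,I^{\bullet})$. Therefore $\widetilde{\mathfrak{S}}^{4}=\id_{\HH^{\bullet}(\mathcal{C})}$.

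To conclude, I would use the standard presentation $\mathrm{SL}_2(\mathbb{Z})=\langle s,t\mid s^{4}=1,\ (st)^{3}=s^{2}\rangle$, where $s$ and $t$ correspond to the matrices $S$ and $T$ in the statement. Passing to $\mathrm{PGL}(\HH^{\bullet}(\mathcal{C}))$, the classes $[\widetilde{\mathfrak{S}}]$ and $[\widetilde{\mathfrak{T}}]$ satisfy $[\widetilde{\mathfrak{S}}]^{4}=[\id]=1$ and $([\widetilde{\mathfrak{S}}][\widetilde{\mathfrak{T}}])^{3}=[c\cdot\widetilde{\mathfrak{S}}^{2}]=[\widetilde{\mathfrak{S}}]^{2}$, so the assignment $s\mapsto[\widetilde{\mathfrak{S}}]$, $t\mapsto[\widetilde{\mathfrak{T}}]$ respects the defining relations and extends uniquely to a group homomorphism $\mathrm{SL}_2(\mathbb{Z})\to\mathrm{PGL}(\HH^{\bullet}(\mathcal{C}))$, which is the desired projective representation.

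I expect the main obstacle to be exactly the lemma that $\theta$ induces the identity on $\Ext_{\mathcal{C}}^{\bullet}(\unitobj,-)$: it is what upgrades the relation $\mathfrak{S}^{4}=\theta_{A}^{-1}$ — which a priori involves the possibly non-scalar automorphism $\theta_{A}$ of $A$ — to a genuine relation in $\mathrm{PGL}$. Everything else is a formal consequence of the identification \eqref{eq:HH-adj-alg-1} together with Lyubashenko's modular relations \eqref{eq:modular-relation} already recorded above.
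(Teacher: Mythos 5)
Your proposal is correct and follows essentially the same route as the paper: apply the functor $\Ext_{\mathcal{C}}^{\bullet}(\unitobj,-)$ to Lyubashenko's relations \eqref{eq:modular-relation} and check the presentation $\langle s,t\mid s^4=1,\ (st)^3=s^2\rangle$ in $\mathrm{PGL}$. Your resolution argument that the twist acts trivially is just an explicit proof of the paper's one-line step $\Ext_{\mathcal{C}}^{\bullet}(\unitobj,\theta_A^{-1})=\Ext_{\mathcal{C}}^{\bullet}(\theta_{\unitobj}^{-1},A)=\id$, which uses the same naturality of $\theta$ together with $\theta_{\unitobj}=\id_{\unitobj}$.
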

\begin{proof}
  It is enough to show that $\widetilde{\mathfrak{S}}$ and $\widetilde{\mathfrak{T}}$ satisfy $(\widetilde{\mathfrak{S}} \widetilde{\mathfrak{T}})^3 = \widetilde{\mathfrak{S}}^2$ and $\widetilde{\mathfrak{S}}^4 = \id$ up to scalar multiple. By the funtorial property of $\Ext$ and \eqref{eq:modular-relation}, we have
  \begin{equation*}
    (\widetilde{\mathfrak{S}} \widetilde{\mathfrak{T}})^3
    = c\cdot \widetilde{\mathfrak{S}}^2
    \quad \text{and} \quad
    \widetilde{\mathfrak{S}}^4
    = \Ext_{\mathcal{C}}^{\bullet}(\unitobj, \theta_{A}^{-1})
    = \Ext_{\mathcal{C}}^{\bullet}(\theta_{\unitobj}^{-1}, A)
    = \id. \qedhere
  \end{equation*}
\end{proof}

Let $H$ be a finite-dimensional ribbon Hopf algebra with universal R-matrix $R$ and ribbon element $v$. We consider the case where $\mathcal{C} = \lmod{H}$. Then $A$ is the adjoint representation of $H$. For $X \in \lmod{H}$, the composition
\begin{equation*}
  A \otimes X
  \xrightarrow{\quad \pi_{\mathcal{C}}(X) \otimes \id_X \quad}
  X \otimes X^* \otimes X
  \xrightarrow{\quad \id_X \otimes \eval_X \quad}
  X
\end{equation*}
sends $a \otimes x \in A \otimes X$ to $a x$ \cite[Subsection 3.7]{MR3631720}. From this, we see that $\pi_{\mathcal{C}}(X)$ is given as follows: Let $\{ x_i \}$ be a basis of $X$, and let $\{ x^i \}$ be the dual basis of $\{ x_i \}$. With the Einstein notation, we have
\begin{equation}
  \label{eq:Hopf-end-dinat-formula}
  \pi_{\mathcal{C}}(X)(a) = a x_i \otimes x^i
  \quad (a \in A).
\end{equation}
The morphism $\lambda$ is in fact a suitably normalized left integral on $H$. The morphism $Q$ can be regarded as an element of $A \otimes_k A$. For simplicity, we express $R$ and $Q$ as $R = R_1 \otimes R_2$ and $Q = Q_1 \otimes Q_2$, respectively. Then the braiding is given by
\begin{equation*}
  \sigma_{X,Y}(x \otimes y) = R_2 y \otimes R_1 x
  \quad (x \in X, y \in Y)
\end{equation*}
for $X, Y \in \lmod{H}$. Let $\{ h_i \}$ and $\{ h^i \}$ be a basis of $H$ and the dual basis of $H^*$, respectively. Then, by \eqref{eq:Hopf-end-dinat-formula} and the definition of $Q$, we have
\begin{align*}
  Q_1 h_i \otimes h^i \otimes Q_2 h_i \otimes h^i
  & = (\pi_{\mathcal{C}}(H) \otimes \pi_{\mathcal{C}}(H)) (Q) \\
  & = h_i \otimes \sigma_{H,H^*} \sigma_{H^*, H}(h^i \otimes h_i) \otimes h^i \\
  & = h_i \otimes R_2' R_1 h^i \otimes R_1' R_2 h_i \otimes h^i \\
  & = S(R_2' R_1) h_i \otimes h^i \otimes R_1' R_2 h_i \otimes h^i,
\end{align*}
where $R' = R'_1 \otimes R'_2$ is a copy of $R$. Thus we have $Q = S(R_2' R_1) \otimes R_1' R_2$. By using the element $Q$, the morphisms $\mathfrak{S}, \mathfrak{T}: A \to A$ are given by
\begin{equation*}
  \mathfrak{S}(a) = \lambda(a Q_1) Q_2
  \quad \text{and} \quad
  \mathfrak{T}(a) = v a
  \quad (a \in A),
\end{equation*}
respectively. Thus our $\mathfrak{S}$ and $\mathfrak{T}$ coincide with those in \cite[Theorem 4.4]{MR1280590}. If we replace $(H, R, v)$ with $(H^{\mathrm{op,cop}}, R, v)$, then the morphisms $\mathfrak{S}$ and $\mathfrak{T}$ coincide with the morphisms considered in \cite{2017arXiv170704032L}.

In \cite{2017arXiv170704032L}, the action $\mathrm{SL}_2(\mathbb{Z}) \to \mathrm{PGL}(\HH^n(H))$ is defined as follows: First, they extend $\mathfrak{S}$ and $\mathfrak{T}$ to cochain maps $\mathfrak{S}^{\bullet}$ and $\mathfrak{T}^{\bullet}$ of a cochain complex $C_1^{\bullet}$ computing the cohomology $\Ext_{H}^{\bullet}(k, A)$. They also established an explicit isomorphism between the complex $C_1^{\bullet}$ and the Hochschild complex $C_2^{\bullet}$ computing the Hochschild cohomology of $H$. The isomorphism $C_1^{\bullet} \cong C_2^{\bullet}$ induces \eqref{eq:hochschild-adj-alg}. The projective action of $\mathrm{SL}_2(\mathbb{Z})$ on $\HH^{\bullet}(H)$ is then given by $\mathfrak{S}^{\bullet}$ and $\mathfrak{T}^{\bullet}$ through \eqref{eq:hochschild-adj-alg}. By the definition of Ext functor, we see that their action is expressed as in Theorem~\ref{thm:modular-group-action}. Thus, in conclusion, we have obtained a generalization of  \cite{2017arXiv170704032L}.

\appendix

\section{Computation of structure morphisms of $\ActRex^{\radj}$}
\label{apdx:act-adj-structure}

\subsection{Bimodule structure of $\iHom$}

Let $\mathcal{C}$ be a rigid monoidal category, and let $\mathcal{M}$ be a closed left $\mathcal{C}$-module category in the sense of Subsection~\ref{subsec:cl-mod-cat}. We establish some results on the natural transformations $\iHomA$, $\iHomB$ and $\iHomB^{\natural}$ introduced in that subsection. For simplicity, we write $\iHom(M, N) = [M, N]$. We recall that $[M, -]$ is defined to be a right adjoint of the functor $\mathcal{C} \to \mathcal{M}$ given by $X \mapsto X \otimes M$. As before, we denote by $\icoev_{(-),M}$ and $\ieval_{M, (-)}$ the unit and the counit of this adjunction, respectively. Then, by~\eqref{eq:module-func-adj-right}, we have
\begin{equation}
  \label{eq:iHom-iso-a-def}
  \iHomA_{X,M,N} = \iHom(\id_M, \id_X \otimes \ieval_{M, N}) \circ \icoev_{X \otimes \iHom(M, N), M}
\end{equation}
for $X \in \mathcal{C}$ and $M, N \in \mathcal{M}$. By~\eqref{eq:iHom-adj}, we have
\begin{equation}
  \label{eq:iHom-iso-b}
  \iHomB_{Y,M,N} = (\iHomB^{\natural}_{Y,M,N} \otimes \id_{Y^*}) \circ (\id_{\iHom(M, N)} \otimes \coev_{Y})
\end{equation}
for $Y \in \mathcal{C}$ and $M, N \in \mathcal{M}$, where
\begin{equation}
  \label{eq:iHom-beta-natural}
  \iHomB^{\natural}_{Y,M,N} = \iHom(M, \ieval_{Y \otimes M, N}) \circ \icoev_{\iHom(Y \otimes M, N) \otimes Y, M}.
\end{equation}
By the zig-zag identities for the adjunction $(-) \otimes M \dashv \iHom(M, -)$, we have
\begin{gather}
  \label{eq:apdx-zig-zag-1}
  \ieval_{M, X \otimes M} \circ (\icoev_{X, M} \otimes \id_M) = \id_{X \otimes M}, \\
  \label{eq:apdx-zig-zag-2}
  [\id_M, \ieval_{M,N}] \circ \icoev_{[M, N], M} = \id_{[M, N]}
\end{gather}
for $X \in \mathcal{C}$ and $N \in \mathcal{M}$. By \eqref{eq:apdx-zig-zag-1}, \eqref{eq:apdx-zig-zag-2} and the naturality of $\ieval_{M, (-)}$, we have
\begin{gather}
  \label{eq:apdx-zig-zag-3}
  \ieval_{M, N} \circ (\iHomA_{X,M,N} \otimes \id_M) = \id_X \otimes \ieval_{M,N}, \\
  \label{eq:apdx-zig-zag-4}
  \ieval_{M, N} \circ (\iHomB^{\natural}_{Y,M,N} \otimes \id_M) = \ieval_{Y \otimes M, N}
\end{gather}
for all objects $X, Y \in \mathcal{C}$ and $M, N \in \mathcal{M}$. We now prove:

\begin{lemma}[$=$ Lemma~\ref{lem:iHom-iso-B}]
  The equations
  \begin{gather}
    \tag{\ref{eq:lem-iHom-iso-B-1}}
    \iHomB_{\unitobj, M, N} = \id_{[M, N]}, \\
    \tag{\ref{eq:lem-iHom-iso-B-2}}
    \iHomB_{X \otimes Y, M, N}
    = (\iHomB_{Y, M, N} \otimes \id_{X^*}) \circ \iHomB_{X, Y \otimes M, N}, \\
    \tag{\ref{eq:lem-iHom-iso-B-3}}
    (\iHomA_{X, M, N} \otimes \id_Y) \circ (\id_X \otimes \iHomB_{Y, M, N})
    = \iHomB_{Y, M, X \otimes N} \circ \iHomA_{X, Y \otimes M,N}
  \end{gather}
  hold for all objects $X, Y \in \mathcal{C}$ and $M, N \in \mathcal{M}$.
\end{lemma}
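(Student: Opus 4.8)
The plan is to reduce each of the three identities to an equality of morphisms \emph{into} an internal Hom object, where the defining universal property supplies a cancellation criterion. Recall from \eqref{eq:iHom-def}--\eqref{eq:iHom-adj} that $\phi^{-1}$ sends a morphism $g\colon Z\to[M,N]$ to its \emph{mate} $\ieval_{M,N}\circ(g\otimes\id_M)$, and that $\phi^{-1}$ is a bijection; hence two morphisms $Z\to[M,N]$ agree as soon as their mates do. Since $\mathcal{C}$ is rigid, $(-)\otimes X\dashv(-)\otimes X^{*}$, so a morphism $g\colon Z\to[M,N]\otimes X^{*}$ is likewise uniquely determined by the composite
\begin{equation*}
  Z\otimes X\otimes M
  \xrightarrow{\ g\otimes\id_{X}\otimes\id_{M}\ } [M,N]\otimes X^{*}\otimes X\otimes M
  \xrightarrow{\ \id\otimes\eval_{X}\otimes\id\ } [M,N]\otimes M
  \xrightarrow{\ \ieval_{M,N}\ } N ,
\end{equation*}
which I will again call its mate. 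In every case the mate is computed from the zig-zag identities \eqref{eq:apdx-zig-zag-1}--\eqref{eq:apdx-zig-zag-4}, once one observes that pairing $\iHomB_{X,M,N}$ with the duality evaluation $\eval_{X}$ recovers $\iHomB^{\natural}_{X,M,N}$ by \eqref{eq:iHom-b-natural}.

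For \eqref{eq:lem-iHom-iso-B-1}, strictness gives $\coev_{\unitobj}=\id_{\unitobj}$, so \eqref{eq:iHom-iso-b} yields $\iHomB_{\unitobj,M,N}=\iHomB^{\natural}_{\unitobj,M,N}$; its mate is $\ieval_{\unitobj\otimes M,N}=\ieval_{M,N}$ by \eqref{eq:apdx-zig-zag-4} with $Y=\unitobj$, which is exactly the mate of $\id_{[M,N]}$. For \eqref{eq:lem-iHom-iso-B-2}, the mate of $\iHomB_{X\otimes Y,M,N}$ is $\ieval_{X\otimes Y\otimes M,N}$, again by \eqref{eq:apdx-zig-zag-4}. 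On the other side, $(\iHomB_{Y,M,N}\otimes\id_{X^{*}})\circ\iHomB_{X,Y\otimes M,N}$, after tensoring with $X\otimes Y\otimes M$ and inserting the duality evaluations for $Y^{*}$ and $X^{*}$, collapses by two successive uses of \eqref{eq:apdx-zig-zag-4}: the outer factor together with the $Y$-evaluation produces $\iHomB^{\natural}_{X,Y\otimes M,N}$ followed by $\ieval_{Y\otimes M,N}$, and a further application gives $\ieval_{X\otimes Y\otimes M,N}$. The only point needing care here is that the evaluations for $Y^{*}$ and $X^{*}$ compose to $\eval_{X\otimes Y}$ under the canonical identification $(X\otimes Y)^{*}\cong Y^{*}\otimes X^{*}$ supplied by the strong monoidal duality functor; granting this, the two mates coincide.

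For \eqref{eq:lem-iHom-iso-B-3}, both sides are morphisms $X\otimes[Y\otimes M,N]\to[M,X\otimes N]\otimes Y^{*}$, so I compute mates into $X\otimes N$. On the right, pairing $\iHomB_{Y,M,X\otimes N}$ with $\eval_{Y}$ and applying \eqref{eq:apdx-zig-zag-4} leaves $\ieval_{Y\otimes M,X\otimes N}\circ(\iHomA_{X,Y\otimes M,N}\otimes\id_{Y\otimes M})$, which by \eqref{eq:apdx-zig-zag-3} equals $\id_{X}\otimes\ieval_{Y\otimes M,N}$. On the left, \eqref{eq:apdx-zig-zag-3} absorbs $\iHomA_{X,M,N}$ and \eqref{eq:apdx-zig-zag-4} absorbs $\iHomB_{Y,M,N}$ (after pairing it with $\eval_{Y}$ to form $\iHomB^{\natural}_{Y,M,N}$), again leaving $\id_{X}\otimes\ieval_{Y\otimes M,N}$. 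Equality of mates finishes the proof.

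I expect the main obstacle to be bookkeeping rather than conceptual: keeping track of where the duality unit and counit $\coev$, $\eval$ enter when $\iHomB$ is unwound through $\iHomB^{\natural}$, and in particular checking—in \eqref{eq:lem-iHom-iso-B-2}—that the coherence isomorphism $(X\otimes Y)^{*}\cong Y^{*}\otimes X^{*}$ intervenes precisely as the composite of the two evaluations. Once the ``mate'' dictionary is set up, each of the three verifications is a two- or three-step application of \eqref{eq:apdx-zig-zag-3}--\eqref{eq:apdx-zig-zag-4}.
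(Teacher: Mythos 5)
Your proof is correct and follows essentially the same route as the paper's: the ``mate'' dictionary you set up is exactly the chain of canonical Hom-isomorphisms the paper transfers each identity across, and both arguments then reduce each side to the same evaluation morphism ($\ieval_{X\otimes Y\otimes M,N}$ for \eqref{eq:lem-iHom-iso-B-2}, $\id_X\otimes\ieval_{Y\otimes M,N}$ for \eqref{eq:lem-iHom-iso-B-3}) via the zig-zag identities \eqref{eq:apdx-zig-zag-3} and \eqref{eq:apdx-zig-zag-4}. The only cosmetic difference is that you spell out \eqref{eq:lem-iHom-iso-B-1} and the coherence $(X\otimes Y)^*\cong Y^*\otimes X^*$, which the paper treats as trivial or leaves implicit in its canonical isomorphisms.
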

\begin{proof}
  Equation \eqref{eq:lem-iHom-iso-B-1} is trivial. By the canonical isomorphisms
  \begin{align*}
    & \Hom_{\mathcal{C}}([X \otimes Y \otimes M, N], [M, N] \otimes Y^* \otimes X^*) \\
    & \qquad \qquad \cong \Hom_{\mathcal{C}}([X \otimes Y M, N] \otimes X \otimes Y, [M, N]) \\
    & \qquad \qquad \cong \Hom_{\mathcal{C}}([X \otimes Y M, N] \otimes X \otimes Y \otimes M, N),
  \end{align*}
  we see that \eqref{eq:lem-iHom-iso-B-2} is equivalent to the equation
  \begin{equation*}
    \ieval_{M, N} \circ (\iHomB^{\natural}_{X \otimes Y, M, N} \otimes \id_M)
    = \ieval_{M,N} \circ (\iHomB^{\natural}_{Y, M, N} \otimes \id_M) \circ (\iHomB^{\natural}_{X, Y \otimes M, N} \otimes \id_Y \otimes \id_M).
  \end{equation*}
  By~\eqref{eq:apdx-zig-zag-4}, the both sides are equal to $\ieval_{X \otimes Y \otimes M, N}$. Thus \eqref{eq:lem-iHom-iso-B-2} is verified. In a similar way, we see that \eqref{eq:lem-iHom-iso-B-3} is equivalent to the equation
  \begin{equation*}
    \begin{aligned}
      & \ieval_{M, X \otimes N} \circ (\iHomA_{X,M,N} \otimes \id_M) \circ (\id_X \otimes \iHomB^{\natural}_{Y,M,N} \otimes \id_M) \\
      & \qquad \qquad = \ieval_{M, X \otimes N} \circ (\iHomB^{\natural}_{Y,M,X \otimes N} \otimes \id_M) \circ (\iHomA_{X, Y \otimes M,N} \otimes \id_Y \otimes \id_M).
    \end{aligned}
  \end{equation*}
  By \eqref{eq:apdx-zig-zag-1}--\eqref{eq:apdx-zig-zag-4}, the both sides are equal to $\id_X \otimes \ieval_{Y \otimes M, N}$. The proof is done.
\end{proof}

In view of this lemma, we have defined the natural isomorphism
\begin{equation*}
  \iHomC_{X,M,N,Y}: X \otimes [M, N] \otimes Y^* \to [Y \otimes M, X \otimes N]
  \quad (X, Y \in \mathcal{C}, M, N \in \mathcal{M})
\end{equation*}
by \eqref{eq:iHom-iso-c-def}. The following Lemmas~\ref{lem:apdx-coev-XM} and~\ref{lem:apdx-comp-and-alpha} will be used in later:

\begin{lemma}
  \label{lem:apdx-coev-XM}
  For all $X \in \mathcal{C}$ and $M \in \mathcal{M}$, the following equation holds:
  \begin{gather}
    \label{eq:apdx-coev-XM}
    \icoev_{\unitobj, X \otimes M}
    = \iHomC_{X,M,M,X} \circ (\id_X \otimes \icoev_{\unitobj, M} \otimes \id_{X^*}) \circ \coev_X.
  \end{gather}
\end{lemma}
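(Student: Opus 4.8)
The plan is to compare the two sides of~\eqref{eq:apdx-coev-XM} after transporting them across the adjunction $(-)\otimes(X\otimes M)\dashv\iHom(X\otimes M,-)$. Both sides are morphisms $\unitobj\to\iHom(X\otimes M,X\otimes M)$ in $\mathcal{C}$, so by the bijection~\eqref{eq:iHom-def} with $W=\unitobj$ it is enough to check that $\ieval_{X\otimes M,X\otimes M}\circ\bigl((-)\otimes\id_{X\otimes M}\bigr)$ carries both of them to the same endomorphism of $X\otimes M$. For the left-hand side this composite is exactly the zig-zag identity~\eqref{eq:apdx-zig-zag-1}, read with $X$ replaced by $\unitobj$ and $M$ replaced by $X\otimes M$, so it equals $\id_{X\otimes M}$; it then remains to show that the right-hand side is carried to $\id_{X\otimes M}$ as well.

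For the right-hand side I would first expand $\iHomC_{X,M,M,X}$ using the second formula in~\eqref{eq:iHom-iso-c-def}, namely $\iHomC_{X,M,M,X}=\iHomA_{X,X\otimes M,M}\circ(\id_X\otimes\iHomB_{X,M,M}^{-1})$, and then use~\eqref{eq:apdx-zig-zag-3} (with $M$ replaced by $X\otimes M$ and $N$ by $M$) to absorb the $\iHomA$-factor into the evaluation. After this step the composite acquires the form
\begin{equation*}
  \bigl(\id_X\otimes\bigl[\ieval_{X\otimes M,M}\circ(\iHomB_{X,M,M}^{-1}\otimes\id_{X\otimes M})\circ(\icoev_{\unitobj,M}\otimes\id_{X^*\otimes X\otimes M})\bigr]\bigr)\circ(\coev_X\otimes\id_{X\otimes M}).
\end{equation*}
The key ingredient at this point is the auxiliary identity
\begin{equation*}
  \ieval_{Y\otimes M,N}\circ(\iHomB_{Y,M,N}^{-1}\otimes\id_{Y\otimes M})=\ieval_{M,N}\circ(\id_{\iHom(M,N)}\otimes\eval_Y\otimes\id_M),
\end{equation*}
which I would deduce by combining~\eqref{eq:apdx-zig-zag-4} with the relation $\iHomB^{\natural}_{Y,M,N}\circ(\iHomB_{Y,M,N}^{-1}\otimes\id_Y)=\id_{\iHom(M,N)}\otimes\eval_Y$; the latter is immediate from the definition~\eqref{eq:iHom-b-natural} of $\iHomB^{\natural}$ and the invertibility of $\iHomB$, and alternatively follows by unwinding the chain of adjunction isomorphisms~\eqref{eq:iHom-iso-b-def} that defines $\iHomB$.

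Applying this identity with $Y=X$, $N=M$ turns the bracketed morphism into $\ieval_{M,M}\circ(\icoev_{\unitobj,M}\otimes\id_M)\circ(\eval_X\otimes\id_M)$, which collapses to $\eval_X\otimes\id_M$ by~\eqref{eq:apdx-zig-zag-1} (again with $X$ replaced by $\unitobj$). Consequently the image of the right-hand side equals $(\id_X\otimes\eval_X\otimes\id_M)\circ(\coev_X\otimes\id_X\otimes\id_M)$, which is $\id_{X\otimes M}$ by the triangle identity for the left dual $(X^*,\eval_X,\coev_X)$; this matches the image of the left-hand side and proves~\eqref{eq:apdx-coev-XM}. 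The only real difficulty is the bookkeeping of tensor factors and of the two mutually inverse forms of $\iHomB$ (and the distinction between $\iHomB$ and $\iHomB^{\natural}$): once the displayed auxiliary identity is available, the remainder is a routine chase through~\eqref{eq:apdx-zig-zag-1}--\eqref{eq:apdx-zig-zag-4} and the rigidity of $\mathcal{C}$.
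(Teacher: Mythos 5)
Your proof is correct and follows essentially the same strategy as the paper's: transport both sides of \eqref{eq:apdx-coev-XM} across the internal Hom adjunction and use the zig-zag identities \eqref{eq:apdx-zig-zag-1}--\eqref{eq:apdx-zig-zag-4} to identify each side with $\id_{X \otimes M}$. The only difference is bookkeeping: the paper expands $\iHomC$ via the first formula in \eqref{eq:iHom-iso-c-def} and removes the $X^*$-leg at the outset, so it never needs a separate identity for $\iHomB^{-1}$, whereas you use the second formula and compensate with your (correct) auxiliary identity for $\ieval \circ (\iHomB^{-1} \otimes \id)$ together with the triangle identity for $(X^*, \eval_X, \coev_X)$ at the end.
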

\begin{proof}
  By the definition of $\iHomC$, the claim is equivalent to that the equation
  \begin{equation*}
    \iHomB^{\natural}_{X, M, X \otimes M} \circ (\icoev_{\unitobj, X \otimes M} \otimes \id_X)
    = \iHomA_{X, M, M} \circ (\id_X \otimes \icoev_{\unitobj, M})
  \end{equation*}
  holds for all $X \in \mathcal{C}$ and $M \in \mathcal{M}$. By \eqref{eq:apdx-zig-zag-1}--\eqref{eq:apdx-zig-zag-4}, the both sides correspond to the identity morphism under the canonical isomorphism
  \begin{equation*}
    \Hom_{\mathcal{C}}(X, \iHom(X \otimes M, M)) \cong \Hom_{\mathcal{M}}(X \otimes M, X \otimes M). \qedhere
  \end{equation*}
\end{proof}

\begin{lemma}
  \label{lem:apdx-comp-and-alpha}
  For all $M_1, M_2, M_3 \in \mathcal{M}$, the following diagram commutes:
  \begin{equation*}
    \xymatrix@C=0pt@R=12pt{
      & [M_2, M_3] \otimes [M_1, M_2] \ar[dd]^{\icomp}
      \ar[ld]_{\iHomA_{[M_2,M_3], M_1, M_2} \qquad \qquad}
      \ar[rd]^{\qquad \qquad \qquad [\ieval_{M_2, M_1}, \id] \otimes \id} \\
      [M_1, [M_2, M_3] \otimes M_2]
      \ar[rd]_{[\id, \ieval_{M_2, M_3}] \qquad \qquad}
      & & [[M_1, M_2] \otimes M_1, M_3] \otimes [M_1, M_2]
      \ar[ld]^{\qquad \qquad \iHomB^{\natural}_{[M_1, M_2], M_1, M_3}} \\
      & [M_1, M_3]
    }
  \end{equation*}
\end{lemma}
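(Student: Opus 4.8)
The plan is to verify the two triangles making up the diagram by transporting each equality to the category $\mathcal{M}$ along the defining adjunction isomorphism $\phi$ of~\eqref{eq:iHom-def}, taken with $X = \iHom(M_2, M_3) \otimes \iHom(M_1, M_2)$, $M = M_1$ and $N = M_3$. Since $\phi$ is bijective, it is enough to show that the three composites landing at $\iHom(M_1, M_3)$ have the same image under $\phi^{-1}$: these composites are $\icomp_{M_1, M_2, M_3}$ itself, the left-hand composite $\iHom(\id_{M_1}, \ieval_{M_2, M_3}) \circ \iHomA_{\iHom(M_2, M_3), M_1, M_2}$, and the right-hand composite $\iHomB^{\natural}_{\iHom(M_1, M_2), M_1, M_3} \circ (\iHom(\ieval_{M_1, M_2}, \id_{M_3}) \otimes \id_{\iHom(M_1, M_2)})$. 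By the definition of $\icomp$ through~\eqref{eq:iHom-eval3} together with the formula $\phi^{-1}(g) = \ieval_{M_1, M_3} \circ (g \otimes \id_{M_1})$ from~\eqref{eq:iHom-adj}, one has $\phi^{-1}(\icomp_{M_1, M_2, M_3}) = \ieval^{(3)}_{M_1, M_2, M_3} = \ieval_{M_2, M_3} \circ (\id_{\iHom(M_2, M_3)} \otimes \ieval_{M_1, M_2})$, so everything comes down to computing $\phi^{-1}$ of the two side composites and recognizing this morphism.

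For the left-hand composite I would first invoke the naturality of the counit $\ieval_{M_1, -}$ in its second argument to rewrite $\ieval_{M_1, M_3} \circ (\iHom(\id_{M_1}, \ieval_{M_2, M_3}) \otimes \id_{M_1})$ as $\ieval_{M_2, M_3} \circ \ieval_{M_1, \iHom(M_2, M_3) \otimes M_2}$, and then precompose with $\iHomA_{\iHom(M_2, M_3), M_1, M_2} \otimes \id_{M_1}$ and apply~\eqref{eq:apdx-zig-zag-3} to collapse the factor $\ieval_{M_1, \iHom(M_2, M_3) \otimes M_2} \circ (\iHomA_{\iHom(M_2, M_3), M_1, M_2} \otimes \id_{M_1})$ to $\id_{\iHom(M_2, M_3)} \otimes \ieval_{M_1, M_2}$; the result is exactly $\ieval^{(3)}_{M_1, M_2, M_3}$. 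For the right-hand composite I would apply~\eqref{eq:apdx-zig-zag-4} with $Y = \iHom(M_1, M_2)$, $M = M_1$, $N = M_3$ to obtain $\ieval_{M_1, M_3} \circ (\iHomB^{\natural}_{\iHom(M_1, M_2), M_1, M_3} \otimes \id_{M_1}) = \ieval_{\iHom(M_1, M_2) \otimes M_1, M_3}$, and then precompose with $\iHom(\ieval_{M_1, M_2}, \id_{M_3}) \otimes \id_{\iHom(M_1, M_2) \otimes M_1}$ and use the dinaturality of $\ieval_{-, M_3}$ in its contravariant first variable, applied to the morphism $\ieval_{M_1, M_2}\colon \iHom(M_1, M_2) \otimes M_1 \to M_2$, which again produces $\ieval_{M_2, M_3} \circ (\id_{\iHom(M_2, M_3)} \otimes \ieval_{M_1, M_2}) = \ieval^{(3)}_{M_1, M_2, M_3}$. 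Since both side composites have image $\ieval^{(3)}_{M_1, M_2, M_3}$ under $\phi^{-1}$, they agree with $\icomp_{M_1, M_2, M_3}$, which is precisely the commutativity of the diagram.

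None of these steps is genuinely difficult; the only place demanding care is keeping apart the two distinct (di)naturality properties of the counit $\ieval$ — one in the covariant argument of $\iHom(M, -)$ and one in the contravariant argument of $\iHom(-, N)$ — and tracking the spectator factor $\id_{M_1}$ on the correct tensorand so that the zigzag identities~\eqref{eq:apdx-zig-zag-3} and~\eqref{eq:apdx-zig-zag-4} apply verbatim, without reassociating. One could alternatively run the computation through $\iHomC$ and Lemma~\ref{lem:apdx-coev-XM}, but passing directly via $\ieval^{(3)}_{M_1, M_2, M_3}$ is the shortest route.
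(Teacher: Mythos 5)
Your proposal is correct and follows essentially the same route as the paper's own proof: transport everything through the adjunction isomorphism $\phi$, reduce each of the two side composites to $\ieval^{(3)}_{M_1,M_2,M_3}$ using the naturality of $\ieval_{M_1,(-)}$ with \eqref{eq:apdx-zig-zag-3} for the left triangle and \eqref{eq:apdx-zig-zag-4} with the dinaturality of $\ieval_{(-),M_3}$ for the right triangle, and conclude by the definition of $\icomp$ via \eqref{eq:iHom-eval3}. (You also silently correct the index order in the label $\ieval_{M_2,M_1}$ to $\ieval_{M_1,M_2}$, which is consistent with the paper's conventions.)
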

\begin{proof}
  By the naturality of $\ieval_{M_1, (-)}$ and \eqref{eq:apdx-zig-zag-3}, we compute
  \begin{align*}
    & \ieval_{M_1, M_3} \circ ([\id_{M_1}, \ieval_{M_2, M_3}] \iHomA_{[M_2, M_3], M_1, M_3} \otimes \id_{M_1}) \\
    & \qquad = \ieval_{M_2, M_3} \circ \ieval_{M_1, [M_2, M_3] \otimes M_2} \circ (\iHomA_{[M_2, M_3], M_1, M_3} \otimes \id_{M_1}) \\
    & \qquad = \ieval_{M_2, M_3} \circ (\id_{[M_2, M_3]} \otimes \ieval_{M_1, M_2}) = \ieval^{(3)}_{M_1, M_2, M_3}.
  \end{align*}
  This shows the commutativity of the left triangle of the diagram. Similarly, by \eqref{eq:apdx-zig-zag-4} and the dinaturality of $\ieval_{(-), M_3}$, we compute
  \begin{align*}
    & \ieval_{M_1, M_3} \circ ((\iHomB^{\natural}_{[M_1, M_2], M_1, M_3} \circ ([\ieval_{M_2, M_1}, \id_{M_3}] \otimes \id_{[M_1, M_2]})) \otimes \id_{M_1}) \\
    & \qquad \qquad = \ieval_{[M_1, M_2] \otimes M_1, M_3} \circ ([\ieval_{M_2, M_1}, \id_{M_3}] \otimes \id_{[M_1, M_2]} \otimes \id_{M_1}) \\
    & \qquad \qquad = \ieval_{M_2, M_3} \circ (\id_{[M_2, M_3]} \otimes \ieval_{M_1, M_2}) = \ieval^{(3)}_{M_1, M_2, M_3}.
  \end{align*}
  Thus the left triangle of the diagram also commutes.
\end{proof}

\subsection{Structure morphisms of $\Act^{\radj}$}

Now we consider the case where $\mathcal{C}$ is a finite tensor category and $\mathcal{M}$ is a finite left $\mathcal{C}$-module category. To save space, we set $\overline{\Act} = \Act^{\radj}_{\mathcal{M}}$. Let $\xi^{(\ell)}$ and $\xi^{(r)}$ be the left and the right $\mathcal{C}$-module structure of $\overline{\Act}$, respectively. By \eqref{eq:module-func-adj-right} and its right module version, we have
\begin{equation}
  \label{eq:apdx-act-adj-C-bimod-def}
  \xi^{(\ell)}_{X, F} = \overline{\Act}(\id_X \otimes \varepsilon_{F}) \circ \eta_{X \otimes \overline{\Act}(F)}
  \quad \text{and} \quad
  \xi^{(r)}_{F, X} = \overline{\Act}(\varepsilon_{F} \otimes \id_X) \circ \eta_{\, \overline{\Act}(F) \otimes X}
\end{equation}
for $F \in \REX(\mathcal{M})$ and $X \in \mathcal{C}$. By using the universal dinatural transformation $\pi_F$ of the end $\overline{\Act}(F)$, these structure morphisms are given as follows:

\begin{lemma}[$=$ Lemma~\ref{lem:act-fun-adj-C-bimod}]
  The equations
  \begin{align}
    \tag{\ref{eq:act-fun-adj-C-bimod-l-def-2}}
    \pi_{X \otimes F}(M) \circ \xi^{(\ell)}_{X,F}
    & = \iHomA_{X, M, F(M)} \circ (\id_X \otimes \pi_{F}(M)), \\
    \tag{\ref{eq:act-fun-adj-C-bimod-r-def-2}}
    \pi_{F \otimes X}(M) \circ \xi^{(r)}_{X,F}
    & = \iHomB^{\natural}_{X, M, F(X \otimes M)} \circ (\pi_{F}(X \otimes M) \otimes \id_X)
  \end{align}
  hold for all $F \in \REX(\mathcal{M})$ and $X \in \mathcal{C}$.
\end{lemma}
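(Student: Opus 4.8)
The plan is to verify both identities by unwinding the definition of the $\mathcal{C}$-bimodule structure induced on a right adjoint, recorded in~\eqref{eq:apdx-act-adj-C-bimod-def}, and then reducing everything to the explicit formulas~\eqref{eq:iHom-iso-a-def} and~\eqref{eq:iHom-beta-natural} for $\iHomA$ and $\iHomB^{\natural}$. The four tools I will use are: naturality of $\pi_{(-)}(M)$ in the functor variable (it is the universal dinatural transformation of an end depending functorially on $F$); naturality of $\icoev_{(-),M}$ in its $\mathcal{C}$-argument, being the unit of $(-)\otimes M \dashv \iHom(M,-)$; the characterization~\eqref{eq:act-fun-adj-unit} of the unit $\eta$ of $\Act \dashv \Act^{\radj}$; and the defining formula~\eqref{eq:act-fun-adj-counit} of the counit $\varepsilon$. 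I will also use repeatedly that, in $\REX(\mathcal{M})$, one has $X \otimes \Act\Act^{\radj}(F) = \Act\big(X \otimes \Act^{\radj}(F)\big)$ and $\Act\Act^{\radj}(F) \otimes X = \Act\big(\Act^{\radj}(F) \otimes X\big)$, so that~\eqref{eq:act-fun-adj-unit} is applicable to these objects.

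For $\xi^{(\ell)}$, I would start from $\xi^{(\ell)}_{X,F} = \Act^{\radj}(\id_X \otimes \varepsilon_F) \circ \eta_{X \otimes \Act^{\radj}(F)}$ and compose with $\pi_{X \otimes F}(M)$ on the left. Naturality of $\pi$ in the functor variable turns $\pi_{X \otimes F}(M) \circ \Act^{\radj}(\id_X \otimes \varepsilon_F)$ into $\iHom(\id_M, \id_X \otimes \varepsilon_{F,M}) \circ \pi_{\Act(X \otimes \Act^{\radj}(F))}(M)$, and then~\eqref{eq:act-fun-adj-unit} yields $\pi_{X \otimes F}(M) \circ \xi^{(\ell)}_{X,F} = \iHom(\id_M, \id_X \otimes \varepsilon_{F,M}) \circ \icoev_{X \otimes \Act^{\radj}(F),M}$. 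On the other side, applying naturality of $\icoev_{(-),M}$ to the morphism $\id_X \otimes \pi_F(M)$ and then~\eqref{eq:act-fun-adj-counit} rewrites $\iHomA_{X,M,F(M)} \circ (\id_X \otimes \pi_F(M))$ — expanded via~\eqref{eq:iHom-iso-a-def} — into the very same expression, which proves~\eqref{eq:act-fun-adj-C-bimod-l-def-2}. The argument for $\xi^{(r)}$ is completely parallel: from $\xi^{(r)}_{F,X} = \Act^{\radj}(\varepsilon_F \otimes \id_X) \circ \eta_{\Act^{\radj}(F) \otimes X}$, using $(\varepsilon_F \otimes \id_X)_M = \varepsilon_{F, X \otimes M}$ and the identification above, the same three tools give $\pi_{F \otimes X}(M) \circ \xi^{(r)}_{F,X} = \iHom(\id_M, \varepsilon_{F,X \otimes M}) \circ \icoev_{\Act^{\radj}(F) \otimes X,M}$; and starting from~\eqref{eq:iHom-beta-natural}, naturality of $\icoev_{(-),M}$ applied to $\pi_F(X \otimes M) \otimes \id_X$ together with~\eqref{eq:act-fun-adj-counit} at the object $X \otimes M$ brings the right-hand side of~\eqref{eq:act-fun-adj-C-bimod-r-def-2} into the same form.

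Finally, since $X \otimes (-)$ is exact it preserves the ends in question, so $X \otimes \Act^{\radj}(F)$ is an end of $(M,M') \mapsto X \otimes \iHom(M, F(M'))$ with universal dinatural transformation $\id_X \otimes \pi_F(-)$, and dually for the right action; hence the displayed equations determine $\xi^{(\ell)}$ and $\xi^{(r)}$ uniquely, giving also the characterization remarked after the statement. I do not expect a genuine obstacle here, as the content is entirely formal. The only thing requiring care is the bookkeeping of which variable each naturality square is applied in, and keeping straight that both $X \otimes \Act\Act^{\radj}(F)$ and $\Act\Act^{\radj}(F) \otimes X$ lie in the image of the action functor; this is why I postpone these verifications to the appendix rather than including them inline.
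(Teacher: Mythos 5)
Your proposal is correct and follows essentially the same route as the paper's proof in Appendix~\ref{apdx:act-adj-structure}: both start from the adjunction formula \eqref{eq:apdx-act-adj-C-bimod-def} for $\xi^{(\ell)}$ and $\xi^{(r)}$, and use naturality of $\pi_{(-)}(M)$, the characterizations \eqref{eq:act-fun-adj-unit} and \eqref{eq:act-fun-adj-counit} of the unit and counit, naturality of $\icoev_{(-),M}$, and the explicit formulas \eqref{eq:iHom-iso-a-def}, \eqref{eq:iHom-beta-natural}, including the key observation that $(\varepsilon_F\otimes\id_X)_M=\varepsilon_{F,X\otimes M}$. The only cosmetic difference is that you reduce both sides to a common expression rather than writing a single chain of equalities, and your closing uniqueness remark (justified by exactness of $X\otimes(-)$, where the paper invokes the Fubini theorem for ends) is consistent with the paper's remark following the lemma.
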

\begin{proof}
  \allowdisplaybreaks
  Equation~\eqref{eq:act-fun-adj-C-bimod-l-def-2} is proved as follows:
  \begin{align*}
    & \pi_{X \otimes F}(M) \circ \xi^{(\ell)}_{X,F} \\
    & = [\id_M, \id_X \otimes \varepsilon_{F, M}] \circ \pi_{\Act(X \otimes \overline{\Act}(F))}(M) \circ \eta_{X \otimes \overline{\Act}(F)} \\
    & = [\id_M, (\id_X \otimes \ieval_{M, F(M)}) \circ (\id_X \otimes \pi_{F}(M) \otimes \id_M)]
      \circ \icoev_{X \otimes \overline{\Act}(F), M} \\
    & = [\id_M, \id_X \otimes \ieval_{M, F(M)}] \circ \icoev_{X \otimes [M, F(M)], M} \circ (\id_X \otimes \pi_{F}(M)) \\
    & = \iHomA_{X, M, F(M)} \circ (\id_X \otimes \pi_{F}(M)).
  \end{align*}
  Here, the first equality follows from~\eqref{eq:apdx-act-adj-C-bimod-def} and the naturality of $\pi_{(-)}(M)$, the second from \eqref{eq:act-fun-adj-counit} and~\eqref{eq:act-fun-adj-unit}, and the third from the naturality of $\icoev_{(-), M}$.

  To prove equation~\eqref{eq:act-fun-adj-C-bimod-r-def-2}, we note that the symbol $\varepsilon_F \otimes \id_X$ in~\eqref{eq:apdx-act-adj-C-bimod-def} means the natural transformation whose component is given by $(\varepsilon_F \otimes \id_X)_M = \varepsilon_{F, X \otimes M}$ for $M \in \mathcal{M}$. Thus we compute:
  \begin{align*}
    & \pi_{F \otimes X}(M) \circ \xi^{(r)}_{X,F} \\
    & = [\id_M, \varepsilon_{F, X \otimes M}] \circ \pi_{\Act(\overline{\Act}(F) \otimes X)}(M) \circ \eta_{\, \overline{\Act}(F) \otimes X} \\
    & = [\id_M, \ieval_{X \otimes M, F(X \otimes M)}] \circ [\id_M, \pi_{F}(X \otimes M) \otimes \id_{X \otimes M}]
      \circ \coev_{\overline{\Act}(F) \otimes X, M} \\
    & = [\id_M, \ieval_{X \otimes M, F(X \otimes M)}] \circ \coev_{[X \otimes M, F(X \otimes M)] \otimes X, M}
      \circ (\pi_{F}(X \otimes M) \otimes \id_X) \\
    & = \iHomB^{\natural}_{X, M, F(X \otimes M)} \circ (\pi_{F}(X \otimes M) \otimes \id_X)
  \end{align*}
  in a similar way as above. The proof is done.
\end{proof}

We recall that $\Act = \Act_{\mathcal{M}}$ is a strict monoidal functor. Hence its right adjoint $\overline{\Act}$ has a structure of a monoidal functor. We denote the structure morphisms by
\begin{equation*}
  \mu^{(2)}_{F,G}: \overline{\Act}(F) \otimes \overline{\Act}(G) \to \overline{\Act}(F G)
  \quad \text{and} \quad \mu^{(0)}: \unitobj \to \overline{\Act}(\id_{\mathcal{M}})
\end{equation*}
for $F, G \in \REX(\mathcal{M})$. With the use of $\eta$ and $\varepsilon$, they are expressed by
\begin{equation}
  \label{eq:apdx-mu-def}
  \mu^{(2)}_{F, G} = \overline{\Act}(\varepsilon_F \circ \varepsilon_G) \circ \eta_{\, \overline{\Act}(F) \otimes \overline{\Act}(G)}
  \quad \text{and} \quad
  \mu^{(0)} = \eta_{\unitobj},
\end{equation}
where $\varepsilon_F \circ \varepsilon_G$ means the tensor product of $\varepsilon_F$ and $\varepsilon_G$ in $\REX(\mathcal{M})$, or, equivalently, the horizontal composition of $\varepsilon_F$ and $\varepsilon_G$.

\begin{lemma}[$=$ Lemma~\ref{lem:act-fun-adj-monoidal}]
  The equations
  \begin{gather}
    \tag{\ref{eq:act-fun-adj-monoidal-2}}
    \pi_{F G}(M) \circ \mu^{(2)}_{F,G} = \icomp_{M, G(M), F G(M)} \circ (\pi_{F}(G(M)) \otimes \pi_{G}(M)), \\
    \tag{\ref{eq:act-fun-adj-monoidal-0}}
    \pi_{\id_{\mathcal{M}}}(M) \circ \mu^{(0)} = \icoev_{\unitobj, M}
  \end{gather}
  hold for all $F, G \in \REX(\mathcal{M})$ and $M \in \mathcal{M}$.
\end{lemma}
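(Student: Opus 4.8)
The plan is to verify both equations by passing through the defining adjunction isomorphism $\phi\colon \Hom_{\mathcal{M}}(X \otimes M, N) \xrightarrow{\ \sim\ } \Hom_{\mathcal{C}}(X, \iHom(M, N))$ of the internal Hom. Equation~\eqref{eq:act-fun-adj-monoidal-0} is immediate: since $\id_{\mathcal{M}} = \Act(\unitobj)$ and $\mu^{(0)} = \eta_{\unitobj}$ by~\eqref{eq:apdx-mu-def}, the asserted identity $\pi_{\id_{\mathcal{M}}}(M) \circ \mu^{(0)} = \icoev_{\unitobj, M}$ is exactly the instance of~\eqref{eq:act-fun-adj-unit} with $X = \unitobj$. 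For~\eqref{eq:act-fun-adj-monoidal-2}, both sides are morphisms $\overline{\Act}(F) \otimes \overline{\Act}(G) \to \iHom(M, F G(M))$ in $\mathcal{C}$, so it suffices to check that $\phi^{-1}$ sends them to the same morphism in $\mathcal{M}$, i.e.\ that post-composing each with $\ieval_{M, FG(M)} \circ ((-) \otimes \id_M)$ yields the same result.

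For the left-hand side I would rewrite $\mu^{(2)}_{F,G}$ by means of~\eqref{eq:apdx-mu-def}, namely $\mu^{(2)}_{F,G} = \overline{\Act}(\varepsilon_F \circ \varepsilon_G) \circ \eta_{\overline{\Act}(F) \otimes \overline{\Act}(G)}$. Because $\Act$ is strict monoidal, the horizontal composite $\varepsilon_F \circ \varepsilon_G$ is a natural transformation $\Act(\overline{\Act}(F) \otimes \overline{\Act}(G)) \to F G$ whose $M$-component is $\varepsilon_{F, G(M)} \circ (\id_{\overline{\Act}(F)} \otimes \varepsilon_{G, M})$. Using the naturality of $\pi_{(-)}(M)$ in the functor variable and then the defining property~\eqref{eq:act-fun-adj-unit} of $\eta$, I obtain
\[
  \pi_{F G}(M) \circ \mu^{(2)}_{F,G}
  = \iHom\bigl(\id_M,\ \varepsilon_{F, G(M)} \circ (\id_{\overline{\Act}(F)} \otimes \varepsilon_{G, M})\bigr)
    \circ \icoev_{\overline{\Act}(F) \otimes \overline{\Act}(G),\, M},
\]
which by~\eqref{eq:iHom-adj} is precisely $\phi$ applied to $\varepsilon_{F, G(M)} \circ (\id_{\overline{\Act}(F)} \otimes \varepsilon_{G, M})$. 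Hence $\phi^{-1}$ of the left-hand side of~\eqref{eq:act-fun-adj-monoidal-2} equals $\varepsilon_{F, G(M)} \circ (\id_{\overline{\Act}(F)} \otimes \varepsilon_{G, M})$.

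For the right-hand side I would compute $\phi^{-1}$ directly: using the definition~\eqref{eq:iHom-comp}--\eqref{eq:iHom-eval3} of the composition morphism, $\ieval_{M, FG(M)} \circ (\icomp_{M, G(M), F G(M)} \otimes \id_M) = \ieval_{G(M), F G(M)} \circ (\id \otimes \ieval_{M, G(M)})$, and then applying~\eqref{eq:act-fun-adj-counit} twice (first to $\pi_G(M)$, then to $\pi_F(G(M))$) to recognize the result again as $\varepsilon_{F, G(M)} \circ (\id_{\overline{\Act}(F)} \otimes \varepsilon_{G, M})$. Since $\phi^{-1}$ is a bijection, this establishes~\eqref{eq:act-fun-adj-monoidal-2}. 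The whole argument is a bookkeeping exercise of chasing the unit $\eta$ and counit $\varepsilon$ through the adjunction; the only point needing care is the horizontal-composition formula for $\varepsilon_F \circ \varepsilon_G$ and consistency with the subscript conventions for $\icoev$ and $\ieval$. No ingredient beyond the manipulations already used for Lemma~\ref{lem:act-fun-adj-C-bimod} is required; in particular Lemma~\ref{lem:apdx-comp-and-alpha} is not needed here (it is reserved for Theorem~\ref{thm:comm-alg-from-mod-subcat}).
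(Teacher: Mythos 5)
Your proposal is correct and follows essentially the same route as the paper's own proof: \eqref{eq:act-fun-adj-monoidal-0} is read off from \eqref{eq:apdx-mu-def} and \eqref{eq:act-fun-adj-unit}, and \eqref{eq:act-fun-adj-monoidal-2} is verified by transporting both sides through the internal-Hom adjunction and identifying each with $(\varepsilon_F \circ \varepsilon_G)_M = \varepsilon_{F, G(M)} \circ (\id_{\overline{\Act}(F)} \otimes \varepsilon_{G, M})$, exactly as in the appendix. Your closing remarks (same bookkeeping as for Lemma~\ref{lem:act-fun-adj-C-bimod}, no need for Lemma~\ref{lem:apdx-comp-and-alpha}) also match the paper.
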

\begin{proof}
  Equation~\eqref{eq:act-fun-adj-monoidal-0} follows from \eqref{eq:act-fun-adj-unit} and \eqref{eq:apdx-mu-def}. To prove \eqref{eq:act-fun-adj-monoidal-2}, we set
  \begin{equation*}
    w = \eval^{(3)}_{M, G(M), F G(M)} \circ (\pi_{F G}(M) \otimes \pi_{G}(M) \otimes \id_M).
  \end{equation*}
  We note that there is an isomorphism
  \begin{equation}
    \label{eq:apdx-pf-fun-adj-monoidal}
    \Hom_{\mathcal{C}}(\overline{\Act}(F) \otimes \overline{\Act}(G), [M, F G(M)])
    \cong \Hom_{\mathcal{C}}(\overline{\Act}(F) \otimes \overline{\Act}(G) \otimes M, F G(M)).
  \end{equation}
  The right-hand side of \eqref{eq:act-fun-adj-monoidal-2} corresponds to $w$ via \eqref{eq:apdx-pf-fun-adj-monoidal}. On the other hand, the left-hand side of \eqref{eq:act-fun-adj-monoidal-2} corresponds to $(\varepsilon_F \circ \varepsilon_G)_M$ via \eqref{eq:apdx-pf-fun-adj-monoidal}.  By \eqref{eq:act-fun-adj-counit} and the definition of the horizontal composition, we have
  \begin{align*}
    (\varepsilon_F \circ \varepsilon_G)_M
    = \varepsilon_{F, G(M)} \circ (\id_{\overline{\Act}(F)} \otimes \varepsilon_{G, M})
    = w.
  \end{align*}
 Thus \eqref{eq:act-fun-adj-monoidal-2} is verified. The proof is done.
\end{proof}

\subsection{Commutativity of $A_{\mathcal{S}}$}

For a $\mathcal{C}$-module full subcategory $\mathcal{S} \subset \mathcal{M}$, we have proved that the end $A_{\mathcal{S}} := \int_{S \in \mathcal{S}} \iHom(S, S)$ has the half-braiding $\sigma_{\mathcal{S}}$ given by the commutative diagram \eqref{eq:act-fun-adj-half-bra}. Namely, the equation
\begin{equation}
  \label{eq:apdx-act-fun-adj-half-bra}
  \iHomA_{X, W, W} \circ (\id_X \otimes \pi_{\mathcal{S}}(W)) \circ \sigma_{\mathcal{S}}(X)
  = \iHomB^{\natural}_{X, W, X \otimes W} \circ (\pi_{\mathcal{S}}(X \otimes W) \otimes \id_X)
\end{equation}
holds for all $X \in \mathcal{C}$ and $W \in \mathcal{S}$, where $\pi_{\mathcal{S}}(X): A_{\mathcal{S}} \to \iHom(X, X)$ is the universal dinatural transformation. Let $m_{\mathcal{S}}: A_{\mathcal{S}} \otimes A_{\mathcal{S}} \to A_{\mathcal{S}}$ be the multiplication of $A_{\mathcal{S}}$, and set $m^{\op}_{\mathcal{S}} = m_{\mathcal{S}} \circ \sigma_{\mathcal{S}}(A_{\mathcal{S}})$.

\begin{proof}[Proof of Theorem~\ref{thm:comm-alg-from-mod-subcat}]
  The claim of this theorem is that $\mathbf{A}_{\mathcal{S}} = (A_{\mathcal{S}}, \sigma_{\mathcal{S}})$ is a commutative algebra in $\mathcal{Z}(\mathcal{C})$. Thus it is sufficient to show $m_{\mathcal{S}}^{\op} = m_{\mathcal{S}}$. We fix $W \in \mathcal{S}$ and set $E = [W, W]$ for simplicity. Then we compute
  \begin{align*}
    \pi_{\mathcal{S}}(W) & \, \circ m_{\mathcal{S}}^{\op} = \icomp_{W,W,W} \circ (\pi_{\mathcal{S}}(W) \otimes \pi_{\mathcal{S}}(W)) \circ \sigma_{\mathcal{S}}(A_{\mathcal{S}}) \\
    & = [\id_W, \ieval_{W,W}] \circ \iHomA_{E, W, W}
      \circ (\pi_{\mathcal{S}}(W) \otimes \id_{X}) \circ \sigma_{\mathcal{S}}(E)
      \circ (\id_{A_{\mathcal{S}}} \otimes \pi_{\mathcal{S}}(W)) \\
    & = [\id_W, \ieval_{W,W}] \circ \iHomB^{\natural}_{E, W, E \otimes W}
      \circ (\pi_{\mathcal{S}}(E \otimes W) \otimes \pi_{\mathcal{S}}(W)) \\
    & = \iHomB^{\natural}_{E, W, W} \circ ([\id_{E \otimes W}, \ieval_{W,W}] \otimes \id_E)
      \circ (\pi_{\mathcal{S}}(E \otimes W) \otimes \pi_{\mathcal{S}}(W)) \\
    & = \iHomB^{\natural}_{E, W, W} \circ ([\ieval_{W,W}, \id_W] \otimes \id_E)
      \circ (\pi_{\mathcal{S}}(W) \otimes \pi_{\mathcal{S}}(W)) \\
    & = \icomp_{W,W,W} \circ (\pi_{\mathcal{S}}(W) \otimes \pi_{\mathcal{S}}(W))
      = (\pi_{\mathcal{S}}(W) \otimes \pi_{\mathcal{S}}(W)) \circ m_{\mathcal{S}}.
  \end{align*}
  Here, the first and the last equalities follow from the definition of $m_{\mathcal{S}}$, the second and the sixth from Lemma~\ref{lem:apdx-comp-and-alpha}, the third from~\eqref{eq:apdx-act-fun-adj-half-bra}, the fourth from the naturality of $\iHomB^{\natural}_{E, W, (-)}$, and the fifth from the dinaturality of $\pi_{\mathcal{S}}$.  We now obtain $m_{\mathcal{S}}^{\op} = m_{\mathcal{S}}$ by the universal property of $A_{\mathcal{S}}$. The proof is done.
\end{proof}

\section{On the properties of the pivotal trace}
\label{sec:additivity-trace}

We complement properties of the trace in a pivotal module category. Let $\mathcal{C}$ be a finite tensor category, and let $\mathcal{M}$ be an exact left $\mathcal{C}$-module category. For simplicity, we write $[M, N] = \iHom(M, N)$. By the definition of the relative Serre functor, there is a natural isomorphism
\begin{equation}
  \tag{\ref{eq:def-relative-Serre}}
  \iHomD_{M,N}: [M, N]^* \to [N, \Ser(M)]
  \quad (M, N \in \mathcal{M}),
\end{equation}
where $\Ser = \Ser_{\mathcal{M}}$. There is also a natural isomorphism
\begin{equation}
  \tag{\ref{eq:relative-Serre-module-functor}}
  \zeta_{X,M}: X^{**} \otimes \Ser(M) \to \Ser(X \otimes M)
  \quad (X \in \mathcal{C}, M \in \mathcal{M})
\end{equation}
such that $\iHomD$ is an isomorphism of $\mathcal{C}$-bimodule functors from $\mathcal{M}^{\op} \times \mathcal{M}$ to ${}_{(-)^{**}}\mathcal{C}$, that is, the equations $\zeta_{\unitobj, M} = \id_{\Ser(M)}$ and
\begin{equation}
  \label{eq:iHom-appendix-eq-1}
  \begin{aligned}[]
    [\id_{X \otimes N}, \zeta_{Y,M}] \circ \, & \iHomD_{Y \otimes M, X \otimes N} \circ (\iHomC_{X,M,N,Y}^*)^{-1} \\
    & = \iHomC_{Y^{**}, N, \Ser(M), X}^{} \circ (\id_{Y^{**}} \otimes \iHomD_{M,N} \otimes \id_{X^*})
  \end{aligned}
\end{equation}
hold for all objects $X, Y \in \mathcal{C}$ and $M, N \in \mathcal{M}$.

Now we suppose that $\mathcal{C}$ and $\mathcal{M}$ are pivotal with pivotal structures $j$ and $j'$, respectively. By Definition~\ref{def:pivotal-mod-cat}, we have
\begin{equation}
  \label{eq:apdx-pivotal-def}
  \zeta_{X,M} \circ j'_{X \otimes M} = j_X \otimes j'_M
  \quad (X \in \mathcal{C}, M \in \mathcal{M}).
\end{equation}
The trace $\trace_{\mathcal{M}}$, defined in Subsection~\ref{subsec:pivotal-mod-cat}, is characterized by
\begin{equation}
  \label{eq:apdx-trace-M}
  \iHomD_{M, M} \circ \trace_{\mathcal{M}}(M)^* = [\id_M, j'_M] \circ \icoev_{\unitobj, M}
  \quad (M \in \mathcal{M}).
\end{equation}
We recall that $\trace_{\mathcal{C}}$ is defined by $\trace_{\mathcal{C}}(X) = \eval_{X^*} \circ (j_X \otimes \id_X)$ for $X \in \mathcal{C}$. Thus,
\begin{align}
  \label{eq:apdx-trace-C}
  \trace_{\mathcal{C}}(X)^*
  = (\id_{X^{**}} \otimes j_{X}^*) \circ \coev_{X^{**}}
  = (j_X \otimes \id_{X^*}) \circ \coev_{X}.
\end{align}

\begin{lemma}
  \label{lem:pivotal-trace-mod-cat-dinat}
  The trace $\trace_{\mathcal{M}}$ is dinatural, and the equation
  \begin{equation}
    \label{eq:trace-mod-cat-lem}
    \trace_{\mathcal{M}}(X \otimes M) \circ \iHomC_{X,M,M,X}
    = \trace_{\mathcal{C}}(X) \circ (\id_X \otimes \trace_{\mathcal{M}}(M) \otimes \id_{X^*})
  \end{equation}
  holds for all objects $M \in \mathcal{M}$ and $X \in \mathcal{C}$.   
\end{lemma}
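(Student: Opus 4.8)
The plan is to deduce both assertions from the defining relation \eqref{eq:apdx-trace-M} of $\trace_{\mathcal{M}}$ by passing to left duals. Since $(-)^*\colon\mathcal{C}^{\op}\to\mathcal{C}^{\rev}$ is an anti-equivalence, an equality of morphisms into $\unitobj$ holds if and only if the equality of the dual morphisms (which are morphisms out of $\unitobj$) holds, and such a dual equality may be checked after post-composing with any chosen isomorphism. I will post-compose with suitable instances of $\iHomD$ and $\iHomC$, because \eqref{eq:apdx-trace-M} computes $\iHomD_{M,M}\circ\trace_{\mathcal{M}}(M)^*$ explicitly as $[\id_M,j'_M]\circ\icoev_{\unitobj,M}$.

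For the dinaturality, I would take a morphism $f\colon M\to N$ in $\mathcal{M}$, start from the desired identity $\trace_{\mathcal{M}}(M)\circ[f,\id_M]=\trace_{\mathcal{M}}(N)\circ[\id_N,f]$ of morphisms $[N,M]\to\unitobj$, dualize it, and post-compose with the isomorphism $\iHomD_{N,M}$. Using the naturality of $\iHomD$ in both variables, the left-hand side becomes $[\id_M,\Ser(f)]\circ\iHomD_{M,M}\circ\trace_{\mathcal{M}}(M)^*$ and the right-hand side becomes $[f,\id_{\Ser(N)}]\circ\iHomD_{N,N}\circ\trace_{\mathcal{M}}(N)^*$. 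Rewriting the two composites $\iHomD\circ\trace_{\mathcal{M}}(-)^*$ by \eqref{eq:apdx-trace-M} and applying the naturality of the pivotal structure $j'$ (so that $\Ser(f)\circ j'_M=j'_N\circ f$), the identity reduces to $[f,\id_N]\circ\icoev_{\unitobj,N}=[\id_M,f]\circ\icoev_{\unitobj,M}$, which is precisely the dinaturality of $M\mapsto\icoev_{\unitobj,M}$ (equivalently, the statement that $\icoev_{\unitobj,M}$ is the identity of $M$ in the $\mathcal{C}$-enriched category $\mathcal{M}$); this last fact follows from the zig-zag identities \eqref{eq:apdx-zig-zag-1} and \eqref{eq:apdx-zig-zag-2} together with the naturality of $\icoev_{(-),M}$ and $\ieval_{M,(-)}$.

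For \eqref{eq:trace-mod-cat-lem}, I would dualize the identity and post-compose both sides with the isomorphism $\Psi:=\iHomC_{X^{**},M,\Ser(M),X}\circ(\id_{X^{**}}\otimes\iHomD_{M,M}\otimes\id_{X^*})$. The compatibility \eqref{eq:iHom-appendix-eq-1}, specialized to $N=M$ and $Y=X$, expresses $\Psi\circ\iHomC_{X,M,M,X}^*$ through $\iHomD_{X\otimes M,X\otimes M}$ and $\zeta_{X,M}$, so that applying $\Psi$ to the dual of the left-hand side and then using \eqref{eq:apdx-trace-M} and the pivotality relation \eqref{eq:apdx-pivotal-def} (namely $\zeta_{X,M}\circ j'_{X\otimes M}=j_X\otimes j'_M$) yields $[\id_{X\otimes M},j_X\otimes j'_M]\circ\icoev_{\unitobj,X\otimes M}$. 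For the right-hand side, $\Psi$ absorbs the factor $\iHomD_{M,M}\circ\trace_{\mathcal{M}}(M)^*=[\id_M,j'_M]\circ\icoev_{\unitobj,M}$ coming from \eqref{eq:apdx-trace-M}, while $\trace_{\mathcal{C}}(X)^*$ is rewritten by \eqref{eq:apdx-trace-C}; this leaves $\iHomC_{X^{**},M,\Ser(M),X}\circ(\id_{X^{**}}\otimes[\id_M,j'_M]\otimes\id_{X^*})\circ(j_X\otimes\icoev_{\unitobj,M}\otimes\id_{X^*})\circ\coev_X$. To match the two sides I would rewrite $\icoev_{\unitobj,X\otimes M}$ via Lemma~\ref{lem:apdx-coev-XM} and then transport $[\id_{X\otimes M},j_X\otimes j'_M]$ across $\iHomC_{X,M,M,X}$ using the naturality of $\iHomC$ in its first and third arguments, turning $[\id_{X\otimes M},j_X\otimes j'_M]\circ\iHomC_{X,M,M,X}$ into $\iHomC_{X^{**},M,\Ser(M),X}\circ(j_X\otimes[\id_M,j'_M]\otimes\id_{X^*})$; the two expressions then coincide, and since $\Psi$ is an isomorphism the original identity follows.

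The only real difficulty is the bookkeeping: one must track carefully the order reversal and the insertions of $(-)^{**}$ produced by the duality functor, together with the precise naturality and dinaturality properties of $\iHomA$, $\iHomB$, $\iHomC$, $\iHomD$, $\zeta$ and $\icoev_{\unitobj,-}$ --- the latter being either recorded in Subsection~\ref{subsec:cl-mod-cat} and this appendix or immediate from \eqref{eq:iHom-iso-c-def} and \eqref{eq:apdx-zig-zag-1}--\eqref{eq:apdx-zig-zag-4}. No new idea beyond \eqref{eq:apdx-trace-M}, \eqref{eq:iHom-appendix-eq-1}, \eqref{eq:apdx-pivotal-def} and Lemma~\ref{lem:apdx-coev-XM} is required.
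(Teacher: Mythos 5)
Your argument is correct and follows essentially the same route as the paper: both prove \eqref{eq:trace-mod-cat-lem} by dualizing, composing with the isomorphism built from $\iHomD_{M,M}$ (and $\iHomC$), and chaining \eqref{eq:apdx-trace-M}, \eqref{eq:apdx-trace-C}, Lemma~\ref{lem:apdx-coev-XM}, the naturality of $\iHomC$, \eqref{eq:apdx-pivotal-def} and \eqref{eq:iHom-appendix-eq-1}, while the dinaturality is deduced, exactly as in the paper, from the naturality of $j'$ and the dinaturality of $\icoev_{\unitobj,(-)}$ via the defining relation \eqref{eq:apdx-trace-M}. Your version merely spells out the dinaturality step and organizes the main computation from both ends toward a middle term, which is a presentational difference only.
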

\begin{proof}
  \allowdisplaybreaks
  The dinaturality of $\trace_{\mathcal{M}}$ follows from the naturality of $j'$ and the dinaturality of $\icoev_{\unitobj, (-)}$. Equation \eqref{eq:trace-mod-cat-lem} is proved as follows:
  \begin{align*}
    & (\id_{X^{**}} \otimes \iHomD_{M,M} \otimes \id_{X^*}) \circ (\trace_{\mathcal{C}}(X) \circ (\id_X \otimes \trace_{\mathcal{M}}(M) \otimes \id_{X^*}))^* \\
    & = (\id_{X^{**}} \otimes [\id_M, j'_M] \icoev_{\unitobj, M} \otimes \id_{X^*})
      \circ (j_X \otimes \id_{X^*}) \circ \coev_{X}
      \quad \text{(by~\eqref{eq:apdx-trace-M},~\eqref{eq:apdx-trace-C})}\\
    & = (j_X \otimes [\id_M, j'_M] \otimes \id_{X^*})
      \circ (\id_X \otimes \icoev_{\unitobj, M} \otimes \id_{X^*}) \circ \coev_{X} \\
    & = (j_X \otimes [\id_M, j'_M] \otimes \id_{X^*})
      \circ \iHomC_{X,M,M,X}^{-1} \circ \icoev_{X \otimes M}
      \quad \text{(by~\eqref{eq:apdx-coev-XM})} \\
    & = \iHomC_{X^{**},M,\Ser(M),X}^{-1} \circ [\id_{X \otimes M}, j_X \otimes j'_M] \circ \icoev_{X \otimes M}
      \quad \text{(by the naturality of $\iHomC$)} \\
    & = \iHomC_{X^{**},M,\Ser(M),X}^{-1} \circ [\id_{X \otimes M}, \zeta_{X,M}] \circ [\id_{X \otimes M}, j'_{X \otimes M}] \circ \icoev_{\unitobj, X \otimes M}
      \quad \text{(by~\eqref{eq:apdx-pivotal-def})} \\
    & = \iHomC_{X^{**},M,\Ser(M),X}^{-1} \circ [\id_{X \otimes M}, \zeta_{X,M}] \circ \iHomD_{X \otimes M, X \otimes M} \circ \trace_{\mathcal{M}}(X \otimes M)^*
      \quad \text{(by~\eqref{eq:apdx-trace-M})} \\
    & = (\id_{X^{**}} \otimes \iHomD_{M,M} \otimes \id_{X^*}) \circ \iHomC^{*}_{X,M,M,X} \circ \trace_{\mathcal{M}}(X \otimes M)^*
      \quad \text{(by~\eqref{eq:iHom-appendix-eq-1})}. \qedhere
  \end{align*}
\end{proof}

For a morphism $f: M \to M$ in $\mathcal{M}$, we have defined $\mathrm{ptr}_{\mathcal{M}}(f) \in k$ of $f$ by
\begin{equation}
  \tag{\ref{eq:def-ptrace}}
  \trace(M) \circ [\id_M, f] \circ \icoev_{\unitobj, M} = \mathrm{ptr}(f) \cdot \id_{\unitobj}.
\end{equation}

\begin{proposition}
  \label{prop:apdx-piv-trace}
  For morphisms $f: M \to N$ and $g: N \to M$ in $\mathcal{M}$, we have
  \begin{equation}
    \label{eq:piv-trace-cyclicity}
    \mathrm{ptr}(f g) = \mathrm{ptr}(g f).
  \end{equation}
  For morphisms $f: M \to M$ in $\mathcal{M}$ and $a: X \to X$ in $\mathcal{C}$, we have
  \begin{equation}
    \label{eq:piv-trace-mult}
    \mathrm{ptr}(a \otimes f) = \mathrm{ptr}(a) \cdot \mathrm{ptr}(f)
  \end{equation}
\end{proposition}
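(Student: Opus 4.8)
The plan is to verify the two identities \eqref{eq:piv-trace-cyclicity} and \eqref{eq:piv-trace-mult} directly from the defining equation \eqref{eq:def-ptrace}, using only the naturality and dinaturality of the coevaluation $\icoev_{\unitobj,(-)}$ and evaluation $\ieval_{(-),(-)}$ of the adjunction $(-)\otimes M \dashv [M,-]$, the dinaturality of $\trace_{\mathcal{M}}$ established in Lemma~\ref{lem:pivotal-trace-mod-cat-dinat}, and the compatibility \eqref{eq:trace-mod-cat-lem} between $\trace_{\mathcal{M}}$, $\trace_{\mathcal{C}}$ and the isomorphism $\iHomC$. First I would recast \eqref{eq:def-ptrace} in a more symmetric form: since $[\id_M, f]\circ \icoev_{\unitobj,M}$ equals the image of $f \colon M \to M$ under the adjunction isomorphism $\Hom_{\mathcal{M}}(M,M) \cong \Hom_{\mathcal{C}}(\unitobj, [M,M])$ (this is exactly \eqref{eq:iHom-adj} with $X = \unitobj$), the scalar $\mathrm{ptr}(f)$ is obtained by post-composing with $\trace_{\mathcal{M}}(M)$. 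This makes the behaviour of $\mathrm{ptr}$ under naturality transparent.

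For the cyclicity \eqref{eq:piv-trace-cyclicity}, the key step is the following: for $f\colon M\to N$ and $g\colon N\to M$, the naturality of $\icoev_{\unitobj,(-)}$ and of $\trace_{\mathcal{M}}$ (its dinaturality, proved in Lemma~\ref{lem:pivotal-trace-mod-cat-dinat}) let me rewrite both $\trace_{\mathcal{M}}(M)\circ[\id_M,fg]\circ\icoev_{\unitobj,M}$ and $\trace_{\mathcal{M}}(N)\circ[\id_N,gf]\circ\icoev_{\unitobj,N}$ as $\trace_{\mathcal{M}}(N)\circ[g,f]\circ\icoev_{\unitobj,M}$ (respectively the same expression with the roles of the two factors of $[g,f]\colon [N,N]\to[M,N]$ rearranged via dinaturality); here $[g,f]$ denotes the image of the pair $(g,f)$ under the bifunctor $\iHom$. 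Concretely, $[\id_M, fg]\circ\icoev_{\unitobj,M}$ factors through $[\id_M, f]\circ[f,\id_M]\circ\cdots$, and one massages the composite so that the "$g$ on the source slot" of $\trace_{\mathcal{M}}$ cancels against the "$g$ on the target slot" using the dinaturality of $\trace_{\mathcal{M}}$ in the variable $N$; symmetrically for $gf$. This is the standard argument that makes the pivotal trace cyclic, carried out in the internal-Hom language; I would cite \cite[Lemma 2.5.1]{MR2803849} for the parallel computation in $\mathcal{C}$ and adapt it. The main obstacle will be bookkeeping the direction of the dinaturality square for $\trace_{\mathcal{M}}$ correctly, since $\trace_{\mathcal{M}}(M)\colon [M,M]\to\unitobj$ is dinatural in $M$ with $[M,M]$ contravariant in the first and covariant in the second slot, so one must be careful which composite one is reducing.

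For the multiplicativity \eqref{eq:piv-trace-mult}, the plan is to start from $\mathrm{ptr}_{\mathcal{M}}(a\otimes f)\cdot\id_{\unitobj} = \trace_{\mathcal{M}}(X\otimes M)\circ[\id_{X\otimes M}, a\otimes f]\circ\icoev_{\unitobj, X\otimes M}$ and to transport everything through the isomorphism $\iHomC_{X,M,M,X}\colon X\otimes[M,M]\otimes X^*\to[X\otimes M, X\otimes M]$. Using Lemma~\ref{lem:apdx-coev-XM} to rewrite $\icoev_{\unitobj, X\otimes M}$ as $\iHomC_{X,M,M,X}\circ(\id_X\otimes\icoev_{\unitobj,M}\otimes\id_{X^*})\circ\coev_X$, and the naturality of $\iHomC$ to move $[\id_{X\otimes M}, a\otimes f]$ across $\iHomC$ into $(a\otimes[\id_M,f]\otimes\id_{X^*})$ (up to the double-dual twist on the $X^*$ slot, which is handled by the pivotal structure $j$), the composite becomes $\trace_{\mathcal{M}}(X\otimes M)\circ\iHomC_{X,M,M,X}\circ(a\otimes[\id_M,f]\otimes\id_{X^*})\circ(\id_X\otimes\icoev_{\unitobj,M}\otimes\id_{X^*})\circ\coev_X$. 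Now \eqref{eq:trace-mod-cat-lem} replaces $\trace_{\mathcal{M}}(X\otimes M)\circ\iHomC_{X,M,M,X}$ by $\trace_{\mathcal{C}}(X)\circ(\id_X\otimes\trace_{\mathcal{M}}(M)\otimes\id_{X^*})$, and the expression factors as $\bigl(\trace_{\mathcal{C}}(X)\circ(a\otimes\id_{X^*})\circ\coev_X\bigr)$ tensored with $\bigl(\trace_{\mathcal{M}}(M)\circ[\id_M,f]\circ\icoev_{\unitobj,M}\bigr)$, i.e.\ $\mathrm{ptr}_{\mathcal{C}}(a)\cdot\mathrm{ptr}_{\mathcal{M}}(f)\cdot\id_{\unitobj}$. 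The only subtlety here is checking that $a$, after passing through $\iHomC$, really lands on the correct slot with the correct variance; this is exactly where the pivotal structure $j_X$ in the definition of $\trace_{\mathcal{C}}$ is used, and I expect this double-dual matching to be the technical heart of the proof. I would present both computations as displayed chains of equalities with the justification of each step annotated in the margin, as in the proof of Lemma~\ref{lem:pivotal-trace-mod-cat-dinat}.
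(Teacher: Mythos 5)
Your proposal is correct and follows essentially the same route as the paper, whose proof is exactly the two observations you elaborate: cyclicity from the dinaturality of $\trace_{\mathcal{M}}$ and $\icoev_{\unitobj,(-)}$ (your identification of the common intermediate form $\trace_{\mathcal{M}}(N)\circ[g,f]\circ\icoev_{\unitobj,M}$ is the right computation), and multiplicativity from \eqref{eq:trace-mod-cat-lem} together with Lemma~\ref{lem:apdx-coev-XM}. The only superfluous worry is the ``double-dual twist'' in the multiplicativity step: moving $[\id_{X\otimes M},a\otimes f]$ across $\iHomC_{X,M,M,X}$ only uses naturality in the two covariant slots, so no extra appeal to the pivotal structure is needed there.
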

\begin{proof}
  Equation~\eqref{eq:piv-trace-cyclicity} follows from the dinaturality of $\trace_{\mathcal{M}}$ and $\icoev_{\unitobj, (-)}$.
  Equation~\eqref{eq:piv-trace-mult} follows from \eqref{eq:trace-mod-cat-lem}.
\end{proof}

For $M \in \mathcal{M}$, we have defined the internal character $\ich_{\mathcal{M}}(M) \in \CF(\mathcal{M})$ by
\begin{equation}
  \tag{\ref{eq:def-internal-char}}
  \ich_{\mathcal{M}}(M) = \trace_{\mathcal{M}}(M) \circ \pi_{\mathcal{M}}(M),
\end{equation}
where $\pi_{\mathcal{M}}: A_{\mathcal{M}} \to [M, M]$ is the universal dinatural transformation.

\begin{proposition}[$=$ Lemma~\ref{lem:character-multiplicative}]
  For all $X \in \mathcal{C}$ and $M \in \mathcal{M}$, we have
  \begin{equation*}
    \ich_{\mathcal{M}}(X \otimes M) = \ich_{\mathcal{C}}(X) \star \ich_{\mathcal{M}}(M),
  \end{equation*}
  where $\star$ is the action~\eqref{eq:class-ft-action} of $\CF(\mathcal{C})$ on $\CF(\mathcal{M})$.
\end{proposition}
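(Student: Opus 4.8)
The plan is to unfold the right-hand side using the definition \eqref{eq:class-ft-action} of the action $\star$, to reduce the assertion to a single identity between the universal dinatural transformations of the ends $A_{\mathcal{M}}$ and $A_{\mathcal{C}}$, and then to verify that identity with the help of Lemma~\ref{lem:pivotal-trace-mod-cat-dinat} and the explicit description of the half-braiding of $\mathbf{A}_{\mathcal{M}}$ recorded in Lemma~\ref{lem:act-fun-adj-half-braiding}.

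Concretely, set $g := \ich_{\mathcal{M}}(M) = \trace_{\mathcal{M}}(M) \circ \pi_{\mathcal{M}}(M)$, so that $\ich_{\mathcal{C}}(X) \star g = \ich_{\mathcal{C}}(X) \circ \mathsf{Z}(g) \circ \delta_{\mathcal{M}}$ by \eqref{eq:class-ft-action}. Identifying $\mathsf{Z}(\unitobj)$ with $A_{\mathcal{C}}$ so that $\pi^{\mathsf{Z}}_{\unitobj}(Y) = \pi_{\mathcal{C}}(Y)$, the functoriality of $\mathsf{Z}$ and the characterization of $\delta_{\mathcal{M}}$ through the half-braiding $\sigma_{\mathcal{M}}$ give
\begin{equation*}
  \pi_{\mathcal{C}}(Y) \circ \mathsf{Z}(g) \circ \delta_{\mathcal{M}}
  = (\id_Y \otimes g \otimes \id_{Y^*}) \circ (\sigma_{\mathcal{M}}(Y) \otimes \id_{Y^*}) \circ (\id_{A_{\mathcal{M}}} \otimes \coev_Y)
\end{equation*}
for all $Y \in \mathcal{C}$. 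Composing with $\trace_{\mathcal{C}}(X)$ in the case $Y = X$, substituting $g = \trace_{\mathcal{M}}(M) \circ \pi_{\mathcal{M}}(M)$, and then invoking \eqref{eq:trace-mod-cat-lem} to rewrite $\trace_{\mathcal{C}}(X) \circ (\id_X \otimes \trace_{\mathcal{M}}(M) \otimes \id_{X^*})$ as $\trace_{\mathcal{M}}(X \otimes M) \circ \iHomC_{X,M,M,X}$, one sees that the claim is equivalent to the equation
\begin{equation*}
  \pi_{\mathcal{M}}(X \otimes M)
  = \iHomC_{X,M,M,X} \circ (\id_X \otimes \pi_{\mathcal{M}}(M) \otimes \id_{X^*}) \circ (\sigma_{\mathcal{M}}(X) \otimes \id_{X^*}) \circ (\id_{A_{\mathcal{M}}} \otimes \coev_X)
\end{equation*}
in $\mathcal{C}$, where $\pi_{\mathcal{M}}$ denotes the universal dinatural transformation of $A_{\mathcal{M}}$.

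To prove this last identity I would use the first description of $\iHomC$ in \eqref{eq:iHom-iso-c-def}, namely $\iHomC_{X,M,M,X} = \iHomB_{X,M,X \otimes M}^{-1} \circ (\iHomA_{X,M,M} \otimes \id_{X^*})$, together with the defining property \eqref{eq:apdx-act-fun-adj-half-bra} of $\sigma_{\mathcal{M}}$ (with $\mathcal{S} = \mathcal{M}$ and $W = M$), which, after an application of interchange and the naturality of $\coev$, turns the right-hand side into
\begin{equation*}
  \iHomB_{X,M,X \otimes M}^{-1} \circ \bigl( \iHomB^{\natural}_{X,M,X \otimes M} \otimes \id_{X^*} \bigr) \circ \bigl( \id_{\iHom(X \otimes M, X \otimes M)} \otimes \coev_X \bigr) \circ \pi_{\mathcal{M}}(X \otimes M).
\end{equation*}
It then remains to check the general identity $(\iHomB^{\natural}_{X,M,N} \otimes \id_{X^*}) \circ (\id_{\iHom(X \otimes M, N)} \otimes \coev_X) = \iHomB_{X,M,N}$, which is immediate from the definition \eqref{eq:iHom-b-natural} of $\iHomB^{\natural}$ and one rigidity zigzag for $X$; specialising to $N = X \otimes M$ collapses the three leftmost factors above to the identity, yielding the desired equation and hence the proposition.

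The computation is essentially formal, and the only care needed is bookkeeping: keeping the variances of $\iHomA$, $\iHomB$, $\iHomB^{\natural}$, $\iHomC$ and of the half-braiding $\sigma_{\mathcal{M}}$ consistent, and pinning down the identification $\mathsf{Z}(\unitobj) = A_{\mathcal{C}}$ so that $\pi^{\mathsf{Z}}_{\unitobj}$ is literally $\pi_{\mathcal{C}}$. Once the reduction to the displayed dinaturality identity is in place there is no delicate point remaining.
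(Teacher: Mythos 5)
Your argument is correct and follows essentially the same route as the paper's proof: both reduce the statement, via the definition of $\star$, the coaction $\delta_{\mathcal{M}}$ and the naturality of $\pi^{\mathsf{Z}}$, to the identity $\iHomC_{X,M,M,X}^{-1}\circ\pi_{\mathcal{M}}(X\otimes M)=(\id_X\otimes\pi_{\mathcal{M}}(M)\otimes\id_{X^*})\circ\pi^{\mathsf{Z}}_{A_{\mathcal{M}}}(X)\circ\delta_{\mathcal{M}}$ and then conclude with Lemma~\ref{lem:pivotal-trace-mod-cat-dinat}. The only difference is that the paper asserts this identity ``by definition'' of the half-braiding, whereas you verify it explicitly from \eqref{eq:apdx-act-fun-adj-half-bra}, \eqref{eq:iHom-iso-c-def} and \eqref{eq:iHom-iso-b}, which is a harmless (and welcome) expansion of the same step.
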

\begin{proof}
  We recall that $A_{\mathcal{M}}$ has the $\mathsf{Z}$-coaction $\delta_{\mathcal{M}}: A_{\mathcal{M}} \to \mathsf{Z}(A_{\mathcal{M}})$ induced from the half-braiding of $A_{\mathcal{M}}$. By definition,
  \begin{equation*}
    (\id_X \otimes \pi_{\mathcal{M}}(M) \otimes \id_{X^*}) \circ \pi^{\mathsf{Z}}(A_{\mathcal{M}}) \circ \delta_{\mathcal{M}}
    = \iHomC_{X,M,M,X}^{-1} \circ \pi_{\mathcal{M}}(X \otimes M)
  \end{equation*}
  for all objects $X \in \mathcal{C}$ and $M \in \mathcal{M}$. Thus, by Lemma~\ref{lem:pivotal-trace-mod-cat-dinat},  we have
  \begin{align*}
    & \ich_{\mathcal{C}}(X) \star \ich_{\mathcal{M}}(M)
    = \ich_{\mathcal{C}}(X) \circ \mathsf{Z}(\ich_{\mathcal{M}}(M)) \circ \delta_{\mathcal{M}} \\
    & = \trace_{\mathcal{C}}(X) \circ (\id_X \otimes (\trace_{\mathcal{M}}(M) \circ \pi_{\mathcal{M}}(M)) \otimes \id_{X^*})
      \circ \pi^{\mathsf{Z}}(A_{\mathcal{M}}) \circ \delta_{\mathcal{M}} \\
    & = \trace_{\mathcal{C}}(X) \circ (\id_X \otimes \trace_{\mathcal{M}}(M) \otimes \id_{X^*})
      \circ \iHomC_{X,M,M,X}^{-1} \circ \pi_{\mathcal{M}}(X \otimes M) \\
    & = \trace_{\mathcal{C}}(X \otimes M) \circ  \pi_{\mathcal{M}}(X \otimes M)
      = \ich_{\mathcal{M}}(X \otimes M). \qedhere
  \end{align*}
\end{proof}

Finally, we give a proof of Lemma~\ref{lem:character-additive} in the following general form: Let $\mathcal{A}$ and $\mathcal{B}$ be abelian categories, and let $\langle -, - \rangle: \mathcal{A}^{\op} \times \mathcal{A} \to \mathcal{B}$ be a functor that is additive and exact in each variable. Let $X$ and $Y$ be objects of $\mathcal{B}$. Suppose that there are two dinatural transformations $d(M): X \to \langle M, M \rangle$ and $e(M): \langle M, M \rangle \to Y$ ($M \in \mathcal{A}$). For a morphism $f: M \to M$ in $\mathcal{A}$, we define
\begin{equation*}
  t(f) = e(M) \circ \langle \id_M, f \rangle \circ d(M) \in \Hom_{\mathcal{B}}(X, Y).
\end{equation*}

\begin{proposition}
  \label{apdx:prop-additivity-piv-trace}
  Suppose that
  \begin{equation*}
    \xymatrix@R=16pt{
      0 \ar[r]
      & M_1 \ar[r]^{r} \ar[d]_{f_1}
      & M_2 \ar[r]^{s} \ar[d]_{f_2}
      & M_3 \ar[r] \ar[d]_{f_3} & 0 \\
      0 \ar[r]
      & M_1 \ar[r]^{r}
      & M_2 \ar[r]^{s}
      & M_3 \ar[r] & 0
    }
  \end{equation*}
  is a commutative diagram in $\mathcal{A}$ with exact rows. Then we have
  \begin{equation*}
    t(f_2) = t(f_1) + t(f_3).
  \end{equation*}
\end{proposition}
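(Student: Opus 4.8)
The plan is to reproduce the classical computation of the trace of a block upper-triangular endomorphism; the one genuinely delicate point is that the short exact sequence $0\to M_1\xrightarrow{r}M_2\xrightarrow{s}M_3\to 0$ need not split, so that point must be circumvented. First I would record two formal properties of $t$ that come for free and fix the use of dinaturality: $t$ is additive in its endomorphism argument (because $\langle-,-\rangle$ is additive in each variable and composition in $\mathcal{B}$ is biadditive), and $t$ is cyclic, $t(vu)=t(uv)$ for $u\colon M\to N$, $v\colon N\to M$ (this is the computation behind \eqref{eq:piv-trace-cyclicity}, using the dinaturality of $d$ and of $e$).

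The heart of the matter is to build, inside $\langle M_2,M_2\rangle$, the subobjects $F_2:=\Img\langle s,\id_{M_2}\rangle$ (a copy of $\langle M_3,M_2\rangle$) and $P:=\Img\langle\id_{M_2},r\rangle$ (a copy of $\langle M_2,M_1\rangle$), and to set $F_1:=F_2\cap P$ and $F_3:=F_2+P$. A short diagram chase using the exactness of $\langle-,-\rangle$ in each variable and the relation $sr=0$ gives $F_1=\Img\langle s,r\rangle\cong\langle M_3,M_1\rangle$ and, crucially, a \emph{direct sum} decomposition $U:=F_3/F_1\cong (F_2/F_1)\oplus(P/F_1)\cong\langle M_3,M_3\rangle\oplus\langle M_1,M_1\rangle$. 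The ladder relations $f_2r=rf_1$ and $sf_2=f_3s$ then force $\langle\id_{M_2},f_2\rangle$ to preserve $F_1,F_2,P$, hence $F_3$, and to act on $U$ as $\langle\id_{M_3},f_3\rangle\oplus\langle\id_{M_1},f_1\rangle$ — block diagonal, not merely triangular. Using $sr=0$ again, dinaturality of $e$ along $r$ and $s$ shows $e(M_2)$ annihilates $F_1$, and dinaturality of $d$ along $r$ and $s$ shows $d(M_2)$ factors through $F_3$; hence $t(f_2)=e(M_2)\circ\langle\id_{M_2},f_2\rangle\circ d(M_2)$ is computed entirely on the subquotient $U$.

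Finally I would identify the induced data on the two summands of $U$. Dinaturality of $d$ for $s$ (resp.\ for $r$) identifies the $\langle M_3,M_3\rangle$-component (resp.\ the $\langle M_1,M_1\rangle$-component) of the induced morphism $X\to U$ with $d(M_3)$ (resp.\ $d(M_1)$), and dinaturality of $e$ for $s$ (resp.\ for $r$) identifies the restriction of the induced morphism $U\to Y$ to the summand $\langle M_3,M_3\rangle$ (resp.\ $\langle M_1,M_1\rangle$) with $e(M_3)$ (resp.\ $e(M_1)$). Since the induced endomorphism of $U$ is block diagonal, $t(f_2)$ is the sum of the two diagonal contributions, $e(M_3)\circ\langle\id_{M_3},f_3\rangle\circ d(M_3)+e(M_1)\circ\langle\id_{M_1},f_1\rangle\circ d(M_1)=t(f_3)+t(f_1)$, which is the claim.

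I expect the only step requiring real care, rather than routine bookkeeping, to be the verification that $U=F_3/F_1$ splits as $(F_2/F_1)\oplus(P/F_1)$, i.e.\ that $F_2+P=F_3$ and $F_2\cap P=F_1$: this is precisely what substitutes for the (unavailable) splitting of the original short exact sequence and is the reason no ``off-diagonal'' term survives. Everything else amounts to chasing the biexactness of $\langle-,-\rangle$ and unwinding the dinaturality identities, along the lines of \cite[Lemma~2.5.1]{MR2803849} and of the standard proof that the trace is additive on short exact sequences.
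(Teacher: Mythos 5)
Your proposal is correct and is essentially the paper's own argument (both modeled on \cite[Lemma~2.5.1]{MR2803849}): your subquotient $U=(F_2+P)/(F_2\cap P)$ with its two projections is a repackaging of the paper's kernel $K=\Ker\langle r,s\rangle$ together with the corestricted maps $p_1,p_3$, and the diagram chase you flag as the delicate point (that $F_2+P$ exhausts $K$, so $d(M_2)$ lands in it, and that $F_2\cap P=\Img\langle s,r\rangle$) is exactly the chase the paper also defers to that reference. Your dinaturality identifications of the components of the induced maps with $d(M_1),d(M_3),e(M_1),e(M_3)$ coincide with the paper's equations $p_1\circ d_2=d_1$, $p_3\circ d_2=d_3$ and the verification that $\Gamma'$ agrees with $e(M_2)$ on $I_1+I_2$.
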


We note that the internal Hom functor of an exact module category is exact in each variable. Lemma~\ref{lem:character-additive} is the case where $\mathcal{A} = \mathcal{M}$, $\mathcal{B} = \mathcal{C}$, $\langle -,- \rangle = [-, -]$, $d = \pi_{\mathcal{M}}$, $e = \trace_{\mathcal{M}}$ and $f_i = \id_{M_i}$ for $i = 1, 2, 3$. If we consider $d = \icoev_{\unitobj, (-)}$ instead of $d = \pi_{\mathcal{M}}$, then we obtain the additivity of the pivotal trace with respect to exact sequences.

\begin{proof}
  By the assumption on $\langle -, - \rangle$, we obtain the following commutative diagram with exact rows and exact columns:
  \begin{equation*}
    \xymatrix@R=16pt@C=32pt{
      & 0 \ar[d] & 0 \ar[d] & 0 \ar[d] \\ 
      0 \ar[r]
      & \langle M_3, M_1 \rangle \ar[r]^{\langle \id,r \rangle} \ar[d]_{\langle s,\id \rangle}
      & \langle M_3, M_2 \rangle \ar[r]^{\langle \id,s \rangle} \ar[d]_{\langle s,\id \rangle}
      & \langle M_3, M_3 \rangle \ar[r] \ar[d]_{\langle s,\id \rangle}
      & 0 \\
      0 \ar[r]
      & \langle M_2, M_1 \rangle \ar[r]^{\langle \id,r \rangle} \ar[d]_{\langle r,\id \rangle}
      & \langle M_2, M_2 \rangle \ar[r]^{\langle \id,s \rangle} \ar[d]_{\langle r,\id \rangle}
      & \langle M_2, M_3 \rangle \ar[r] \ar[d]_{\langle r,\id \rangle}
      & 0 \\
      0 \ar[r]
      & \langle M_1, M_1 \rangle \ar[r]^{\langle \id,r \rangle} \ar[d]
      & \langle M_1, M_2 \rangle \ar[r]^{\langle \id,s \rangle} \ar[d]
      & \langle M_1, M_3 \rangle \ar[r] \ar[d]
      & 0 \\
     & 0 & 0 & 0
    }
  \end{equation*}
  We set $K = \Ker(\langle r, s \rangle: \langle M_2, M_2 \rangle \to \langle M_1, M_3 \rangle)$. Then we have $K = I_1 + I_2$, where
  \begin{align*}
    I_1 & = \Img(\langle \id_{M_2}, r \rangle: \langle M_2, M_1 \rangle \to \langle M_2, M_2 \rangle), \\
    I_2 & = \Img(\langle s, \id_{M_2} \rangle: \langle M_3, M_2 \rangle \to \langle M_2, M_2 \rangle).
  \end{align*}
  Moreover, there are morphisms $p_i: K \to \langle M_i, M_i \rangle$ ($i = 1, 3$) such that
  \begin{equation}
    \label{eq:lem-additivity-p}
    \langle \id_{M_1}, r \rangle \circ p_1 = \langle r, \id_{M_2} \rangle
    \quad \text{and} \quad
    \langle s, \id_{M_3} \rangle \circ p_3 = \langle \id_{M_2}, s \rangle.
  \end{equation}
  These claims are checked by chasing the diagram. See \cite[Lemma 2.5.1]{MR2803849} for the detail of the verification, since the proof up to here is completely same.

  For simplicity of notation, we set $d_i = \langle \id_{M_i}, f_i \rangle \circ d(M_i)$. By $f_2 s = s f_3$ and the dinaturality of $d$, we have
  \begin{align}
    \label{eq:lem-additivity-1}
    \langle \id_{M_2}, s \rangle \circ d_2 & = \langle s, \id_{M_3} \rangle \circ d_3, \\
    \label{eq:lem-additivity-2}
    \langle r, \id_{M_2} \rangle \circ d_2 & = \langle \id_{M_1}, r \rangle \circ d_3.
  \end{align}
  Thus $\langle r, s \rangle \circ d_2 = \langle r, \id \rangle \circ \langle \id, s \rangle \circ d_2 = \langle r, \id \rangle \circ \langle s, \id \rangle \circ d_3 = 0$, that is, $\Img(d_2) \subset K$. Hence the following morphism is defined:
  \begin{equation}
    \label{eq:lem-additivity-trace-Gamma}
    \Gamma
    := e(M_1) \circ p_1 \circ d_2
    + e(M_3) \circ p_3 \circ d_2.
  \end{equation}
  We first show that $\Gamma = t(f_1) + t(f_3)$. By~\eqref{eq:lem-additivity-p} and~\eqref{eq:lem-additivity-2}, we have
  \begin{equation*}
    \langle \id_{M_1}, r \rangle \circ p_1 \circ d_2
    = \langle r, \id_{M_2} \rangle \circ d_2
    = \langle \id_{M_1}, r \rangle \circ d_1.
  \end{equation*}
  Since $r$ is monic, so is $\langle \id_{M_1}, r \rangle$. Thus we have $p_1 \circ d_2 = d_1$. Therefore the first term of \eqref{eq:lem-additivity-trace-Gamma} is $t(f_1)$. Similarly, we have
  \begin{equation*}
    \langle s, \id_{M_3} \rangle \circ p_3 \circ d_2
    = \langle \id_{M_2}, s \rangle \circ d_2
    = \langle s, \id_{M_1} \rangle \circ d_3
  \end{equation*}
  by~\eqref{eq:lem-additivity-p} and~\eqref{eq:lem-additivity-1}, and thus $p_3 \circ d_2 = d_3$. From this, we see that the second term of \eqref{eq:lem-additivity-trace-Gamma} is $t(f_3)$. Thus the claim follows.

  To complete the proof, we show $\Gamma = t(f_2)$. To see this, we remark
  \begin{gather*}
    \langle \id_{M_1}, r \rangle \circ p_1 \circ \langle s, \id_{M_2} \rangle = \langle r, \id_{M_2} \rangle \circ \langle s, \id_{M_2} \rangle = 0, \\
    \langle s, \id_{M_3} \rangle \circ p_3 \circ \langle \id_{M_2}, r \rangle = \langle \id_{M_2}, s \rangle \circ \langle \id_{M_2}, r \rangle = 0
  \end{gather*}
  by~\eqref{eq:lem-additivity-p}. Since both $\langle \id_{M_1}, r \rangle$ and $\langle s, \id_{M_3} \rangle$ are monic, $p_1 \circ \langle s, \id_{M_2} \rangle$ and $p_3 \circ \langle \id_{M_2}, r \rangle$ are zero morphisms.
  Set $\Gamma' = e(M_1) \circ p_1 + e(M_3) \circ p_3$. We have
  \begin{align*}
    \Gamma' \circ \langle s, \id_{M_2} \rangle
    = e(M_2) \circ \langle s, \id_{M_2} \rangle
    \quad \text{and} \quad
    \Gamma' \circ \langle \id_{M_2}, r \rangle
    = e(M_2) \circ \langle \id_{M_2}, r \rangle
  \end{align*}
  by the dinaturality of $e$. These equation imply that $\Gamma' = e(M_2)$ on $K = I_1 + I_2$. Since $\Img(d_3) \subset K$, we conclude that $\Gamma = t(f_2)$. The proof is done.
\end{proof}

\def\cprime{$'$}

\end{document}